\pgfplotsset{compat=1.15}
\newcommand{\Z}{{\mathbb Z}}
\newcommand{\R}{\mathbb{R}}
\newcommand{\p}[1]{{\mathbb{P}^{#1}}}
\newcommand{\op}[1]{{\mathcal O}_{\mathbb{P}^{#1}}}
\newcommand{\ox}{{\mathcal O}_{X}}
\newcommand{\ch}{\operatorname{ch}}
\newcommand{\hd}{\operatorname{hd}}
\newcommand{\stab}{\operatorname{Stab}}
\newcommand{\sing}{\operatorname{Sing}}
\newcommand{\supp}{\operatorname{Supp}}
\newcommand{\cala}{{\mathcal A}}
\newcommand{\calb}{{\mathcal B}}
\newcommand{\calf}{{\mathcal F}}
\newcommand{\calg}{{\mathcal G}}
\newcommand{\calh}{{\mathcal H}}
\newcommand{\cali}{{\mathcal I}}
\newcommand{\calm}{{\mathcal M}}
\newcommand{\calo}{{\mathcal O}}
\newcommand{\calp}{{\mathcal P}}
\newcommand{\calt}{{\mathcal T}}
\newcommand{\dbx}{D^{b}(X)}
\newcommand{\coh}{\mathcal{C}oh}
\newcommand{\cohb}{\mathcal{B}^{\beta}}
\newcommand{\cohab}{\mathcal{A}^{\beta,\alpha}}
\newcommand{\labs}{\lambda_{\beta,\alpha,s}}
\newcommand{\free}[1]{\mathcal{F}_{#1}}
\newcommand{\tors}[1]{\mathcal{T}_{#1}}
\newcommand{\obeta}{{\overline{\beta}}}
\newcommand{\oalpha}{\overline{\alpha}}
\newcommand{\inhom}{{\mathcal H}{\it om}}
\newcommand{\inext}{{\mathcal E}{\it xt}}
\newcommand{\Ext}{\operatorname{Ext}}
\newcommand{\Hom}{\operatorname{Hom}}
\DeclareMathOperator{\coker}{coker}
\DeclareMathOperator{\im}{im}
\DeclareMathOperator{\rk}{{rk}}
\DeclareMathOperator{\Pic}{{Pic}}
\newcommand{\lra}{\longrightarrow}
\newcommand{\into}{\hookrightarrow}
\newcommand{\onto}{\twoheadrightarrow}
\newcommand{\Ap}{\mathcal{A}^p}
\newcommand{\Ac}{\mathcal{A}}
\newcommand{\Tc}{\mathcal{T}}
\newcommand{\Fc}{\mathcal{F}}
\newcommand{\wt}{\widetilde}
\newcommand{\wh}{\widehat}
\newcommand{\ol}{\overline}
\newcommand{\dimension}{{\mathrm{dim}\,}}
\newcommand{\image}{{\mathrm{im}\,}}
\newtheorem{theorem}{Theorem}[section]
\newtheorem{mthm}{Main Theorem}
\newtheorem{proposition}[theorem]{Proposition}
\newtheorem{lemma}[theorem]{Lemma}
\newtheorem{corollary}[theorem]{Corollary}
\theoremstyle{definition}
\newtheorem{remark}[theorem]{Remark}
\newtheorem{example}[theorem]{Example}
\newtheorem{definition}[theorem]{{\bf Definition}}
\newtheorem{construction}[theorem]{Construction}
\title[Higher DT/PT]{Higher rank DT/PT wall-crossing in Bridgeland stability}
\author{Marcos Jardim}
\address{IMECC - UNICAMP \\ Departamento de Matemática \\
Rua Sérgio  Buarque de Holanda, 651\\ 13083-970 Campinas-SP, Brazil}
\email{jardim@ime.unicamp.br}
\author{Jason Lo}
\address{Department of Mathematics \\
California State University, Northridge\\
18111 Nordhoff Street\\
Northridge CA 91330 \\
USA}
\email{jason.lo@csun.edu}
\author{Antony Maciocia}
\address{School or Mathematics and Maxwell Institute for Mathematical Sciences\\ University of Edinburgh\\ Peter Guthrie Tait Road\\ Edinburgh\\ EH9 3FD\\ UK}
\email{A.Maciocia@ed.ac.uk}
\author{Cristian Martinez}
\address{School of Engineering, Science and Technology \\Universidad del Rosario\\
Carrera 6 No. 12C-16, 111711, Bogotá, Colombia}
\email{cristianm.martinez@urosario.edu.co}
\begin{document}

\definecolor{uuuuuu}{rgb}{0.26666666666666666,0.26666666666666666,0.26666666666666666}
\definecolor{qqqqff}{rgb}{0,0,1}
\definecolor{ffqqqq}{rgb}{1,0,0}

\begin{abstract}

We prove that the Gieseker moduli space of stable sheaves on a smooth projective threefold $X$ of Picard rank 1 is separated from the moduli space of PT stable objects by a single wall in the space of Bridgeland stability conditions on $X$, thus realizing the higher rank DT/PT correspondence as a wall-crossing phenomenon in the space of Bridgeland stability conditions. In addition, we also show that only finitely many walls pass through the upper $(\beta,\alpha)$-plane parametrizing geometric Bridgeland stability conditions on $X$ which destabilize Gieseker stable sheaves, PT stable objects or their duals when $\alpha>\alpha_0$.
\end{abstract}
\maketitle

\tableofcontents


\section{Introduction}

Let $X$ be a smooth projective threefold of Picard number one for which the generalized Bogomolov-Gieseker inequality holds. The Donaldson--Thomas (DT) invariants give virtual counts of curves $C$ (or equivalently, their ideal sheaves $I_C$) on $X$, while Pandharipande--Thomas (PT) invariants give virtual counts of pairs $(F,s)$ consisting of a 1-dimensional sheaf $F$ and a section $s\in H^0(F)$ whose cokernel is 0-dimensional, so-called \textit{PT stable pairs}. In \cite{pandharipande2009curve}, it was conjectured that there is an equivalence between the DT and the PT invariants, in the sense that the generating series for these invariants can be converted to each other via correspondence formulas; this equivalence came to be known in the literature as the \textit{DT/PT correspondence}.

When $X$ is a Calabi--Yau threefold, the Euler characteristic version of the correspondence was proved by Toda \cite{toda2010curve} under a technical assumption on the local structures of the relevant moduli stacks (the assumption was later removed in \cite{toda2020hall}), while the full conjecture (using Behrend functions) was eventually proved by Bridgeland in \cite{bridgeland2011hall}. Independently, Stoppa--Thomas \cite{stoppa2011hilbert} also proved the same version of the DT/PT correspondence, notably noticing that there is, for an arbitrary threefold $X$, a GIT wall-crossing relating the Hilbert scheme of curves on $X$ and the moduli space of PT stable pairs.   

Note that ideal sheaves of curves are rank 1 slope stable sheaves, while PT stable pairs are objects in the derived category that are equivalent to rank 1  stable objects in the sense of \textit{PT stability} as defined by Bayer in \cite{BayerPBSC}. Besides, counting invariants can be defined for higher-rank stable sheaves on a smooth projective Calabi--Yau threefold and for higher-rank PT stable objects \cite{Lo2,toda2020hall}. It is then natural to ask whether the DT/PT correspondence also holds in \emph{higher ranks}.  Indeed, under a coprime assumption on rank and degree, this \textit{higher-rank DT/PT correspondence} was proved by Toda in \cite{toda2020hall}, thus generalising the original (rank 1) DT/PT correspondence.

It has long been suggested, however, that the DT/PT correspondence could be a manifestation of a wall-crossing phenomenon in the space of Bridgeland stability conditions on the threefold $X$ (e.g.\ see \cite[Section 3.3]{pandharipande2009curve}), i.e.\ there should be a wall in the Bridgeland stability manifold separating the moduli of ideal sheaves and the moduli of PT stable pairs. In fact, in \cite{bridgeland2011hall}, Bridgeland shows that for the rank 1 case, both DT and PT invariants count quotients of $\mathcal{O}_X$ in two different hearts related by a tilt. In \cite{BayerPBSC}, where the concept of polynomial stability conditions was introduced, Bayer showed that there are two standard polynomial stability conditions called DT stability and PT stability that are separated by a wall in the space of polynomial stability conditions \cite[Proposition 6.1.1]{BayerPBSC}, and where the rank-one DT stable objects (respectively\ PT stable objects) are precisely the ideal sheaves of curves (respectively\ PT stable pairs).  Therefore, if one accepts the intuition that polynomial stability conditions correspond to \textit{limits} of Bridgeland stability conditions, then Bayer's result gave one step towards realising DT/PT correspondence as a wall-crossing in the complex manifold ${\rm Stab}(X)$ of Bridgeland stability conditions.

In this paper, we complete the picture for a smooth projective threefold of Picard rank 1 by producing a wall in ${\rm Stab}(X)$ where, for any rank, we have the moduli of PT stable objects on one side of the wall and the moduli of stable sheaves on the other side.

More precisely, let $X$ be a smooth projective threefold of Picard rank 1 for which geometric stability conditions $(\cala^{\beta,\alpha},Z_{\beta,\alpha,s})$ as constructed by Bayer, Macrì, and Toda in \cite{BMT1,BMT} are known to exist; at the moment of writing, the list includes Fano threefolds \cite{M-p3,S-q3,Li}, abelian threefolds \cite{MP,BMS}, quintic threefolds \cite{Li2}, general weighted hypersurfaces in either of the weighted projective spaces $\mathbb{P}(1,1,1,1,2)$ or $\mathbb{P}(1,1,1,1,4)$ \cite{KosDoubleTriple}, and Calabi--Yau threefolds obtained as a complete intersection of quadratic and quartic hypersurfaces in $\mathbb{P}^5$ \cite{LiuBGICY3}. Fix a Chern character vector
$$ v=(v_0,v_1,v_2,v_3) \in \mathbb{Z}\times \mathbb{Z}\times \frac{1}{2}\mathbb{Z}\times \frac{1}{6}\mathbb{Z}, $$
and let ${\mathcal{G}(v)}$, ${\mathcal{M}_{\beta,\alpha,s}(v)}$ and ${\mathcal{T}(v)}$ denote the moduli spaces of Gieseker semistable sheaves, Bridgeland semistable objects and PT semistable objects, respectively, with class $v$. Finally, for any fixed $s\in\R^+$, set
$$ \Theta_v:=\{(\beta,\alpha)\in\R\times\R^+ ~|~\ {\rm Im}\big(Z_{\beta,\alpha,s}(v)\big)=0\} $$ 
regarded as a curve in the $(\beta,\alpha)$ plane parametrizing geometric stability conditions.

In addition, let $E$ be a torsion-free sheaf on $X$ and set $Q_E:=E^{**}/E$; let $Z_E$ be the maximal 0-dimensional subsheaf of $Q_E$, and let $E'$ be the kernel of the composed epimorphism $E^{**}\onto Q_E\onto Q_E/Z_E$; see Section \ref{sec:sheaves} for further details. 

We prove:

\begin{mthm}\label{mthm1}{\rm [\textbf{= Theorem \ref{thm:wall-xing}}]}
    Let $v$ be a Chern character vector with $v_0>0$ and such that $v_0$ and $v_1$ are relatively prime. Fix $s>1/3$ and let $(\beta_v,\alpha_v)\in\Theta_v$ be the last tilt wall for $v$ along $\Theta_v$. For any point $(\obeta,\oalpha)\in\Theta_v$ with $\obeta<\beta_v$ the Bridgeland wall-crossing, via restriction to the locus of PT stable objects, produces a diagram
    $$
    \begin{diagram}
     \node{\mathcal{G}(-v)}\arrow{se,b}{\gamma}\node{}\node{\mathcal{T}(v)}\arrow{sw,b}{\tau}\\
     \node{}\node{\mathcal{M}_{\obeta,\oalpha,s}(v)}\node{}
    \end{diagram}
    $$
with morphisms given by
$$ \gamma(E) = \big[E'[1]\oplus Z_E\big]  ~~{\rm and}~~ \tau(A) = \big[\calh^{-1}(A)[1]\oplus \calh^{0}(A)\big]. $$
\end{mthm}

We highlight that the morphism $\gamma$ above can be regarded as analogous to the so-called \textit{Gieseker-to-Uhlenbeck map} for surfaces originally described by Li in \cite{Li-Jun} and recently recast in terms of a morphism between Bridgeland moduli spaces by Tajakka in \cite{Tajakka}. Indeed, let $E$ be a Gieseker semistable sheaf on a surface $S$; Tajakka's version of the Gieseker-to-Uhlenbeck map goes from the Gieseker moduli space $\mathcal{G}(v)$ to the Bridgeland moduli $\mathcal{M}_{\beta,\alpha}(v)$ space for any $\alpha>0$ and $\beta=v_1/v_0$, taking $E$ to (the S-equivalence class of) the object $E^{**}\oplus(E^{**}/E)[-1]$, which is a Bridgeland semistable object at the vertical wall given by $\ch_1^\beta(E)=0$. Our proposal for the Gieseker-to-Uhlenbeck map for threefolds is precisely the map $\gamma$ above.

It is also worth noticing that Pavel and Tajakka recently claimed that for any smooth projective threefold $X$, the moduli space $\calt(v)$ of PT-stable objects on $X$ is a projective scheme whenever $v_0$ and $v_1$ are coprime, see \cite[Theorem 1.1]{Pavel-Tajakka}.  The moduli space $\calt(v)$ was originally constructed in \cite{Lo1, Lo2} as a universally closed algebraic stack of finite type.  In the absence of strictly semistable objects, $\calt(v)$ admits a proper coarse moduli space.  Using methods on determinantal line bundles that originated in the works of Le Potier \cite{le1992fibre} and Li \cite{Li-Jun} and further developed in Tajakka \cite{Tajakka}, Pavel and Tajakka constructed an ample line bundle on the coarse moduli space of $\calt(v)$ under a coprime assumption on $v_0, v_1$, thereby proving it is a projective moduli space \cite{Pavel-Tajakka}.  Since we prove in this article that PT stability \emph{is} a Bridgeland stability condition (see Theorem \ref{t:PTisBridgeland} below), our results yield a class of examples of projective moduli spaces of Bridgeland stable objects on threefolds.  It would be interesting to check whether the  Bridgeland moduli spaces $\mathcal{M}_{\obeta,\oalpha,s}(v)$ can also be shown to be projective.

Implicit in the statement of Main Theorem \ref{mthm1} is the existence of two special stability chambers in ${\rm Stab}(X)$ separated by the wall $\Theta_v$: the \textit{Gieseker chamber}, for which there was already evidence in \cite{JM19}, in which the Bridgeland moduli space coincides with the Gieseker moduli space of semistable sheaves, and the \textit{PT chamber}, in which the Bridgeland moduli space coincides with the moduli space of PT semistable objects. 

Moreover, for a fixed $s\geq 1/3$ and $\beta\ll 0$, PT stable objects of Chern character $v$ can be found not only near $\Theta_v$ but also for $\alpha\gg 0$, as they satisfy the conditions of \cite[Theorem 3.1]{JMM}. This would be consistent with an absence of walls for $v$. This motivated us to complete the paper by looking at how the geometry of the walls corresponding to the PT stable objects is constrained and deduce that there can only be finitely many such walls. More precisely, we show:

\begin{mthm}\label{mthm2}{\rm [\textbf{= Theorem \ref{t:finite}}]}
    For any $s\geq1/3$, $\alpha_0>0$ and any Chern character $v$ with $\Delta(v)\geq0$ and $v_0>0$, there are only finitely many actual $\lambda$-walls passing through the upper $(\beta,\alpha)$ plane (with  $\alpha>\alpha_0$), which destabilize Gieseker stable objects, PT stable objects or their duals.
\end{mthm}

The results in this paper, especially Main Theorem \ref{mthm1}, Main Theorem \ref{mthm2}, and Theorem \ref{t:PTisBridgeland}, lead to the following picture of the Gieseker and PT chambers in the $(\beta,\alpha)$-plane.

\begin{figure}[h] \centering
\begin{tikzpicture}[line cap=round,line join=round,>=triangle 45,x=1cm,y=1cm]
\draw[->] (-8.5,0) -- (0.5,0);
\draw[->] (0,-0.2) -- (0,7.2);
\begin{scope}
\clip(-8.5,-0.2) rectangle (1,7.2);
\draw [line width=2pt,color=ffqqqq] (-2.5,0) circle (1.8027756377319946cm);
\draw [samples=50,domain=-0.99:0.99,rotate around={0:(-0.5,0)},xshift=-0.5cm,yshift=0cm,line width=2pt,color=qqqqff] plot ({0.8660254037844386*(-1-(\x)^2)/(1-(\x)^2)},{0.8660254037844386*(-2)*(\x)/(1-(\x)^2)});
\draw (0.2,0.1) node[anchor=south west] {$\beta$};
\draw (0.0864859446932387,6.553798607869315) node[anchor=north west] {$\alpha$};
\draw [color=qqqqff](-5.8,4.5) node[anchor=north west] {$\Theta_v$};
\draw [line width=1.5pt] (-2.5,1.8) .. controls (-3.5,2.3) and (-4.5,2.3) .. (-5.5,1.6) .. controls (-7.1,1.1) and (-6.8,0.3) .. (-6.9,0); 
\draw[line width=1.5pt] (-2.5,1.8) .. controls (-3.0,2.5) .. (-3.2,3.6) .. controls (-5.4,5.1) and (-5.4, 5.7) .. (-2.7,5.8)  .. controls (-1.8,6.3) and (-1.7,6.6) .. (-1.5,7.2);
\begin{scriptsize}
\draw (-8.390040124688505,3.6230499630301054) node[anchor=north west] {\parbox{3 cm}{Gieseker chamber}};
\draw (-6,6.5) node[anchor=north west] {PT chamber};
\draw (-2.5,2.3) node[anchor=north west] {$(\beta_v,\alpha_v)$};
\draw [fill=uuuuuu] (-2.5,1.8027756377319948) circle (2pt);
\end{scriptsize}
\end{scope}
\end{tikzpicture}
\caption{The picture illustrates the Gieseker and PT chambers in the $(\beta,\alpha)$-plane for a given Chern character $v$. The red curve is the largest tilt wall for $v$, with the point of intersection with the blue $\Theta_v$ curve marked with $(\beta_v,\alpha_v)$, following the notation of Main Theorem \ref{mthm1}. The black curve represents the first actual wall that destabilizes Gieseker semistable sheaves and PT semistable objects, as described in Main Theorem \ref{mthm2}; this is a piecewise smooth curve, each continuous segment is an arc of a cubic or quartic curve.}
    \label{fig:intro}
\end{figure}
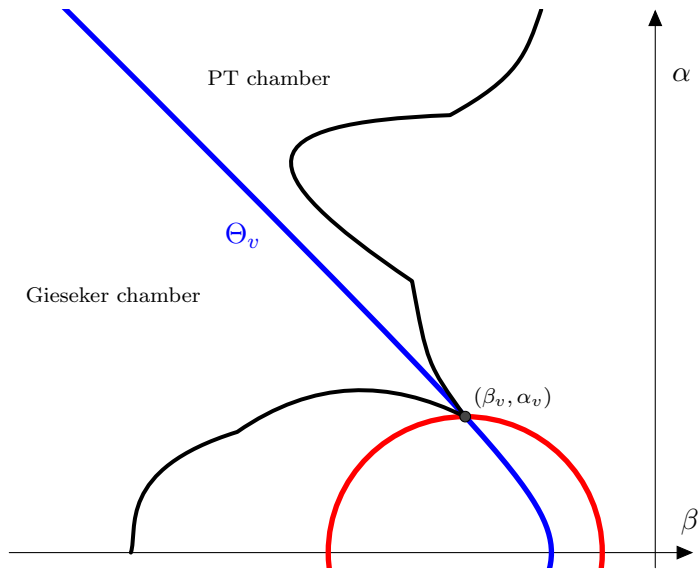

The paper is organized as follows. We start by setting up notation and definitions in Section \ref{sec:BG}. In particular, we briefly recall the Bayer--Macrì--Toda construction of Bridgeland stability conditions on threefolds \cite{BMT} in Section \ref{sec:Bsc}, revise some relevant results from our previous papers, notably \cite{JM19,JMM}, about asymptotic stability in Section \ref{sec:vertical}, and present Bayer's polynomial stability conditions \cite{BayerPBSC} in Section \ref{sec:polynomial}.

In Section \ref{sec:triples} we introduce \textit{stable triples} $(B,P,\eta)$ consisting of a torsion-free sheaf $B$, a 1-dimensional sheaf $P$ and $\eta\in\Ext^2(P,B)$ satisfying certain conditions (see Definition \ref{defn:stable-triple} for a precise statement) as a higher rank generalization of PT stable pairs, see Lemma \ref{lem:rk1ST-PTpairsequiv}; several other properties of these objects are established in this section.

In Section \ref{sec:PT}, we study the relations among PT-stability (in the sense of Bayer \cite{BayerPBSC}), the large volume limit (in the sense of Bayer--Macrì--Toda \cite{BMT1}), vertical stability (as defined in  \cite{JMM}), and stable triples (as defined in Section \ref{sec:triples}).  To start with, we define \textit{$\sigma_{\infty, B}$-stability}, which generalizes the large volume limit.  Then we show that vertical stability coincides with $\sigma_{\infty,B}$-stability  (Theorem \ref{thm:AG53-39-1}), and that $\sigma_{\infty,B}$-stability with \textit{sufficiently negative $B$} coincides with PT-stability (Theorem \ref{thm:PTislimtstab-eff}).  Then, in Propositions \ref{pro:AG53-92-1} and \ref{pro:AG53-92-2}, we show the implications
\begin{gather}\label{eq:equiv-PT-stabtri-intro}
\text{ stable triple } \Rightarrow \text{ PT-stable object}  \Rightarrow \text{ PT-semistable object } 
\Rightarrow \text{ semistable triple}
\end{gather}
thus completing our web of relations among PT-stability, the large volume limit, vertical stability, and stable triples. From these implications, we see that (semi)stable triples give us approximations of PT-(semi)stable objects via characterizations of their cohomology sheaves.  In particular, since stable triples coincide with semistable triples under the coprime assumption on rank and degree, all four notions in \eqref{eq:equiv-PT-stabtri-intro} coincide under the coprime assumption.

The brief Section \ref{sec:stab-theta} is dedicated to characterizing the Brigdeland moduli space $\mathcal{M}_{\obeta,\oalpha}(v)$ when $(\obeta,\oalpha)\in\Theta_v$, see Proposition \ref{prop:moduli in theta}. The proof of Main Theorem \ref{mthm1} is completed in Section \ref{sec:honest-wall} by choosing a convenient heart from the slicing corresponding to the stability conditions $(\mathcal{A}_{\beta,\alpha},Z_{\beta,\alpha,s})$, which allows us to work with the same Chern character on both sides of the wall $\Theta_v$. We also provide a detailed study of three concrete cases to exemplify how the wall $\Theta_v$ could be collapsing, fake, or an honest wall.

Finally, Main Theorem \ref{mthm2} is proved in Section \ref{sec:y} extending the arguments in \cite{JM19}.

\subsection*{Acknowledgments}
MJ is supported by the CNPQ grant number 302889/2018-3 and the FAPESP Thematic Project 2018/21391-1. JL was partially supported by NSF grant DMS-2100906.  We thank the ICMS for the Research-in-Groups programme \textit{Bridgeland stability on three-dimensional varieties} held from 19 February to 1 March 2024.


\section{Background material}\label{sec:BG}

\subsection{Notation}\label{sec:notation}

In what follows, $X$ will denote a smooth projective threefold of Picard rank 1 with ample generator $H$ for which stability conditions are known to exist, e.g., $X$ is Fano \cite{M-p3,S-q3,Li}, or abelian \cite{MP,BMS}, or a quintic threefold \cite{Li2}, or a general weighted hypersurface in either of the weighted projective spaces $\mathbb{P}(1,1,1,1,2)$ or $\mathbb{P}(1,1,1,1,4)$ \cite{KosDoubleTriple}, or a Calabi-–Yau threefold obtained as complete intersection of quadratic and quartic hypersurfaces in $\mathbb{P}^5$ \cite{LiuBGICY3}. Other than specified, we will use the following notation:
\begin{itemize}
\item The Chern character vector of an object $A\in \dbx$ is the vector 
\[
\ch(A)=(\ch_0(A)H^3,\ch_1(A)H^2,\ch_2(A)H,\ch_3(A))\in \mathbb{Z}\times \mathbb{Z}\times \frac{1}{2}\mathbb{Z}\times \frac{1}{6}\mathbb{Z}.
\]
By abuse of notation we will simply write $\ch_i(A)$ for $\ch_i(A)H^{3-i}$.
\item For $\beta\in \mathbb{R}$ and $A\in \dbx$, the twisted Chern characters of $A$ are
\begin{align*}
\ch^{\beta}_0(A)&=\ch_0(A),\\
\ch^{\beta}_1(A)&=\ch_1(A)-\beta\ch_0(A),\\
\ch^{\beta}_2(A)&=\ch_2(A)-\beta\ch_1(A)+\frac{\beta^2}{2}\ch_0(A),\\
\ch^{\beta}_3(A)&=\ch_3(A)-\beta\ch_2(A)+\frac{\beta^2}{2}\ch_1(A)-\frac{\beta^3}{6}\ch_0(A),
\end{align*}
i.e., $\ch^{\beta}(A):= \exp(-\beta H)\cdot\ch(A)\in \mathbb{R}^4$.
\item $\delta_{ij}^{\beta}(F,A) := \ch_i^{\beta}(F)\ch_j^{\beta}(A)-\ch_j^{\beta}(F)\ch_i^{\beta}(A)$. We suppress $\beta$ from the notation when $\beta=0$.
\item $\rho_{\beta,\alpha}(A) = \ch_2^{\beta}(A) -\alpha^2\ch_0^{\beta}(A)/2$.

\item Let $\mathcal{A}$ be the heart of a bounded t-structure on $\dbx$. For an object $A\in \dbx$, we denote by $\mathcal{H}^{i}_{\mathcal{A}}(A)$
the cohomology objects of $A$ with respect to the t-structure generated by $\mathcal{A}$. We suppress $\mathcal{A}$ from the notation in the case of the standard t-structure. 

\item For a line bundle $L$, we will occasionally write $n\cdot L:=L^{\oplus n}$.
\item We denote by $\coh^{\leq k}(X)$ the sub-category of $\coh(X)$ consisting of sheaves supported in dimension $\leq k$, and denote by  $\coh^{\geq k}(X)$  the sub-category of sheaves supported in dimension $\geq k$ that do not have any subsheaves in $\coh^{\leq k-1}(X)$. 
\item Following the notation in \cite[p.26]{HL}, we will write $\coh_{k,l}(X)$ to denote the quotient category $\coh^{\leq k}(X)/\coh^{\leq l-1}(X)$.
\item For an object  $A\in \dbx$, we denote $A^\vee:=\mathcal{H}om_{\dbx}(A,\ox)$; when $E$ is a sheaf of pure codimension $c$, we denote $E^*:=\calh^c(E^\vee)[c]=\inext^c(E,\ox)$. 
\end{itemize}


\subsection{Stability for sheaves} \label{sec:sheaves}

Given a coherent sheaf $E$ on $X$, let $P_E(t)$ denote its Hilbert polynomial with respect to a fixed polarization on $X$; let $p_E(t):=P_E(t)/\rk(E)$ be the reduced Hilbert polynomial. Recall that $E$ is said to be Gieseker (semi)stable if and only if every proper, non-trivial subsheaf $F\into E$ satisfies the inequality $p_F(t)<(\le)p_E(t)$, where $\le$ indicates the lexicographic order on $\mathbb{Q}[t]$. 

Now let $\bar{P}_E(t)$ be the polynomial obtained by eliminating the term on $t^0$ from $P_E(t)$; we will call this the \textit{truncated Hilbert polynomial}. We will say that a torsion-free sheaf $E$ is \textit{2-Gieseker (semi)stable} if and only if every proper, non-trivial subsheaf $F\into E$ satisfies the inequality $\bar p_F(t)<(\le)\bar p_E(t)$ involving the reduced, truncated Hilbert polynomials for $F$ and $E$. This is equivalent to the notion of stability in the category $\coh_{3,1}(X)$ in the sense of \cite[Definition 1.6.3]{HL}; in \cite[Section 2.5]{JM19}, this same notion was called $\mu_{\le2}$-stability.

Finally, a torsion-free sheaf $E$ is \textit{$\mu$-(semi)stable} if and only if every proper, non-trivial subsheaf $F\into E$ satisfies the inequality $\mu(F)<(\le)\mu(E)$, where $\mu$ indicates the usual slope for a torsion-free sheaf.

The following chain of implications is easy to check:
$$ \mu\textrm{-stable} ~~\Rightarrow~~ \textrm{2-Gieseker stable} ~~\Rightarrow~~ \textrm{Gieseker stable} ~~\Rightarrow $$
$$ \Rightarrow~~ \textrm{Gieseker semistable} ~~\Rightarrow~~ \textrm{2-Gieseker semistable} ~~\Rightarrow~~ \mu\textrm{-semistable}. $$
Note that if $\gcd(\ch_0(E),\ch_1(E))=1$, then every $\mu$-semistable sheaf is actually $\mu$-stable, and all these notions of stability coincide.

If $E$ is torsion-free, then we have the following canonical short exact sequence:
\begin{equation} \label{std sqc}
 0 \to E \to E^{**} \to Q_E \to 0,~~ Q_E:=E^{**}/E,
\end{equation}
where $E^*:=\inhom(E,\ox)$. Note that $\dim Q_E\le1$; letting $Z_E$ be the maximal 0-dimensional subsheaf of $Q_E$, define $T_E:=Q_E/Z_E$, which is a pure 1-dimensional sheaf. Let $E'$ be the kernel of the composed epimorphism $ E^{**} \onto Q_E \onto T_E$; it fits into the following short exact sequence in $\coh(X)$:
\begin{equation} \label{e' sqc}
 0 \to E \to E' \to Z_E \to 0 .
\end{equation}
Clearly $(E')^{**}=E^{**}$, and $(E')^{**}/E'=T_E$. Note that 
\begin{itemize}
\item[(i)] $E$ is $\mu$-(semi)stable if and only if $E'$ is $\mu$-(semi)stable;
\item[(ii)] $E$ is 2-Gieseker (semi)stable if and only if $E'$ is 2-Gieseker (semi)stable.
\end{itemize}

\begin{remark}
In this paper, the moduli space of Gieseker semistable sheaves with fixed Chern character $v$ will be denoted by $\calg(v)$.

A \emph{reflexive} sheaf will always be torsion-free by default.
\end{remark}


\subsection{Technical lemmas}

For an ample class $\omega$ on a smooth projective threefold $X$, We write $\wh{\mu}$ to denote the slope function $\wh{\mu}:= \ch_3/\omega\ch_2$ on $\coh^{\leq 1}(X)$.

\begin{proposition}\label{prop:AG53-46-1}
Let $X$ be a smooth projective variety with an ample class $\omega$, and $\{E_t\}_{t \in I} $ a bounded set of $\mu_\omega$-semistable  sheaves on $X$.  Then there exists a constant $b \in \mathbb{R}$ such that $\wh{\mu}_{\mathrm{max}}( (E_t)^{\ast\ast}/(E_t)) < b$ for all $t \in I$.
\end{proposition}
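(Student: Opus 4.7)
The plan is to reduce to bounding $\wh{\mu}$ on pure one-dimensional subsheaves of the family of cokernels $Q_{E_t} := E_t^{\ast\ast}/E_t$, each of which lies in $\coh^{\leq 1}(X)$, and then to conclude via Serre vanishing together with Hirzebruch--Riemann--Roch. Since a bounded family of sheaves has only finitely many Chern characters, I may partition $I$ and assume $\ch(E_t) = v$ is fixed.

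The first step is to establish that $\{E_t^{\ast\ast}\}_{t \in I}$ is itself a bounded family, from which boundedness of $\{Q_{E_t}\}$ follows. Each $E_t^{\ast\ast}$ is reflexive and $\mu_\omega$-semistable with the same $\ch_0, \ch_1$ as $E_t$. The exact sequence $0 \to E_t \to E_t^{\ast\ast} \to Q_{E_t} \to 0$ combined with $\omega \ch_2(Q_{E_t}) \ge 0$ (as $Q_{E_t}$ is supported in dimension $\le 1$) forces $\ch_2(E_t^{\ast\ast}) \ge v_2$, while Bogomolov's inequality provides the upper bound $\ch_2(E_t^{\ast\ast}) \le \ch_1^2/(2\ch_0)$. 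Maruyama-type boundedness for $\mu_\omega$-semistable reflexive sheaves with bounded $\ch_0, \ch_1, \ch_2$ then yields the claim.

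Next, by Serre vanishing applied to the bounded family $\{Q_{E_t}\}$, choose $N \gg 0$ such that $Q_{E_t}(N\omega)$ is globally generated with vanishing higher cohomology for every $t \in I$, and set $B := \sup_t h^0(Q_{E_t}(N\omega)) < \infty$. For any pure one-dimensional subsheaf $F \hookrightarrow Q_{E_t}$, the induced inclusion $F(N\omega) \hookrightarrow Q_{E_t}(N\omega)$ yields $h^0(F(N\omega)) \le B$. Hirzebruch--Riemann--Roch on the 1-dimensional sheaf $F$ gives
\[
\chi(F(N\omega)) \;=\; \ch_3(F) \;+\; (N+c_F)\,\omega\ch_2(F),
\]
where $c_F := (c_1(X)\cdot\ch_2(F))/(2\,\omega\ch_2(F))$ varies in a bounded range, because $\ch_2(F)$ takes values in a bounded set of classes (as $F$ is a subsheaf of a member of the bounded family $\{Q_{E_t}\}$). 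Setting $d := \omega\ch_2(F) \ge 1$ and using $\chi(F(N\omega)) \le h^0(F(N\omega)) \le B$, I deduce
\[
\wh{\mu}(F) \;=\; \frac{\ch_3(F)}{d} \;\le\; \frac{B}{d} - N - c_F \;\le\; B - N + C_0
\]
for any uniform upper bound $C_0 \ge \sup_{F,t}|c_F|$, furnishing the required constant $b$.

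The main obstacle is the first step, establishing boundedness of the reflexive hulls $\{E_t^{\ast\ast}\}$. This demands the appropriate Maruyama-type boundedness for $\mu_\omega$-semistable reflexive sheaves with bounded $\ch_0, \ch_1, \ch_2$ on a smooth projective variety of arbitrary dimension; once this is in hand, the cohomological estimates in the second step are routine.
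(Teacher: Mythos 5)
Your proof has a genuine gap at the step you yourself flag as ``the main obstacle'': the boundedness of $\{E_t^{\ast\ast}\}$. The result you want to cite --- boundedness of $\mu_\omega$-semistable \emph{reflexive} sheaves with $\ch_0,\ch_1,\ch_2$ in a finite set --- is not an off-the-shelf Maruyama/Langer-type theorem. The classical boundedness theorems require control of the full Hilbert polynomial (equivalently, of $\ch_3$ as well), and without the reflexivity hypothesis the statement is simply false: as this very paper remarks, $\ch_3$ of a 2-Gieseker semistable sheaf with fixed $\ch_{\le 2}$ is unbounded below (kernels of maps to skyscrapers). So reflexivity is doing all the work, and you would need to actually prove a lower bound on $\ch_3(E_t^{\ast\ast})$ (e.g.\ by dualizing and using an upper bound for $\ch_3$ of the $\mu$-semistable sheaf $E_t^{\ast}$, plus the fact that $\inext^1(E_t^{\ast\ast},\calo_X)$ is $0$-dimensional) before any boundedness theorem applies; on a ``smooth projective variety of arbitrary dimension,'' as you propose, even that route breaks down, since the non-locally-free locus of a reflexive sheaf need only have codimension $\ge 3$ and $Q_{E_t}$ need not lie in $\coh^{\le 1}(X)$. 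The paper avoids all of this: since $\{E_t\}$ is bounded \emph{by hypothesis}, the objects $E_t^\vee$ all arise as derived restrictions of a single complex on $X\times W$ (derived dual commutes with derived pullback), a flattening stratification then bounds $\{H^0(E_t^\vee)\}=\{E_t^{\ast}\}$, and repeating bounds $\{E_t^{\ast\ast}\}$ --- no semistability and no Chern-class estimates are needed.

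Granting boundedness of $\{Q_{E_t}\}$, your second step is a legitimate alternative to the paper's conclusion: the paper invokes the relative Harder--Narasimhan stratification \cite[Theorem 2.3.2]{HL} to make $\wh{\mu}_{\mathrm{max}}$ constant on finitely many strata, whereas you extract an explicit bound from uniform $N$-regularity of the family together with Riemann--Roch on $1$-dimensional subsheaves; the cycle-theoretic bound on the classes $\ch_2(F)$ and hence on $c_F$ is correct. Two small repairs would still be needed there: $\wh{\mu}_{\mathrm{max}}$ must also account for $0$-dimensional subsheaves of $Q_{E_t}$ (whose length is likewise bounded by $h^0(Q_{E_t}(N\omega))$), not only the pure $1$-dimensional ones, and you should fix an ample \emph{line bundle} representing $\omega$ to twist by. But these are cosmetic; the unproved boundedness of the reflexive hulls is the real defect.
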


\begin{proof}
Suppose $\{E_t\}_{t \in I}$ is a bounded set of  $\mu_\omega$-semistable sheaves on $X$. This means that there is a finite-type scheme $W$ over $\mathbb{C}$ and an object $F \in D^b(X \times W)$ such that each $E_t$ is isomorphic to $F|^L_q$ for some $q \in W$.  To prove our claim, we can assume that $W$ itself is projective over $\mathbb{C}$.  Since derived dual and derived pull-back commute under a morphism of projective schemes, it follows that each $(E_t)^\vee$ occurs as $(F^\vee)|^L_q$ for some $q \in W$.  That is, the set $\{(E_t)^\vee\}_{t \in I}$ is also bounded.  Now, using a flattening stratification $W_1, \cdots, W_m \subseteq W$ such that each cohomology sheaf of $F^\vee$ is flat over each $W_i$, it follows that the set $\{ H^0( (E_t)^\vee ) \}_{t \in I} = \{ (E_t)^\ast\}_{t \in I}$ is also bounded.  Repeating this process, we obtain the boundedness of the set $\{ (E_t)^{\ast\ast}\}_{t \in I}$.

For each $t \in I$, we have an exact triangle in $\dbx$
\[
(E_t)^{\ast\ast} \to (E_t)^{\ast\ast}/E_t \to (E_t)[1].
\]
Since both the sets $\{ E_t\}_{t \in I}$ and $\{ (E_t)^{\ast\ast}\}_{t \in I}$ are bounded, it follows that the set $\{ (E_t)^{\ast\ast}/(E_t) \}_{t \in I}$ is also bounded \cite[Lemma 3.16]{Toda08}.  That is, there is some scheme $V$ of finite type over $\mathbb{C}$ and some object $E \in D^b(X \times V)$ such that, for every $t \in I$, the sheaf $(E_t)^{\ast\ast}/E_t$ is isomorphic in $\dbx$ to the derived pullback $E|_s^L$ for some closed point $s \in V$.  

Now consider the locus of closed points
\[
V_1 = \{ s \in V : E |_s^L \cong (E_t)^{\ast\ast}/E_t \text{ for some }t \in I\}.
\]
Since being a coherent sheaf is an open property in a family of complexes \cite[Lemma 2.1.4]{Lieblich06}, for each $s \in V_1$ there exists an open subscheme $U_s \subseteq V$ containing $s$ such that, for every $s' \in U_s$, the derived pullback $E|^L_{s'}$ is also a sheaf.  Let $U$ be the union of all such $U_s$.  Then $E|_U$ is a $U$-flat family of sheaves on $X \times U$ by \cite[Lemma 3.31]{FMTAG} and, in particular, $E|_U$ itself is a sheaf sitting at degree 0 in $D^b(X \times U)$.

Using  \cite[Theorem 2.3.2]{HL}, we now have a stratification  of $U$ into locally closed subschemes $U_1, \ldots, U_n$ such that, for every $U_i$, the slope $\wh{\mu}_{\mathrm{max}}(E|_s)$ is constant as $s$ ranges over $U_i$.  The proposition now follows.
\end{proof}


The following lemma is well-known to experts, but we include the proof here for ease of reference.

\begin{lemma}\label{lem:CohzeroSerresubs}
Let $X$ be a smooth projective threefold, and $\omega, B$ divisors on $X$ such that $\omega$ is ample.  Then 
\begin{itemize}
    \item[(i)] $\coh^{\leq 0}(X)$ is a Serre subcategory of $\coh^B(X)$.
    \item[(ii)] $\coh^{\leq 0}(X)$ is a Serre subcategory of $\Ac^{B,\omega}$.
\end{itemize}
\end{lemma}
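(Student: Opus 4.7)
The plan is to reduce both parts to the elementary fact that $\coh^{\le 0}(X)$ is a Serre subcategory of $\coh(X)$, by lifting short exact sequences through the two tilts in play. I would first record the easy containments: a $0$-dimensional sheaf $F$ is torsion, hence lies in the torsion class of the torsion pair used to tilt $\coh(X)$ into $\coh^B(X)$; moreover $\ch_0(F)=\ch_1(F)=0$ forces $\omega^2\cdot\ch_1^B(F)=0$, so $F$ has tilt slope $+\infty$ and lies in the torsion class $\Tc'$ of the torsion pair used to tilt $\coh^B(X)$ into $\Ac^{B,\omega}$.

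For part (i), consider a short exact sequence $0\to A\to F\to C\to 0$ in $\coh^B(X)$ with $F\in\coh^{\le 0}(X)$. Passing to the long exact sequence of standard cohomology sheaves and using $\calh^{-1}(F)=0$, we immediately obtain $\calh^{-1}(A)=0$, so $A$ is a sheaf in the torsion class, and the LES collapses to
\[
0\to\calh^{-1}(C)\to A\to F\to\calh^0(C)\to 0\qquad\text{in } \coh(X).
\]
The key step is to show $\calh^{-1}(C)=0$: this $\Fc$-object is torsion-free with $\mu^B_\omega$-slopes $\le 0$, while the torsion-free part of $A$ has $\mu^B_\omega$-slopes $>0$, so standard slope Hom-vanishing forces the composition $\calh^{-1}(C)\to A\to A/A_{\mathrm{tors}}$ to be zero; since a torsion-free subsheaf of a torsion sheaf must vanish, we conclude $\calh^{-1}(C)=0$. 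The four-term sequence then becomes a genuine short exact sequence of sheaves with $0$-dimensional middle term, whence $A,C\in\coh^{\le 0}(X)$. Closure under extensions is easier: applying the LES to an extension $E$ of $C$ by $A$ with both outer terms in $\coh^{\le 0}(X)$ gives $\calh^{-1}(E)=0$ at once, reducing the claim to an extension of $0$-dimensional sheaves in $\coh(X)$.

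For part (ii), I repeat this strategy one tilt higher. Given $0\to A\to F\to C\to 0$ in $\Ac^{B,\omega}$ with $F\in\coh^{\le 0}(X)$, take the long exact sequence of cohomology with respect to the $t$-structure generated by $\coh^B(X)$. Since $\calh^{-1}_{\coh^B}(F)=0$, one gets $\calh^{-1}_{\coh^B}(A)=0$, so $A$ is a pure $\coh^B$-object in $\Tc'$, and what remains is a four-term exact sequence in $\coh^B(X)$ whose leftmost entry $\calh^{-1}_{\coh^B}(C)$ lies in $\Fc'$ and maps into $A\in\Tc'$. The crucial Hom-vanishing here is $\Hom_{\coh^B(X)}(\Fc',\Tc')=0$, which is immediate from the slope separation: every object of $\Fc'$ has $\nu_{B,\omega}$-slope $\le 0$ while every object of $\Tc'$ has $\nu_{B,\omega}$-slope $>0$ (with the convention that $\omega^2\ch_1^B=0$ yields $+\infty$). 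Hence $\calh^{-1}_{\coh^B}(C)=0$ and the sequence becomes a short exact sequence in $\coh^B(X)$ with middle term in $\coh^{\le 0}(X)$, to which part (i) applies. Closure under extensions follows identically.

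The main obstacle in both parts is the Hom-vanishing that kills the $\calh^{-1}$ of the quotient; this is exactly where the torsion-pair structure enters essentially --- through the $\mu^B_\omega$-slope comparison in part (i) and the tilt slope $\nu_{B,\omega}$ in part (ii). Everything else is formal manipulation of long exact sequences together with the Serre property of $\coh^{\le 0}(X)$ inside $\coh(X)$.
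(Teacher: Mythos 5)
Your overall architecture matches the paper's: take the long exact sequence of cohomology sheaves, observe that the subobject $A$ has vanishing $\calh^{-1}$, reduce everything to killing $\calh^{-1}$ of the quotient, and run part (ii) one tilt higher before feeding it back into part (i). However, the step you yourself flag as crucial is justified by a Hom-vanishing that is false. For a torsion pair $(\Tc,\Fc)$ one has $\Hom(\Tc,\Fc)=0$ by definition, but $\Hom(\Fc,\Tc)$ is \emph{not} zero in general, and slope considerations do not rescue it: the standard vanishing $\Hom(E,F)=0$ requires $\mu_{\min}(E)>\mu_{\max}(F)$, i.e.\ it kills maps from \emph{large} slope to \emph{small} slope, whereas you are mapping an object with $\mu_{B,\omega;\,\max}\le 0$ into one all of whose quotients have slope $>0$. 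Concretely, on $\p3$ with $B=0$ the sheaf $\op3(-1)$ lies in the free class and $\op3(1)$ in the torsion class of the first tilt, yet $\Hom(\op3(-1),\op3(1))=H^0(\op3(2))\ne 0$. The same objection applies verbatim to the claimed vanishing $\Hom_{\coh^B(X)}(\Fc',\Tc')=0$ in your part (ii): that is the wrong direction for a torsion pair.

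The gap is repairable, and the repair uses precisely the piece of information your argument discards at that step: the cokernel of $\calh^{-1}(C)\hookrightarrow A$ is the image of $A\to F$, hence a subobject of $F$, hence lies in $\coh^{\leq 0}(X)$. Therefore $\calh^{-1}(C)$ and $A$ have the same $\ch_0$ and $\ch_1$ (respectively the same $\ch_0,\ch_1^B,\ch_2^B$ in part (ii)), so they have the same slope $\mu_{B,\omega}$ (respectively $\nu_{B,\omega}$) whenever the relevant denominator is nonzero. Since $\calh^{-1}(C)$ lies in the free class (slope $\le 0$) and $A$ in the torsion class (slope $>0$), equality of slopes is impossible, forcing $\ch_0(\calh^{-1}(C))=0$ (respectively $\omega^2\ch_1^B(\calh^{-1}(C))=0$) and hence $\calh^{-1}(C)=0$. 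This is exactly the paper's argument. With that substitution the remainder of your write-up, including the treatment of extension-closedness, goes through.
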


\begin{proof}
(i) Take any $\coh^B(X)$-short exact sequence
\begin{equation}\label{eq:AG53-78-1}
0 \to A \to E \to Q \to 0
\end{equation}
where $E \in \coh^{\leq 0}(X)$. Since $\coh^{\leq 0}(X) \subset \coh^B(X)$, the long exact sequence of sheaves looks like
\[
0 \to \mathcal{H}^{-1}(Q) \to A \overset{f}{\to} E \to \mathcal{H}^0(Q) \to 0.
\]
Since $\coh^{\leq 0}(X)$ is a Serre subcategory of $\coh (X)$, $\image (f)$ lies in $\coh^{\leq 0}(X)$, and so $\mathcal{H}^{-1}(Q),\ A$ have the same $\ch_0$.  If $\ch_0(\mathcal{H}^{-1}(Q))=\ch_0(A)$ is non-zero, then $\mu_{B,\omega}(\mathcal{H}^{-1}(Q))\leq 0$ while $\mu_{B, \omega}(A)>0$, which is impossible, and so $\ch_0(\mathcal{H}^{-1}(Q))=0$, forcing $\mathcal{H}^{-1}(Q)=0$. Hence \eqref{eq:AG53-78-1} is a short exact sequence in $\coh (X)$ and both $A, Q$ lie in $\coh^{\leq 0}(X)$. 

(ii) The argument is completely analogous to that in (i).  Take any $\Ac^{B,\omega}$-short exact sequence 
\begin{equation}\label{eq:AG53-78-2}
0 \to A \to E \to Q \to 0
\end{equation}
where $E \in \coh^{\leq 0}(X)$. Noting $E \in \coh^{\leq 0}(X)\subset \coh^B(X)$, the  $\coh^B(X)$-long exact sequence of cohomology looks like
\[
0 \to \calh^{-1}(Q) \to A \overset{f}{\to} E \to \calh^0(Q) \to 0
\]
where $\calh^i$ denotes the degree-$i$ cohomology with respect to the heart $\coh^B(X)$.  Since $\coh^{\leq 0}(X)$ is a Serre subcategory of $\coh^B(X)$ from (i), $\image (f)$ lies in $\coh^{\leq 0}(X)$, and so $\ch_1^B (\calh^{-1}(Q))=\ch_1^B(A)$.  If $\omega^2\ch_1^B (\calh^{-1}(Q))=\omega^2\ch_1^B(A)$ is non-zero, then $\nu_{B,\omega}(\calh^{-1}(Q))\leq 0$ while $\nu_{B,\omega}(A)>0$, which is impossible since $\calh^{-1}(Q), A$ have the same $\ch_0, \ch_1, \ch_2$.  Hence $\omega^2\ch_1^B(\calh^{-1}(Q))=0$, forcing $\calh^{-1}(Q)$ itself to be zero.  This means \eqref{eq:AG53-78-2} is a short exact sequence in $\coh^B(X)$ and so, by (i), both $A$ and $Q$ lie in $\coh^{\leq 0}(X)$.
\end{proof}

Recall that the homological dimension of a coherent sheaf $E$ is the smallest length of a locally free resolution of $E$; it will be denoted by $\hd(E)$.
The following lemma will also be useful below.

\begin{lemma}\label{lem:hd1equivconds}
Let $X$ be a smooth projective threefold and $E$ a torsion-free sheaf on $X$.  The following are equivalent:
\begin{itemize}
    \item[(i)] $E$ has homological dimension at most 1.
    \item[(ii)] $\Hom (\coh^{\leq 0}(X),E[1])=0$.
    \item[(iii)] $E^{\ast\ast}/E$ is pure 1-dimensional, if non-zero.
\end{itemize}
\end{lemma}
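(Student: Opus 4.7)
The plan is to prove the equivalences $(ii)\Leftrightarrow (iii)$ and $(i)\Leftrightarrow (ii)$ separately, tying both to vanishing of $\Ext$ against 0-dimensional sheaves.

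For $(ii)\Leftrightarrow (iii)$, I will apply $\Hom_X(T,-)$ to the canonical sequence \eqref{std sqc} for an arbitrary $T\in\coh^{\leq 0}(X)$. Since $E^{\ast\ast}$ is torsion-free, $\Hom(T,E^{\ast\ast})=0$. Moreover $E^{\ast\ast}$ is reflexive on the smooth threefold $X$, hence has homological dimension at most one, and a Serre duality argument gives $\Ext^1(T,E^{\ast\ast})=0$. The long exact sequence then collapses to an isomorphism $\Hom(T,Q_E)\cong \Ext^1(T,E)$. Hence $(ii)$ is equivalent to $Q_E$ having no non-zero 0-dimensional subsheaf; since $\dim Q_E\leq 1$ always holds when $E$ is torsion-free, this is exactly $(iii)$.

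For $(i)\Leftrightarrow (ii)$, I will pass to vanishing statements about $\Ext^2(E,T)$ for $T\in\coh^{\leq 0}(X)$. By the Auslander--Buchsbaum formula on a smooth threefold, $\hd(E)\leq 1$ if and only if $\inext^i(E,\ox)=0$ for all $i\geq 2$; combined with the local-to-global spectral sequence and the fact that the support of any $T\in\coh^{\leq 0}(X)$ is 0-dimensional (so the higher cohomology terms vanish), this is equivalent to $\Ext^2(E,T)=0$ for every such $T$. Serre duality on $X$ now gives $\Ext^1(T,E)\cong \Ext^2(E,T\otimes\omega_X)^\vee$, and since tensoring with the invertible sheaf $\omega_X$ preserves $\coh^{\leq 0}(X)$, this vanishing is equivalent to $(ii)$.

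The step I expect to require the most care is the vanishing $\Ext^1(T,E^{\ast\ast})=0$ used in the first equivalence. It relies on showing that a reflexive sheaf on a smooth threefold has homological dimension at most one, which combines Serre's criterion (so that $E^{\ast\ast}$ is $S_2$) with Auslander--Buchsbaum; Serre duality then reduces $\Ext^1(T,E^{\ast\ast})$ to $\Ext^2(E^{\ast\ast},T\otimes\omega_X)^\vee$, which vanishes by the previous bound on $\inext^i(E^{\ast\ast},\ox)$. Once this key vanishing is in place, the remaining arguments are routine manipulations with the long exact sequences of $\Ext$ and local-to-global Ext.
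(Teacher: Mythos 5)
Your proof is correct, and its overall skeleton matches the paper's: both arguments hinge on the double-dual sequence $0\to E\to E^{\ast\ast}\to E^{\ast\ast}/E\to 0$ to relate (ii) and (iii), and both reduce (i) to an $\Ext$-vanishing statement against $\coh^{\leq 0}(X)$. The difference is one of self-containedness rather than strategy. The paper disposes of (i)$\Leftrightarrow$(ii), and of the auxiliary vanishing $\Hom(\coh^{\leq 0}(X),E^{\ast\ast}[1])=0$, by citing \cite[Lemma 3.3]{Lo17} (restated as Lemma \ref{lem:Lo17lem3p3}), and then runs the (ii)$\Leftrightarrow$(iii) step by rotating the triangle and applying $\Hom(\coh^{\leq 0}(X),-)$ — which is the same computation as your long exact sequence for $\Hom(T,-)$. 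You instead prove both ingredients from scratch: $\hd(E)\le 1\Leftrightarrow \Ext^2(E,T)=0$ for all $T\in\coh^{\leq 0}(X)$ via Auslander--Buchsbaum and local-to-global Ext (using that $\inext^q(E,T)$ is supported on points, so only $H^0$ survives), and then Serre duality $\Ext^1(T,E)\cong\Ext^2(E,T\otimes\omega_X)^\vee$ to convert this into (ii); the same circle of ideas gives $\Ext^1(T,E^{\ast\ast})=0$ from the fact that a reflexive sheaf on a smooth threefold has homological dimension at most $1$. This is essentially the content of the cited lemma, so what your version buys is a proof that does not lean on \cite{Lo17}, at the cost of invoking depth/Auslander--Buchsbaum and Serre duality explicitly; note the paper itself uses your resolution-based vanishing $\Ext^1(P,F)=0$ for $F$ reflexive and $P$ zero-dimensional in the proof of Lemma \ref{lem:presentation}, so the two treatments are fully consistent.
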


\begin{proof}
The equivalence of (i) and (ii) follows from Lemma \ref{lem:Lo17lem3p3}.  

To see the equivalence between (ii) and (iii), take the double dual of $E$ to form the short exact sequence
\[
0 \to E \to E^{\ast\ast} \to E^{\ast\ast}/E \to 0
\]
and then rotate the corresponding exact triangle in $\dbx$ to obtain the short exact sequence in $\Ac^p$
\begin{equation}\label{eq:AG54-4-1}
  0 \to E^{\ast\ast}/E \to E[1] \to E^{\ast\ast}[1] \to 0.
\end{equation}
Suppose (ii) is true.  Applying  $\Hom (\coh^{\leq 0}(X),-)$ to this short exact sequence  yields $\Hom (\coh^{\leq 0}(X),E^{\ast\ast}/E)=0$, meaning $E^{\ast\ast}/E$ is pure 1-dimensional if non-zero. 

Now suppose (iii) is true.  Applying $\Hom (\coh^{\leq 0}(X),-)$ to \eqref{eq:AG54-4-1} again and noting that $\Hom (\coh^{\leq 0}(X), E^{\ast\ast}[1])=0$ by Lemma \ref{lem:Lo17lem3p3}(ii), we see that (ii) follows.
\end{proof}


\subsection{Bridgeland stability conditions on threefolds} \label{sec:Bsc}

We briefly recall the construction of Bridgeland stability conditions on $X$ due to Bayer, Macrì, and Toda \cite{BMT}. 

For $\beta\in\mathbb{R}$ define the (twisted) Mumford slope by
$$ \mu_{\beta}(E)=\begin{cases}\dfrac{\ch^{\beta}_1(E)}{\ch^{\beta}_0(E)}& \text{if}\ \ch^{\beta}_0(E)\neq 0,\\ +\infty &\text{if}\ \ch^{\beta}_0(E)=0.\end{cases} $$ 
The subcategories 
\begin{align*}
\tors{\beta}&:=\{E\in \coh(X)\ |\ \mu_{\beta}(Q)>0\ \text{for every quotient}\ E\onto Q\},\ \text{and}\\
\free{\beta}&:=\{E\in \coh(X)\ |\ \mu_{\beta}(F)\leq 0\ \text{for every subsheaf} \ F\into E\},
\end{align*}
form a torsion pair on $\coh(X)$. The corresponding tilted category is the extension closure
$$
\coh^{\beta}(X):=\langle \free{\beta}[1],\ \tors{\beta}\rangle.
$$
For every $\alpha>0$ we can define the following \textit{slope} function on $\coh^{\beta}(X)$
$$
\nu_{\beta, \alpha}(E)=\begin{cases}\dfrac{\ch^{\beta}_2(E)-\frac{\alpha^2}{2}\ch^{\beta}_0(E)}{\alpha\ch^{\beta}_1(E)}& \text{if}\ \ch^{\beta}_1(E)\neq 0,\\ +\infty &\text{if}\ \ch^{\beta}_1(E)=0.\end{cases}
$$ 
We refer to $\nu_{\beta,\alpha}$ as the tilt, and to objects in $\coh^{\beta}(X)$ that are semistable with respect to $\nu_{\beta,\alpha}$ as tilt--semistable objects.

\begin{remark}
    As in the case of surfaces, it is known that in the $(\beta,\alpha)$-plane the tilt walls for a fixed Chern character $v$ (with $v_0>0$ and satisfying the Bogomolov inequality $\Delta(v)=v_1^2-2v_0v_2\geq 0$) are bounded (see \cite{S}) and that for a fixed value of $\beta_0\ll 0$ the only objects that are $\nu_{\beta,\alpha}$-semistable for $\alpha\gg 0$ are precisely the $2$-Gieseker semistable sheaves of Chern character $v$ (see for instance \cite[Proposition 14.2]{B08} and \cite[Theorem 5.2]{JM19}).
\end{remark}

Analogously to the case of Mumford slope, the subcategories
\begin{align*}
\mathcal{T}_{\beta,\alpha}&:=\{E\in \coh^{\beta}(X)\ |\ \nu_{\beta,\alpha}(Q)>0\ \text{for every quotient}\ E\onto Q\ \text{in}\ \coh^{\beta}(X)\},\ \text{and}\\
\mathcal{F}_{\beta,\alpha}&:=\{E\in \coh^{\beta}(X)\ |\ \nu_{\beta,\alpha}(F)\leq 0\ \text{for every subobject} \ F\into E\ \text{in}\ \coh^{\beta}(X)\},
\end{align*}
form a torsion pair on $\coh^{\beta}(X)$. The corresponding tilted category is the extension closure
$$
\mathcal{A}^{\beta,\alpha}:=\langle {\mathcal{F}}_{\beta,\alpha}[1],\ \tors{\beta,\alpha}\rangle.
$$
As proven in \cite{BMT,M-p3,BMS,Li}, we know that $\mathcal{A}^{\beta,\alpha}$ supports a 1-parameter family of stability conditions whose central charges are given by
$$
Z_{\beta,\alpha,s}(A)=-\left(\ch^{\beta}_3(A)-\left(s+\frac{1}{6}\right)\alpha^2\ch^{\beta}_1(A)\right)+i\left(\ch^{\beta}_2(A)-\frac{\alpha^2}{2}\ch^{\beta}_0(A)\right),\ s>0.
$$ 
The associated Bridgeland slope is
$$
\lambda_{\beta,\alpha,s}(A):=\frac{\ch^{\beta}_3(A)-\left(s+\frac{1}{6}\right)\alpha^2\ch^{\beta}_1(A)}{\ch^{\beta}_2(A)-\frac{\alpha^2}{2}\ch^{\beta}_0(A)}.
$$
The stability conditions $\sigma_{\beta,\alpha,s}=(Z_{\beta,\alpha,s},\mathcal{A}^{\beta,\alpha})$ are geometric (i.e., skyscraper sheaves $\mathcal{O}_x$ are stable of the same phase), and they satisfy the support property with respect to the quadratic form
$$
Q_{\beta,\alpha, K}(\ch):=(\ch^{\beta})^{t}B_{\alpha,K}\ch^{\beta},\ \text{where}\ B_{\alpha,K}=\begin{pmatrix}0 & 0 & -K\alpha^2& 0\\ 0 & K\alpha^2 &0 &-3\\ -K\alpha^2 & 0 & 4 &0 \\ 0 & -3 & 0 & 0\end{pmatrix},
$$
for every $K\in[1, 6s+1)$ (see \cite{BMT,BMS}). For an object $A\in\mathcal{A}^{\beta,\alpha}$ we write $Q_{\beta,\alpha, K}(A)$ for $Q_{\beta,\alpha, K}(\ch(A))$, and denote the corresponding pairing 
$$
(\ch^{\beta}(F))^{t}B_{\alpha,K}\ch^{\beta}(A)
$$ 
by $Q_{\beta,\alpha, K}(F,A)$.

For a fixed Chern character $v$ with $v_0>0$ and satisfying the Bogomolov inequality $\Delta(v)\geq 0$, we define the curves
\begin{align*}
    \Theta_v&=\{(\beta,\alpha)\colon 2\ch_2^{\beta}(v)-\alpha^2\ch_0(v)=0\}\\
    \Gamma_{v,s}&=\{(\beta,\alpha)\colon 6\ch_3^{\beta}(v)-(6s+1)\ch_1^{\beta}(v)=0\}
\end{align*}

\begin{figure}[ht]\label{fig:theta} \centering
\includegraphics[scale=2]{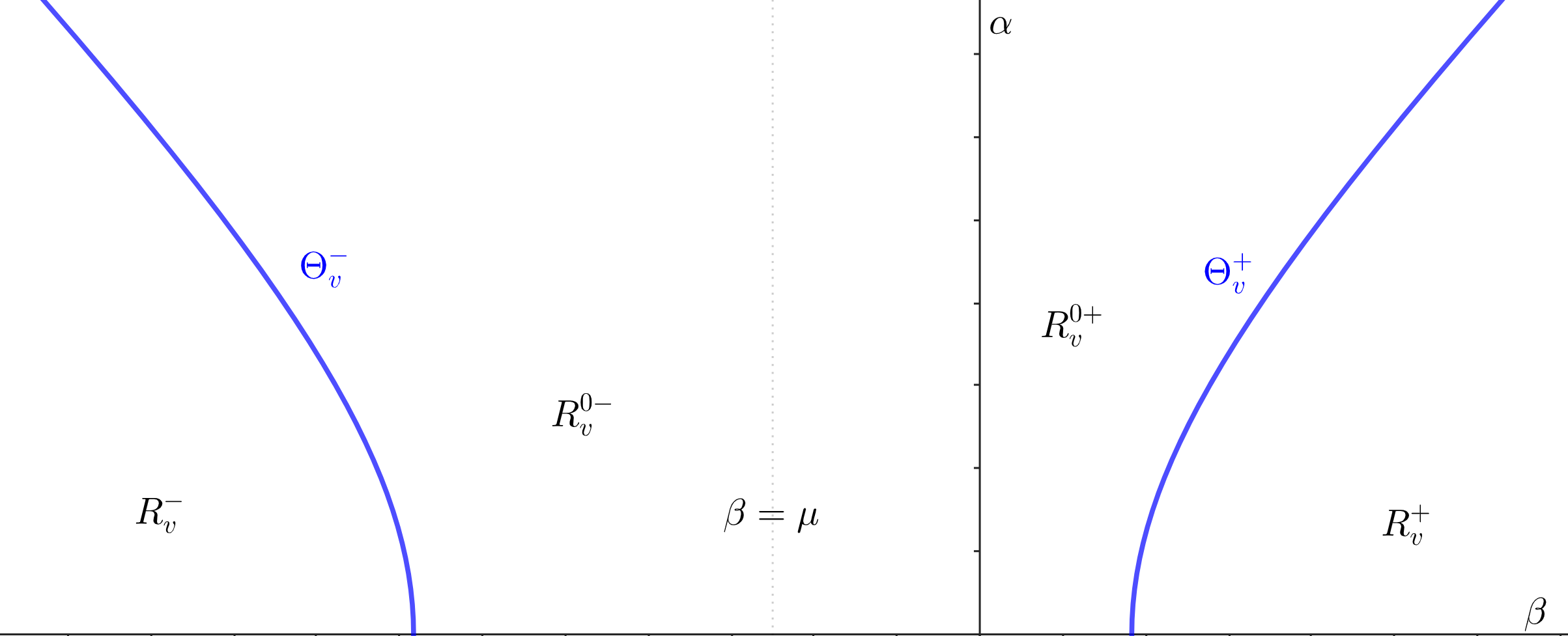}
\caption{The four regions of the plane as defined by the hyperbola $\Theta_v$ and the vertical line $\{\beta=\mu(v)\}$ when $v$ is a numerical Chern character satisfying $\Delta(v)\geq 0$.}
\end{figure}

\begin{proposition}
Let $v=(v_0,v_1,v_2)$ with $v_0>0$ and $\Delta(v)>0$. The set
$$
E(v)=\{e\colon e=\ch_3(E)\ \text{for some 2-Gieseker semistable sheaf } E \text{ with}\ \ch_{\leq 2}(E)=v\}
$$
is bounded above.
\end{proposition}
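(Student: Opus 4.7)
The plan is to apply the generalized Bogomolov--Gieseker inequality (the BMT conjecture), which holds on $X$ by hypothesis on the threefold, to 2-Gieseker semistable sheaves after reinterpreting them as $\nu_{\beta,\alpha}$-semistable objects at a carefully chosen point on the curve $\Theta_v$. The key input is the asymptotic characterization recalled in the remark preceding the proposition: for $\beta \ll 0$ and $\alpha \gg 0$, the $\nu_{\beta,\alpha}$-semistable objects of a given Chern character are precisely the 2-Gieseker semistable sheaves of that character.

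First I would select a point $(\beta_0,\alpha_0) \in \Theta_v$ with $\beta_0$ sufficiently negative. This is possible because $\Theta_v = \{2\ch_2^\beta(v) - \alpha^2 v_0 = 0\}$ is (for $v_0 > 0$) the branch of a hyperbola with $\alpha \to \infty$ as $\beta \to -\infty$. Combining the asymptotic characterization above with the boundedness of tilt walls for $v$, once $\beta_0$ is chosen to the left of every tilt wall for $v$, every 2-Gieseker semistable sheaf $E$ with $\ch_{\leq 2}(E) = v$ is automatically $\nu_{\beta_0,\alpha_0}$-semistable. Moreover, since $\ch_{\leq 2}(E) = v$ and $(\beta_0,\alpha_0) \in \Theta_v$, we have $\nu_{\beta_0,\alpha_0}(E) = \nu_{\beta_0,\alpha_0}(v) = 0$, so $E$ is tilt-semistable exactly at the vanishing locus of its own tilt-slope.

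The BMT inequality then yields
\[
\ch_3^{\beta_0}(E) \;\leq\; \frac{\alpha_0^2}{6}\,\ch_1^{\beta_0}(E) \;=\; \frac{\alpha_0^2}{6}\,\ch_1^{\beta_0}(v),
\]
a constant depending only on $v$ and the fixed point $(\beta_0,\alpha_0)$. Converting back to untwisted Chern characters via
\[
\ch_3(E) \;=\; \ch_3^{\beta_0}(E) + \beta_0 v_2 - \tfrac{\beta_0^2}{2} v_1 + \tfrac{\beta_0^3}{6} v_0
\]
and using that $\ch_{\leq 2}(E) = v$ transfers the bound to a uniform upper bound on $\ch_3(E)$, which is the claim.

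The main obstacle is technical and concerns the uniformity of the choice of $(\beta_0,\alpha_0)$: the asymptotic characterization of 2-Gieseker semistable sheaves as tilt-semistable objects must apply simultaneously to all $E$ with $\ch_{\leq 2}(E) = v$, and this is where the boundedness of the collection of tilt walls for $v$ is essential. Once $(\beta_0,\alpha_0)$ lies on $\Theta_v$ and to the left of every such wall, the BMT inequality can be applied uniformly and the argument goes through. An alternative, more classical route would be to invoke the boundedness of $\mu$-semistable torsion-free sheaves with fixed $\ch_{\leq 2}$ (Maruyama/Langer) and note that a bounded family has only finitely many Hilbert polynomials, hence only finitely many values of $\ch_3$; but the BMT approach is more in keeping with the tilt-stability framework used throughout the paper.
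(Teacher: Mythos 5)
Your main argument is correct and is essentially the paper's own proof: both pick a point $(\beta_0,\alpha_0)\in\Theta_v$ to the left of every actual tilt wall, observe that any 2-Gieseker semistable $E$ with $\ch_{\le 2}(E)=v$ is then $\nu_{\beta_0,\alpha_0}$-semistable with $\nu_{\beta_0,\alpha_0}(E)=0$, and apply the generalized Bogomolov--Gieseker inequality (which the paper phrases equivalently as $\mathfrak{Re}\,Z_{\beta_0,\alpha_0,s}(E[1])<0$ for all $s>0$, using that $E[1]\in\mathcal{A}^{\beta_0,\alpha_0}$ has vanishing imaginary part) to obtain the uniform bound $\ch_3^{\beta_0}(E)\le\frac{\alpha_0^2}{6}\ch_1^{\beta_0}(v)$, which converts to the stated upper bound on $\ch_3(E)$. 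One caution on your closing aside: the family of $\mu$-semistable sheaves with fixed $\ch_{\le 2}$ but unconstrained $\ch_3$ is \emph{not} bounded (the remark immediately following the proposition shows $\ch_3$ can be arbitrarily negative), so the ``more classical route'' as you state it is circular without an independent a priori bound on $\ch_3$.
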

\begin{proof}
    This is because, under the hypotheses, the actual tilt walls for $v$ are semicircles of bounded radii. Let $(\beta_0,\alpha_0)\in \Theta_v^-$ such that there is no tilt wall intersecting $\Theta_v^-$ at a point $(\beta,\alpha)$ with $\beta\leq \beta_0$. Since $v_0>0$ then any 2-Gieseker semistable sheaf $E$ with $\ch_{\leq 2}(E)=v$ is $\nu_{\beta_0,\alpha_0}$-semistable and moreover $\nu_{\beta_0,\alpha_0}(E)=0$. Thus, $E[1]\in\mathcal{A}^{\beta_0,\alpha_0}$ and $\mathfrak{Im}\big(Z_{\beta_0,\alpha_0,s}(E[1])\big)=0$ for all $s>0$. Therefore,
    $$
    \mathfrak{Re}\big(Z_{\beta_0,\alpha_0,s}(E[1])\big)<0\ \ \text{for all}\ \ s>0.
    $$
    This says that
    $$
    \ch_3^{\beta_0}(E)\leq \frac{\alpha_0^2}{6}\ch_1^{\beta_0}(E),
    $$
    which is equivalent to having
    $$
    \ch_3(E)\leq \frac{\alpha_0^2}{6}(v_1-\beta_0 v_0)+\beta_0 v_2-\frac{\beta_0^2}{2}v_1+\frac{\beta_0^3}{6}v_0.
    $$
\end{proof}

\begin{remark} 
Observe that $E(v)$ is not bounded below: given a 2-Gieseker semistable sheaf $E$, choose a point $p$ away from the singular locus of $E$ and pick an epimorphism $\varphi:E\to\calo_p$ (which exists because $E$ is locally free at $p$); set $E_\varphi:=\ker(\varphi)$ and note that $E_\varphi$ is also 2-Gieseker semistable and $\ch_3(E_\varphi)=\ch_3(E)-1$. Therefore, $\ch_3$ of a 2-Gieseker semistable sheaf can be arbitrarily negative.  
\end{remark}

Following Schmidt \cite{S21}, for a fixed $v=(v_0,v_1,v_2)$, we define
\begin{equation} \label{eq:epsilon(v)}
\varepsilon_X(v):=\max \big\{ E(v) \big\}. 
\end{equation}

\begin{lemma} \label{lem:2Gsst hd=1}
If $E$ is a Gieseker semistable sheaf with $\ch(E)=(v_0,v_1,v_2,\varepsilon_X(v))$, then $\hd(E)\le 1$.
\end{lemma}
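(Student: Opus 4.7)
The plan is to argue by contradiction using the construction of $E'$ from Section \ref{sec:sheaves} and the characterization of homological dimension at most $1$ given in Lemma \ref{lem:hd1equivconds}. Suppose $\hd(E)>1$. By Lemma \ref{lem:hd1equivconds}, the quotient $Q_E=E^{\ast\ast}/E$ fails to be pure one-dimensional, so its maximal zero-dimensional subsheaf $Z_E$ is non-zero. Form the short exact sequence
\[
0 \to E \to E' \to Z_E \to 0
\]
as in \eqref{e' sqc}. Since $Z_E$ is supported in dimension zero we have $\ch_i(E')=\ch_i(E)$ for $i=0,1,2$, while
\[
\ch_3(E') \;=\; \ch_3(E) + \ch_3(Z_E) \;=\; \ch_3(E) + \lt(Z_E) \;>\; \ch_3(E).
\]

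Next I would invoke the chain of implications recorded in Section \ref{sec:sheaves}: Gieseker semistability implies $2$-Gieseker semistability, so $E$ is $2$-Gieseker semistable. Property (ii) listed after \eqref{e' sqc} then gives that $E'$ is also $2$-Gieseker semistable. In particular $\ch_3(E')\in E(v)$, because $\ch_{\le 2}(E')=\ch_{\le 2}(E)=v$ and $E'$ is $2$-Gieseker semistable. But then
\[
\varepsilon_X(v) \;=\; \ch_3(E) \;<\; \ch_3(E') \;\le\; \max E(v) \;=\; \varepsilon_X(v),
\]
which is absurd. Therefore $Z_E=0$, meaning $Q_E$ is either zero or pure one-dimensional, and applying Lemma \ref{lem:hd1equivconds} again yields $\hd(E)\le 1$.

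There is no real obstacle in this argument; the only subtlety worth double-checking is that passing from $E$ to $E'$ preserves $2$-Gieseker semistability (which is precisely the content of property (ii) after \eqref{e' sqc}, since modifying along the zero-dimensional part $Z_E$ does not affect the truncated Hilbert polynomial). The proof is genuinely a one-line observation once the maximality of $\varepsilon_X(v)$ over $E(v)$ is combined with the cohomological characterization of $\hd\le 1$.
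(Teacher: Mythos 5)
Your proof is correct and follows essentially the same route as the paper's: assume $\hd(E)>1$, pass to $E'$ via the sequence \eqref{e' sqc}, observe that $E'$ remains $2$-Gieseker semistable with $\ch_{\le 2}(E')=v$ and $\ch_3(E')>\varepsilon_X(v)$, and contradict the maximality of $\varepsilon_X(v)$ over $E(v)$. The only cosmetic difference is that you explicitly route the equivalence ``$\hd(E)\le 1$ iff $Z_E=0$'' through Lemma \ref{lem:hd1equivconds}, which the paper leaves implicit.
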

\begin{proof}
For a contradiction, assume that $E^{**}/E$ admits a non-trivial 0-dimensional subsheaf $Z$ so that $\hd(E)>1$; let $E'$ be the sheaf satisfying the short exact sequence in display \eqref{e' sqc}. Note that $\ch_{\le2}(E')=\ch_{\le2}(E)=v$ and
$$ \ch_3(E') = \ch_3(E) + \ch_3(Z) > \varepsilon_X(v); $$
however, as it was observed above, $E'$ is 2-Gieseker semistable because $E$ is Gieseker semistable, so $\ch_3(E')\le \varepsilon_X(v)$, providing the desired contradiction.
\end{proof}


\subsection{Vertical stability} \label{sec:vertical}

Let us now recall the concept of asymptotic stability introduced in \cite[Definition 7.1]{JM19}. Let $\gamma\colon [0,\infty)\rightarrow \mathbb{H}$ be an unbounded path. An object $A\in\dbx$ is \emph{asymptotically $\labs$-(semi)stable along $\gamma$} if the following two conditions hold for a given $s>0$: 
\begin{itemize}
\item[(i)] there is $t_0>0$ such that $A\in\mathcal{A}^{\gamma(t)}$ for every $t>t_0$;
\item[(ii)]  there is $t_1>t_0$ such that for all $t>t_1$, all sub-objects $F\into A$ in $\mathcal{A}^{\gamma(t)}$ satisfy $\lambda_{\gamma(t),s}(F) < ~(\le)~ \lambda_{\gamma(t),s}(A)$. 
\end{itemize}
Since $\mathcal{A}^{\beta,\alpha}$ is a full subcategory of $\dbx$ we can always consider any fixed non-trivial morphism $F\to A$ in $\dbx$ which becomes an injection in $\mathcal{A}^{\beta,\alpha}$ as $(\beta,\alpha)$ vary. 
In \cite{JM19} the authors studied the asymptotics along the so-called \textit{$\Theta^\pm$-curves}, that is, unbounded curves $\gamma(t)=(\alpha(t),\beta(t))$ such that $\lim_{t\to\infty}\beta(t)=\pm\infty$ and
\[ \lim_{t\to\infty} \left|\frac{\dot\alpha(t)}{\dot\beta(t)}\right| > 1. \]
The key results from \cite{JM19} (Theorem 7.19 and Theorem 7.23, to be precise) can be summarized as follows.

\begin{theorem} \label{thm:asymp-h+}
Let $v$ be a numerical Chern character satisfying $v_0\ne0$ and $Q^{\rm tilt}(v)\ge0$; fix $s\geq1/3$. 
\begin{enumerate}
\item If $\gamma$ is a $\Theta^-$-curve, then an object $E\in \dbx$ with $\ch(E)=v$ is asymptotically $\lambda_{\alpha,\beta,s}$-(semi)stable along $\gamma$ if and only if $E$ is a Gieseker (semi)stable sheaf.
\item If $\gamma$ is a $\Theta^+$-curve, then an object $A\in\dbx$ with $\ch(A)=v$ is asymptotically $\labs$-(semi)stable along $\gamma$ if and only if $A^\vee$ is a Gieseker (semi)stable sheaf.
\end{enumerate}
\end{theorem}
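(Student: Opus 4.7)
The plan is to treat the two statements in parallel via duality, so that it suffices to deal with the $\Theta^{-}$-case and then transport the conclusion along $(-)^\vee$. Throughout, I would let $\gamma(t)=(\beta(t),\alpha(t))$ be a $\Theta^{-}$-curve, so that $\beta(t)\to -\infty$ and $\alpha(t)/|\beta(t)|\to c>1$ (or $+\infty$), and let $v=\ch(E)$ with $v_0\neq 0$.

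First I would verify that the object $E$ actually lies in the heart $\mathcal{A}^{\beta(t),\alpha(t)}$ for all sufficiently large $t$, which is the prerequisite condition (i) for asymptotic (semi)stability. If $v_0>0$ and $E$ is torsion-free Gieseker semistable, then for $\beta\ll 0$ one has $\mu_\beta(F)>0$ for every quotient $E\onto F$ (the Mumford slopes become positive once $\beta$ drops below $v_1/v_0$), hence $E\in \mathcal{T}_\beta\subseteq \coh^\beta(X)$. The support condition $Q^{\mathrm{tilt}}(v)\geq 0$ together with the large-volume argument of Bridgeland (cf.\ \cite[Prop.\ 14.2]{B08} / \cite[Thm.\ 5.2]{JM19}) yields that, for $\alpha\gg 0$, $E$ is tilt-semistable with $\nu_{\beta,\alpha}(E)>0$, placing $E$ in $\mathcal{T}_{\beta,\alpha}$ and hence in $\mathcal{A}^{\beta,\alpha}$ at degree $0$. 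The case $v_0<0$ is handled by the analogous argument after a shift; here the statement is that $E$ sits in the heart as a degree-$0$ object only when the rank sign is positive.

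The core of the argument is then to compare $\lambda_{\beta,\alpha,s}$-slopes with reduced Hilbert polynomials in the limit. Any $\mathcal{A}^{\beta,\alpha}$-subobject $F\hookrightarrow E$ with $E$ a sheaf fits into an exact triangle whose $\mathcal{H}^0$ is a subsheaf of $E$ and whose $\mathcal{H}^{-1}[1]$ is a torsion-free sheaf satisfying $\mu_\beta\leq 0$. A direct expansion shows that both the numerator and denominator of $\lambda_{\beta,\alpha,s}$ are polynomials in $\beta$ and $\alpha$, and along a $\Theta^{-}$-curve the leading term of
\[
\lambda_{\beta,\alpha,s}(F)-\lambda_{\beta,\alpha,s}(E)
\]
is (up to a positive factor depending on $s$ and $c$) the leading non-vanishing coefficient in the lexicographic difference $p_F(t)-p_E(t)$ of the reduced Hilbert polynomials; in particular a $\mathcal{H}^{-1}(F)\neq 0$ contribution is killed asymptotically by the constraint $\mu_\beta\leq 0$. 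This establishes (semi)stability in both directions: Gieseker (semi)stability forces asymptotic $\lambda$-(semi)stability, and conversely any properly destabilising subsheaf in the Gieseker sense produces a genuine asymptotic destabiliser in $\mathcal{A}^{\beta,\alpha}$. The assumption $s\geq 1/3$ is used here to control the sign of the subleading coefficients, and the Bogomolov condition $Q^{\mathrm{tilt}}(v)\geq 0$ to ensure finiteness of walls so that the asymptotic comparison is well-defined.

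For part (2), I would appeal to the duality functor $\mathbb{D}=\mathcal{H}om(-,\mathcal{O}_X)[n]$ on $D^b(X)$: it sends $\mathcal{A}^{\beta,\alpha}$ to $\mathcal{A}^{-\beta,\alpha}$ (up to a shift and a sign convention on $\ch$), converts $\Theta^+$-curves to $\Theta^-$-curves, and preserves $\lambda_{\beta,\alpha,s}$-(semi)stability; the Chern character transforms as $(\ch_i)\mapsto ((-1)^i\ch_i)$. Applying $\mathbb{D}$ to an asymptotically $\lambda$-(semi)stable $A$ along a $\Theta^+$-curve produces an object on a $\Theta^-$-curve, which by part (1) must be Gieseker (semi)stable; that object is exactly $A^\vee$ in the notation of the statement. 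The main obstacle is really in the asymptotic comparison of step three: keeping track of which coefficient of the Hilbert polynomial governs the leading behaviour of the difference of $\lambda$-slopes, and in particular showing that the subleading analysis (needed to distinguish Gieseker from $\mu$- or $2$-Gieseker stability) survives in the limit. Once that is in hand, the duality step is formal.
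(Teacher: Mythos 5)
First, a point of reference: the paper itself does not prove this statement --- it is imported from \cite{JM19} (Theorems 7.19 and 7.23 there), so there is no in-house argument to compare against; I am judging your proposal against what such a proof must actually contain. Your overall strategy (expand $\labs(F)-\labs(E)$ along the curve, match the hierarchy of leading terms with the lexicographic comparison of reduced Hilbert polynomials, then dualize for part (2)) is indeed the right shape. But the execution has a concrete error at the very first step, the heart-membership verification. For a torsion-free sheaf $E$ with $\ch_0(E)>0$ and $\alpha\gg 0$ one has
\[
\nu_{\beta,\alpha}(E)=\frac{\ch_2^{\beta}(E)-\tfrac{\alpha^2}{2}\ch_0(E)}{\alpha\,\ch_1^{\beta}(E)}<0,
\]
because the term $-\tfrac{\alpha^2}{2}\ch_0(E)$ dominates, and a $\Theta^-$-curve eventually lies \emph{above} $\Theta_v$ precisely because the condition $\lim|\dot\alpha/\dot\beta|>1$ beats the slope of the asymptote of that hyperbola. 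Hence $E$ lands in $\mathcal{F}_{\beta,\alpha}$, not $\mathcal{T}_{\beta,\alpha}$, and it is $E[1]$ (more generally a two-term complex with $\calh^{-1}\neq 0$) that lies in $\cala^{\beta,\alpha}$ for $t\gg 0$ --- compare Proposition \ref{prop:moduli in theta} and the map $\gamma(E)=[E'[1]\oplus Z_E]$ of the Main Theorem. This sign error propagates: you treat $E$ as a degree-zero object whose $\calh^{-1}$-contributions are ``killed asymptotically,'' whereas the destabilizers that detect failure of Gieseker stability are exactly objects of the form $C[1]$ for saturated subsheaves $C\subseteq E$, i.e.\ objects with $\calh^{-1}\neq 0$; far from being killed, they carry the $\delta_{20}$ and $\delta_{30}$ terms that separate $\mu$-, $2$-Gieseker- and Gieseker-stability. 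The genuinely hard parts --- a boundedness statement restricting the numerical classes of subobjects of $E[1]$ in $\cala^{\gamma(t)}$ that could destabilize for $t\gg 0$, and the verification that $\lim|\dot\alpha/\dot\beta|>1$ together with $s\geq 1/3$ orders the asymptotic expansion so that $\delta_{10},\delta_{20},\delta_{30}$ appear as successive leading coefficients --- are asserted rather than carried out.

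The duality step for part (2) is also not formal. The functor $(-)^\vee=\RHom(-,\ox)$ does not send $\cala^{\beta,\alpha}$ to $\cala^{-\beta,\alpha}$ on the nose; it only does so up to a spread of cohomological degrees, which is exactly why the dual of an object of the heart need not lie in a single shift of a heart. Moreover, the assertion that the asymptotically stable object $A$ on a $\Theta^+$-curve has $A^\vee$ concentrated in a single degree (i.e.\ is an honest sheaf, as the statement requires) is itself a nontrivial part of the conclusion, not something you can extract by applying duality and quoting part (1); this is why the $\Theta^+$ case is a separate theorem in \cite{JM19}. As it stands, the proposal records the correct guiding heuristic but does not constitute a proof.
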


In another direction, the special case of asymptotic stability along vertical lines $\{\beta=\obeta\}$ was studied in detail in \cite{JMM}. We will refer to an asymptotic $\lambda_{\beta,\alpha,s}$-(semi)stable object along the vertical curve $\{\beta=\obeta\}$ simply as a \emph{vertical (semi)stable object} at $\obeta$.

In this context of vertical lines, it turns out that once a map $F\to E$ injects, it must inject for all $\alpha\gg 0$ or it never injects for $\alpha\gg 0$.
\begin{lemma}
Fix $\beta$ and let $\alpha_0>0$. Suppose $\phi:F\to E$ is a morphism in $\mathcal{A}^{\beta,\alpha}$ for all $\alpha\geq\alpha_0$ and suppose $\phi$ injects in $\mathcal{A}^{\beta,\alpha_i}$ for an unbounded strictly increasing sequence of $\alpha_i\geq\alpha_0$. Then $\phi$ injects in $\mathcal{A}^{\beta,\alpha}$ for all $\alpha>\alpha_0$.
\end{lemma}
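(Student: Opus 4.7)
The plan is to translate the injection condition into a cohomological one on the cone $C := \mathrm{Cone}(\phi)$: the map $\phi$ injects in $\mathcal{A}^{\beta,\alpha}$ if and only if $C \in \mathcal{A}^{\beta,\alpha}$. Since $F, E \in \mathcal{A}^{\beta,\alpha}$ for all $\alpha \geq \alpha_0$ and $C \in \mathcal{A}^{\beta,\alpha_i}$ for at least one $\alpha_i$, the $\coh^\beta$-cohomology of $C$ is concentrated in degrees $-1, 0$. Set $F' := \mathcal{H}^{-1}_{\coh^\beta}(C)$ and $T := \mathcal{H}^0_{\coh^\beta}(C)$, which are fixed objects of $\coh^\beta$ independent of $\alpha$. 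Then $C \in \mathcal{A}^{\beta,\alpha}$ is equivalent to the conjunction $T \in \mathcal{T}_{\beta,\alpha}$ and $F' \in \mathcal{F}_{\beta,\alpha}$.

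The claim $T \in \mathcal{T}_{\beta,\alpha}$ for every $\alpha \geq \alpha_0$ follows from the hypothesis on $E$ alone: the long exact sequence in $\coh^\beta$ from the triangle $F \to E \to C$ realizes $T$ as a $\coh^\beta$-quotient of $\mathcal{H}^0_{\coh^\beta}(E) \in \mathcal{T}_{\beta,\alpha}$, and $\mathcal{T}_{\beta,\alpha}$ is the torsion class of a torsion pair on $\coh^\beta$, hence closed under $\coh^\beta$-quotients. It therefore remains to prove $F' \in \mathcal{F}_{\beta,\alpha}$ for every $\alpha > \alpha_0$.

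Set $J := \{\alpha > \alpha_0 : F' \in \mathcal{F}_{\beta,\alpha}\}$; by hypothesis $J$ contains the unbounded sequence $\{\alpha_i\}$. For each fixed $\coh^\beta$-subobject $N \hookrightarrow F'$, the slope inequality $\nu_{\beta,\alpha}(N) \leq 0$ is a closed condition on $\alpha$, so $J$ is an intersection of closed sets, hence closed in $(\alpha_0, \infty)$. Assume for contradiction that there is $\alpha^* \in (\alpha_0, \infty) \setminus J$, witnessed by a $\coh^\beta$-subobject $N \hookrightarrow F'$ with $\nu_{\beta,\alpha^*}(N) > 0$. An analysis of $\nu_{\beta,\alpha}(N)$ as a function of $\alpha$, using the explicit formula $\nu_{\beta,\alpha} = (\ch_2^\beta - \tfrac{\alpha^2}{2}\ch_0)/(\alpha \ch_1^\beta)$, shows that the hypothesis $F' \in \mathcal{F}_{\beta,\alpha_i}$ at arbitrarily large $\alpha_i$ eliminates every sign configuration of $(\ch_0(N), \ch_1^\beta(N), \ch_2^\beta(N))$ except the one with all three strictly positive, in which case $\nu_{\beta,\alpha}(N)$ vanishes at the unique $\alpha = \sqrt{2\ch_2^\beta(N)/\ch_0(N)}$ and is positive below that threshold, negative above it.

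The main obstacle is to rule out this surviving case. The plan is to exploit the $\coh^\beta$-presentation $0 \to A \to F' \to B \to 0$ supplied by the long exact sequence, with $A := \mathcal{H}^{-1}_{\coh^\beta}(E)/\mathcal{H}^{-1}_{\coh^\beta}(F)$ and $B := \ker(\mathcal{H}^0_{\coh^\beta}(F) \to \mathcal{H}^0_{\coh^\beta}(E)) \subseteq \mathcal{H}^0_{\coh^\beta}(F)$. Decompose $N$ as $0 \to N \cap A \to N \to N_B \to 0$ with $N_B \subseteq B$. Pulling $N \cap A \hookrightarrow A$ back through $\mathcal{H}^{-1}_{\coh^\beta}(E) \twoheadrightarrow A$ produces $\tilde N \hookrightarrow \mathcal{H}^{-1}_{\coh^\beta}(E)$; since $\mathcal{H}^{-1}_{\coh^\beta}(E) \in \mathcal{F}_{\beta,\alpha}$ and $\mathcal{F}_{\beta,\alpha}$ is closed under subobjects, $\nu_{\beta,\alpha}(\tilde N) \leq 0$ for every $\alpha \geq \alpha_0$, which bounds the slope contribution of $N \cap A$. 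A parallel bound on $N_B$ from $\mathcal{H}^0_{\coh^\beta}(F) \in \mathcal{T}_{\beta,\alpha}$, combined with additivity of the numerator $\rho_{\beta,\alpha}$ and the denominator $\alpha \ch_1^\beta$ of $\nu_{\beta,\alpha}$ along the short exact sequence for $N$, should yield $\nu_{\beta,\alpha^*}(N) \leq 0$, the desired contradiction. Hence $J = (\alpha_0, \infty)$ and $\phi$ injects in $\mathcal{A}^{\beta,\alpha}$ for every $\alpha > \alpha_0$.
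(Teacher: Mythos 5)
Your reduction to the cone $C$ is correct and is essentially the route the paper takes: $\mathcal{H}^{-2}_{\coh^\beta}(C)$ is $\alpha$-independent and dies because $C\in\mathcal{A}^{\beta,\alpha_i}$ for one $\alpha_i$; the degree-$0$ part is a $\coh^\beta(X)$-quotient of $\mathcal{H}^0_{\coh^\beta}(E)$ and hence lies in $\mathcal{T}_{\beta,\alpha}$ for all $\alpha\geq\alpha_0$; and the whole question is whether $F'=\mathcal{H}^{-1}_{\coh^\beta}(C)$ lies in $\mathcal{F}_{\beta,\alpha}$. Your sign analysis is also right: the only subobjects $N\subseteq F'$ that can obstruct are those with $\ch_0(N),\ch_1^\beta(N),\ch_2^\beta(N)>0$, for which $\nu_{\beta,\alpha}(N)\leq 0$ exactly when $\alpha\geq\sqrt{2\ch_2^\beta(N)/\ch_0(N)}$.

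The final step, however, does not close, and cannot. Neither of your two bounds holds: (a) from $\tilde N\subseteq\mathcal{H}^{-1}_{\coh^\beta}(E)\in\mathcal{F}_{\beta,\alpha}$ you get $\rho_{\beta,\alpha}(\tilde N)\leq 0$, but $\rho_{\beta,\alpha}(N\cap A)=\rho_{\beta,\alpha}(\tilde N)-\rho_{\beta,\alpha}(\mathcal{H}^{-1}_{\coh^\beta}(F))$ and the subtracted term is itself $\leq 0$, so $\rho_{\beta,\alpha}(N\cap A)$ can be strictly positive; (b) $\mathcal{T}_{\beta,\alpha}$ is closed under quotients, not subobjects, so $\mathcal{H}^0_{\coh^\beta}(F)\in\mathcal{T}_{\beta,\alpha}$ gives no upper bound on $\rho_{\beta,\alpha}(N_B)$. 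Worse, the surviving case genuinely occurs, so the statement as written is false. On $\mathbb{P}^3$ with $\beta=-9/2$, let $g:\mathcal{O}(-4)\to\mathcal{O}(-3)^{\oplus 2}$ be the Koszul map of two independent linear forms cutting out a line $L$, so that $\mathrm{coker}(g)=I_L(-2)$, and set $F=\mathcal{O}(-4)[1]$, $E=\mathcal{O}(-3)^{\oplus 2}[1]$, $\phi=g[1]$. Both $F$ and $E$ lie in $\mathcal{A}^{\beta,\alpha}$ for all $\alpha\geq\alpha_0=3/2$, and $\phi$ injects precisely when $I_L(-2)\in\mathcal{F}_{\beta,\alpha}$; since $I_L(-2)$ is tilt-stable along this line with $\ch_2^\beta=17/8$, this happens exactly for $\alpha\geq\sqrt{17}/2\approx 2.06$ and fails on $(3/2,\sqrt{17}/2)$. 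What your reduction does prove is that the set of $\alpha$ where $\phi$ injects is an interval, because each condition $\nu_{\beta,\alpha}(N)\leq 0$ (respectively $\nu_{\beta,\alpha}(Q)>0$) cuts out a convex subset of $(0,\infty)$; hence injectivity at an unbounded increasing sequence yields injectivity on $[\alpha_1,\infty)$ with $\alpha_1$ the first term, and the conclusion for all $\alpha>\alpha_0$ requires the extra hypothesis $\inf_i\alpha_i=\alpha_0$. The paper's own proof asserts $\nu^-_{\beta,\alpha_i}(C^1)>0$ at the same juncture and runs into the identical problem, so you should not expect to repair your argument by following it more closely.
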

\begin{proof}
Let $C$ be the cone of $\phi$ in $D^b(X)$ and for any $A\in D^b(X)$ let $A^i=\mathcal{H}^i_{\cohb}(A)$. Let $K_\alpha$ and $Q_\alpha$ denote the kernel and cokernel of $\phi$ respectively.  We have $C^0=\mathcal{H}^0_{\cohb}(Q_\alpha)$. But the LHS does not depend on $\alpha$ and since $\phi$ injects in $\mathcal{A}^{\beta,\alpha_0}$ it must vanish. We also have a short exact sequence in $\cohb$ for all $\alpha\geq\alpha_0$:
\[0\to K^0_\alpha\to C^1\to Q^1_\alpha\to 0.\]
But $C^1=K^0_{\alpha_0}$ and so $\nu^-_{\beta,\alpha_0}(C^1)>0$.

Suppose $\ch_0(C^1)\geq0$. Note that 
\[\nu_{\beta,\alpha}(v)=\frac{\ch_2^\beta(v)-\alpha^2\ch_0(v)}{\ch_1^\beta(v)}\]
and $\ch_1^\beta(v)>0$ for any Chern character $v$ of an element of $\cohb$. So $\nu_{\beta,\alpha}(C^1)$ is a decreasing function of $\alpha$. But $\nu^-_{\beta,\alpha_i}(C^1)>0$ and so this is a contradiction as $\alpha_i\to\infty$.

It follows that $\ch_0(C^1)<0$. But then $\nu_{\beta,\alpha}(Q^1_{\alpha})>0$ which implies $Q^1_{\alpha}=0$ and so $\phi$ injects for all $\alpha\geq\alpha_0$. 
\end{proof}
It follows that (ii) in the definition of asymptotic stability above in the case of vertical paths is equivalent to the apparently weaker
\begin{itemize}
    \item[(ii)$^\prime$] there is $t_1>t_0$ such that for any map $\phi:F\to A$ in $\mathcal{A}^{\gamma(t)}$ for all $t>t_0$, if $\phi$ injects for all $t>t_1$ in $\mathcal{A}^{\gamma(t)}$ then $\lambda_{\gamma(t),s}(F) < ~(\le)~ \lambda_{\gamma(t),s}(A)$. 
\end{itemize}
This is the version we will use in Theorem \ref{thm:AG53-39-1} below.

The precise characterization of vertically stable objects, established in \cite{JMM}, is the content of Theorem \ref{vert-3.1} and Theorem \ref{vert-3.2} below.

\begin{theorem}\label{vert-3.1}\cite[Theorem 3.1]{JMM} 
Let $s\geq1/3$. If an object $A\in\dbx$ with $\ch_0(A)\ne0$ satisfies the following conditions
\begin{enumerate}
\item $\calh^p(A)=0$ for $p\ne-1,0$;
\item $\dim\calh^0(A)\le1$, and every quotient sheaf $\calh^0(A)\onto P$ (including $\calh^0(A)$ itself) satisfies
$$ 
\dfrac{\ch_3(P)}{\ch_2(P)} > C_{\obeta,s}(A),\ \text{where}\ C_{\obeta,s}(A):=\dfrac{6s+1}{3}\left( \mu(A)-\obeta \right) + \obeta
$$
whenever $\ch_2(P)\ne0$;
\item $\calh^{-1}(A)$ is $\mu$-stable;
\item $\calh^{-1}(A)$ is either reflexive or $\calh^{-1}(A)^{**}/\calh^{-1}(A)$ has pure dimension 1, and every subsheaf $R\into\calh^{-1}(A)^{**}/\calh^{-1}(A)$ (including $\calh^{-1}(A)^{**}/\calh^{-1}(A)$ itself) satisfies
$$ \dfrac{\ch_3(R)}{\ch_2(R)} < C_{\obeta,s}(A) ; $$ 
\item if $U$ is a sheaf of dimension at most 1 and $u:U\to \calh^0(A)$ is a non-zero morphism that lifts to a monomorphism $\tilde{u}:U\into A$ within $\mathcal{A}^{\obeta,\alpha}$ for every $\alpha\gg0$, then
$$ \dfrac{\ch_3(U)}{\ch_2(U)} < C_{\obeta,s}(A); $$
\end{enumerate}
then $A$ is vertical stable at $\obeta$.
\end{theorem}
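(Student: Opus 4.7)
The plan is to verify the two conditions defining asymptotic $\lambda_{\obeta,\alpha,s}$-stability along the vertical line $\beta=\obeta$: first, that $A\in \mathcal{A}^{\obeta,\alpha}$ for all $\alpha\gg 0$; second, that $\lambda_{\obeta,\alpha,s}(F)\leq \lambda_{\obeta,\alpha,s}(A)$ holds for every $F\to A$ injecting in $\mathcal{A}^{\obeta,\alpha}$ with $\alpha$ large, in the weakened form (ii)$^\prime$ recorded above.

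The first step uses the exact triangle $\calh^{-1}(A)[1]\to A\to \calh^0(A)$ coming from condition (1). Conditions (3) and (4) place $\calh^{-1}(A)$ and its double-dual cokernel into the correct pieces of the two successive tilts, so that $\calh^{-1}(A)[1]\in \mathcal{F}_{\obeta,\alpha}[1]$ for all $\alpha\gg 0$, while (2) and the dimension bound on $\calh^0(A)$ force $\calh^0(A)\in \mathcal{T}_{\obeta,\alpha}$ asymptotically: the only possible obstruction to the latter would be a $\coh^{\obeta}$-quotient of $\calh^0(A)$ with non-positive $\nu_{\obeta,\alpha}$-slope for $\alpha$ large, and such a quotient would produce a $1$-dimensional sheaf quotient of $\calh^0(A)$ with $\ch_3/\ch_2$-ratio violating the bound in (2).

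For the slope inequality, the central observation is the asymptotic behavior of $\lambda_{\obeta,\alpha,s}$ as $\alpha\to \infty$: for any object $B$ with $\ch_0(B)\neq 0$ one computes $\lim_{\alpha\to \infty}\lambda_{\obeta,\alpha,s}(B) = C_{\obeta,s}(B)-\obeta$, while for a sheaf $U$ of dimension at most $1$ the slope $\lambda_{\obeta,\alpha,s}(U) = \ch_3(U)/\ch_2(U)-\obeta$ is independent of $\alpha$. This makes $C_{\obeta,s}(A)$ the natural reference value against which hypotheses (2), (4) and (5) are calibrated. Given a potentially destabilizing $F\hookrightarrow A$, I would split into cases on $(\ch_0(F),\ch_1(F))$: when $\ch_0(F)\neq 0$, the $\mu$-stability in (3) (via the inclusion $\calh^{-1}(F)\hookrightarrow \calh^{-1}(A)$) controls the leading $\alpha^2$-term while (4) handles the $O(1)$ correction; when $\ch_0(F)=0$ and $\ch_1(F)\neq 0$, the denominator of $\lambda_{\obeta,\alpha,s}(F)$ loses a power of $\alpha$, and one must use (2) and (4) to rule out $F$ producing sub-quotients of $\calh^0(A)$ or of $\calh^{-1}(A)^{\ast\ast}/\calh^{-1}(A)$ of the wrong sign; finally, when $\ch_0(F)=\ch_1(F)=0$ the morphism $F\to A$ factors through $\calh^0(A)$, which is exactly the setup of condition (5).

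The main obstacle is the bookkeeping in the first case: an arbitrary sub-object $F$ can mix contributions from $\calh^{-1}$ and $\calh^0$, and the $\coh^{\obeta}$-long exact sequence for $F\hookrightarrow A$ produces a subsheaf of $\calh^{-1}(A)^{\ast\ast}/\calh^{-1}(A)$ together with a quotient of $\calh^0(A)$ at the same time. One must verify that the uniform $\ch_3/\ch_2$-bounds in (2) and (4) are sharp enough to absorb every sub-leading term in the asymptotic expansion of $\lambda_{\obeta,\alpha,s}(F)-\lambda_{\obeta,\alpha,s}(A)$ and deliver the desired inequality uniformly for $\alpha$ large; the boundedness results of Section 2 (notably Proposition \ref{prop:AG53-46-1}) should ensure that these bounds can indeed be made uniform across the relevant families of cohomology sheaves.
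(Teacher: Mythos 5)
A preliminary remark: Theorem \ref{vert-3.1} is quoted from \cite[Theorem 3.1]{JMM} and is not proved in this paper, so the only basis for comparison is what the surrounding text reveals about that proof. On that basis, your outline follows essentially the same route as the cited argument: reduce to morphisms $F\to A$ that inject in $\cala^{\obeta,\alpha}$ for all $\alpha\gg0$ (the weakened condition (ii)$'$), observe that $\lambda_{\obeta,\alpha,s}(F)-\lambda_{\obeta,\alpha,s}(A)\to\tfrac{6s+1}{3}\left(\mu(F)-\mu(A)\right)$ so that $\mu$-stability of $\calh^{-1}(A)$ settles the leading order, and decide the borderline case $\mu(F)=\mu(A)$ by the next term of the expansion, which is where the calibration of (2), (4), (5) against $C_{\obeta,s}(A)$ enters. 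This is exactly the shape visible in Lemma \ref{lem:alt v3.1}, where the quantity $\lim_{\alpha\to\infty}\alpha^2\big(\labs(F)-\labs(A)\big)=\tfrac{4}{\ch_0(F)\ch_0(A)}D_{\obeta,s}(F,A)$ and the inclusion $\calh^{-1}(F)\into\calh^{-1}(A)$ (via \cite[Lemma 3.14]{JMM}) are imported from that proof. Your computation of the limit $C_{\obeta,s}(A)-\obeta$ and of the $\alpha$-independence of $\lambda_{\obeta,\alpha,s}$ on $\coh^{\leq 1}(X)$ is correct and is the right calibration.

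Two soft spots remain. First, condition (2) is not what forces $\calh^{0}(A)\in\mathcal{T}_{\obeta,\alpha}$: any quotient of a sheaf of dimension $\leq 1$ inside $\coh^{\obeta}(X)$ has $\ch_1^{\obeta}=0$ and hence $\nu_{\obeta,\alpha}=+\infty$, so membership is automatic. Condition (2) is needed instead in the slope comparison, to handle subobjects $F$ whose quotient $A/F$ is a quotient sheaf of $\calh^{0}(A)$; such $F$ have $\ch_0(F)=\ch_0(A)$ and $\mu(F)=\mu(A)$, so they land in your first case, where you only invoke (3) and (4) — the role of (2) there goes unmentioned. Second, the decisive case, $\mu(F)=\mu(A)$ with $F$ simultaneously producing a subsheaf of $\calh^{-1}(A)$ or of $\calh^{-1}(A)^{**}/\calh^{-1}(A)$ and a quotient of $\calh^{0}(A)$, is acknowledged but not carried out; this is precisely where the strict inequalities in (2), (4), (5) and the sign of $D_{\obeta,s}(F,A)$ must be analysed. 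Your appeal to Proposition \ref{prop:AG53-46-1} for uniformity is not the relevant mechanism: that proposition concerns boundedness of families of semistable sheaves, whereas the uniform threshold $t_1$ over subobjects of the single fixed $A$ comes from the discreteness and boundedness of the possible $(\ch_2,\ch_3)$ of subsheaves and quotient sheaves of the fixed cohomologies of $A$. So the skeleton is right and matches the cited proof, but as written the argument is an outline rather than a complete proof.
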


Condition (3) in the previous result can be slightly weakened, under some additional hypotheses.

\begin{lemma} \label{lem:alt v3.1}
Fix $s>1/3$, and let $A\in\dbx$ be an object with $\ch_0(A)\ne0$ satisfying conditions (1), (2), (4) and (5) in Theorem \ref{vert-3.1}. If $\calh^{-1}(A)$ is 2-Gieseker stable, then there is $\beta_0'<0$ such that $A$ is vertical stable at $\beta<\beta_0'$.
\end{lemma}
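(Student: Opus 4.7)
The plan is to trace through the proof of Theorem~\ref{vert-3.1} and modify the step where condition (3), the $\mu$-stability of $\calh^{-1}(A)$, is invoked. Since $\calh^{-1}(A)$ is 2-Gieseker stable, it is in particular $\mu$-semistable, so any potential destabilizing sub-object $F \hookrightarrow A$ in $\mathcal{A}^{\obeta,\alpha}$ arising in that proof splits into two cases. If $\mu(\calh^{-1}(F)) < \mu(\calh^{-1}(A))$, the original argument of Theorem~\ref{vert-3.1} applies verbatim since strict Mumford-slope comparison still holds. The new case is $\mu(\calh^{-1}(F)) = \mu(\calh^{-1}(A))$, for which 2-Gieseker stability of $\calh^{-1}(A)$ provides the strict inequality
\[
\frac{\ch_2(\calh^{-1}(F))}{\ch_0(\calh^{-1}(F))} < \frac{\ch_2(\calh^{-1}(A))}{\ch_0(\calh^{-1}(A))},
\]
and the task is to upgrade this to $\lambda_{\obeta,\alpha,s}(F) < \lambda_{\obeta,\alpha,s}(A)$ for $\alpha \gg 0$, at the cost of taking $\obeta$ sufficiently negative.

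To achieve this I would expand $\lambda_{\obeta,\alpha,s}(E)$ as a series in $1/\alpha^2$: the leading coefficient is $(2s+\tfrac{1}{3})(\mu(E)-\obeta)$, which agrees for $F$ and $A$ in the problematic case, so the required strict inequality reduces to the comparison of the $1/\alpha^2$-coefficients. A short calculation shows that this coefficient, after normalizing by $\ch_0(E)$, is a polynomial in $\obeta$ whose $\obeta^3$ and $\obeta^2$ contributions depend only on $\mu(E)$ and therefore cancel in the difference $\lambda_{\obeta,\alpha,s}(A)-\lambda_{\obeta,\alpha,s}(F)$. The coefficient of $\obeta^1$ then contains the term $(2s-\tfrac{2}{3})(\ch_2(E)/\ch_0(E))$ up to an expression depending only on $\mu(E)$; since $s>1/3$ forces the prefactor to be strictly positive and the $\ch_2/\ch_0$ inequality above goes in the right direction, the linear-in-$\obeta$ contribution dominates the $\obeta^0$ term for $\obeta$ sufficiently negative, yielding the required strict Bridgeland-slope inequality.

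The principal obstacle is obtaining a single uniform choice of $\beta_0'$ that works for all potential destabilizing sub-objects simultaneously. For this, I would invoke a boundedness argument: subsheaves of the fixed sheaf $\calh^{-1}(A)$ with equal Mumford slope form a bounded family (a consequence of Grothendieck's theorem applied to quotients of $\calh^{-1}(A)$), and conditions (2), (4), and (5) of Theorem~\ref{vert-3.1} tightly constrain the possibilities for $\calh^0(F)$ and the cokernel of $F \hookrightarrow A$. Together these provide uniform bounds on $\ch_3^{\obeta}(F)$ as $F$ ranges over the resulting bounded family of potential destabilizers, so that a single $\beta_0' < 0$ can be chosen for which the dominance argument applies uniformly.
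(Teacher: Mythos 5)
Your proposal follows essentially the same route as the paper's proof: reduce to sub-objects $F$ with $\mu(F)=\mu(A)$, expand $\labs(F)-\labs(A)$ in powers of $1/\alpha^2$, and observe that the next-order coefficient is linear in $\obeta$ with slope proportional to $(s-\tfrac{1}{3})\,\delta_{20}(F,A)$, whose sign is pinned down by the 2-Gieseker stability of $\calh^{-1}(A)$, so that it dominates for $\obeta\ll 0$. The only substantive addition is your explicit boundedness argument for choosing a single $\beta_0'$ uniformly over all destabilizers, a point the paper's proof leaves implicit; this is a refinement of, not a departure from, the same argument.
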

\begin{proof}
Following the proof of \cite[Theorem 3.1]{JMM}, we observe that it is enough to consider sub-objects $F\into A$ within $\cala^{\beta,\alpha}$ for $\alpha\gg0$ such that $\ch_0(A)<\ch_0(F)<0$; taking $\beta<\mu(A)$, \cite[Lemma 3.14]{JMM} implies that $C:=\calh^{-1}(F)$ is a subsheaf of $B:=\calh^{-1}(A)$. Since $B$ is a $\mu$-semistable sheaf, we have that $\mu(F)\le\mu(C)\le\mu(B)=\mu(A)$. When $\mu(F)<\mu(A)$, then $F$ does not destabilize $A$, see \cite[Proof of Theorem 3.1 for $\obeta<\mu(A)$]{JMM}; if $\mu(F)=\mu(A)$, then one must analyze the sign of the limit
$$ \lim_{\alpha\to\infty} \alpha^2\big( \labs(F) - \labs(A) \big) = \dfrac{4}{\ch_0(F)\ch_0(A)} D_{\beta,s}(F,A), $$
where
\begin{eqnarray*} 
D_{\obeta,s}(F,A) & := & \left(s+\dfrac{1}{6}\right)(\mu(A)-\obeta)\delta_{20}^{\obeta}(F,A)-\dfrac{1}{2}\delta_{30}^{\obeta}(F,A) \\
& = & -\left(s-\dfrac{1}{3}\right)\delta_{20}(F,A)\beta + \left(s+\dfrac{1}{6}\right)\mu(A) - \dfrac{1}{2}\delta_{30}(F,A);
\end{eqnarray*}
note that $\ch_0(F)\ch_0(A)>0$ and
$$ \delta_{20}(F,A) = \delta_{20}(C,B)-\ch_2(\calh^0(F))\ch_0(B) \le \delta_{20}(C,B)<0$$
as $\mu(F)=\mu(C)$ forces $\ch_1(\calh^0(F))=0$, so $\dim(\calh^0(F))\le1$. It then follows that $D_{\obeta,s}(F,A)<0$ when 
$$ \beta < \dfrac{-1}{(s-1/3)\delta_{20}(F,A)}\left( \dfrac{1}{2}\delta_{30}(F,A) - \left(s+\dfrac{1}{6}\right)\mu(A) \right). $$ 
Therefore, we take $\beta_0'$ as the minimum between $\mu(A)$ and the right-hand side of the previous inequality to reach the desired conclusion.
\end{proof}

\begin{remark}
Let $B$ be a Gieseker stable sheaf that is not 2-Gieseker stable, so that there exists a 2-Gieseker stable subsheaf $C$ of $B$ with $\delta_{10}(C,B)=\delta_{20}(C,B)=0$ and $\delta_{30}(C,B)<0$; assume in addition that $\mu(B)\ge0$.

Lemma \ref{lem:alt v3.1} tells us that there is $\beta_0$ such that $C[1]$ is vertical stable at any $\beta<\beta_0$. The sheaf monomorphism $C\into B$ yields a nonzero morphism $C[1]\to B[1]$ in $\cala^{\beta,\alpha}$; note that
$$ D_{\obeta,s}(C[1],B[1]) = \left(s+\dfrac{1}{6}\right)\mu(B) -\dfrac{1}{2}\delta_{30}(C,B) > 0. $$
It follows that $B[1]$ can not be vertically stable at any $\beta<\beta_0$. This example shows that Lemma \ref{lem:alt v3.1} is in some sense sharp.
\end{remark}

\begin{theorem}\label{vert-3.2}\cite[Theorem 3.2]{JMM}
Let $v$ be a numerical Chern character satisfying $v_0\ne0$ and the Bogomolov inequality $v_1^2-2v_0v_2\geq 0$. If $A\in\dbx$ is a vertical semistable object at $\obeta$ with $\ch(A)=v$, then:
\begin{enumerate}
\item $\calh^p(A)=0$ for $p\ne-1,0$;
\item $\dim\calh^{0}(A)\le1$, and every sheaf quotient $\calh^{0}(A)\onto P$ (including $\calh^{0}(A)$ itself) satisfies
$$ \dfrac{\ch_3(P)}{\ch_2(P)} \ge C_{\obeta,s}(A) $$
whenever $\ch_2(P)\ne0$;
\item $\calh^{-1}(A)$ is $\mu$-semistable, and every sub-object $F\into A$ with $\mu(F)=\mu(A)$ satisfies $D_{\obeta,s}(F,A)\le0$.
\item $\calh^{-1}(A)$ is either reflexive or $\calh^{-1}(A)^{**}/\calh^{-1}(A)$ has pure dimension 1, and every subsheaf $R\into\calh^{-1}(A)^{**}/\calh^{-1}(A)$ (including $\calh^{-1}(A)^{**}/\calh^{-1}(A)$ itself) satisfies
$$ \dfrac{\ch_3(R)}{\ch_2(R)} \le C_{\obeta,s}(A) ; $$ 
\item if $U$ is a sheaf of dimension at most 1 and $u:U\to \mathcal{H}^0(A)$ is a non-zero morphism that lifts to a monomorphism $\tilde{u}:U\into A$ within $\mathcal{A}^{\obeta,\alpha}$ for every $\alpha\gg0$, then
$$ \dfrac{\ch_3(U)}{\ch_2(U)} \le C_{\obeta,s}(A). $$
\end{enumerate}
\end{theorem}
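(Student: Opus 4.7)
The plan is to prove the contrapositive of each condition: assuming $A$ is vertically semistable, I exhibit a sub-object or quotient violating $\lambda$-semistability along $\{\beta = \obeta\}$ whenever any of (1)--(5) fails, using that $A \in \cala^{\obeta,\alpha}$ for all $\alpha \gg 0$ together with the ``once-injects-always-injects'' lemma recorded in Section~\ref{sec:vertical}.

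\textbf{Cohomological shape.} Condition (1) falls out of the double-tilt construction: since $A \in \cala^{\obeta,\alpha}$ for all $\alpha \gg 0$ and the tilted hearts satisfy $\cala^{\obeta,\alpha} \subset D^{[-1,0]}(\coh^{\obeta}(X)) \subset D^{[-2,1]}(\coh(X))$, potential contributions at $\calh^{-2}$ and $\calh^{1}$ are controlled by membership in $\free{\obeta}$ and $\tors{\obeta}$ respectively and are eliminated as $\alpha \to \infty$. For the dimension bound in (2), a direct computation gives $\lim_{\alpha\to\infty}\lambda_{\obeta,\alpha,s}(A) = C_{\obeta,s}(A) - \obeta$, which is finite; if $\calh^0(A)$ admitted a $\ge 2$-dimensional quotient sheaf $P$ in $\cala^{\obeta,\alpha}$, the requirement $\nu_{\obeta,\alpha}(P) > 0$ would force $\ch_2^{\obeta}(P) > 0$, whence $\lambda_{\obeta,\alpha,s}(P) \to -\infty$, contradicting semistability.

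\textbf{Quotient and sub-object inequalities.} For condition (2), a pure $1$-dimensional sheaf quotient $\calh^0(A) \onto P$ yields a quotient $A \onto P$ in $\cala^{\obeta,\alpha}$. A direct computation using $\ch_1^{\obeta}(P) = 0$ gives $\lambda_{\obeta,\alpha,s}(P) = \ch_3(P)/\ch_2(P) - \obeta$, independent of $\alpha$; semistability then yields $\ch_3(P)/\ch_2(P) - \obeta \ge C_{\obeta,s}(A) - \obeta$, the stated inequality. For condition (5), the same computation applied to a sub-object $\widetilde{u}: U \into A$ produces the reversed inequality.

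\textbf{Structural conditions on $\calh^{-1}(A)$.} For condition (3), a putative $\mu$-destabilizing subsheaf $B \into \calh^{-1}(A)$ with $\mu(B) > \mu(\calh^{-1}(A)) = \mu(A)$ yields $B[1] \into A$ in $\cala^{\obeta,\alpha}$ for all $\alpha \gg 0$, and the limit $\lim_{\alpha\to\infty}(\lambda_{\obeta,\alpha,s}(B[1]) - \lambda_{\obeta,\alpha,s}(A)) = (2s+\tfrac{1}{3})(\mu(B) - \mu(A)) > 0$ contradicts semistability. In the equality case $\mu(F) = \mu(A)$, the leading constant term vanishes and the next-order expansion in $\alpha^{-2}$ yields
\[
\lim_{\alpha\to\infty}\alpha^2\big(\lambda_{\obeta,\alpha,s}(F) - \lambda_{\obeta,\alpha,s}(A)\big) = \tfrac{4}{\ch_0(F)\ch_0(A)}\, D_{\obeta,s}(F,A),
\]
exactly as in the proof of Lemma~\ref{lem:alt v3.1}, forcing $D_{\obeta,s}(F,A) \le 0$. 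For condition (4), a non-trivial 0-dimensional subsheaf $Z \into \calh^{-1}(A)^{**}/\calh^{-1}(A)$ would, via the pullback construction behind \eqref{e' sqc} shifted by $[1]$, produce a sub-object of $A$ in $\cala^{\obeta,\alpha}$ whose $\lambda$ diverges to $+\infty$, a contradiction, forcing purity. A pure $1$-dimensional subsheaf $R$ of the quotient, handled analogously, gives the ratio inequality $\ch_3(R)/\ch_2(R) \le C_{\obeta,s}(A)$.

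\textbf{Main obstacle.} The principal technical subtlety is verifying that each putative destabilizing morphism, constructed from classical sheaf-theoretic data, persists as a genuine $\cala^{\obeta,\alpha}$-sub or -quotient for \emph{all} $\alpha$ sufficiently large at once (and not merely at isolated values of $\alpha$); this is precisely where the injection-persistence lemma stated in Section~\ref{sec:vertical} plays its essential role, and it also justifies the passage to the $\alpha \to \infty$ limit in each $\lambda$-comparison. A secondary delicate point is the equality analysis in condition (3): once the $\alpha^2$-coefficient of $\lambda(F) - \lambda(A)$ vanishes, one must carefully isolate the sub-leading $\alpha^0$-contribution encapsulated by $D_{\obeta,s}(F,A)$, and separate out possible $\calh^0(F)$ interference from the $\calh^{-1}(F)$ term.
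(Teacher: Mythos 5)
This theorem is imported verbatim from \cite[Theorem 3.2]{JMM}; the present paper gives no proof of it, so there is no internal argument to compare against. Judged on its own terms, your strategy is the standard (and correct) one for results of this type: test vertical semistability against the natural sub-objects and quotients built from the cohomology sheaves of $A$, and extract the inequalities from the asymptotics of $\lambda_{\obeta,\alpha,s}$. The explicit computations you display are right: $\lambda_{\obeta,\alpha,s}(P)=\ch_3(P)/\ch_2(P)-\obeta$ for $P$ of dimension $\le 1$, $\lim_{\alpha\to\infty}\lambda_{\obeta,\alpha,s}(A)=C_{\obeta,s}(A)-\obeta$, the leading term $(2s+\tfrac13)(\mu(B)-\mu(A))$ in the strict case of (3), and the $\alpha^2$-rescaled limit $\tfrac{4}{\ch_0(F)\ch_0(A)}D_{\obeta,s}(F,A)$ in the equality case, which indeed matches the identity used in the proof of Lemma \ref{lem:alt v3.1}.

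Two places are thinner than they should be. First, your treatment of (1): saying that $\calh^{-2}(A)$ is ``eliminated as $\alpha\to\infty$'' is not an argument, and the naive $\lambda$-comparison does not work here --- if $G:=\calh^{-2}(A)\ne 0$ then $G\in\free{\obeta}$, so $\lim\lambda_{\obeta,\alpha,s}(G[2])=(2s+\tfrac13)\mu_{\obeta}(G)\le 0$, which need not exceed $\lim\lambda_{\obeta,\alpha,s}(A)$. The correct mechanism is one tilt down: $G$ is torsion-free of positive rank with $\ch_1^{\obeta}(G)\le 0$, and $G[1]$ is a subobject of $\calh^{-1}_{\coh^{\obeta}(X)}(A)$ inside $\coh^{\obeta}(X)$ with $\nu_{\obeta,\alpha}(G[1])\to+\infty$, contradicting $\calh^{-1}_{\coh^{\obeta}(X)}(A)\in\calf_{\obeta,\alpha}$ for $\alpha\gg 0$; i.e.\ only condition (i) of vertical semistability is used, but via $\nu$ rather than $\lambda$. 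Second, your sketch never invokes the hypothesis $\Delta(v)=v_1^2-2v_0v_2\ge 0$, which is a warning sign: it is exactly what guarantees (via boundedness of tilt walls) that the candidate destabilizers you construct --- $B[1]$ in (3), the pulled-back extension in (4), the cone of each such map --- actually lie in $\cala^{\obeta,\alpha}$ \emph{uniformly} for all $\alpha\gg 0$, which is the precondition for applying the injection-persistence lemma you cite and for passing to the limit. You correctly flag this as the main obstacle, but as written it is deferred rather than resolved; a complete proof must carry out these heart-membership verifications.
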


\begin{lemma}\label{lem:v1=0}
Let $v$ be a numerical Chern character satisfying $v_0\ne0$ and $v_1^2-2v_0v_2\geq 0$. There is $\beta_0<0$ depending on $A$ such that if $A\in\dbx$ is a vertical semistable object at $\beta<\beta_0$ with $\ch(A)=v$, then $\calh^{-1}(A)$ is 2-Gieseker semistable.
\end{lemma}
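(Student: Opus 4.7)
The plan is to argue by contradiction using the structural constraints on vertical semistable objects given by Theorem \ref{vert-3.2}. Assume $A$ is vertical semistable at $\beta$ and set $B:=\calh^{-1}(A)$. By Theorem \ref{vert-3.2}, $B$ is a torsion-free $\mu$-semistable sheaf lying in $\mathcal{F}_\beta$ (so $\mu(A)=\mu(B)\le\beta$), $\dim\calh^0(A)\le1$, and every $\mathcal{A}^{\beta,\alpha}$-subobject $F\into A$ with $\mu(F)=\mu(A)$ satisfies $D_{\beta,s}(F,A)\le0$. Suppose, for contradiction, that $B$ is not 2-Gieseker semistable; using the Harder--Narasimhan filtration of $B$ with respect to 2-Gieseker stability I can choose a saturated, 2-Gieseker semistable subsheaf $C\into B$ with $\mu(C)=\mu(B)$ and $\delta_{20}(C,B)>0$, and saturation guarantees that $B/C$ is torsion-free.

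The next step is to realize $C[1]$ as a sub-object of $A$ in the heart $\mathcal{A}^{\beta,\alpha}$ for all $\alpha\gg0$. Since $\mathcal{F}_\beta$ is closed under subsheaves in $\coh(X)$ one has $C\in\mathcal{F}_\beta$, and using the $\mu$-semistability of $B$ to bound the slopes of subsheaves of $B/C$ by $\mu(B)\le\beta$ yields $B/C\in\mathcal{F}_\beta$ as well. Thus each of $C[1],B[1],(B/C)[1]$ sits in $\coh^\beta(X)$. A large-volume argument --- their tilt slopes $\nu_{\beta,\alpha}$ grow to $+\infty$ as $\alpha\to\infty$, and the Bogomolov--Gieseker inequality bounds the tilt walls --- shows that for $\alpha\gg0$ all three complexes are tilt-semistable and lie in $\mathcal{T}_{\beta,\alpha}\subset\mathcal{A}^{\beta,\alpha}$. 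Combining the $\mathcal{A}^{\beta,\alpha}$-short exact sequences
\[
0\to C[1]\to B[1]\to (B/C)[1]\to 0,\qquad 0\to B[1]\to A\to\calh^0(A)\to 0
\]
--- the latter being valid because $\calh^0(A)$ is supported in dimension $\le1$ and hence lies in $\mathcal{T}_{\beta,\alpha}$ --- yields the desired $\mathcal{A}^{\beta,\alpha}$-monomorphism $C[1]\into A$.

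A direct Chern character computation, using $\dim\calh^0(A)\le1$, gives $\delta_{10}(C[1],A)=0$ and
\[
\delta_{20}(C[1],A)=\delta_{20}(C,B)+\ch_2(\calh^0(A))\ch_0(C)>0.
\]
Expanding $D_{\beta,s}(C[1],A)$ using these vanishings and the identity $\delta_{30}^\beta(C[1],A)=\delta_{30}(C[1],A)-\beta\,\delta_{20}(C[1],A)$ produces
\[
D_{\beta,s}(C[1],A)=-\left(s-\tfrac{1}{3}\right)\delta_{20}(C[1],A)\,\beta+\left(s+\tfrac{1}{6}\right)\mu(A)\delta_{20}(C[1],A)-\tfrac{1}{2}\delta_{30}(C[1],A),
\]
an affine function of $\beta$ whose leading coefficient $-(s-1/3)\delta_{20}(C[1],A)$ is strictly positive when $s>1/3$, because $\delta_{20}(C[1],A)>0$. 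Hence there exists $\beta_0<0$, depending on $A$ and $s$, such that $D_{\beta,s}(C[1],A)>0$ for every $\beta<\beta_0$, contradicting the constraint $D_{\beta,s}(C[1],A)\le0$ from Theorem \ref{vert-3.2}(3). Therefore $B$ must be 2-Gieseker semistable.

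The main obstacle is the second step: verifying that each of $C[1],B[1],(B/C)[1]$ actually lies in $\mathcal{A}^{\beta,\alpha}$ (and not merely in $\coh^\beta(X)$) for all $\alpha\gg0$, which requires combining the large-volume behavior of $\nu_{\beta,\alpha}$ with the tilt-wall boundedness coming from the Bogomolov--Gieseker inequality, together with the torsion-freeness of $B/C$ guaranteed by saturating $C$. A secondary subtlety is that the dominant-term argument needs $s>1/3$ strictly; the borderline case $s=1/3$ makes $D_{\beta,s}(C[1],A)$ independent of $\beta$ and would require a separate treatment.
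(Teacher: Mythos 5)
Your proof is correct and follows essentially the same route as the paper's: pass to the maximal 2-Gieseker destabilizing subsheaf $C\subseteq B$, realize $C[1]$ as a subobject of $A$ in $\mathcal{A}^{\beta,\alpha}$ for $\alpha\gg0$, and show $D_{\beta,s}(C[1],A)>0$ for $\beta$ sufficiently negative, contradicting Theorem \ref{vert-3.2}(3); the paper is merely terser, citing \cite[Lemma 3.8]{JMM} for the heart membership and writing down the explicit threshold $\beta_0'$ as in \eqref{eq:b0'}. One small slip in your justification: the coefficient of $\beta$ in $D_{\beta,s}(C[1],A)$, namely $-\left(s-\tfrac{1}{3}\right)\delta_{20}(C[1],A)$, is strictly \emph{negative} (not positive) when $s>1/3$ and $\delta_{20}(C[1],A)>0$ --- and it is precisely this negativity that forces $D_{\beta,s}(C[1],A)\to+\infty$ as $\beta\to-\infty$, which is the conclusion you correctly draw.
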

\begin{proof}
We know from Theorem \ref{vert-3.2} that $B:=\calh^{-1}(A)$ is $\mu$-semistable. Assume, for a contradiction, that $B$ is not 2-Gieseker semistable, and let $C\into B$ be the maximal 2-Gieseker destabilizing subsheaf,
so that $\delta_{10}(C,B)=0$ and $\delta_{20}(C,B)>0$. Since $C$ is 2-Gieseker semistable, $F:=C[1]$ belongs to $\cala^{\beta,\alpha}$ for $\alpha\gg0$ \cite[Lemma 3.8]{JMM}, thus $F$ is a sub-object of $A$ with $\mu(F)=\mu(A)=0$ within $\cala^{\beta,\alpha}$ for $\beta<\mu(C)=\mu(B)$ and $\alpha\gg0$.

Setting $\beta_0:=\min\{\mu(A),\beta_0'\}$ where
\begin{equation} \label{eq:b0'}
\beta_0':=\dfrac{-1}{(s-1/3)\delta_{20}(C,B)}\left( \dfrac{1}{2}\delta_{30}(C,B) - \ch_3(\calh^0(A))\ch_0(C) - \left(s+\dfrac{1}{6}\right)\mu(A) \right),
\end{equation}
note that $D_{\obeta,s}(F,A)>0$ when $\beta<\beta_0$, thus contradicting condition (3) in Theorem \ref{vert-3.2}.
\end{proof}


\subsection{Polynomial stability} \label{sec:polynomial}

Let $X$ be a smooth projective variety.  By \cite[Proposition and Definition 4.1.1]{BMT1}, giving a polynomial stability condition $\sigma$ on $\dbx$ is equivalent to giving a pair $\sigma = (Z, \cala)$ where $\cala$ is the heart of a t-structure on $\dbx$, and $Z: K(\dbx) \to \mathbb{C}[m]$ is a central charge satisfying
\begin{itemize}
       \item[(a)] There is some real number $a$ such that, for every nonzero object $E \in \cala$, we have 
    \[
     Z(E)(m) \in \{ r e^{i \pi \phi} : r \in \mathbb{R}_{>0}, \phi \in (a, a+1]\} \text{\qquad for $m \gg 0$}.
    \]
    \item[(b)] $Z$ satisfies the Harder--Narasimhan (HN) property on $\cala$.
\end{itemize}
Property (a) allows us to define, for every nonzero $E \in \cala$, a polynomial phase function $\phi (E)$ (a function germ) satisfying
$$ Z(E)(m) \in \mathbb{R}_{>0} e^{i\pi \phi (E)(m)} \text{\qquad for $m \gg 0$} $$
and such that $\phi (E)(m) \in (a,a+1]$ for all $m \gg 0$.  This allows us to define a notion of semistability for objects in $\cala$: We say $E \in \Ac$ is $Z$-(semi)stable if, for every nonzero proper subobject $F$ of $E$ in $\cala$, we have $\phi (F) \prec (\preceq) \phi (E)$, i.e.\
\[
\phi (F) < (\leq) \phi (E) \text{\qquad for $m \gg 0$}.
\]
The HN property in (b) is then defined using this notion of $Z$-semistability. Bayer constructed a family of polynomial stability conditions on $\dbx$ for any normal projective variety $X$ \cite{BayerPBSC} which included Gieseker stability.  

Suppose $X$ is a smooth projective threefold and $\omega$ is an ample class on $X$. Consider the heart \footnote{The superscript $p$ is not a parameter, but a relic from Bayer's notation: it originally stands for the perversity function that is needed as an input for constructing a polynomial stability; here we are fixing the perversity function $p(d)=-\lfloor \frac{d}{2}\rfloor$.}
\[
\Ap = \langle \coh^{\geq 2}(X)[1], \coh^{\leq 1}(X)\rangle
\]
in $\dbx$ and the polynomial central charge 
\begin{equation} \label{eq:dtpt charge}
Z(E)(m) = \sum_{i=0}^3 \int_X \rho_i m^i \omega^i \ch(E)
\end{equation}
where  $\rho_i$ ($0\leq i\leq 3$) are nonzero complex numbers such that $\rho_0, \rho_1, -\rho_2, -\rho_3$ all lie on the upper-half complex plane.  Then by \cite[Theorem 3.2.2, Proposition 6.1.1]{BayerPBSC} we have:
\begin{itemize}
\item[(i)] (PT-stability) If the phases of the $\rho_i$ satisfy the ordering
\[
\phi (-\rho_2) > \phi (\rho_0) > \phi (-\rho_3) > \phi (\rho_1),
\]
then $(Z,\Ap)$ is a polynomial stability condition with respect to $(0,1]$, and the rank-$(-1)$ semistable objects of trivial determinant are isomorphic in $\dbx$ to \textit{stable pairs} $[\mathcal{O}_X \to F]$ in the sense of Pandharipande--Thomas \cite{pandharipande2009curve} (see Definition \ref{def:PT} below for a precise definition), with $F$ placed at degree $0$ in the derived category.

\item[(ii)] (DT-stability) If the phases of the $\rho_i$ satisfy the ordering
\[
\phi (-\rho_2) > \phi (-\rho_3) > \phi (\rho_0)  > \phi (\rho_1),
\]
then $(Z,\Ap)$ is a polynomial stability condition with respect to $(0,1]$, and the rank-$(-1)$ semistable objects of trivial determinant are isomorphic in $\dbx$ to shifts by 1 of ideal sheaves of 1-dimensional subschemes of $X$.
\end{itemize}

\begin{figure}
  \caption{Arrangements of $\rho_i$ for DT and PT polynomial stability conditions on a threefold.} \label{fig:DTPTstabvec}
  \centering
  \vspace*{0.2in}
  \scalebox{1}{
\begin{tikzpicture}
    \draw[->] (-2,0) -- (2,0);
    \draw[->] (0,-1) -- (0,2);

     \draw[->,thick] (0,0) -- (1.5,0.7);
     \draw[->,thick] (0,0) -- (1.1,1.4142);
     \draw[->,thick] (0,0) -- (-1, 1.7321);
     \draw[->,thick] (0,0) -- (-1.4,1);

     \filldraw (1.5,0.7)  node[right] {$\rho_1$};
     \filldraw (1.1,1.4142)  node[right] {$\rho_0$};
     \filldraw (-1, 1.7321)  node[left] {$-\rho_3$};
     \filldraw (-1.4,1) node[left] {$-\rho_2$};

     \filldraw (0,-1.2) node[below] {DT};

    \draw[->] (3,0) -- (7,0);
    \draw[->] (5,-1) -- (5,2);

     \draw[->,thick] (5,0) -- (6.4142,1.1);
     \draw[->,thick] (5,0) -- (5.7654,1.8478);
     \draw[->,thick] (5,0) -- (4.7,1.7321);
     \draw[->,thick] (5,0) -- (3.9,1);

     \filldraw (6.4142,1.1)  node[right] {$\rho_1$};
     \filldraw (5.7654,1.8478)  node[right] {$-\rho_3$};
     \filldraw (4.7,1.7321)  node[left] {$\rho_0$};
     \filldraw (3.9,1)  node[left] {$-\rho_2$};
    \filldraw (5,-1.2) node[below] {PT};

\end{tikzpicture}
}
\end{figure}
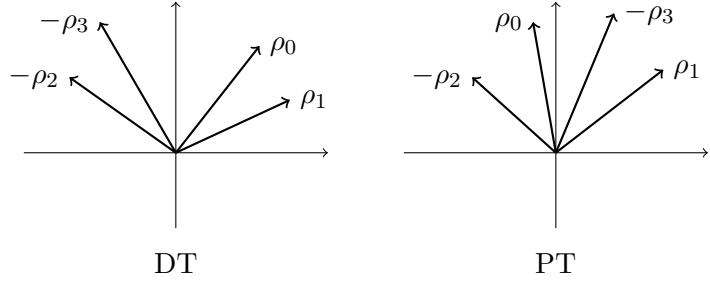

The following definition is motivated by the considerations above.

\begin{definition}
An object $A\in \dbx$ is said to be PT-(semi)stable if it is (semi)stable with respect to a polynomial stability condition $(Z,\Ap)$ where the phases of the coefficients $\rho_i$ as in equation \eqref{eq:dtpt charge} satisfy the ordering $$\phi (-\rho_2) > \phi (\rho_0) > \phi (-\rho_3) > \phi (\rho_1).$$ 
\end{definition}

On the other hand, the large volume limit can also be defined as a polynomial stability condition (see \cite[Section 4]{BayerPBSC} or \cite{Toda1}).  In this article, we will use the definition as given in \cite[Section 6.1]{BMT1}, where the large volume limit is defined as the pair $(Z_{\infty\omega, B}, \Ac)$ where 

\begin{equation*}
Z_{\infty\omega, B}(E) = Z_{m\omega,B}(E) = -\ch_3^B(E) + m^2 \frac{\omega^2}{2}\ch_1^B(E) + im(\omega \ch_2^B(E) - m^2 \frac{\omega^3}{6}\ch_0^B(E)).
\end{equation*}
Note that $(Z_{\infty\omega, B}, \Ap)$ is a polynomial stability condition with respect to $(\tfrac{1}{4}, \tfrac{5}{4})$.


\section{Stable triples} \label{sec:triples} 

On a smooth projective variety $X$, consider the set of triples
\[ \Big\{ (B, P, \xi) : B, P \in \coh (X), \xi \in \Ext^2_{\coh(X)}(P,B) \Big\}.
\]
We aim to show that this set is the object set of a well-defined category.
For the definition of the morphisms between triples, we define $A_\xi:=\mathrm{cone}(\xi)[-1]$, so that we obtain the distinguished triangle in $\dbx$
$$ B[1] \lra A_\xi \lra P \stackrel{\xi}{\lra} B[2]. $$
The key observation now is that the derived category (as well as the homotopy category) is \emph{fully} triangulated, see \cite[Section 3]{AM23}. This means that not only are there distinguished triangles, but there are also \textit{distinguished maps of distinguished triangles} in such a way that they compose. Choices have to be made to make this precise, but for the derived category we can fix this by fixing isomorphisms of the triangles in our object collection to \textit{standard} triangles given by mapping cones. We define a morphism $\psi$ from $(B, P, \xi)$ to $(B', P', \xi')$ to be a distinguished map of triangles
\begin{equation}\label{eq:TXmordef}\begin{split}
\xymatrix{
B[1] \ar[r] \ar[d]^{b[1]} &  A_\xi \ar[r]^{\zeta} \ar[d]^\alpha  & P \ar[r]^\xi \ar[d]^p & B[2] \ar[d]^{b[2]} \\
B'[1] \ar[r]^{\eta'} & A_{\xi'} \ar[r] & P' \ar[r]^{\xi'} & B'[2]
} \end{split} \end{equation}
In particular, the identity morphism on an object $(B,P,\xi)$ is defined to be the diagram
\[ \xymatrix{
B[1] \ar[r] \ar[d]^{1_B[1]} &  A \ar[r] \ar[d]^{1_A}  & P \ar[r]^\xi \ar[d]^{1_P} & B[2] \ar[d]^{1_B[2]} \\
B[1] \ar[r] & A \ar[r] & P \ar[r]^{\xi} & B[2]
} \]
The key observation is that
any two such maps differ by what Neeman in \cite{Neeman91} calls a \emph{lightning strike} (\cite[Axiom 5, Def 3.1]{AM23}). In our context that means that if we have two maps $\psi$ and $\tilde\psi$ of triangles of the form (\ref{eq:TXmordef}) given by $\alpha$ and $\tilde\alpha$ for fixed $b$ and $p$, then $\alpha-\tilde\alpha=\eta' \ell\zeta$ for some map $\ell:P\to B'[1]$.

We will then denote by $Tr(X)$ the category of triples, with objects and morphisms described above, given by distinguished morphisms of triangles. 


Observe that the object $A_\xi$ belongs to the full subcategory $D^{[-1,0]}(X)$ of $\dbx$ consisting of objects $A$ such that $\calh^{p}(A)=0$ for $p\ne-1,0$; however, the assignment $(B, P, \xi)\mapsto A_\xi$ is not functorial. To get around this issue, we consider the full subcategory $Tr^{\text{PT}}(X)$ consisting of triples $(B, P, \xi)$ satisfying the following conditions:
\begin{itemize}
\item[(i)] $B$ is a torsion-free sheaf such that $Q_B=B^{**}/B$ has pure dimension 1;
\item[(ii)] $P$ is a 0-dimensional sheaf.
\end{itemize}
Applying the functor $\Hom(P,\cdot)$ to the standard short sequence \eqref{std sqc} for the sheaf $B$ we obtain the exact sequence
\[ \cdots\longrightarrow \Hom(P,B^{**}/B)\longrightarrow\Ext^1(P,B)\longrightarrow\Ext^1(P,B^{**})\longrightarrow\cdots \]
But $B^{**}/B$ is pure 1-dimension thus $\Hom(P,B^{**}/B)=0$, and $\Ext^1(P,B^{**})=0$ because $B^{**}$ is reflexive and $P$ is 0-dimensional. Hence, $\Hom(P,B'[1])=0$ thus the shifted cone $A_\xi$ is unique up to unique isomorphism and the maps $\alpha$ are uniquely defined.

We then define a functor
$$ \mathfrak{A} : Tr^{\text{PT}}(X) \lra D^{[-1,0]}_{\coh (X)} \quad,\quad \mathfrak{A}(B,P,\xi):=\mathrm{cone}(\xi)[-1]. $$
We denote the image of this functor by $D^{\text{PT}}_{\coh(X)}$, namely the full subcategory of $D^{[-1,0]}_{\coh (X)}$ whose objects $A$ have $\calh^{-1}(A)$ and $\calh^0(A)$ satisfying the conditions in items (i) and (ii) above.

On the other hand, given any object $A \in D^{[-1,0]}_{\coh (X)}$, we have the canonical exact triangle
\[
\mathcal{H}^{-1}(A)[1] \to A \to \mathcal{H}^0(A) \overset{\xi}{\to} \mathcal{H}^{-1}(A)[2]
\]
which then gives a triple $(\mathcal{H}^{-1}(A),\mathcal{H}^0(A),\xi)$ which is an object of the category $Tr(X)$. Given a morphism $\alpha:A\to A'$, the truncation functors with respect to the standard t-structure on $\dbx$ gives a commutative diagram in $\dbx$ 
\[
\xymatrix{
  \mathcal{H}^{-1}(A)[1] \ar[r] \ar[d]^{\mathcal{H}^{-1}(\alpha)[1]} & A \ar[r] \ar[d]^\alpha &  \mathcal{H}^0(A) \ar[r] \ar[d]^{\mathcal{H}^0(\alpha)} &  \mathcal{H}^{-1}(A)[2] \ar[d]{\mathcal{H}^{-1}(\alpha)[2]} \\
  \mathcal{H}^{-1}(A')[1] \ar[r] & A' \ar[r] & \mathcal{H}^0(A') \ar[r] & \mathcal{H}^{-1}(A')[2]
  }
\]
which is well defined on $D^{\text{PT}}_{\coh(X)}$ since $\mathcal{H}^{-1}(A)$ and $\mathcal{H}^{0}(A)$ satisfy the conditions on $B$ and $P$ above. We therefore obtain a morphism between the triples $(\mathcal{H}^{-1}(A),\mathcal{H}^0(A),\xi)$ and $(\mathcal{H}^{-1}(A'),\mathcal{H}^0(A'),\xi')$. In addition, this operation takes an identity morphism in $D^{[-1,0]}_{\coh (X)}$ to an identity morphism in $Tr(X)$ by \cite[Chap.\ IV \S 4, 5. Lemma b)]{GM}, and,
since $\mathcal{H}^{-1},\mathcal{H}^0$ themselves are functors from $\dbx$ to $\coh(X)$, this operation also respects composition of morphisms. Therefore, we define the functor 
$$ \mathfrak{H} : D^{\text{PT}}_{\coh(X)} \lra Tr^{\text{PT}}(X)  \quad,\quad \mathfrak{H}(A):=\big( \calh^{-1}(A),\calh^{0}(A),\xi\big) . $$

\begin{lemma}\label{lem:AG52-29-1}
The functors
\[ \xymatrix{
Tr^{\text{PT}}(X)  \ar@<0.5ex>[r]^{\mathfrak{A}} &  D^{\text{PT}}_{\coh(X)} \ar@<0.5ex>[l]^{\mathfrak{H}}
} \]
induce mutually inverse bijections on the isomorphism classes on both sides.
\end{lemma}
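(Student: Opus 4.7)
The plan is to show that the two compositions $\mathfrak{H}\circ\mathfrak{A}$ and $\mathfrak{A}\circ\mathfrak{H}$ agree with the respective identity functors on objects up to canonical isomorphism; since any functor preserves isomorphisms, this immediately yields mutually inverse bijections on isomorphism classes.

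For $(B,P,\xi)\in Tr^{\text{PT}}(X)$, I would feed the defining triangle
$$ B[1]\lra\mathfrak{A}(B,P,\xi)\lra P\stackrel{\xi}{\lra} B[2] $$
into the long exact sequence of cohomology with respect to the standard $t$-structure on $\dbx$. Since $B$ and $P$ are sheaves concentrated in degree zero, the long exact sequence collapses and yields $\calh^{-1}(\mathfrak{A}(B,P,\xi))\cong B$, $\calh^0(\mathfrak{A}(B,P,\xi))\cong P$, with no other non-vanishing cohomology; in particular $\mathfrak{A}(B,P,\xi)\in D^{\text{PT}}_{\coh(X)}$. The displayed triangle then coincides with the canonical truncation triangle for $\mathfrak{A}(B,P,\xi)$, so the extension class recorded by $\mathfrak{H}$ is precisely $\xi$, yielding $\mathfrak{H}(\mathfrak{A}(B,P,\xi))=(B,P,\xi)$ after the canonical identifications of $\calh^{-1}$ with $B$ and $\calh^0$ with $P$.

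Conversely, for any $A\in D^{\text{PT}}_{\coh(X)}$ the triple $\mathfrak{H}(A)=(\calh^{-1}(A),\calh^0(A),\xi_A)$ is by definition read off from the canonical truncation triangle
$$ \calh^{-1}(A)[1]\lra A\lra\calh^0(A)\stackrel{\xi_A}{\lra}\calh^{-1}(A)[2], $$
exhibiting $A$ itself as a shifted cone of $\xi_A$. Since $\calh^{-1}(A)$ and $\calh^0(A)$ satisfy the PT conditions (i) and (ii) defining $Tr^{\text{PT}}(X)$, the double-dual computation already carried out in the excerpt yields $\Hom(\calh^0(A),\calh^{-1}(A)[1])=0$, so the cone is unique up to unique isomorphism and one obtains a canonical isomorphism $\mathfrak{A}(\mathfrak{H}(A))\simto A$.

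The main delicate point is the identification in the first direction: one must verify that the class extracted from the cohomology triangle of $\mathfrak{A}(B,P,\xi)$ is literally $\xi$ rather than some other class in $\Ext^2(P,B)$. This again reduces to the vanishing $\Hom(P,B[1])=0$, which is precisely the content of conditions (i) and (ii) in the definition of $Tr^{\text{PT}}(X)$; without it $\xi$ could only be recovered modulo this $\Hom$ group. Once this is settled, functoriality of $\mathfrak{A}$ and $\mathfrak{H}$ ensures that they descend to well-defined maps on isomorphism classes, and the compositional identities above guarantee these maps are mutually inverse bijections.
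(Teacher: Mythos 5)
Your proposal is correct and follows essentially the same route as the paper: both directions come down to identifying the defining triangle of the cone with the canonical truncation triangle (the paper cites the standard lemma from Gelfand--Manin for this, where you instead run the cohomology long exact sequence explicitly), and you correctly isolate the vanishing $\Hom(P,B[1])=0$ — established in the paper right before the definition of $\mathfrak{A}$ — as the point that makes the recovered extension class literally equal to $\xi$.
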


\begin{proof}
The claim will follow if we can show that:
\begin{itemize}
\item[(a)] For any object $t=(B, P, \xi) \in Tr^{\text{PT}}_X$, we have $(\mathfrak{H}\circ\mathfrak{A})(t) \cong t$.
\item[(b)] For any object $A \in D^{\text{PT}}_{\coh (X)}$, we have $(\mathfrak{A}\circ\mathfrak{H})(A) \cong A$ in $\dbx$.
\end{itemize}
Statement (a) follows directly from applying \cite[Chap.\ IV \S 4, 5. Lemma b)]{GM} to $\mathfrak{A}(t)$. To see statement (b), set $\mathfrak{H}(A) = (\mathcal{H}^{-1}(A), \mathcal{H}^0(A), \xi)$.  Then $\mathfrak{A}(\mathfrak{H} (A))$ is simply the cone of $\xi:\mathcal{H}^0(A)\to\mathcal{H}^{-1}(A)[2]$, which, as we have seen above, is isomorphic to $A$ in the derived category $\dbx$, and so (b) follows.
\end{proof}


\subsection{Stable triples}

Let us now introduce one of the main characters of this section.

\begin{definition}\label{defn:stable-triple}
A \emph{(semi)stable triple} $(B,P,\eta)$ consists of
\begin{itemize}
\item[(i)] a 2-Gieseker (semi)stable sheaf $B$ such that $B^{**}/B$ has pure dimension 1, whenever non-trivial;
\item[(ii)] a 0-dimensional sheaf $P$;
\item[(iii)] a morphism $\eta\in\Hom_{\dbx}(P,B[2])$, that is \emph{completely non-trivial}, that is, no subsheaf $U\hookrightarrow P$ factors through $A_\eta\to P$, where $A_\eta:={\rm cone}(\eta)[-1]$.
\end{itemize}
When $B$ is $\mu$-stable, the triple $(B,P,\eta)$ is  said to be a $\mu$-stable triple.
\end{definition}

In other words, a (semi)stable triple is an object $(B,P,\eta)$ in $Tr^{\text{PT}}(X)$ such that $B$ is 2-Gieseker (semi)stable and $\eta$ satisfies the complete nontriviality condition.

\begin{remark}\label{rem:fourtripesequiv}
A torsion-free sheaf $B$ with $\gcd(\ch_0(B),\ch_1(B))=1$ is $\mu$-stable if and only if it is $\mu$-semistable, if and only if it is 2-Gieseker (semi)stable. In this case, for a triple $(B,P,\eta)$, the following  notions all coincide:
\begin{itemize}
\item $(B,P,\eta)$ is a $\mu$-stable triple;
\item $(B,P,\eta)$ is a stable triple;
\item $(B,P,\eta)$ is a semistable triple;
\end{itemize}
\end{remark}

Two stable triples $(B,P,\eta)$ and $(B',P',\eta')$ are isomorphic if there are isomorphisms $\beta:B\to B'$ and $\gamma:P\to P'$ such that the following diagram commutes
$$ \xymatrix{
P \ar[d]^\gamma \ar[r]^\eta & B[2] \ar[d]^{\beta} \\
P' \ar[r]^{\eta'} & B'[2]
} $$
equivalently, $A_\eta\simeq A_{\eta'}$.

The condition in item (iii) is meant to exclude the following situation. Assume that $P=\calo_{q_1}\oplus\calo_{q_2}$ for distinct points $q_1,q_2\in X$, so that 
$$ \Hom(P,B[2]) = \Hom(\calo_{q_1},B[2]) \oplus \Hom(\calo_{q_2},B[2]); $$
then $\eta$ can be expressed as a pair $(\eta_1,\eta_2)$ with $\eta_i\in\Hom(\calo_{q_i},B[2])$. The class $\eta$ is completely non-trivial if and only if both $\eta_1$ and $\eta_2$ are non-trivial.

Clearly, if $B$ is a 2-Gieseker (semi)stable sheaf such that $B^{**}/B$ is pure 1-dimensional, then $(B,0,0)$ is a (semi)stable triple. Similarly, if $P\in\coh^{\leq 0}$, then $(0,P,0)$ is also a stable triple. Below is a simple non-trivial example of a stable triple.

\begin{example}\label{eg:reflx-sh-gives-ST}
Let $F$ be a $\mu$-stable reflexive sheaf, and set $A:=F^\vee[1]$. Setting $B:=\calh^{-1}(A)\simeq F^*$ and $P:=\calh^{0}(A)\simeq\inext^1(F,\ox)$, we have the triangle 
$$ B[1] \lra A \lra P \stackrel{\eta}{\lra} B[2] $$
which defines a morphism $\eta\in\Hom(P,B[2])$ whose cone is precisely $A[1]$; we argue that $(B,P,\eta)$ is a stable triple. Indeed, $B$ also a $\mu$-stable reflexive sheaf, so in particular $B$ is 2-Gieseker semistable and $B^{**}/B$ is trivial; in addition, $P$ is a 0-dimensional sheaf. The fact that $\eta$ is completely non-trivial follows from \cite[Proposition 2.15]{JM19}.
\end{example}

\begin{lemma}\label{lem:sup-sing}
If $(B,P,\eta)$ is a triple in $Tr^{\text{PT}}(X)$ satisfying condition (iii) of Definition \ref{defn:stable-triple}, then $\supp(P)\subset\sing(B)$.
\end{lemma}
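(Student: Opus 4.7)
The plan is to argue by contradiction: suppose there exists a point $q\in\supp(P)$ at which $B$ is locally free, and exhibit a subsheaf $\calo_q\hookrightarrow P$ that factors through $A_\eta\to P$, contradicting condition (iii) of Definition \ref{defn:stable-triple}. Building the inclusion $\calo_q\hookrightarrow P$ is automatic: since $P$ is $0$-dimensional and $q\in\supp(P)$, the stalk $P_q$ is a nonzero finite-length $\calo_{X,q}$-module, so its socle contains the residue field at $q$, yielding a sheaf inclusion $\iota:\calo_q\hookrightarrow P$.

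Next, I would translate the ``factors through $A_\eta$'' condition into a concrete vanishing. Applying $\Hom_{\dbx}(\calo_q,-)$ to the defining triangle
\[
A_\eta\lra P\xrightarrow{\ \eta\ } B[2]\lra A_\eta[1]
\]
gives the exact sequence
\[
\Hom_{\dbx}(\calo_q,A_\eta)\lra \Hom_{\dbx}(\calo_q,P)\xrightarrow{\ \eta\circ(-)\ }\Ext^2(\calo_q,B),
\]
so $\iota$ lifts to $A_\eta$ (and hence factors through $A_\eta\to P$) as soon as $\eta\circ\iota=0$. Consequently it is enough to prove that $\Ext^2(\calo_q,B)=0$ whenever $B$ is locally free at $q$, for then the composition $\eta\circ\iota$ vanishes automatically.

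For this vanishing, I would use the local-to-global spectral sequence: since $\calo_q$ is supported at a point, the sheaves $\inext^n(\calo_q,B)$ are $0$-dimensional, so $\Ext^n(\calo_q,B)=H^0(\inext^n(\calo_q,B))$ and only the stalk at $q$ matters. If $B$ is locally free of rank $r$ near $q$ then $\inext^n(\calo_q,B)_q\cong \inext^n(\calo_q,\ox)_q^{\oplus r}$, and the Koszul resolution of the residue field on the smooth threefold $X$ yields $\inext^n(\calo_q,\ox)=0$ for $n\ne 3$. In particular $\inext^2(\calo_q,B)=0$, hence $\Ext^2(\calo_q,B)=0$ as required.

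Combining these steps, $\iota:\calo_q\hookrightarrow P$ factors through $A_\eta\to P$, contradicting the complete non-triviality of $\eta$; therefore every $q\in\supp(P)$ must lie in $\sing(B)$. I do not expect any substantial obstacle: the only nontrivial ingredient is the standard computation $\inext^{<3}(\calo_q,\ox)=0$ on a smooth threefold, which is essentially the Koszul resolution.
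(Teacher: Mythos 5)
Your proof is correct and follows essentially the same strategy as the paper's: assume a point $q\in\supp(P)$ lies outside $\sing(B)$, exhibit a punctual subsheaf of $P$ at $q$, and show it lifts to $A_\eta$ because the obstruction group $\Ext^2(-,B)$ vanishes there, contradicting complete non-triviality. The only cosmetic difference is in how the vanishing is obtained — the paper applies Serre duality to the full summand $P_i$ of $P$ at $x_i$ and uses $\inext^1(B,P_i)_{x_i}=0$, while you restrict to the residue field $\calo_q$ and compute $\inext^{<3}(\calo_q,\ox)=0$ directly from the Koszul resolution; both are valid, and your version sidesteps the (harmless) omission of $\omega_X$ in the paper's duality statement.
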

\begin{proof}
Set $\supp(P)=\{x_1,\dots,x_l\}\subset X$, so that $P=\bigoplus_i P_i$ with $\supp(P_i)=\{x_i\}$. Note that
$$\Ext^2(P_i,B) \simeq \Ext^1(B,P_i)^* \simeq H^0(\inext^1(B,P_i))^* \text{ for each  } i=1,\dots,k , $$
where the first isomorphism is given by Serre duality, while the second one comes from the fact that $\dim(P)=0$.

If any of the points $x_i$ is not in the singular locus of the sheaf $B$, then $\inext^1(B,P_i)_{x_i}=\inext^2(B_{x_i},(P_i)_{x_i})=0$ because $B_{x_i}$ is a free module over the local ring $\calo_{X,x_i}$. It follows that $P_i\into P$ factors through $A_\eta$, contradicting the complete non-triviality hypothesis.  
\end{proof}

We define $\ch(B,P,\eta):=\ch(B)-\ch(P)=\ch(A_\eta[1])$. In particular, the \textit{rank} of a stable triple $(B,P,\eta)$ is defined as the rank of $B$.
Let us recall the notion of a stable pair as originally introduced by Pandharipande and Thomas in \cite{pandharipande2009curve}.

\begin{definition} \label{def:PT}
A pair $(F,s)$ consisting of a sheaf $F$ and a section $s\in H^0(F)$ is called a \textit{PT stable pair} if $F$ is pure 1-dimensional and $\coker\{\ox\stackrel{s}{\to}F\}$ is 0-dimensional. Two pairs $(F,s)$ and $(F',s')$ are isomorphic if there is an isomorphism $\phi:F\to F'$ such that $\phi\circ s=s'$.
\end{definition}

We argue below that stable triples generalize PT stable pairs.

\begin{lemma}\label{lem:rk1ST-PTpairsequiv}
There is a 1-1 correspondence between isomorphism classes of rank 1 stable triples with $\ch_1=0$ and isomorphism classes of PT stable pairs.
\end{lemma}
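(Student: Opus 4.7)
The plan is to establish the bijection via explicit constructions in each direction, using the equivalence from Lemma~\ref{lem:AG52-29-1} between triples and objects in $D^{\text{PT}}_{\coh(X)}$, and identifying the cocone $A_\eta$ with the 2-term complex $[\ox\xrightarrow{s} F]$ via the octahedral axiom. First I observe that a rank 1 stable triple $(B,P,\eta)$ with $\ch_1=0$ has $B$ a rank 1 torsion-free sheaf with $\ch_1(B)=0$; on a Picard rank 1 variety its reflexive hull satisfies $B^{**}\cong\ox$, so $B=I_C$ with $C$ either empty or a Cohen--Macaulay curve (by purity of $B^{**}/B=\mathcal{O}_C$), and $I_C$ is automatically 2-Gieseker stable by Remark~\ref{rem:fourtripesequiv}. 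I also record the vanishing $\Hom(P,\ox[i])=0$ for $i\leq 2$, which follows from $\inext^j(P,\ox)=0$ for $j<3$ (valid since $P$ is 0-dimensional on a smooth threefold) combined with the local-to-global spectral sequence.

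For the forward direction, given $(I_C,P,\eta)$, I rotate the triangle $I_C\to\ox\to\mathcal{O}_C\to I_C[1]$ and apply $\Hom(P,-)$ to produce an isomorphism
$$h_\ast:\Ext^1(P,\mathcal{O}_C)\xrightarrow{\sim}\Ext^2(P,I_C),$$
so $\eta$ lifts uniquely to a class $\tilde\eta\in\Ext^1(P,\mathcal{O}_C)$ that defines an extension $0\to\mathcal{O}_C\to F\to P\to 0$. The composition $s:\ox\twoheadrightarrow\mathcal{O}_C\hookrightarrow F$ then satisfies $\ker(s)=I_C$ and $\coker(s)=P$. To verify that $F$ is pure 1-dimensional, take any nonzero 0-dimensional subsheaf $U\hookrightarrow F$: it must inject into $P$ (otherwise it would embed in the pure 1-dimensional sheaf $\mathcal{O}_C$), and this injection induces a splitting of the extension pulled back along $U\hookrightarrow P$, giving $\tilde\eta|_U=0$ and hence $\eta|_U=0$ via $h_\ast$. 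But then the inclusion $U\hookrightarrow P$ lifts through the canonical triangle $I_C[1]\to A_\eta\to P$, contradicting the complete non-triviality of $\eta$.

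The reverse direction is symmetric: given a PT stable pair $(F,s)$, set $\mathcal{O}_C:=\im(s)$ (automatically Cohen--Macaulay by purity of $F$), $I_C:=\ker(s)$, $P:=\coker(s)$, extract $\tilde\eta\in\Ext^1(P,\mathcal{O}_C)$ from the defining extension of $F$, and let $\eta:=h_\ast(\tilde\eta)$. All sheaf-level conditions on the triple $(I_C,P,\eta)$ are then immediate, and complete non-triviality of $\eta$ follows by reversing the argument above. To see that the two constructions are mutually inverse, I apply the octahedral axiom to the vanishing composition $I_C\hookrightarrow\ox\xrightarrow{s} F$ to obtain a triangle $\mathcal{O}_C\to[\ox\xrightarrow{s} F]\to P\xrightarrow{\tilde\eta}\mathcal{O}_C[1]$ with connecting morphism precisely $\tilde\eta$; composing with the boundary $\mathcal{O}_C[1]\to I_C[2]$ realizes the canonical connecting morphism of $[\ox\xrightarrow{s} F]$ as $\eta=h_\ast(\tilde\eta)$. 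Hence $A_\eta\cong[\ox\xrightarrow{s} F]$ in $D^{\text{PT}}_{\coh(X)}$, and Lemma~\ref{lem:AG52-29-1} promotes this to an isomorphism of triples. At the level of isomorphism classes, I use that on a Picard rank 1 variety any abstract isomorphism $I_C\cong I_{C'}$ extends to an automorphism of $\ox$, which forces $C=C'$.

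The main obstacle is the equivalence ``$\eta$ completely non-trivial $\Leftrightarrow$ $F$ pure 1-dimensional''. This pivots on the $\Ext$-isomorphism $h_\ast$ applied to 0-dimensional subsheaves, which transports the splitting condition on $0\to\mathcal{O}_C\to F\to P\to 0$ restricted to $U$ into exactly the vanishing condition $\eta|_U=0$ characterizing when the inclusion $U\hookrightarrow P$ lifts through $A_\eta\to P$.
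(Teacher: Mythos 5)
Your proof is correct and follows essentially the same route as the paper: both directions hinge on the isomorphism $\Ext^2(P,\cali_C)\cong\Ext^1(P,\calo_C)$ coming from the vanishing of $\Ext^{\le 2}(P,\ox)$, and on translating purity of $F$ into the complete non-triviality of $\eta$ (your octahedral-axiom and extension-splitting details just make explicit what the paper leaves implicit). The one caveat is your claim that $B^{**}\cong\ox$, which fails when $\Pic^0(X)\neq 0$ (e.g.\ abelian threefolds, which are in the paper's scope); the paper instead writes $B=\cali_C\otimes L$ with $L\in\Pic^0(X)$ and discards $L$ via $P\otimes L^{-1}\cong P$, a one-line fix your argument absorbs without change.
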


\begin{proof}
Given a PT stable pair $(F,s)$, note that $\im(s)=\calo_C$ for some pure 1-dimensional subscheme $C\subset X$, thus $\ker(s)=\cali_C$, the corresponding ideal sheaf; the sheaf $P:=\coker(s)$ is 0-dimensional by hypothesis. we obtain the exact sequence
$$ 0\to \cali_C \to \ox \stackrel{s}{\to} F \to P \to 0, $$
which can be regarded as an extension class $\eta\in\Ext^2_{\coh}(P,\cali_C)=\Hom_{D(X)}(P,\cali_C[2])$; we check that this is completely non-trivial: since $A_\eta={\rm cone}(\eta)[-1]=\{\ox\stackrel{s}{\to}F\}$, a subsheaf of $P$ that factors $A_\eta$ would give be a 0-dimensional subsheaf of $F$, which cannot exist because $F$ is pure 1-dimensional. 

Therefore, the PT stable pair $(F,s)$ yields the rank 1 stable triple $(\cali_C,P,\eta)$ with $\ch_1=0$. If $(F',s')$ is isomorphic to $(F,s)$, then 
\begin{itemize}
    \item $\ker(s')\simeq\ker(s)=\cali_C$;
    \item $\coker(s')\simeq\coker(s)$;
    \item $A_\eta\simeq A_{\eta'}$;
\end{itemize}
it follows that $(\cali_C,P,\eta)$ and $(\cali_C,P',\eta')$ are isomorphic.

Conversely, let $(B,P,\eta)$ be a rank 1 stable triple with $\ch_1=0$; since $B$ is a torsion-free sheaf of rank 1, then $B=\cali_C\otimes L$, for some pure 1-dimensional scheme $C\subset X$ and a line bundle $L\in\Pic^0(X)$. Note that
$$ \Hom_{D(X)}(P,\cali_C\otimes L[2]) \simeq \Hom_{D(X)}(P\otimes L^{-1},\cali_C[2]) \simeq \Hom_{D(X)}(P,\cali_C[2]) $$
since $P\otimes L^{-1}\simeq P$. Therefore, we can just disregard the twisting line bundle $L$ and consider instead a triple of the form $(\cali_C,P,\eta)$.

Next, note that 
$$ \Hom_{D(X)}(P,\cali_C[2]) = \Ext^2(P,\cali_C) = \Ext^1(P,\calo_C) $$
since, by Serre duality, $\Ext^i(P,\calo_X)=H^{3-i}(P)^*=0$ for $i\ne3$. This means that $\eta$ can be regarded as an extension of the form
$$ 0 \longrightarrow \calo_C \longrightarrow F \longrightarrow P \longrightarrow 0; $$
it only remains for us to argue that the sheaf $F$ is pure 1-dimensional. Indeed, assume that $Z\hookrightarrow F$ is a 0-dimensional subsheaf; since $\calo_C$ is pure, the composition $Z\hookrightarrow F \onto P$ is a monomorphism, meaning that a subsheaf of $P$ factors through $F$ and thus contradicts the hypothesis of complete non-triviality for $\eta$. The PT stable pair associated with the rank 1 stable triple $(\cali_C,P,\eta)$ will then consist of the sheaf $F$ above, plus the section given by the composition $s:\calo_X\onto\calo_C\hookrightarrow F$, i.e. ${\rm cone}(\eta)[-1]=\{\ox\stackrel{s}{\to}F\}$. If $(\cali_C,P',\eta')$ is isomorphic to $(\cali_C,P,\eta)$, then $A_\eta\simeq A_{\eta'}$, meaning that the corresponding PT stable pairs are isomorphic as well.
\end{proof}

The following alternative formulations of the complete non-triviality condition, namely Definition \ref{defn:stable-triple} item (iii), will be useful later on.  

\begin{lemma}\label{eq:STcond3equivs}
Suppose $(B, P,\eta)$ is a triple in $Tr(X)^{{\text PT}}$
and set $A_\eta:={\rm cone}(\eta)[-1]$. The following four conditions are equivalent:
\begin{itemize}
\item[(iii-a)] No nonzero subsheaf $U \subseteq P$ factors through the canonical map $p : A_\eta \to P$ in $\dbx$.
\item[(iii-b)] No nonzero morphism $U \to P$ in $\dbx$, with $U \in \coh^{\leq 0}(X)$, factors through $p$.
\item[(iii-c)] For any $U \in \coh^{\leq 0}(X)$, the induced map $\overline{p} : \Hom (U, A_\eta) \to \Hom (U, P)$ has $\image (\overline{p})=0$.
\item[(iii-d)] For any $U \in \coh^{\leq 0}(X)$, we have $\Hom_{\dbx}(U,A_\eta)=0$.
\end{itemize}
\end{lemma}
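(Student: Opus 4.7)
The plan is to establish the equivalences via the cycle (iii-d) $\Rightarrow$ (iii-c) $\Rightarrow$ (iii-b) $\Rightarrow$ (iii-a) $\Rightarrow$ (iii-b) $\Rightarrow$ (iii-c) $\Rightarrow$ (iii-d), where most implications are routine and the essential work concentrates in (iii-a) $\Rightarrow$ (iii-b). The central tool throughout is the long exact sequence obtained by applying $\Hom(U,-)$, for an arbitrary $U \in \coh^{\leq 0}(X)$, to the exact triangle $B[1] \to A_\eta \xrightarrow{p} P \xrightarrow{\eta} B[2]$:
\[
\Hom(U, B[1]) \to \Hom(U, A_\eta) \xrightarrow{\overline{p}} \Hom(U, P) \xrightarrow{\eta_\ast} \Hom(U, B[2]).
\]
The definition of $Tr^{\text{PT}}(X)$ requires $B$ to be torsion-free with $B^{**}/B$ pure one-dimensional (or zero), so Lemma \ref{lem:hd1equivconds} yields $\Hom(U, B[1]) = 0$. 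In particular, $\overline{p}$ is injective, which is exactly what makes (iii-c) and (iii-d) equivalent. The implications (iii-b) $\Leftrightarrow$ (iii-c) are then purely formal, as a morphism $U \to P$ factors through $p$ if and only if it lies in $\image(\overline{p})$; and (iii-b) $\Rightarrow$ (iii-a) is trivial since any nonzero subsheaf of the 0-dimensional sheaf $P$ is itself an object of $\coh^{\leq 0}(X)$.

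The substantive direction is (iii-a) $\Rightarrow$ (iii-b). Suppose (iii-a) holds and, for contradiction, that there exists a nonzero morphism $f: U \to P$ with $U \in \coh^{\leq 0}(X)$ and a factorization $f = p \circ g$ for some $g: U \to A_\eta$. The natural way to produce a subsheaf contradicting (iii-a) is to take $V := \image (f) \subseteq P$, but it is not immediately clear that the inclusion $\iota: V \hookrightarrow P$ itself lifts through $p$ (only $f$ is known to factor). To force the lift, I would use the following descent: setting $K := \ker f$ and $\pi: U \twoheadrightarrow V$, the restriction $g|_K$ satisfies $p \circ g|_K = f|_K = 0$, so it lies in the image of $\Hom(K, B[1]) \to \Hom(K, A_\eta)$; but $\Hom(K, B[1])=0$ by a second application of Lemma \ref{lem:hd1equivconds}, since $K \in \coh^{\leq 0}(X)$. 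Hence $g|_K=0$, and from the exact triangle $K \to U \xrightarrow{\pi} V \to K[1]$ the morphism $g$ descends to some $\tilde g: V \to A_\eta$ with $g = \tilde g \circ \pi$. Post-composing with $p$ gives $p \circ \tilde g \circ \pi = \iota \circ \pi$, and since $\pi^\ast: \Hom(V, P) \to \Hom(U, P)$ is injective (its kernel vanishes because $\Hom(K[1], P) = 0$), we conclude $p \circ \tilde g = \iota$. Thus $V$ is a nonzero subsheaf of $P$ factoring through $p$, contradicting (iii-a).

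The main obstacle is precisely this descent step, namely upgrading a factorization $f = p \circ g$ of an arbitrary map to a factorization of the inclusion of its image sheaf; one has to be careful to work in the derived category rather than naively in $\coh(X)$. Once this is carried out, the rest of the proof is routine diagram chasing through long exact sequences. Everything ultimately rests on the vanishing $\Hom(\coh^{\leq 0}(X), B[1]) = 0$, which is exactly the homological-dimension-one property that is built into the definition of $Tr^{\text{PT}}(X)$ via Lemma \ref{lem:hd1equivconds}.
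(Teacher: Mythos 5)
Your proposal is correct and follows essentially the same route as the paper: both arguments rest on the vanishing $\Hom(\coh^{\leq 0}(X),B[1])=0$ coming from $\hd(B)\le 1$ (Lemma \ref{lem:hd1equivconds}), and your descent step for (iii-a) $\Rightarrow$ (iii-b) — killing $g|_K$, descending $g$ to the image sheaf $V$, and using injectivity of $\pi^\ast$ on $\Hom(-,P)$ — is exactly the diagram chase the paper carries out for (iii-a) $\Rightarrow$ (iii-c) with its factorization $U \to \overline{U} \to P$. No gaps.
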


\begin{proof}
Suppose  $(B,P,\eta)$ satisfies (i), (ii), and (iii).  The equivalence of (iii-b) and (iii-c) follows from the definition of $\overline{p}$, while (iii-c) clearly implies (iii-a).  

(iii-a) $\Rightarrow$ (iii-c): Take any morphism $j : U \to P$ where $U \in \coh^{\leq 0}(X)$, and assume  that $j$ factors as 
\[
U \overset{\wt{j}}{\to} A_\eta \overset{p}{\to} P
\]
for some $\wt{j}$ and where $p$ is the canonical map.  Our goal is to show that $j$ must be the zero map.

In order to apply assumption (iii-a), consider the factorisation of $j$ in $\Ac^p$
\begin{equation}\label{AG52-14-1}
  U \overset{j_1}{\longrightarrow} \ol{U} \overset{j_2}{\longrightarrow} P
\end{equation}
where $\ol{U}$ is the image of $j$ in $\Ap$, and $j_1, j_2$ are surjective and injective in $\Ap$, respectively.  Note that \eqref{AG52-14-1} is also a short exact sequence in $\coh^{\leq 0}(X)$ since $\coh^{\leq 0}(X)$ is  a Serre subcategory of $\Ap$.  We will also write $K$ to denote the kernel of $j_1$ in $\Ap$, so that we have
\begin{equation}\label{eq:AG52-14-2}
0 \to K \overset{k}{\longrightarrow} U \overset{j_1}{\longrightarrow} \ol{U} \to 0
\end{equation}
which is a short exact sequence in both $\Ap$ and $\coh^{\leq 0} (X)$.

Consider the  diagram 
\[
\xymatrix{
\Hom (\ol{U}, B[1]) \ar[r] \ar[d] & \Hom (\ol{U}, A_\eta) \ar[r]^{p^{\ol{U}}} \ar[d]^{(j_1)_A} & \Hom (\ol{U},P) \ar[d]^{(j_1)_P} \\
\Hom (U, B[1]) \ar[r] \ar[d] & \Hom (U, A_\eta) \ar[r]^{p^U} \ar[d]^{k_A} & \Hom (U,P) \ar[d]^{k_P} \\
\Hom (K, B[1]) \ar[r] & \Hom (K, A_\eta) \ar[r]^{p^K} & \Hom (K, P)
}
\]
where the columns are obtained by applying the functors  $\Hom (-, B[1]), \Hom (-,A_\eta)$ and $\Hom (-,P)$ to \eqref{eq:AG52-14-2}, and the rows are obtained by applying $\Hom (\ol{U},-), \Hom (U,-)$ and $\Hom (K,-)$ to the canonical exact triangle
\[
B[1] \to A_\eta \to P \to B[2].
\]
The commutativity of the diagram and the exactness of all the rows and columns follow directly from the construction.  Also, all the terms in the leftmost column are zero by Lemma \ref{lem:hd1equivconds}.

Now, we know $j=j_2\circ j_1$, i.e.\ $j=((j_1)_P)(j_2)$.  If we can show that $j_2$ lies in the image of $p^{\ol{U}}$, then it means $j_2$ factors through $p$.  Since $j_2$ is also an injection in $\coh (X)$,  assumption (iii-a)  implies that $j_2$ must be zero, i.e.\ $j$ must be zero, completing the proof.

Since $A_\eta$ and $P$ both lie in the heart $\Ap$, the maps $p^{\ol{U}}, (j_1)_P, (j_1)_A, p^U$ are all injective.  Recalling that $j=(p^U)(\wt{j})$, the claim that $j_2$ lies in the image of $p^{\ol{U}}$ will follow if we can show that $\wt{j}$ lies in the image of $(j_1)_A$ or, equivalently, $\wt{j}$ lies in the kernel of $k_A$.

Furthermore, since $p^K$ is injective, it suffices to show $(p^K \circ k_A)(\wt{j})=0$, i.e.\ $p\wt{j}k=0$.  Now, 
\[
  p\wt{j}k=jk=j_2j_1k=0
\]
since $j_1k=0$, and we are done.

(iii-d) $\Rightarrow$ (iii-b): Suppose $U \in \coh^{\leq 0}(X)$ and $j : U \to P$ is a morphism in $\dbx$ that factors through $p$.  Our goal is to show that $j$ must be zero.

We begin with the short exact sequence of sheaves $0 \to B \to B^{\ast\ast} \to B^{\ast\ast}/B \to 0$.  Considering this as an exact triangle and rotating it, we obtain the exact triangle
\[
B^{\ast\ast}/B \to B[1] \to B^{\ast\ast}[1] \to (B^{\ast\ast}/B) [1].
\]
That $B^{\ast\ast}/B$ is reflexive implies $\Hom (U, (B^{\ast\ast}/B)[1])\cong \Ext^1 (U,B^{\ast\ast}/B)=0$ \cite[Lemma 3.3(ii)]{Lo17}, while $\Hom (U,B^{\ast\ast}/B)=0$ by (i).  Therefore, $\Hom (U, B[1])=0$.  Applying $\Hom (U,-)$ to the exact triangle $B[1]\to A_\eta \to P \to B[2]$ then shows the induced map 
\[
 \overline{p} : \Hom (U,A_\eta) \to \Hom (U,P)
\]
is an injection.  Our assumption that $j$ factors through $p$ means $j$ lies in the image of $\overline{p}$, while (iii-d) implies $\Hom (U,A_\eta)=0$.  Hence $j=0$ as wanted.

(iii-b) $\Rightarrow$ (iii-d): Suppose (iii-b) holds.  Take any $U \in \coh^{\leq 0}(X)$ and any $g \in \Hom (U, A_\eta)$.  Now $pg$ is a morphism $U \to P$ that factors through $p$ and so, by (iii-d), we have $pg=0$.  The same argument as in the previous paragraph shows $\overline{p}$ is injective.  Since $0=pg=\overline{p}(g)$, it follows that $g=0$.
\end{proof}

\begin{remark}
By \cite[Example 5.2(a)]{Lo17}, condition (iii-d) above allows us to think of stable triples as objects lying in the category $\mathcal{E}_0$ defined in \cite[5.3]{Lo17}; it then follows from \cite[Theorem 5.9]{Lo17} that we can write the subcategory of stable triples in $\dbx$ as a disjoint union indexed by surjections of the form $\mathcal{E}xt^1(A,\mathcal{O}_X) \to Q$ in $\coh (X)$ where $A$ is a coherent sheaf of homological dimension at most 1, and $Q$ is a 0-dimensional sheaf.
\end{remark}


\subsection{Standard representation of triples}
Given a triple $(B,P,\eta)$ in $Tr(X)^{{\text PT}}$, the associated object $A_\eta$ can be represented by a two-step complex $F\xrightarrow{\phi} T$ for some coherent sheaves $F$ and $T$ so that $\ker(\phi)=B$ and $\coker(\phi)=P$. Are there some standard choices for $F$ and $T$, like in the rank 1 case (in which $F=\ox$ and $T$ is pure 1-dimensional, as in the proof of Lemma \ref{lem:rk1ST-PTpairsequiv})? The answer is positive at least under the following vanishing condition.

\begin{lemma} \label{lem:presentation}
Let $(B,P,\eta)$ be a triple in $Tr(X)^{{\text PT}}$.
If $\Ext^2(P,B^{**})=0$, then there is a pure 1-dimensional sheaf $T$ and a morphism $\phi\in\Hom(B^{**},T)$ such that $A_\eta\simeq\{B^{**}\xrightarrow{\phi} T\}$.
\end{lemma}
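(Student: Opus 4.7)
The plan is to construct $T$ as the sheaf extension of $P$ by $Q_B$ obtained from a lift of $\eta$ along the canonical double-dual sequence, and then to identify $A_\eta$ with the cone of a morphism $B^{**}\to T$ by means of the octahedral axiom.

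The canonical sequence $0\to B\to B^{**}\to Q_B\to 0$ yields an exact triangle $B\to B^{**}\to Q_B\xrightarrow{\delta}B[1]$ in $\dbx$.  Applying $\Hom(P,-)$ to its shift and using the hypothesis $\Ext^2(P,B^{**})=0$, the morphism $\eta\in\Hom(P,B[2])$ lifts to some $\tilde\eta\in\Hom(P,Q_B[1])=\Ext^1(P,Q_B)$ with $\eta=\delta[1]\circ\tilde\eta$.  This class $\tilde\eta$ defines a short exact sequence $0\to Q_B\to T\to P\to 0$ in $\coh(X)$, and the octahedral axiom applied to the factorisation $\eta=\delta[1]\circ\tilde\eta$ — combined with the identifications $\mathrm{cone}(\tilde\eta)\simeq T[1]$, $\mathrm{cone}(\delta[1])\simeq B^{**}[2]$, and $\mathrm{cone}(\eta)\simeq A_\eta[1]$ — yields a distinguished triangle
\[ B^{**}\xrightarrow{\phi}T\to A_\eta\to B^{**}[1]. \]
Since both $B^{**}$ and $T$ are sheaves, $\phi$ is necessarily a morphism in $\coh(X)$, and $A_\eta$ is represented by the two-term complex $\{B^{**}\xrightarrow{\phi}T\}$ concentrated in degrees $-1$ and $0$, with $\ker\phi=B$ and $\coker\phi=P$.

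It remains to verify that $T$ is pure 1-dimensional.  Suppose $Z\hookrightarrow T$ is a nonzero 0-dimensional subsheaf.  Its composition with the quotient $T\to P$ cannot vanish, for otherwise $Z$ would embed into the pure 1-dimensional sheaf $Q_B$.  The octahedral compatibility identifies the composition $T\to A_\eta\xrightarrow{p}P$ with the canonical projection $T\to P$ coming from the defining extension of $T$, so the nonzero morphism $Z\to P$ would factor through $p:A_\eta\to P$, contradicting the complete non-triviality of $\eta$ in the form of Lemma \ref{eq:STcond3equivs}(iii-b).

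The main obstacle is the octahedral bookkeeping in the last step: one must trace the octahedral diagram carefully to ensure that the arrow $T\to A_\eta$ produced by the axiom, post-composed with $p:A_\eta\to P$, equals the canonical quotient $T\to P$ of the defining extension of $T$.  Once this compatibility is secured, both the two-term presentation of $A_\eta$ and the purity of $T$ follow; the first two steps are formal consequences of the double-dual sequence and the octahedral axiom.
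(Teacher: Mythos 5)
Your proof is correct and follows essentially the same route as the paper: lift $\eta$ through the surjection $\Ext^1(P,Q_B)\twoheadrightarrow\Ext^2(P,B)$ supplied by the hypothesis $\Ext^2(P,B^{**})=0$, realise $T$ as the resulting extension of $P$ by $Q_B$, deduce purity of $T$ from the complete non-triviality of $\eta$, and present $A_\eta$ as the cone of a map $B^{**}\to T$. The only differences are cosmetic: you invoke the octahedral axiom to justify the final identification (which the paper merely asserts by exhibiting $\phi$ as the composition $B^{**}\onto Q_B\hookrightarrow T$), and you bypass the paper's auxiliary vanishing $\Ext^1(P,B^{**})=0$ by using only surjectivity rather than the full isomorphism $\Ext^2(P,B)\simeq\Ext^1(P,Q_B)$.
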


Note that the hypothesis holds when $B^{**}$ is locally free, or, more generally, when $\supp(P)\cap\supp(\inext^1(B^{**}),\ox)=\emptyset$.

\begin{proof}
First, note that if $F$ is a reflexive sheaf and $\dim(P)=0$, then $\Ext^1(P,F)=0$. Indeed, recall that $F$ admits a locally free resolution of the form $0\to L_1 \to L_0 \to F$. Applying the functor $\Hom(P,\cdot)$ to this sequence, we obtain
$$ \Ext^1(P,L_0) \longrightarrow \Ext^1(P,F) \longrightarrow \Ext^2(P,L_1); $$
since $Ext^q(P,L_i)\simeq H^0\big(\inext^q(P,\ox)\otimes L_i\big)=0$ when $q<3$ because $P$ is 0-dimensional, we obtain the desired vanishing.

Now apply the functor $\Hom(P,\cdot)$ to the sequence $0\to B\to B^{**}\to Q_B\to 0$ to conclude that $\Ext^2(P,B)\simeq\Ext^1(P,Q_B)$ since $\Ext^2(P,B^{**})=0$. This means that $\eta\in\Ext^2(P,B)$ corresponds to an extension class $\tilde\eta\in\Ext^1(P,Q_B)$, providing an exact sequence $0\to Q_B \to T \to P\to 0$; the fact that $\eta$ is completely non-trivial implies that $T$ is pure 1-dimensional, since $Q_B$ is pure 1-dimensional and any 0-dimensional subsheaf of $T$ would also be a subsheaf of $P$.

Therefore, the composition $\phi:B^{**}\onto Q_B \hookrightarrow T$, has the desired properties.
\end{proof}

Conversely, let $(E,T,\phi)$ be a triple consisting of a torsion-free sheaf $E$ with $\hd(E)\le1$, a pure 1-dimensional sheaf $T$, and a morphism $\phi\in\Hom(E,T)$ such that $\coker(\phi)$ is 0-dimensional. Similar triples (with other conditions on the sheaves $E$ and $T$ and on the morphism $\phi$) have long appeared in the literature under the nomenclature of \textit{holomorphic triples}, see for instance \cite{BGPG,RH}, and can be considered as a generalization of PT stable pairs, compare with Definition \ref{def:PT} above.

Considering $A:=\{E\stackrel{\phi}{\to}T\}$ as an object in $\dbx$, set $B:=\calh^{-1}(A)\simeq\ker(\phi)$ and $P:=\calh^0(A)\simeq\coker(\phi)$; in addition, the exact sequence
$$ 0\lra B \lra E \stackrel{\phi}{\lra} T \lra P \lra 0 $$
defines a class $\eta\in\Ext^2(P,B)$. We argue that $(B,P,\eta)$ is an object in $Tr^{{\text PT}}(X)$. 

Indeed, $\dim(P)=0$ by hypothesis. Let $I:=\im(\varphi)$ and note that, since $I\hookrightarrow T$, $I$ is also pure 1-dimensional; we also have the commutative diagram
$$ \xymatrix{
& 0\ar[d] & 0\ar[d] & & \\
0\ar[r] & B \ar[r]\ar[d] & E \ar[r]\ar[d] & I \ar[r] & 0 \\
 & B^{**} \ar[r]^{\sim}\ar[d] &  E^{**} \ar[d] & & \\
 & Q_B \ar[r]\ar[d] & Q_E \ar[d] & & \\
 & 0 & 0 & &
} $$
Therefore, we obtain the short exact sequence $0\to I \to Q_B \to Q_E \to0$; since both $I$ and $Q_E$ are pure 1-dimensional, then $Q_B$ is also pure 1-dimensional. 

Note, in addition, that $\eta$ is completely nontrivial: if $U$ is a subsheaf of $P$, then it cannot lift to $A$ precisely because $T$ is pure 1-dimensional. 

Finally, since $B^*\simeq E^*$ if $E$ is $\mu$-stable, then $B$ is also $\mu$-stable and $(B,P,\eta)$ is a stable triple.


\subsection{Constructing  stable triples}

In \cite{Lo17}, the second author describes a method for constructing all \textit{PT stable objects}, which are slightly different from stable triples but coincide with them under a coprime condition on degree and rank (see Section \ref{sec:PT} for precise comparisons between them). 
In this subsection, we describe how the constructions in \cite{Lo17} can be used to construct all stable triples.  The approach in this section is different from that of Lemma \ref{lem:presentation}, in that we do not try to obtain a standard representation of stable triples.  We note that attempts at obtaining standard representations of PT stable objects can be found in works including \cite[Section 3.2]{toda2020hall} and \cite[Section 6.8]{Lo17}. We  recall:

\begin{lemma}\cite[Lemma 3.3]{Lo17}\label{lem:Lo17lem3p3}
Let $X$ be a smooth projective threefold.  Suppose $E \in \coh^{\geq 2}(X)$.  Then
\begin{itemize}
    \item[(i)] $E$  has homological dimension 1 if and only if $\Hom_{\dbx}(\coh^{\leq 0}(X), E[1])=0$.
    \item[(ii)] If $E$ is torsion-free, then $E$ is reflexive if and only if $\Hom_{\dbx}(\coh^{\leq 1}(X),E[1])=0$.
\end{itemize}
\end{lemma}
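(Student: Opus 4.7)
My approach treats the two parts separately, leaning on Serre duality together with a locally free resolution in one direction, and on local depth computations via Auslander--Buchsbaum in the other.

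For part (i), the forward direction takes a length-one locally free resolution $0 \to L_1 \to L_0 \to E \to 0$. Applying $\Hom (T,-)$ for $T \in \coh^{\le 0}(X)$ and invoking Serre duality on the smooth threefold gives $\Ext^j(T, L_i) \cong H^{3-j}(X, L_i^\vee \otimes T \otimes \omega_X)^\vee$, which vanishes for $j \ne 3$ since $L_i^\vee \otimes T \otimes \omega_X$ is $0$-dimensional; the long exact sequence then yields $\Ext^1(T, E) = 0$. Conversely, the hypothesis $\Hom_{\dbx}(\coh^{\le 0}(X), E[1])=0$ specialises to $\Ext^1(\calo_x, E)=0$ at each closed point $x$, while $E \in \coh^{\ge 2}(X)$ forces $\Hom (\calo_x, E)=0$. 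Localising at $x$, these two vanishings give $\mathrm{depth}(E_x) \ge 2$, and Auslander--Buchsbaum yields $\hd(E) \le 1$.

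For the reverse direction of (ii), suppose $E$ is not reflexive; then $Q_E := E^{**}/E$ is a nonzero sheaf supported in codimension $\ge 2$, so $Q_E \in \coh^{\le 1}(X)$. The canonical short exact sequence $0 \to E \to E^{**} \to Q_E \to 0$ represents a nonzero class in $\Ext^1(Q_E, E)$, since any splitting would embed the torsion sheaf $Q_E$ into the torsion-free sheaf $E^{**}$. Hence $\Hom_{\dbx}(\coh^{\le 1}(X), E[1]) \ne 0$.

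For the forward direction of (ii), assume $E$ reflexive. Standard codimension estimates for the local Ext sheaves of reflexive sheaves ($\mathrm{codim}\,\inext^q(E,\calo_X) \ge q+2$ for $q \ge 1$) force $\inext^q(E, \calo_X)=0$ for $q \ge 2$ and $\inext^1(E,\calo_X)$ to be $0$-dimensional; in particular $\hd(E) \le 1$ and the dual resolution reads $0 \to E^* \to L_0^\vee \to L_1^\vee \to \inext^1(E,\calo_X) \to 0$. For $T \in \coh^{\le 1}(X)$, Serre duality reduces the desired vanishing $\Ext^1(T,E)=0$ to $\Ext^2(E, T \otimes \omega_X)=0$, which I will control via the local-to-global spectral sequence
\[
E_2^{p,q} = H^p\bigl(\inext^q(E, T \otimes \omega_X)\bigr) \Longrightarrow \Ext^{p+q}(E, T \otimes \omega_X).
\]
Right-exactness of tensor product identifies $\inext^1(E, T \otimes \omega_X) \cong \inext^1(E,\calo_X) \otimes T \otimes \omega_X$, which is $0$-dimensional, while $\inhom (E, T \otimes \omega_X)$ is supported in dimension $\le 1$ and $\inext^q(E, T \otimes \omega_X) = 0$ for $q \ge 2$. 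All three contributions with $p+q=2$ therefore vanish for dimensional reasons. The main obstacle is correctly handling the local Ext sheaves for a \emph{general} $1$-dimensional $T$ (as opposed to, say, $T = \calo_Z$ on a curve); the remaining three implications reduce to Serre duality or a one-line diagram chase.
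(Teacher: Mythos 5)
The paper does not actually prove this lemma---it is imported verbatim from \cite[Lemma 3.3]{Lo17}---so there is no internal argument to compare against; your proof has to stand on its own, and it does. All four implications check out. In (i), the forward direction via Serre duality on the terms of a length-one locally free resolution is fine, and for the converse the global vanishing $\Ext^1(\calo_x,E)=0$ does localise correctly (the local-to-global spectral sequence degenerates because $\inext^q(\calo_x,E)$ is punctual, so it computes $\Ext^1_{\calo_{X,x}}(k(x),E_x)$), whence the depth bound and Auslander--Buchsbaum give $\hd(E)\le 1$. In (ii), the non-split extension $0\to E\to E^{**}\to Q_E\to 0$ with $Q_E\in\coh^{\leq 1}(X)$ settles one direction in a line. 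For the other, the point you flag as the ``main obstacle'' is in fact already resolved by your own argument: once reflexivity gives $\codim\inext^q(E,\calo_X)\ge q+2$ for $q\ge 1$, hence on a threefold a resolution $0\to L_1\to L_0\to E\to 0$ with $\inext^1(E,\calo_X)$ zero-dimensional and $\inext^{\ge 2}(E,\calo_X)=0$, the identification $\inext^1(E,T\otimes\omega_X)\cong\inext^1(E,\calo_X)\otimes T\otimes\omega_X$ follows from right-exactness of $-\otimes(T\otimes\omega_X)$ applied to $\coker(L_0^\vee\to L_1^\vee)$, and this works for an arbitrary coherent $T$---no special structure on $T$ is needed. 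Every $E_2^{p,q}$ contributing to $\Ext^2(E,T\otimes\omega_X)$ then vanishes for dimension reasons and Serre duality finishes. One cosmetic remark: as literally stated (``homological dimension 1'') part (i) would exclude locally free sheaves, for which the $\Hom$-vanishing also holds; the statement you actually prove, and the one the paper uses (e.g.\ in Lemma \ref{lem:hd1equivconds}), is $\hd(E)\le 1$.
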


To state Proposition \ref{prop:enum-stabletriples-1} below, we need some notation and constructions from \cite{Lo17}.  Consider the categories
\begin{align*}
    \mathcal{E} &= \{ E \in D^{[-1,0]}_{\coh (X)} : \mathrm{hd}(H^{-1}(E)) \leq 1, H^0(E) \in \coh(X)^{\leq 0}(X) \}, \\
    \mathcal{E}_0 &= \{ E \in \mathcal{E} : \Hom_{\dbx} (\coh^{\leq 0}(X), E)=0 \}.
\end{align*}
Additionally, let $\mathcal{M}or(\coh(X))$ denote the category where the objects are morphisms $A \to B$ in $\coh(X)$, and morphisms are commutative diagrams.  Then let $\mathcal{M}or(\coh(X))/\thicksim$ denote the set of isomorphism classes in $\mathcal{M}or (\coh (X))$.  Let us also write $C_{3,1}^s$ (respectively\ $C_{3,1}^{ss}$) to denote the full subcategory of $\coh (X)$ consisting of 2-Gieseker stable (respectively\ 2-Gieseker semistable) sheaves.

\begin{construction}\label{constr:enum-stabtriple-2}
Suppose $E$ is an object in $\mathcal{E}$.  We have the canonical exact triangle in $\dbx$
\[
\calh^{-1}(E)[1] \to E \to \calh^0(E) \overset{w}{\to} \calh^{-1}(E)[2].
\]
Applying $(-)^\vee [2]$ gives the exact triangle
\[
\calh^0(E)^\vee [2] \to E^\vee [2] \to \calh^{-1}(E)^\vee [1] \overset{w^\vee [3]}{\longrightarrow} \calh^0(E)^\vee [3]
\]
where $\calh^{-1}(E)^\vee [1] \in D^{[-1,0]}_{\coh (X)}$ and $\calh^0(E)^\vee [3] \in \coh^{\leq 0}(X)$.  We denote the morphism $w^\vee [3]$ in $\dbx$ by $\widetilde{F}(E)$.
\end{construction}

\begin{construction}\label{constr:pre-stabtriple}
Suppose $B$ is a torsion-free sheaf on $X$ of homological dimension at most 1.  We have the canonical map in $\dbx$
\[
  B^\vee [1] \overset{c}{\to}  \calh^0(B^\vee [1]) = \mathcal{E}xt^1 (B,\calo_X).
\]
For any 0-dimensional sheaf quotient
\[
\mathcal{E}xt^1 (B,\calo_X) \overset{r}{\onto} Q,
\]
we can take the mapping cone $C$ of the composition $rc$ to form the exact triangle
\[
  B^\vee [1] \overset{rc}{\to} Q \to C \to B^\vee [2].
\]
Applying the composite functor $-^\vee \circ [-3]$ then gives the exact triangle
\begin{equation}\label{eq:enum-triples-1}
  B[1] \to C^\vee [3] \to Q^\vee [3] \overset{\eta}{\to} B[2]
\end{equation}
where $Q^\vee [3]$ is a 0-dimensional sheaf sitting in degree 0 in $\dbx$.
\end{construction}

Putting together Constructions \ref{constr:enum-stabtriple-2} and \ref{constr:pre-stabtriple}, we obtain the following characterization of semistable triples.

\begin{proposition}\label{prop:enum-stabletriples-1}
Let $X$ be a smooth projective threefold.  
\begin{enumerate}
    \item[(i)] Suppose $B$ is a 2-Gieseker (semi)stable sheaf on $X$ with homological dimension at most 1,  $r : \mathcal{E}xt^1 (B,\calo_X) \to Q$ is an epimorphism of sheaves where $Q$ is supported in dimension 0, and $\eta$ is as in Construction \ref{constr:pre-stabtriple}.  Then $(B,Q^\ast, \eta)$ is a (semi)stable triple.
    \item[(ii)]  Suppose $(B, P, \eta)$ is a (semi)stable triple.  Let $A_\eta = \mathrm{cone}(\eta)[-1]$.  Then $\calh^0(\widetilde{F}(A_\eta)) : \mathcal{E}xt^1 (B,\calo_X) \to P^\ast$ is an epimorphism of sheaves.
\end{enumerate}
\end{proposition}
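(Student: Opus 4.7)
The plan is, in part (i), to verify directly each of the conditions in Definition \ref{defn:stable-triple}. The 2-Gieseker (semi)stability of $B$ is a hypothesis; the purity of $B^{\ast\ast}/B$ (when non-zero) follows from $\hd (B) \leq 1$ via Lemma \ref{lem:hd1equivconds}; and the sheaf $P := Q^\vee[3] \cong \mathcal{E}xt^3(Q,\ox)$ is 0-dimensional since $Q$ is. The remaining, substantive step is verifying complete non-triviality of $\eta$. I would use the reformulation (iii-d) of Lemma \ref{eq:STcond3equivs} and show $\Hom_{\dbx}(U, A_\eta) = 0$ for every $U \in \coh^{\leq 0}(X)$, where $A_\eta = C^\vee[3]$ as constructed. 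Tensor-Hom adjunction gives $\Hom(U, C^\vee[3]) \cong \Hom(C, U^\vee[3]) = \Hom(C, U^\ast)$, so the question reduces to a calculation of $\Hom(C, U^\ast)$.

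To compute it I would apply $\Hom(-, U^\ast)$ to the defining triangle $B^\vee[1] \xrightarrow{rc} Q \to C \to B^\vee[2]$. The term $\Hom (B^\vee[2], U^\ast)$ vanishes for degree reasons (since $B^\vee[2]\in D^{[-2,-1]}$ while $U^\ast$ is a sheaf), so it is enough to prove that $(rc)^\ast : \Hom(Q, U^\ast) \to \Hom(B^\vee[1], U^\ast)$ is injective. Writing $(rc)^\ast = c^\ast \circ r^\ast$, the map $r^\ast$ is injective because $r$ is a surjection of coherent sheaves, and $c^\ast$ is injective because applying $\Hom(-, U^\ast)$ to the canonical truncation triangle $B^\ast[1]\to B^\vee[1]\xrightarrow{c}\mathcal{E}xt^1(B,\ox)\to B^\ast[2]$ exhibits $\ker(c^\ast)$ as a quotient of $\Hom(B^\ast[2], U^\ast) = 0$, vanishing again for degree reasons.

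For part (ii), starting from the canonical triangle $B[1]\to A_\eta \to P \xrightarrow{\eta} B[2]$, I would dualize and rotate to obtain the triangle $P^\vee \to A_\eta^\vee \to B^\vee[-1] \to P^\vee[1]$, whose last arrow is, up to sign, $\eta^\vee[1]$. A routine cohomology calculation using $P^\vee$ concentrated in degree $3$ with value $P^\ast$, and $B^\vee[-1]$ with $\calh^1 = B^\ast$ and $\calh^2 = \mathcal{E}xt^1(B, \ox)$, identifies $\calh^3(A_\eta^\vee)$ with $\coker(\delta)$, where $\delta : \mathcal{E}xt^1(B, \ox) \to P^\ast$ is the connecting morphism. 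A direct inspection of Construction \ref{constr:enum-stabtriple-2} shows $\delta = \calh^0(\eta^\vee[3]) = \calh^0(\wt{F}(A_\eta))$ up to a harmless sign, so the proposition reduces to the vanishing $\calh^3(A_\eta^\vee) = 0$.

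I would establish this vanishing by contradiction. If $T := \calh^3(A_\eta^\vee) \neq 0$, then $T$ is a non-zero 0-dimensional sheaf. Applying $(-)^\vee$ to the canonical truncation triangle $\tau_{\leq 2}(A_\eta^\vee) \to A_\eta^\vee \to T[-3]$ and using biduality $(A_\eta^\vee)^\vee \cong A_\eta$ together with $(T[-3])^\vee = T^\ast$ produces an exact triangle $T^\ast \to A_\eta \to (\tau_{\leq 2}(A_\eta^\vee))^\vee$. Since $(-)^\vee$ is an anti-equivalence, hence faithful, and the projection $A_\eta^\vee \to T[-3]$ is non-zero, the leftmost morphism $T^\ast \to A_\eta$ is non-zero. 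This contradicts the complete non-triviality of $\eta$ in the form (iii-d) of Lemma \ref{eq:STcond3equivs} applied with $U = T^\ast$, forcing $T = 0$. The main obstacle I anticipate is the bookkeeping needed to verify that the connecting map $\delta$ really agrees with $\calh^0(\wt{F}(A_\eta))$ prescribed by Construction \ref{constr:enum-stabtriple-2}, and that the various dualizations interact correctly with the t-structure truncations; once these compatibilities are pinned down, the logical skeleton of the argument is clean.
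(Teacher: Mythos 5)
Your proof is correct, but it takes a genuinely different route from the paper's. The paper disposes of both parts by citing prior work of the second author: part (i) invokes \cite[Lemma 5.7]{Lo17} to see that Construction \ref{constr:pre-stabtriple} lands in the category $\mathcal{E}_0$ (i.e.\ $\Hom(\coh^{\leq 0}(X), A_\eta)=0$, which is complete non-triviality in the form (iii-d) of Lemma \ref{eq:STcond3equivs}), and part (ii) invokes \cite[Lemma 5.1]{Lo17} for the surjectivity of $\calh^0(\widetilde{F}(A_\eta))$. You instead reprove the substance of those two lemmas from scratch: in (i) you reduce $\Hom(U,C^\vee[3])$ to $\Hom(C,U^\ast)$ by derived duality and kill it via the long exact sequence of the defining triangle, using the injectivity of $r^\ast$ and $c^\ast$ and the vanishing $\Hom(B^\vee[2],U^\ast)=0$ (valid because $\hd(B)\leq 1$ puts $B^\vee$ in $D^{[0,1]}$); in (ii) you identify the failure of surjectivity with $\calh^3(A_\eta^\vee)$ and rule it out by dualizing the top truncation, which would otherwise produce a nonzero map from a $0$-dimensional sheaf into $A_\eta$, contradicting (iii-d). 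I verified the key compatibilities you flag as potential obstacles: the connecting map $\delta$ is indeed $\calh^0(\eta^\vee[3])=\calh^0(\widetilde{F}(A_\eta))$ up to sign (a shift does not change the induced map on cohomology sheaves, and the sign is harmless for the cokernel), and $T^\vee[3]=T^\ast$ is a genuine $0$-dimensional sheaf in degree $0$ with $\pi^\vee\neq 0$ by faithfulness of $(-)^\vee$. The trade-off is the usual one: the paper's proof is shorter but outsources the content to \cite{Lo17}, while yours is self-contained and makes the role of the homological-dimension hypothesis and of condition (iii-d) completely transparent.
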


\begin{proof}
(i) By Lemma \ref{lem:hd1equivconds}, the quotient $B^{\ast\ast}/B$ is pure of dimension 1.  By \cite[Lemma 5.7]{Lo17} and its proof, Construction \ref{constr:pre-stabtriple} gives  an object $E \in\mathcal{E}_0$ such that the morphism of sheaves $H^0(\widetilde{F}(G))$ is isomorphic to $r$ in $\mathcal{M}or(\coh(X))$; moreover, $E$ fits in an exact triangle
\[
B [1] \to E \to Q^\ast \overset{\eta}{\to} B[2].
\]
Since $E$ lies in $\mathcal{E}_0$, we have $\Hom (\coh^{\leq 0}(X), E)=0$.  Note that   $E$ is an object of $Tr(X)^{{\text PT}}$.  By Lemma \ref{eq:STcond3equivs}, the extension class $\eta \in \Ext^2(Q^\ast, B)$ satisfies complete nontriviality, and so the triple $(B, Q^\ast, \eta)$ is a (semi)stable triple.  

(ii) By Lemma \ref{eq:STcond3equivs}, we have $\Hom (\coh^{\leq 0}(X), A_\eta)=0$. Since $B^{\ast\ast}/B$ is pure 1-dimensional, we have $\mathrm{hd}(B)\leq 1$ by Lemma \ref{lem:hd1equivconds}.  Hence $A_\eta \in \mathcal{E}_0$.  That $H^0(\widetilde{F}(A_\eta))$ is a surjective morphism of sheaves then follows from \cite[Lemma 5.1]{Lo17}.
\end{proof}

\begin{remark}\label{rem:AG54-17-1}
Proposition \ref{prop:enum-stabletriples-1} says that there is a surjective function from the set $(B,P,\eta)$ of all stable (respectively\ semistable) triples to the set of pairs $(B,r)$ in $C_{3,1}^s \times (\mathcal{M}or(\coh(X))/\thicksim)$ (respectively\ $C_{3,1}^{ss} \times (\mathcal{M}or(\coh(X))/\thicksim$)) satisfying
\begin{itemize}
\item[(a)] $B$ is a 2-Gieseker stable (respectively\ semistable) sheaf such that $B^{\ast\ast}/B$ is pure 1-dimensional;
\item[(b)]  $r : \mathcal{E}xt^1 (B,\calo_X) \to Q$ is a surjective morphism in $\coh (X)$ with $Q \in \coh^{\leq 0}(X)$.
\end{itemize}
In particular, part (i) of Proposition \ref{prop:enum-stabletriples-1} says we can construct (semi)stable triples by starting with a pair $(B,r)$ of the form above and applying Construction \ref{constr:pre-stabtriple}.  By \cite[6.2.3]{Lo17}, we obtain all the (semi)stable triples this way.
\end{remark}


\section{PT stability} \label{sec:PT}

In this section, we establish connections among the following on a threefold:
\begin{itemize}
    \item[(i)] PT-stability;
    \item[(ii)] the large volume limit;
    \item[(iii)] vertical stability;
    \item[(iv)] stable triples.
\end{itemize}

Recall that approximations of vertical stable objects $E$ in terms of the cohomology sheaves $\mathcal{H}^i(E)$ were stated in Theorem \ref{vert-3.1} and Theorem \ref{vert-3.2}.  The definition of stable triples in Section \ref{sec:triples} gave a more concise way to think of these approximations.

In this section, we take a different approach. Instead of studying vertical stability by characterizing its cohomology sheaves with respect to the standard t-structure, we study vertical stability itself as a stability condition.  We first show that vertical (semi)stability coincides with $\sigma_{\infty,B}$-(semi)stability (Theorem \ref{thm:AG53-39-1}).  Here, $\sigma_{\infty,B}$ is a class of polynomial stability conditions that generalizes the \textit{large volume limit} as defined in \cite[Section 6.1]{BMT1}.    

Then, we show that when the $B$-field is sufficiently negative, $\sigma_{\infty,B}$-(semi)stability coincides with PT-(semi)stability (Theorem \ref{thm:PTislimtstab-eff}).  This gives us the  equivalences of polynomial stability conditions
\[
\text{PT-stability}\,\, \Leftrightarrow \,\, \text{$\sigma_{\infty, B}$-stability for $B \ll 0$} \,\, \Leftrightarrow \,\, \text{vertical stability for $B\ll 0$}
\]
as well as their analogues if we replace all instances of \textit{stability} with \textit{semistability}. Since these are equivalences of stability conditions, for \emph{any} fixed Chern character, they yield isomorphisms of the corresponding moduli spaces of stable objects with that Chern character.

As a result of the above equivalences, Theorem \ref{vert-3.1} and Theorem \ref{vert-3.2} can be thought of as characterisations of PT-(semi)stable objects $E$ in terms of their cohomology sheaves $\calh^i(E)$; these are more refined characterisations than those originally stated in \cite{Lo2}.    

In Proposition \ref{prop:PTequivSTnSST}, we also prove that PT-stable objects coincide with stable triples when we impose a coprime assumption on rank and degree.  Without the coprime assumption, we show in Propositions \ref{pro:AG53-92-1} and \ref{pro:AG53-92-2}, that when $s>0$, we have the implications
\begin{align*}
\text{ stable triple } &\Rightarrow \text{ PT-stable object ($\Leftrightarrow$ vertical stable object for $\beta <\beta_0$)}  \\
&\Rightarrow \text{ PT-semistable object ($\Leftrightarrow$ vertical semistable object for $\beta <\beta_0$)} \\
&\Rightarrow \text{ semistable triple}
\end{align*}
where $\beta_0$ depends only on the Chern character of the object.  These equivalences explain why many results on vertical stable objects and stable triples resemble those of PT-stable objects.

\begin{remark}
Toda proved in \cite[Theorem 4.7]{Toda1} that the moduli of rank-one stable objects with respect to the large volume limit coincide with the moduli of rank-one PT-stable objects (i.e.\ the moduli of PT stable pairs) when the $B$-field is sufficiently negative.  Theorem \ref{thm:PTislimtstab-eff} can be thought of as a higher-rank analogue of \cite[Theorem 4.7]{Toda1}.
\end{remark}


\subsection{Properties  of PT-semistable objects}

\begin{proposition}\cite[3.1 and Proposition 2.24]{Lo2} \label{prop:tech-0}
Let $X$ be a smooth projective threefold. If $E$ is a PT-semistable object in $\Ap$ of nonzero rank, then 
\begin{itemize}
    \item[(i)] $\mathcal{H}^{-1}(E)$ is a 2-Gieseker semistable torsion-free sheaf.
    \item[(ii)] $\mathcal{H}^0(E) \in \coh^{\leq 0}(X)$.
    \item[(iii)] $\Hom_{\dbx}(\coh^{\leq 0}(X), E)=0.$
\end{itemize}
If $E$ is an object of $\Ap$ with nonzero rank, and its rank and degree  are coprime, then the following are equivalent:
\begin{itemize}
    \item $E$ is PT-stable.
    \item $E$ is PT-semistable. 
    \item $E$ satisifes (i), (ii), and (iii).
\end{itemize}
\end{proposition}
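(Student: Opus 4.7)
The plan is to exploit the PT phase ordering $\phi(-\rho_2) > \phi(\rho_0) > \phi(-\rho_3) > \phi(\rho_1)$ by locating canonical sub- and quotient objects of $E$ in $\Ap$ whose polynomial phases are pinned down by the dimension of their supports. Specifically, for an object $N \in \Ap$ that is (up to shift) a pure sheaf, the leading asymptotic of $Z(N)(m)$ as $m \to \infty$ gives phase $\phi(-\rho_2)$ when $N$ is a shifted pure $2$-dimensional sheaf, $\phi(\rho_0)$ when $N$ is $0$-dimensional, $\phi(-\rho_3)$ when $N$ is a shifted torsion-free sheaf, and $\phi(\rho_1)$ when $N$ is a pure $1$-dimensional sheaf. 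Since $E$ has nonzero rank, $\ch_0(E) = -\ch_0(\mathcal{H}^{-1}(E)) < 0$ and the leading term of $Z(E)(m)$ is $\rho_3 m^3 \omega^3 \ch_0(E)$, so $\phi(E)(m) \to \phi(-\rho_3)$ as $m \to \infty$.

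For (ii), let $T_0 \subseteq \mathcal{H}^0(E)$ be the maximal $0$-dimensional subsheaf; since $\mathcal{H}^0(E) \in \coh^{\leq 1}(X)$, the quotient $T_1 := \mathcal{H}^0(E)/T_0$ is either zero or pure $1$-dimensional, and the composition $E \twoheadrightarrow \mathcal{H}^0(E) \twoheadrightarrow T_1$ is a quotient in $\Ap$. A nonzero $T_1$ would have leading phase $\phi(\rho_1) < \phi(-\rho_3) = \phi(E)$, contradicting PT-semistability, so $T_1 = 0$ and $\mathcal{H}^0(E) \in \coh^{\leq 0}(X)$. For (iii), both $W \in \coh^{\leq 0}(X)$ and $E$ lie in the heart $\Ap$, so any morphism $W \to E$ in $\dbx$ is the same as a morphism in $\Ap$; its image is a $0$-dimensional quotient of $W$ realised as a subobject of $E$, hence of phase $\phi(\rho_0) > \phi(E)$, forcing the image to vanish. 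For (i), the condition $\mathcal{H}^{-1}(E) \in \coh^{\geq 2}(X)$ together with nonzero rank rules out torsion, so $\mathcal{H}^{-1}(E)$ is torsion-free. For $2$-Gieseker semistability I would pick a subsheaf $F \subseteq \mathcal{H}^{-1}(E)$, pass to its saturation in $\coh_{3,1}(X)$ so that the quotient lies in $\coh^{\geq 2}(X)$, and observe that $F[1] \hookrightarrow \mathcal{H}^{-1}(E)[1] \hookrightarrow E$ is then injective in $\Ap$. Both $F[1]$ and $E$ share leading phase $\phi(-\rho_3)$, so destabilisation must be detected at orders $m^2$ and $m$ of $Z$; matching these subleading coefficients against the first two coefficients of the truncated Hilbert polynomial $\bar{p}$ converts lexicographic destabilisation in $\bar{p}$ into polynomial-phase destabilisation, again contradicting PT-semistability.

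For the equivalences under $\gcd(\ch_0 H^3, \ch_1 H^2) = 1$, only the implication (i)--(iii) $\Rightarrow$ PT-stable is new. Coprimality promotes $2$-Gieseker semistability of $\mathcal{H}^{-1}(E)$ to $2$-Gieseker stability and rules out proper subsheaves with the same reduced truncated Hilbert polynomial. Given any $\Ap$-short exact sequence $0 \to F \to E \to G \to 0$ with $F, G \neq 0$, the $\Ap$-cohomology long exact sequence splits the analysis according to whether $\mathcal{H}^{-1}(F)$ is zero, equals $\mathcal{H}^{-1}(E)$, or is a proper nonzero subsheaf: in the first case $F$ is pure $1$-dimensional by (iii) with phase $\phi(\rho_1) < \phi(E)$; in the third case strict $2$-Gieseker stability gives strict slope drop and hence strict subleading phase drop for $F[1]$; the middle case reduces to $\mathcal{H}^0(F) \subsetneq \mathcal{H}^0(E)$ in $\coh^{\leq 0}(X)$ and, applying (iii) to the quotient $G$, forces strict inequality of phases at the lowest nontrivial order. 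In each case $\phi(F) \prec \phi(E)$, yielding PT-stability.

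The hard part is (i): leading-order phase comparison does not distinguish $F[1]$ from $E$, so the argument requires a controlled two-term asymptotic expansion of $\phi(Z(\cdot)(m))$ and a precise dictionary between the subleading coefficients of $Z$ and the first two nontrivial coefficients of $\bar{p}$. Setting up this dictionary cleanly, and verifying that passing to the saturation in $\coh_{3,1}(X)$ does not corrupt the relevant phase comparison, is the technical core of the proof and constitutes the substance of Proposition 2.24 of \cite{Lo2}.
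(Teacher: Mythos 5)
First, note that the paper does not prove this proposition itself: it is quoted verbatim from \cite{Lo2} (3.1 and Proposition 2.24), so there is no in-paper argument to compare against. Your strategy — pinning leading phases by dimension of support, reading off (ii) from the pure $1$-dimensional quotient of $\mathcal{H}^0(E)$, (iii) from the $0$-dimensional image of a map out of $\coh^{\leq 0}(X)$, and extracting $2$-Gieseker semistability from the subleading $m^2$ and $m$ coefficients of $Z$ — is exactly the mechanism used in \cite{Lo2}, and your case analysis for the coprime converse (splitting on $\mathcal{H}^{-1}(F)$ being zero, proper, or all of $\mathcal{H}^{-1}(E)$) is sound.

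There is one genuine error: your justification of torsion-freeness in (i). Membership of $\mathcal{H}^{-1}(E)$ in $\coh^{\geq 2}(X)$ only excludes subsheaves supported in dimension $\leq 1$; it does not exclude a $2$-dimensional torsion subsheaf (e.g.\ $\mathcal{O}_X \oplus \mathcal{O}_D$ for a divisor $D$ lies in $\coh^{\geq 2}(X)$ and has nonzero rank). So "$\mathcal{H}^{-1}(E) \in \coh^{\geq 2}(X)$ together with nonzero rank rules out torsion" is false as stated. The correct argument is of the same type as the rest of your proof: let $T$ be the maximal torsion subsheaf of $\mathcal{H}^{-1}(E)$; since $\mathcal{H}^{-1}(E) \in \coh^{\geq 2}(X)$, a nonzero $T$ is supported in dimension exactly $2$ and the quotient $\mathcal{H}^{-1}(E)/T$ is torsion-free, hence $T[1] \hookrightarrow \mathcal{H}^{-1}(E)[1] \hookrightarrow E$ is a subobject in $\Ap$ with leading term $-\rho_2 m^2 \omega^2\ch_1(T)$ and $\omega^2\ch_1(T)>0$, so its phase tends to $\phi(-\rho_2) > \phi(-\rho_3)$, contradicting PT-semistability. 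Note this is the only place the top of the ordering $\phi(-\rho_2) > \cdots$ is actually needed, which is a sign that omitting it loses content. A smaller blemish: in the middle case of the converse your appeal to "(iii) applied to the quotient $G$" is misstated — what you actually need there is that $G$ is a nonzero quotient of $\mathcal{H}^0(E) \in \coh^{\leq 0}(X)$, hence $0$-dimensional with constant phase $\phi(\rho_0) > \phi(-\rho_3)$, which by the see-saw property forces $\phi(F) \prec \phi(E)$.
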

In particular, under the coprime assumption, (i) is equivalent to 
\begin{itemize}
    \item[(i')] $\mathcal{H}^{-1}(E)$ is a $\mu$-stable torsion-free sheaf.
\end{itemize}

\begin{lemma}\label{lem:tech-1}
Let $X$ be a smooth projective threefold, and let $F$ be a torsion-free sheaf on $X$. Then for any $T \in \coh^{\leq 1}(X)$, we have
\[
\Hom (T, F^{\ast\ast}/F) \cong \Hom (T, F[1]).
\]
\end{lemma}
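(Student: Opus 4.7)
The plan is to extract the isomorphism from the long exact sequence of $\Hom$'s associated with the canonical short exact sequence relating $F$ and $F^{\ast\ast}$. First I would start from the standard short exact sequence
\[
0 \to F \to F^{\ast\ast} \to F^{\ast\ast}/F \to 0
\]
and rotate the corresponding exact triangle in $\dbx$ to obtain
\[
F^{\ast\ast}/F \to F[1] \to F^{\ast\ast}[1] \to (F^{\ast\ast}/F)[1].
\]
Applying $\Hom(T,-)$ to this triangle produces the exact sequence
\[
\Hom(T, F^{\ast\ast}) \longrightarrow \Hom(T, F^{\ast\ast}/F) \longrightarrow \Hom(T, F[1]) \longrightarrow \Hom(T, F^{\ast\ast}[1]).
\]
So the content of the lemma reduces to the vanishing of the two outer terms.

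For the left vanishing, I would argue that $\Hom(T, F^{\ast\ast})=0$ because $T$ has dimension at most 1, so any morphism $T \to F^{\ast\ast}$ has torsion image inside the torsion-free sheaf $F^{\ast\ast}$, and must therefore be zero. For the right vanishing, $F^{\ast\ast}$ is reflexive by construction, and Lemma \ref{lem:Lo17lem3p3}(ii) gives exactly $\Hom_{\dbx}(\coh^{\leq 1}(X), F^{\ast\ast}[1])=0$, hence $\Hom(T, F^{\ast\ast}[1])=0$.

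Combining these two vanishings with the exact sequence above yields the required isomorphism $\Hom(T, F^{\ast\ast}/F) \cong \Hom(T, F[1])$, which is functorial in $T$. There is no real obstacle here; the only small subtlety is making sure to invoke Lemma \ref{lem:Lo17lem3p3}(ii) for the reflexive sheaf $F^{\ast\ast}$ rather than for $F$ itself, since $F$ is merely assumed torsion-free.
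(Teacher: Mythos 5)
Your proof is correct and follows essentially the same route as the paper: rotate the triangle coming from $0 \to F \to F^{\ast\ast} \to F^{\ast\ast}/F \to 0$, apply $\Hom(T,-)$, and kill $\Hom(T,F^{\ast\ast}[1])$ using reflexivity of $F^{\ast\ast}$ (the paper cites an external lemma for this vanishing, but Lemma \ref{lem:Lo17lem3p3}(ii) applied to $F^{\ast\ast}$ works just as well). The only cosmetic difference is that the paper obtains injectivity on the left by viewing the rotated triangle as a short exact sequence in $\Ap$ and using left-exactness of $\Hom$, whereas you verify $\Hom(T,F^{\ast\ast})=0$ directly via the torsion/torsion-free argument; both are fine.
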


\begin{proof}
From the short exact sequence of sheaves
\[
  0 \to F\to F^{\ast\ast} \to F^{\ast\ast}/F \to 0
\]
we obtain the short exact sequence in $\Ap$
\[
  0 \to F^{\ast\ast}/F \to F[1] \to F^{\ast\ast}[1] \to 0.
\]
Since $F^{\ast\ast}$ is torsion-free and reflexive, we have $\Hom (\coh^{\leq 1}(X), F^{\ast\ast}[1])=0$ by \cite[Lemma 4.20]{CL}.  The result then follows.
\end{proof}

\begin{lemma}\label{lem:tech-2}
If $E\in\Ap$ is a PT-semistable object of nonzero rank, then $\hd\big(\mathcal{H}^{-1}(E)\big)\le1$.
\end{lemma}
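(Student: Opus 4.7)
The plan is to reduce the homological dimension statement to a vanishing condition via Lemma \ref{lem:hd1equivconds}, and then derive a contradiction from the PT-semistability of $E$.  Write $F := \calh^{-1}(E)$, which by Proposition \ref{prop:tech-0}(i) is a 2-Gieseker semistable torsion-free sheaf.  By Lemma \ref{lem:hd1equivconds}, proving $\hd(F) \le 1$ is equivalent to showing that $F^{\ast\ast}/F$ is pure 1-dimensional whenever it is nonzero, or equivalently that $\Hom(\coh^{\le 0}(X), F[1]) = 0$.  I will therefore suppose for contradiction that $F^{\ast\ast}/F$ admits a nonzero 0-dimensional subsheaf $Z$.

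The first step will be to promote the inclusion $Z \hookrightarrow F^{\ast\ast}/F$ into a nonzero morphism $Z \to F[1]$ in $\dbx$.  This is immediate from Lemma \ref{lem:tech-1} applied to $T = Z \in \coh^{\le 0}(X) \subset \coh^{\le 1}(X)$, which gives the canonical isomorphism $\Hom(Z, F^{\ast\ast}/F) \cong \Hom(Z, F[1])$.  Alternatively, one can use the short exact sequence $0 \to F^{\ast\ast}/F \to F[1] \to F^{\ast\ast}[1] \to 0$ in $\Ap$ coming from the double dual and apply $\Hom(Z,-)$, using the vanishing $\Hom(Z, F^{\ast\ast}[1]) = 0$ for a reflexive sheaf $F^{\ast\ast}$.

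Next, I will show that the canonical triangle
\[
F[1] \lra E \lra \calh^0(E) \overset{w}{\lra} F[2]
\]
is actually a short exact sequence in $\Ap$.  Indeed, $E \in \Ap$ by assumption; $\calh^0(E) \in \coh^{\le 0}(X) \subset \coh^{\le 1}(X) \subset \Ap$ by Proposition \ref{prop:tech-0}(ii); and since $F$ is torsion-free we have $F \in \coh^{\ge 2}(X)$, hence $F[1] \in \Ap$.  In particular, $F[1] \to E$ is a monomorphism in $\Ap$.

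Composing the nonzero map $Z \to F[1]$ from the first step with the monomorphism $F[1] \hookrightarrow E$ in $\Ap$ produces a nonzero morphism $Z \to E$ in $\Ap$, and hence in $\dbx$.  This contradicts Proposition \ref{prop:tech-0}(iii), which asserts $\Hom_{\dbx}(\coh^{\le 0}(X), E) = 0$, completing the argument.  I do not expect any real obstacle here: all of the necessary ingredients (the characterization of $\hd \le 1$ via purity of $F^{\ast\ast}/F$, the compatibility of $F[1]$ with $\Ap$, and the key vanishing on PT-semistable objects) are already in place, so the proof is essentially bookkeeping plus the observation that $F[1] \hookrightarrow E$ in $\Ap$.
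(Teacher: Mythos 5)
Your proposal is correct and is essentially the paper's own argument read in the contrapositive direction: the paper propagates the vanishing $\Hom(\coh^{\leq 0}(X), E)=0$ down through the two short exact sequences $0 \to \calh^{-1}(E)[1] \to E \to \calh^{0}(E) \to 0$ and $0 \to F^{\ast\ast}/F \to F[1] \to F^{\ast\ast}[1] \to 0$ to conclude purity of $F^{\ast\ast}/F$ and then invokes Lemma \ref{lem:hd1equivconds}, while you assume a $0$-dimensional subsheaf of $F^{\ast\ast}/F$ and push it back up through the same two sequences to contradict Proposition \ref{prop:tech-0}(iii). The ingredients, and in particular the reduction via Lemma \ref{lem:hd1equivconds}, are identical.
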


\begin{proof}
Since $E$ is PT-semistable in $\Ap$ of nonzero rank, Proposition \ref{prop:tech-0} tells us that $\mathcal{H}^{-1}(E)$ is a semistable torsion-free sheaf and $\Hom (\coh^{\leq 0}(X), E)=0$.  Since  $\coh^{\leq 0}(X)$ is a Serre subcategory of $\Ap$, from the canonical short exact sequence in $\Ap$
\[
0 \to \mathcal{H}^{-1}(E)[1] \to E \to \mathcal{H}^0(E) \to 0
\]
 we get $\Hom (\coh^{\leq 0}(X), \mathcal{H}^{-1}(E)[1])=0$.  Then, from the $\Ap$-short exact sequence
\[
0 \to \mathcal{H}^{-1}(E)^{\ast\ast}/\mathcal{H}^{-1}(E) \to \mathcal{H}^{-1}(E)[1] \to \mathcal{H}^{-1}(E)^{\ast\ast}[1] \to 0,
\]
we obtain $\Hom (\coh^{\leq 0}(X), \mathcal{H}^{-1}(E)^{\ast\ast}/\mathcal{H}^{-1}(E))=0$, i.e.\ $\mathcal{H}^{-1}(E)^{\ast\ast}/\mathcal{H}^{-1}(E)$ is a pure 1-dimensional sheaf when it is nonzero.  The result now follows from Lemma \ref{lem:hd1equivconds}.
\end{proof}


\subsection{Vertical stability and the large volume limit}\label{sec:lambdastabnLVL}

Recall that the large volume limit (see \cite[Section 4]{BayerPBSC} or \cite{Toda1}) can be considered as a polynomial stability condition.  In   \cite[Section 6.1]{BMT1}, the large volume limit is defined as the polynomial stability condition given by the pair  $(Z_{\infty \omega, B}, \mathcal{A}^p)$ where
\begin{equation}\label{eq:limstabcentralcharge}
Z_{\infty\omega, B} = Z_{m\omega,B} = -\ch_3^B + m^2 \frac{\omega^2}{2}\ch_1^B + im(\omega \ch_2^B - m^2 \frac{\omega^3}{6}\ch_0^B).
\end{equation}
Note that $(Z_{\infty \omega, B}, \mathcal{A}^p)$ is a polynomial stability condition with respect to the interval $(\tfrac{1}{4}, \tfrac{5}{4})$, i.e.\ for every $0 \neq E \in \Ap$, we have $\phi (E)(m) \in (\tfrac{1}{4}, \tfrac{5}{4})$ for $m \gg 0$.
Let $\mathcal{Q}^p$ denote the slicing for this polynomial stability condition, and define  $\mathcal{C}^p = \mathcal{Q}^p(0,1]$ as in \cite[Definition 6.1.1]{BMT1}.

Recall the central charge $Z_{\beta, \alpha, s}$ from \cite{JMM}
\begin{equation}\label{eq:JMMcentralcharge}
Z_{\beta, \alpha, s} = -\ch_3^\beta + \left( s+\frac{1}{6}\right)\alpha^2 \ch_1^\beta + i\left(\ch_2^\beta - \frac{\alpha^2}{2}\ch_0^\beta\right)
\end{equation}
when $X$ has Picard rank 1 with $H$ as an ample generator.  

If we set
\[
\alpha = \frac{m}{\sqrt{3}}  \text{ and } s= \frac{4}{3},
\]
then  $\tfrac{\alpha^2}{2}=\tfrac{m^2}{6}$ and $(s+\tfrac{1}{6})\alpha^2=\tfrac{m^2}{2}$, in which case the central charge \eqref{eq:limstabcentralcharge} for the large volume limit coincides with  \eqref{eq:JMMcentralcharge}   up to a $\mathrm{GL}^+\!(2,\mathbb{R})$-action with $\omega = H$ and $B=\beta H$. This suggests that when $s=\tfrac{4}{3}$, vertical stability and the large volume limit may coincide.  This is indeed the case, and we will make this precise for any $s>\frac{1}{3}$ in the next subsection.


\subsection{A generalization of the large volume limit, $\sigma_{\infty, B}$}  \label{sec:LVLgeneralization} In this section, we show that on a smooth projective threefold $X$, vertical stability is a polynomial stability condition in the sense of Bayer.  We begin by defining a class of polynomial stability conditions that contains the large volume limit.

Suppose $f, g \in \mathbb{R}[m]$ are monomials of degree 2 with positive coefficients, and let us write 
\[
f = f_2m^2, \text{\quad} g=g_2m^2
\]
where $f_2, g_2 \in \mathbb{R}_{>0}$.  Consider the central charge
\begin{equation}\label{eqn:AG53-34-3}
 Z = -\ch_3^B + f\tfrac{\omega^2}{2}\ch_1^B + i m\left(\omega \ch_2^B - g \tfrac{\omega^3}{6} \ch_0^B\right)
\end{equation}
as well as the modification
\begin{equation} \label{eq:AG53-34-2b}
Z' =   \begin{pmatrix} 1 & 0 \\ 0 & 1/m \end{pmatrix} Z 
=   -\ch_3^B + f\tfrac{\omega^2}{2}\ch_1^B + i \left(\omega \ch_2^B - g \tfrac{\omega^3}{6} \ch_0^B\right).
\end{equation}

We first show that $Z'$, or equivalently $Z$, is the central charge of a polynomial stability condition.

\begin{lemma}\label{lem:AG53-43-1}
Let $X$ be a smooth projective threefold, and $\omega, B$ divisors on $X$ with $\omega$ ample.  Then $(Z,\Ac^p)$ is a polynomial stability condition.
\end{lemma}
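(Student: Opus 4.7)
The plan is to verify the two defining conditions of a polynomial stability condition, namely the polynomial phase condition (a) and the Harder--Narasimhan property (b). The central charge $Z$ differs from the standard large volume limit central charge $Z_{\infty\omega, B}$ of \cite[Section 6.1]{BMT1} only in that the positive scalar factors $f_2, g_2$ replace $1$ in the leading coefficients of the $\ch_1^B$ and $\ch_0^B$ terms, respectively. Consequently, the proof follows the argument for the large volume limit with only cosmetic modifications.

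For (a), I would fix a nonzero $E \in \mathcal{A}^p$ and analyze $Z(E)(m)$ asymptotically as $m \to \infty$, proceeding by case analysis on the canonical $\mathcal{A}^p$-short exact sequence
\[ 0 \to \mathcal{H}^{-1}(E)[1] \to E \to \mathcal{H}^0(E) \to 0. \]
If $\ch_0(\mathcal{H}^{-1}(E))>0$, then $\mathrm{Im}\,Z(E)(m)$ has a dominant positive $m^3$ coefficient coming from the $-g\tfrac{\omega^3}{6}\ch_0^B$ term, so the polynomial phase $\phi(E)(m)$ tends to $1/2$. If $\mathcal{H}^{-1}(E)$ is pure $2$-dimensional, the $m^2$ term in $\mathrm{Re}\,Z(E)$ arising from $f\tfrac{\omega^2}{2}\ch_1^B$ is negative and dominates the linear imaginary part, so $\phi(E)(m)$ tends to $1$. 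If $\mathcal{H}^{-1}(E)=0$, so that $E\in\coh^{\leq 1}(X)$, then $\phi(E)=1$ when $E$ is $0$-dimensional (since $Z(E)=-\ch_3(E)$ is a negative real) and $\phi(E)(m)$ tends to $1/2$ when $E$ has $1$-dimensional support. It follows that for some universal $a$ (one can take $a=1/4$), the polynomial phase satisfies $\phi(E)(m)\in(a,a+1]$ for $m\gg 0$ for every nonzero $E\in\mathcal{A}^p$.

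For (b), I would follow Bayer's general HN-gluing strategy from \cite[Section 3.2]{BayerPBSC}: the polynomial stability defined by $Z$ refines, in each asymptotic regime, a slope-type stability condition on either $\coh^{\geq 2}(X)$ or $\coh^{\leq 1}(X)$, and HN filtrations for these slope stabilities can be patched together using noetherianity of the hearts. The main obstacle is the bookkeeping needed when the leading Chern character of a candidate destabilising subobject vanishes, forcing one to iterate through finitely many sub-leading levels of the central charge; but since $f_2, g_2>0$ preserve all the signs and dominance orderings already present for the large volume limit, the verification carried out in \cite[Section 3.2]{BayerPBSC} and \cite[Section 6.1]{BMT1} adapts with no essential change.
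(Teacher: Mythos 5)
Your proof is correct and follows essentially the same route as the paper: both reduce to the observation that the positive scalars $f_2,g_2$ do not alter the asymptotic phase analysis already carried out for the large volume limit. The only cosmetic difference is how the HN property is sourced --- the paper runs the argument of \cite[Lemma 4.2.3]{BMT1}, separating phases across the finite-length torsion pair $(\Ac^p_1,\Ac^p_{1/2})$ and invoking \cite[Lemma 4.1.2]{BMT1}, whereas you defer to Bayer's gluing argument in \cite[Section 3.2]{BayerPBSC}; both amount to the same thing.
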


In the proof of this lemma, we will make use of a torsion pair $(\Ac^p_1, \Ac^p_{1/2})$ in $\Ac^p$ (see \cite[Lemmas 2.16, 2.17, 2.19]{Toda08} or \cite[Lemma 4.2.2]{BMT1}) given by
\begin{align*}
    \Ac^p_1 &= \langle F[1], k(x) : F \text{ is pure 2-dimensional}, x \in X \rangle \\
    \Ac^p_{1/2} &= \{ E \in \Ac^p : \Hom (\Ac^p_{\geq 1}, E)=0 \}.
\end{align*}

\begin{proof}
The argument is essentially the same as that of \cite[Lemma 4.2.3]{BMT1}.   For any object $E \in \Ac_1^p$, we have $\dimension E = 2$ or $0$, and so $\phi_Z(E) \to 1$.  For any object $E \in \Ac_{1/2}^p$, we have $\dimension E = 3$ or $1$, in which case $\phi_Z(E) \to \tfrac{1}{2}$.  Hence $Z$ is a polynomial stability function on $\Ac^p$.  Then, since we have $\phi (E') \succ \phi (E'')$ for any nonzero objects $E' \in \Ac_1^p, E'' \in \Ac_{1/2}^p$, and both $\Ac_1^p, \Ac_{1/2}^p$ are subcategories of $\Ac^p$ of finite length  \cite[Lemma 4.2.2]{BMT1}, it follows from  \cite[Lemma 4.1.2]{BMT1}  that $Z$ has the HN property on $\Ac^p$.
\end{proof}

\begin{definition}[$\sigma_{\infty, B}$-stability]
Let $X$ be a smooth projective threefold, and $\omega, B$ divisors on $X$ with $\omega$ ample.  Suppose $f, g \in\mathbb{R}[m]$ are monomials of degree 2 with positive coefficients, and let 
\begin{equation}
 Z = -\ch_3^B + f\tfrac{\omega^2}{2}\ch_1^B + i m\left(\omega \ch_2^B - g \tfrac{\omega^3}{6} \ch_0^B\right)
\end{equation}
be as in \eqref{eqn:AG53-34-3}.  We define $\sigma_{\infty,B}$ to be the pair $(Z, \Ac^p)$, a polynomial stability condition with respect to the interval $(\tfrac{1}{4}, \tfrac{5}{4})$.
\end{definition}

Since $Z'$ differs from $Z$ via a stretch along the vertical direction, it follows that $(Z', \Ac^p)$ is also a polynomial stability condition, which we will denote by $\sigma'_{\infty, B}$.  Note that $\sigma'_{\infty, B}$ is no longer a polynomial stability with respect to the interval $(\tfrac{1}{4}, \tfrac{5}{4})$.  It also follows that an object $E \in \dbx$ is $\sigma_{\infty, B}$-(semi)stable if and only if it is $\sigma'_{\infty, B}$-(semi)stable.  Let $R, R'$ denote the slicings of $\sigma_{\infty, B}, \sigma'_{\infty, B}$, respectively, so that $\Ac^p = R(\tfrac{1}{4}, \tfrac{5}{4})$.

To show that $\sigma'_{\infty, B}$ coincides with vertical stability,  it is easier to use a slightly different heart from $\Ac^p$.  To this end, we recall a torsion pair that was used in studying the large volume limit in \cite[Section 6]{BMT1}.

Given a smooth projective threefold $X$ and divisors $\omega, B$ on $X$ where $\omega$ is ample, consider the slope function $\widehat{\mu}_{\omega, B}$ on $\coh^{\leq 2}(X)$ given by, for any nonzero $E \in \coh^{\leq 2}(X)$,
\[
\widehat{\mu}_{\omega, B} (E) = \begin{cases} \dfrac{\omega \ch_2^B(E)}{\omega^2 \ch_1^B(E)} &\text{ if $\omega^2 \ch_1^B(E)\neq 0$} \\
\infty &\text{ if $\omega^2 \ch_1^B(E)= 0$} \end{cases}.
\]
Now, define the torsion pair $(\Tc^p, \Fc^p)$ in $\coh (X)$, where 
\begin{align*}
    \Tc^p &= \{ E \in \coh^{\leq 2}(X) : \widehat{\mu}_{\omega, B; \, \mathrm{min}}(E)>0 \} \\
    \Fc^p &= \{ E \in \coh (X): \Hom (\Tc^p, E)=0 \}.
\end{align*}

\begin{lemma}\label{lem:AG53-43-2}
 $R (0,1] = \langle \Fc^p [1], \Tc^p\rangle$.
\end{lemma}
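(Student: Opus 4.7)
My plan is to exhibit $(\Tc^p, \Fc^p)$ as a torsion pair on $\coh(X)$ and show that the resulting tilted category $\Bc' := \langle \Fc^p[1], \Tc^p\rangle$ coincides with $R(0,1]$. Orthogonality $\Hom(\Tc^p, \Fc^p)=0$ is definitional. For the torsion decomposition of any $E \in \coh(X)$, I would take its maximal $\coh^{\leq 2}(X)$-subsheaf, apply the Harder--Narasimhan filtration with respect to $\wh\mu_{\omega,B}$ (noting that $\coh^{\leq 1}(X)$-subsheaves sit at the top with slope $\infty$), and keep the part with $\wh\mu > 0$; the quotient then lies in $\Fc^p$. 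By Happel--Reiten--Smal\o{}, $\Bc'$ is then the heart of a bounded t-structure on $\dbx$, and since $R(0,1]$ is also such a heart, it suffices to prove the inclusion $\Bc' \subseteq R(0,1]$.

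For $\Tc^p \subseteq R(0,1]$, I would decompose any $T \in \Tc^p$ as $0 \to T_{01} \to T \to T_2 \to 0$ in $\coh(X)$, where $T_{01}$ is the maximal $\coh^{\leq 1}(X)$-subsheaf of $T$ and $T_2$ is either zero or pure $2$-dimensional with $\wh\mu_{\mathrm{min}}(T_2)>0$. Both $T_{01}$ and $T_2[1]$ lie in $\Ac^p$. A direct computation using the explicit $Z$ shows that $T_{01}$ has asymptotic phase tending to $1$ or $\tfrac{1}{2}$, placing it in $R(\tfrac{1}{4},1]$; while for $T_2$, the condition $\wh\mu_{\mathrm{min}}(T_2)>0$ forces $\Im Z(T_2[1]) = -m\omega\ch_2^B(T_2) < 0$ for $m \gg 0$, so $T_2[1] \in R(1,\tfrac{5}{4}]$ and hence $T_2 \in R(0,\tfrac{1}{4}] \subset R(0,1]$. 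Extension closure then gives $T \in R(0,1]$.

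For $\Fc^p[1] \subseteq R(0,1]$, any $F \in \Fc^p$ has no subsheaf in $\coh^{\leq 1}(X)$ (such subsheaves would lie in $\Tc^p$), so $F \in \coh^{\geq 2}(X)$ and $F[1] \in \Ac^p$. To show $F[1] \in R(0,1]$ I would rule out HN factors of $F[1]$ with asymptotic phase $>1$: such a factor is semistable in $\Ac^p$ and, since objects in $\Ac^p_{1/2}$ have phase tending to $\tfrac{1}{2}$, must lie in $\Ac^p_1 = \langle G[1], k(x) \rangle$. The $k(x)$-pieces contribute phase exactly $1$ and can be split off, leaving a semistable sub-object of the form $G[1]$ with $G$ pure $2$-dimensional and $\wh\mu_{\mathrm{min}}(G)>0$, so $G \in \Tc^p$. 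Taking $\calh^{-1}$ of the corresponding $\Ac^p$-monomorphism $G[1] \hookrightarrow F[1]$ yields a sheaf injection $G \hookrightarrow F$, contradicting $F \in \Fc^p$. Together with the previous paragraph this gives $\Bc' \subseteq R(0,1]$, and hence equality.

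The main obstacle will be the argument inside $\Ac^p_1$ in the third paragraph: namely, identifying the semistable sub-objects of $F[1]$ with asymptotic phase strictly greater than $1$ as honest shifts $G[1]$ of pure $2$-dimensional sheaves in $\Tc^p$, cleanly disposing of possible $k(x)$-extensions. This rests on relating the restriction of the polynomial stability $(Z,\Ac^p)$ to $\Ac^p_1$ back to the slope $\wh\mu_{\omega, B}$ on $\coh^{\leq 2}(X)$, and I anticipate handling it by observing that $k(x)$-factors can only appear at phase exactly $1$, so any semistable sub-object of phase strictly above $1$ is purely of $G[1]$-type, with $G$ a $\wh\mu_{\omega,B}$-semistable pure $2$-dimensional sheaf of positive slope.
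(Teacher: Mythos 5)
Your overall strategy is the same as the paper's: reduce to the single inclusion $\langle \Fc^p[1],\Tc^p\rangle \subseteq R(0,1]$ using that both sides are hearts of bounded t-structures, and then treat the torsion and torsion-free constituents separately. But there is a genuine gap at the central step of your second paragraph. From $\wh{\mu}_{\omega,B;\,\mathrm{min}}(T_2)>0$ you deduce $\mathfrak{Im}\big(Z(T_2[1])\big)<0$ for $m\gg 0$ and conclude $T_2[1]\in R(1,\tfrac{5}{4}]$. This inference is invalid: the phase of the central charge of the \emph{total} object constrains nothing about the phases of its Harder--Narasimhan factors, and membership in a slice $R(I)$ is precisely a statement about all HN factors. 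This is exactly where the paper does its real work: it reduces to $\wh{\mu}_{\omega,B}$-stable pure $2$-dimensional sheaves $E$ with $\wh{\mu}_{\omega,B}(E)>0$ and proves that each such $E[1]$ is genuinely $Z$-semistable by classifying its subobjects inside $\Ac^p_1$ (they are either $0$-dimensional, of phase $1\prec\phi_Z(E[1])$, or $2$-dimensional, in which case $\wh{\mu}_{\omega,B}$-stability controls the comparison); only for a \emph{semistable} object does the sign of $\mathfrak{Im}\, Z$ determine the phase, and extension-closure of the slices then yields $T_2[1]\in R(1,\tfrac{5}{4}]$. To repair your argument you must either follow this route or show directly that every $\Ac^p$-quotient of $T_2[1]$ has phase $\succ 1$ (using that $\calh^{-1}$ of such a quotient receives a map from $T_2$ with cokernel in $\coh^{\leq 1}(X)$, hence has positive $\wh{\mu}_{\omega,B}$); neither appears in your text.

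The same defect recurs elsewhere, in milder form. The claim that $T_{01}\in R(\tfrac14,1]$ should rest on the fact that $\coh^{\leq 1}(X)$ is a Serre subcategory of $\Ac^p$ (so the HN factors of $T_{01}$ remain in $\coh^{\leq 1}(X)$ and their phases can be read off from $Z$), not on the asymptotic phase of $Z(T_{01})$ alone. And in your third paragraph the assertion that the phase-$\succ 1$ semistable subobject of $F[1]$ produces a subsheaf $G\hookrightarrow F$ with $\wh{\mu}_{\mathrm{min}}(G)>0$ — which you yourself flag as the main obstacle — again requires comparing phases of $2$-dimensional quotients of that subobject against $1$, i.e., the same semistability analysis in $\Ac^p_1$ that is missing from paragraph two. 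The skeleton of your proof is sound and matches the paper's, but without these verifications the argument does not close.
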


\begin{proof}
We follow the argument of \cite[Lemma 6.1.2]{BMT1} with a slight modification at the end.  Note that since $R(0,1]$ and $\langle \Fc^p [1], \Tc^p\rangle$ are both hearts of bounded t-structures, it suffices to show that $\langle \Fc^p [1], \Tc^p\rangle$ is contained in $R(0,1]$.  We do this by considering various types of objects in $\Fc^p[1]$ and $\Tc^p$.  To this end, note that the same argument as in  \cite[Lemma 2.27]{Toda1} shows that an object $E \in \Ac^p$ is $Z$-semistable if and only if it lies in $\Ac_i^p$ for $i=1$ or $\tfrac{1}{2}$ and $E$ is $Z$-semistable in $\Ac_i^p$.

First, suppose $E \in \coh^{\leq 2}(X)$ is a 2-dimensional $\wh{\mu}_{\omega, B}$-stable with $\wh{\mu}_{\omega, B}(E)>0$. Then $E$ must be pure 2-dimensional and so $E[1] \in \Ac^p_1$.  Note that $\mathfrak{Im}\big(Z(E[1])\big)=m\omega \ch_2^B(E[1])<0$ for all $m>0$, and so $\phi_{Z'}(E[1]) \succ 1$.  Now consider any short exact sequence in $\Ac^p_1$
\[
0 \to A \to E[1] \to Q \to 0
\]
where $A, Q\neq 0$.  If $A$ is supported in dimension 0, then $\phi_Z (A) = 1 \prec \phi_Z (E[1])$.  If $A$ is supported in dimension 2, then so is $Q$, and the  $\wh{\mu}_{\omega, B}$-stability of $E$ ensures $\phi_Z (A) \prec \phi_Z (Q)$ since $\mathcal{H}^0(A)$ is supported in dimension 0.  Thus $E$ is a $Z$-semistable object in $\Ac_i^p$ and hence is a $Z$-semistable object in $\Ac^p$.  This means  $E[1] \in R(1,2]$ and hence $E \in R(0,1]$.

Next, suppose $E$ is a pure 1-dimensional sheaf on $X$.  Then $E \in \Ac^p_{1/2} \subset \Ac^p$. Since $\coh^{\leq 1}(X)$ is a Serre subcategory of $\Ac^p$, every $Z$-HN factor of $E$ lies in $\coh^{\leq 1}(X)$ and must be supported in dimension 1, giving us  $E \in R(0,1]$.

If $E$ is a 0-dimensional sheaf of length 1, then it is a $Z$-stable object of phase 1 in $\Ac^p$ and so $E \in R(0,1]$.  This finishes showing that $\Tc^p \subset R(0,1]$.

Next, suppose $E$ is a $\wh{\mu}_{\omega, B}$-stable sheaf in $\coh^{\leq 2}(X)$ with $\hat{\mu}_{\omega, B}(E) \leq 0$.  Then $E$ is pure 2-dimensional and $E[1]$ lies in $\Ac^p_1$. The same argument as above shows that $E[1]$ is $Z$-semistable in $\Ac_1^p$ and hence in $\Ac^p$.  Since $\hat{\mu}_{\omega, B}(E) \leq 0$, it follows that $\phi_Z (E[1]) \in (0,1]$ and so $E[1] \in R(0,1]$.

Lastly, suppose $E$ is a torsion-free sheaf on $X$.  In the abelian category $\Ac^p$, notice that $E[1]$ cannot have any subobject $A$ supported in dimension 2.  Therefore, the left-most $Z'$-HN factor of $E[1]$ in $\Ac^p$ cannot be supported in dimension 2.  As a result, all the $Z'$-HN factors of $E[1]$ are supported in dimensions 0, 1, or 3 and thus have $\phi_{Z}$ in $(0,1]$.  Hence $E[1] \in R(0,1]$.  As a result, $\Fc^p[1] \subset R(0,1]$ and the lemma is proved.
\end{proof}

\begin{remark}\label{rmk:sigmaprimePSaltform}
\begin{enumerate}
    \item Our proofs of Lemmas \ref{lem:AG53-43-1} and \ref{lem:AG53-43-2} do not depend on the specific values of $f_2$ and $g_2$ in the definitions of $Z, Z'$.  As a result, the heart $R(0,1]$ is independent of $f_2, g_2$.  Since $Z'$ differs from $Z$
 by a stretch along the vertical direction, we  have $R(0,1] = R'(0,1]$.
    \item The polynomial stability condition $(Z_{\infty \omega, B}, \Ac^p)$ considered in \cite[Section 6]{BMT1}, referred to as the \textit{polynomial stability condition at the large volume limit}, coincides with our polynomial stability condition $\sigma_{\infty, B}=(Z, \Ac^p)$ for $f=g=m^2$.  In their notation, they used $Q^p$ to denote the slicing of $(Z_{\infty \omega, B}, \Ac^p)$, defined $\mathcal{C}^p := Q^p(0,1]$, and  proved that $\mathcal{C}^p = \langle \Fc^p[1], \Tc^p\rangle$.  In light of Lemma \ref{lem:AG53-43-2}, we have
    \[
    \mathcal{C}^p = \langle \Fc^p[1], \Tc^p\rangle = R(0,1] = R'(0,1].
    \]
    \item By the previous remark, the polynomial stability condition $\sigma_{\infty, B}=(Z,\Ac^p)$ (respectively\ $\sigma'_{\infty, B}=(Z', \Ac^p)$) can now be equivalently defined as the pair $(Z,\mathcal{C}^p)$ (respectively\ $(Z', \mathcal{C}^p)$).
\end{enumerate} 
\end{remark}

In preparation of the next theorem, we make a comparison between the notation for Bridgeland stability conditions in \cite{JMM} and that in \cite{BMT1}.

Let $X$ be a smooth projective threefold of Picard rank 1 with $\mathrm{Pic}(X)=\langle  H \rangle$ for $H$ ample.  Bridgeland stability conditions in \cite[Section 2.2]{JMM} are written using  the central charge
\[
Z_{\beta, \alpha, s } = -\ch_3^\beta + \left( s+\tfrac{1}{6}\right) \alpha^2 \ch_1^\beta + i \left(\ch_2^\beta - \tfrac{\alpha^2}{2} \ch_0^\beta \right)
\]
where $\ch_i^\beta \in \mathbb{R}$ is defined by setting
\[
 \ch_i^\beta H^i =\ch_i^{\beta H}.
\]
A quick computation  shows 
\begin{equation}\label{eq:AG53-38-1}
H^3 Z_{\beta, \alpha, s} = -\ch_3^B  + \left( s+\tfrac{1}{6}\right) \alpha^2 \omega^2 \ch_1^B + i \left(\omega\ch_2^B - \tfrac{\alpha^2}{2} \omega^3\ch_0^B \right)
\end{equation}
if we write $\omega = H$ and $B = \beta H$.  The pair $\sigma_{\beta, \alpha, s} = (Z_{\beta, \alpha, s}, \Ac^{\beta, \alpha})$ is a Bridgeland stability condition whenever  $(\beta, \alpha) \in \mathbb{R} \times\mathbb{R}_{>0}$ and $s \in \mathbb{R}_{>0}$.

On the other hand, Bridgeland stability conditions in \cite[Section 3]{BMT1} are written using the  central charge
\[
 Z_{\omega, B} = -\ch_3^B + \tfrac{\omega^2}{2} \ch_1^B + i  \left(\omega \ch_2^B  - \tfrac{\omega^3}{6} \ch_0^B\right).
\]
Whenever $\omega, B$ are divisors on $X$ with $\omega$ ample, the pair $\sigma_{\omega, B} = (Z_{\omega, B}, \Ac_{\omega, B})$ is a Bridgeland stability condition.

We can check that when $\omega = H, B = \beta H$ and $\alpha = m/\sqrt{3}$, we have
\begin{equation}\label{eq:AG53-40-1}
\Ac^{\beta, \alpha } = \Ac_{m\omega, B}.
\end{equation}

Now recall the following lemmas of Bayer--Macrì--Toda on the heart $\mathcal{C}^p$.

\begin{lemma}\cite[Lemma 6.2.1]{BMT1}\label{lem:BMT1Lem621}
Let $X$ be a smooth projective threefold, and $\omega, B$ divisors on $X$ where $\omega$ is ample.  For any object $E \in \dbx$, if $E \in \Ac_{m\omega, B}$ for $m \gg 0$, then $E \in \mathcal{C}^p$.
\end{lemma}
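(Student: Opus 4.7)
The plan is to verify that $E$, assumed to lie in $\Ac_{m\omega,B}$ for all sufficiently large $m$, satisfies the characterization of objects in $\mathcal{C}^p = \langle \Fc^p[1], \Tc^p\rangle$ from Lemma \ref{lem:AG53-43-2}: namely $\calh^i_{\coh}(E)=0$ for $i \notin\{-1,0\}$, $\calh^{-1}_{\coh}(E)\in\Fc^p$, and $\calh^0_{\coh}(E)\in\Tc^p$.

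For each large $m$, write the tautological short exact sequence in $\Ac_{m\omega,B}$
\[
0 \to F'[1] \to E \to T' \to 0
\]
with $F' \in \Fc_{m\omega,B}$ and $T' \in \Tc_{m\omega,B}$ in $\coh^B(X)$ (both depending on $m$). Since $\coh^B(X) \subset D^{[-1,0]}_{\coh}$, only $\calh^{-2}_{\coh}(E)$ could be nonzero outside $\{-1,0\}$; the long exact sequence of standard cohomology identifies $\calh^{-2}_{\coh}(E) \cong \calh^{-1}_{\coh}(F') \in \free{B}$. Setting $L := \calh^{-2}_{\coh}(E)$, which is intrinsic to $E$ and independent of $m$, the shifted object $L[1]$ is a subobject of $F'$ in $\coh^B(X)$, and $F' \in \Fc_{m\omega,B}$ forces $\nu_{m\omega,B}(L[1])\leq 0$. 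A short computation shows $\nu_{m\omega,B}(L[1]) = +\infty$ when $\omega^2\ch_1^B(L)=0$ and $\nu_{m\omega,B}(L[1])\to +\infty$ as $m\to\infty$ when $\omega^2\ch_1^B(L)<0$; both contradict $\nu\leq 0$, so $L=0$. The triangle then simplifies via the LES to $0 \to F' \to F \to \calh^{-1}_{\coh}(T') \to 0$ and $G \cong \calh^{0}_{\coh}(T')$, where $F := \calh^{-1}_{\coh}(E)$, $G := \calh^0_{\coh}(E)$.

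To prove $G \in \Tc^p$, note $G$ is a $\coh^B(X)$-quotient of $T' \in \Tc_{m\omega,B}$; any coherent quotient $Q$ of $G$ in $\coh$ is still a $\coh^B(X)$-quotient of $T'$, so $\nu_{m\omega,B}(Q)>0$. If $\ch_0(Q)>0$ then $\nu_{m\omega,B}(Q) \to -\infty$ as $m \to\infty$, a contradiction, whence $G \in \coh^{\leq 2}(X)$. For a $2$-dimensional coherent quotient $Q$ one has $\ch_0(Q)=0$ and $\omega^2\ch_1^B(Q)>0$, and the condition $\nu_{m\omega,B}(Q) > 0$ reduces to $\omega\ch_2^B(Q)>0$, i.e.\ $\wh{\mu}_{\omega,B}(Q)>0$. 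Hence $G \in \Tc^p$.

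The remaining and hardest step is to show $F \in \Fc^p$. Suppose for a contradiction that there exists a nonzero subsheaf $T \hookrightarrow F$ with $T \in \Tc^p$. The crucial technical claim is that $T \in \Tc_{m\omega,B}$ for all $m \gg 0$: given an arbitrary $\coh^B(X)$-quotient $T \to Q$, an analysis via the LES shows $\ch_0(Q)\leq 0$ and $\omega^2\ch_1^B(Q) \geq 0$, while positivity of $\nu_{m\omega,B}(Q)$ for $m \gg 0$ can be achieved \emph{uniformly in $Q$} because the set of Chern characters of such quotients is bounded (a standard Quot-scheme boundedness argument in the spirit of the BMT1 treatment of the large volume limit); this uniformity is the technically delicate ingredient of the proof. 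Granting this, $T$ lies at degree $0$ in the heart $\Ac_{m\omega,B}$, and therefore
\[
\Hom_{\dbx}(T[1], E) \;=\; \Ext^{-1}_{\Ac_{m\omega,B}}(T,E)\;=\;0.
\]
Hence the composition $T[1] \to F[1] \to E$ vanishes in $\dbx$, forcing $T[1]\to F[1]$ to factor through $G[-1] \to F[1]$ in the rotation of the standard triangle $F[1]\to E\to G\to F[2]$. But $\Hom_{\dbx}(T[1], G[-1]) = \Ext^{-2}_{\coh}(T,G) = 0$ since $T,G$ are sheaves, so the factorization forces $T[1]\to F[1]=0$, contradicting $T\hookrightarrow F$ nonzero. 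Thus $F \in \Fc^p$, and $E \in \mathcal{C}^p$.
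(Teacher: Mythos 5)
The paper does not actually prove this statement --- it is quoted verbatim from \cite[Lemma 6.2.1]{BMT1} --- so your argument can only be measured on its own merits. Your overall strategy (verify the torsion-pair description $\mathcal{C}^p=\langle \Fc^p[1],\Tc^p\rangle$ on the standard cohomology sheaves of $E$) is legitimate, and most of the steps check out: the vanishing of $\calh^{-2}(E)$ is correct; the verification that $\calh^0(E)\in\Tc^p$ is correct up to an ordering issue (you should first conclude $\ch_0(\calh^0(E))=0$ from $\nu_{m\omega,B}(\calh^0(E))>0$ alone, because a sheaf surjection out of a positive-rank object of $\tors{B}$ need not be a $\coh^B(X)$-surjection --- its kernel can have a nonzero $\free{B}$-part; once $\calh^0(E)$ is known to be torsion, all its sheaf quotients do become $\coh^B(X)$-quotients and your argument goes through); and the final step deducing $\calh^{-1}(E)\in\Fc^p$ from $\Hom_{\dbx}(T[1],E)=0$ and $\Ext^{-2}_{\coh}(T,\calh^0(E))=0$ is clean and correct, \emph{granted} that $T\in\Tc_{m\omega,B}$ for $m\gg0$.

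That last claim is where the genuine gap lies. You justify the uniformity in $Q$ by asserting that ``the set of Chern characters of such quotients is bounded,'' but this is false: a $\coh^B(X)$-quotient $Q$ of $T$ has $\calh^{-1}(Q)=D\in\free{B}$ equal to the kernel of a sheaf map $K\to T$ with $K\in\tors{B}$, and $\ch_0(D)$ (hence $\ch_0(Q)=-\ch_0(D)$) is unbounded over such quotients, so no Quot-scheme boundedness of the kind you invoke is available. What actually closes the step is a Chern-class estimate rather than a boundedness of families: from $K\in\tors{B}$ and $K/D\subseteq T$ one gets $\omega^2\ch_1^B(D)\in(-\omega^2\ch_1(T),\,0]$, and then the classical Bogomolov inequality combined with the Hodge index theorem, applied to the $\mu_{\omega,B}$-Harder--Narasimhan factors of $D$, gives $\omega\ch_2^B(D)\le (\omega^2\ch_1^B(D))^2/(2\omega^3)$, a constant depending only on $\ch(T)$. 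Together with $\omega\ch_2^B(\calh^0(Q))\ge 0$ (which follows from $T\in\Tc^p$) this bounds the numerator of $\nu_{m\omega,B}(Q)$ below by $\tfrac{m^2\omega^3}{6}\ch_0(D)-C$, which is positive for all $m$ beyond a threshold depending only on $T$, uniformly in $Q$; the denominator is nonnegative, so $\nu_{m\omega,B}(Q)>0$. The claim you need is therefore true, but the reason you give for it does not work, and since this is exactly the step you yourself flag as the technically delicate ingredient, the proof of $\calh^{-1}(E)\in\Fc^p$ is incomplete as written.
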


\begin{lemma}\cite[Lemma 6.2.2]{BMT1}\label{lem:BMT1Lem622}
Let $X$ be a smooth projective threefold, and $\omega, B$ divisors on $X$ where $\omega$ is ample.  For any object $E \in \mathcal{C}^p$, we have  $E \in \Ac_{m\omega, B}$ for $m \gg 0$.
\end{lemma}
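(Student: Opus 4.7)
The plan is to exploit the identity $\mathcal{C}^p = \langle \Fc^p[1], \Tc^p\rangle$ established in Lemma \ref{lem:AG53-43-2}. Since both $\mathcal{C}^p$ and $\Ac_{m\omega, B}$ are hearts of bounded t-structures and hence extension-closed, it suffices to prove the inclusion on the two generating subcategories separately: (a) every $T \in \Tc^p$ lies in $\Ac_{m\omega, B}$ for $m \gg 0$, and (b) every $F \in \Fc^p$ satisfies $F[1] \in \Ac_{m\omega, B}$ for $m \gg 0$. Given these, an arbitrary $E \in \mathcal{C}^p$ sits in a triangle $F[1] \to E \to T$ with $F \in \Fc^p$ and $T \in \Tc^p$, and taking $m$ larger than the thresholds for both $F$ and $T$ places $E$ in $\Ac_{m\omega, B}$.

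For step (a), I would note that $\Tc^p \subset \coh^{\leq 2}(X)$, so every $T \in \Tc^p$ is a torsion sheaf and therefore lies in the first tilt $\coh^B(X)$ (its Mumford slope is $+\infty$ by convention). To show $T$ belongs to the torsion class $\mathcal{T}_{m\omega, B}$ of the second tilt, I would take its HN filtration with respect to the tilt slope $\nu_{m\omega, B}$ in $\coh^B(X)$ and show each HN factor has positive $\nu_{m\omega, B}$-slope for $m$ large. Since $T$ is a torsion sheaf, we have $\ch_0^B(T) = 0$, and for any quotient $Q$ in $\coh^B$ the slope simplifies to $\nu_{m\omega, B}(Q) = \omega \ch_2^B(Q)/(\omega^2 \ch_1^B(Q))$ when $Q$ is supported in dimension $2$, and to $+\infty$ when $Q$ is supported in dimension $\leq 1$. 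The hypothesis $\widehat{\mu}_{\omega, B; \min}(T) > 0$ then guarantees $\nu_{m\omega, B}(Q) > 0$ uniformly in $m$ large, placing $T \in \Ac_{m\omega, B}$.

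For step (b), any $F \in \Fc^p$ has no nonzero map from $\Tc^p$; in particular its torsion subsheaf $F_{\text{tors}}$ has $\widehat{\mu}_{\omega, B; \max}(F_{\text{tors}}) \leq 0$ and its torsion-free quotient $F_{\text{tf}}$ has no subsheaf of positive $\widehat{\mu}_{\omega, B}$-slope. I would take the $\mu_{\omega, B}$-HN filtration of $F_{\text{tf}}$, split each factor further at the first tilt, and bound $\nu_{m\omega, B}$ from above on each piece. Each factor contributes either a term in $\free{B}[1]$ (whose $\nu_{m\omega, B}$-slope tends to $-\infty$ like $-m^2 \omega^3 \ch_0^B / (2\omega^2\ch_1^B)$ as $m\to\infty$, using $\ch_0 > 0$ and $\omega^2 \ch_1^B < 0$) or a torsion quotient with nonpositive $\widehat{\mu}_{\omega, B}$, which again yields $\nu_{m\omega, B} \leq 0$ for large $m$. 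So $F[1]$ lands in $\Fc_{m\omega, B}[1] \subset \Ac_{m\omega, B}$ for $m$ past a uniform threshold.

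The main obstacle is precisely the \emph{uniformity} in $m$: the argument rests on the fact that each of the HN filtrations involved (Mumford on $F$, tilt on $T$) has only finitely many factors, and for each factor the sign of the leading-in-$m$ coefficient of $\nu_{m\omega, B}$ is forced by $\Fc^p/\Tc^p$ membership. Once this sign analysis is in place, one may pick $m_0$ so that every HN factor has attained its limiting-sign regime; past that threshold the inclusions $T \in \mathcal{T}_{m\omega, B}$ and $F \in \Fc_{m\omega, B}$ hold simultaneously. The finer bookkeeping amounts to separating factors by the dimension of their support and tracking the degree-$2$ polynomial $\nu_{m\omega, B}(\cdot)$ in $m$, which is routine once the reduction to HN factors is made.
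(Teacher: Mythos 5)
First, a remark on the comparison: the paper does not prove this lemma at all --- it is quoted verbatim from \cite[Lemma 6.2.2]{BMT1} --- so your argument has to stand on its own. The skeleton (reduce via $\mathcal{C}^p = \langle \Fc^p[1], \Tc^p\rangle$ from Lemma \ref{lem:AG53-43-2} and extension-closure of $\Ac_{m\omega,B}$ to the two generating classes) is sound, and step (a) is correct; in fact for a torsion quotient $Q$ one has $\nu_{m\omega,B}(Q) = \tfrac{1}{m}\widehat{\mu}_{\omega,B}(Q)$, so the sign is independent of $m$ and $T \in \mathcal{T}_{m\omega,B}$ for \emph{every} $m>0$, with no threshold needed.

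Step (b) has two genuine problems. The first is a sign/placement error. Writing $0 \to F_T \to F \to F_F \to 0$ for the decomposition of $F$ under the first tilt ($F_T\in\tors{B}$, $F_F\in\free{B}$), the object $F[1]$ has $\coh^B(X)$-cohomology $F_T$ in degree $-1$ and $F_F[1]$ in degree $0$, so you must show $F_T \in \mathcal{F}_{m\omega,B}$ \emph{and} $F_F[1] \in \mathcal{T}_{m\omega,B}$; your conclusion that ``$F[1]$ lands in $\mathcal{F}_{m\omega, B}[1]$'' is correct only when $F_F=0$. Moreover the limit you assert is backwards: for $G \in \free{B}$ with $\ch_0(G)>0$ and $\omega^2\ch_1^B(G)<0$, the expression you write, $-m^2\omega^3\ch_0/(2\omega^2\ch_1^B)$, is \emph{positive}, and $\nu_{m\omega,B}(G[1])=\nu_{m\omega,B}(G)\to+\infty$ --- which is what membership in the torsion class $\mathcal{T}_{m\omega,B}$ requires, not membership in $\mathcal{F}_{m\omega,B}$. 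The second and more serious problem is that your proposed resolution of the uniformity issue does not work. Membership in $\mathcal{T}_{m\omega,B}$ (resp.\ $\mathcal{F}_{m\omega,B}$) is a condition on \emph{all} quotients (resp.\ subobjects) in $\coh^B(X)$, an infinite family, and it cannot be verified by inspecting the finitely many factors of an $m$-independent Harder--Narasimhan filtration of $F$: the relevant ($\nu_{m\omega,B}$-)HN filtration varies with $m$. Concretely, for a quotient $G'$ of $F_F$ in $\free{B}$ with $\ch_0(G')>0$ one needs $m^2 > 6\,\omega\ch_2^B(G')/(\omega^3\ch_0(G'))$, so a single threshold $m_0$ requires a uniform upper bound on $\omega\ch_2^B(G')/\ch_0(G')$ over all such quotients. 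That bound is a Bogomolov-type boundedness statement (the threefold analogue of \cite[Proposition 14.2]{B08}) and is precisely the content of \cite[Lemma 6.2.2]{BMT1}; it is the step your ``routine bookkeeping'' elides.
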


\begin{theorem}\label{thm:AG53-39-1}
Let $X$ be a smooth projective threefold with $\mathrm{Pic}(X) = \mathbb{Z}H$ for $H$ ample.  Let us write $\omega = H$ and $B = \beta H$ where $\beta \in \mathbb{R}$, and fix 
\begin{equation}\label{eq:AG53-41-1}
f = 2\left( s + \tfrac{1}{6}\right)m^2, \,\, g = 3m^2
\end{equation} 
in the definition of $Z$.   Then for any object $E \in \dbx$, we have 
\[
E \text{ is $\sigma_{\infty,B}$-(semi)stable} \text{ if and only if } E \text{ is vertical (semi)stable}.
\]
That is, vertical stability is a polynomial stability condition in the sense of Bayer \cite{BayerPBSC}.
\end{theorem}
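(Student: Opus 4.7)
The plan is to reduce the theorem to matching central charges on one side and hearts on the other, then to transfer the subobject structure between the two formalisms.

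First, I would verify the central charge identification. Setting $\omega = H$, $B = \beta H$, and $\alpha = m$ in \eqref{eq:AG53-38-1}, and substituting $f = 2(s+\tfrac{1}{6})m^2$ and $g = 3m^2$ into the definition \eqref{eq:AG53-34-2b} of $Z'$, a direct comparison gives
$$ Z' \;=\; H^3 \cdot Z_{\beta, m, s}. $$
Since $H^3 > 0$, the polynomial phase $\phi_{Z'}(E)(m)$ and the Bridgeland slope $\lambda_{\beta, m, s}(E)$ encode the same ordering of objects for $m \gg 0$. As $Z$ and $Z'$ differ by a vertical stretch, $\sigma_{\infty,B}$-(semi)stability coincides with $\sigma'_{\infty,B}$-(semi)stability, so we may work with $Z'$ throughout.

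Next, using Remark \ref{rmk:sigmaprimePSaltform}(3) I would redefine $\sigma_{\infty,B}$-(semi)stability using the heart $\mathcal{C}^p$ in place of $\Ac^p$, and then invoke the identification \eqref{eq:AG53-40-1} together with Lemmas \ref{lem:BMT1Lem621} and \ref{lem:BMT1Lem622} to obtain: $E \in \mathcal{C}^p$ if and only if $E \in \Ac^{\beta, \alpha}$ for all $\alpha \gg 0$. This matches the ambient-heart condition in the definition of vertical stability.

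The technical core is a subobject-matching claim: for $E \in \mathcal{C}^p$ and a morphism $F \to E$ in $\dbx$, $F$ is a subobject of $E$ in $\mathcal{C}^p$ if and only if $F \to E$ is an $\Ac^{\beta,\alpha}$-monomorphism for all $\alpha \gg 0$. In either direction one forms the cone $G$ of $F \to E$; applying Lemmas \ref{lem:BMT1Lem621} and \ref{lem:BMT1Lem622} to each of $F, E, G$ shows that all three lie in the relevant heart simultaneously, so the exact triangle becomes a short exact sequence in either heart. The persistence lemma from Section \ref{sec:vertical} (injectivity along an unbounded sequence of $\alpha$'s forces injectivity for all large $\alpha$) shows that the condition (ii)$'$ is equivalent to (ii) in the vertical-line case and legitimises the quantifier \emph{for all $\alpha \gg 0$}.

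Putting these three pieces together: the first step converts slope inequalities $\lambda_{\beta,\alpha,s}(F) < (\leq) \lambda_{\beta,\alpha,s}(E)$ for $\alpha \gg 0$ into phase inequalities $\phi_Z(F) \prec (\preceq) \phi_Z(E)$ and vice versa; the second step matches the ambient condition on $E$; and the subobject-matching claim aligns the collections of subobjects to be tested. The conjunction of these gives the stated equivalence between $\sigma_{\infty,B}$-(semi)stability and vertical (semi)stability. The main obstacle I anticipate is the subobject-matching step, since $\Ac^{\beta,\alpha}$ and $\mathcal{C}^p$ are genuinely distinct hearts in $\dbx$ and one must track the cohomology of $F, E, G$ with respect to both t-structures; this is exactly where Lemmas \ref{lem:BMT1Lem621}--\ref{lem:BMT1Lem622} and the persistence lemma do the heavy lifting.
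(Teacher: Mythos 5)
Your proposal is correct and follows essentially the same route as the paper's proof: identify $Z'$ with $H^3 Z_{\beta,m,s}$ via the choice of $f,g$, pass to the heart $\mathcal{C}^p=R'(0,1]$, and use Lemmas \ref{lem:BMT1Lem621} and \ref{lem:BMT1Lem622} to show that a morphism $F\to E$ sits in a short exact sequence in $\mathcal{C}^p$ if and only if it is an $\Ac^{\beta,\alpha}$-monomorphism for all $\alpha\gg 0$, so the two notions of (semi)stability test the same inequalities on the same subobjects.
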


\begin{proof}[Proof of Theorem \ref{thm:AG53-39-1}]
Since an object $E \in \dbx$ is $\sigma_{\infty, B}$-(semi)stable if and only if it is $\sigma'_{\infty, B}$-(semi)stable, we will work with $\sigma'_{\infty,B}$-stability below.  Our choice of $f, g$ as in \ref{eq:AG53-41-1} ensures that  
\begin{equation}\label{eq:cccompare1}
  Z' = H^3 Z_{\beta, m, s},
\end{equation}
i.e.\ $Z'$ coincides with a constant scalar multiple of $Z_{\beta, \alpha, s}$ under the change of variable $m=\alpha$.

Suppose $E \in \mathcal{C}^p$ is a $\sigma'_{\infty,B}$-semistable object.  By Lemma \ref{lem:BMT1Lem622} and \eqref{eq:AG53-40-1}, we have $E \in \Ac^{\beta, \alpha}$ for $\alpha \gg 0$.  Now take any morphism $j : F \to E$ that is injective in $\Ac^{\beta, \alpha}$ for all $\alpha \gg 0$, and let $N = \mathrm{cone}(j)$ in $\dbx$.  The exact triangle in $\dbx$
\[
F \overset{j}{\to} E \to N \to F[1] 
\]
then gives a short exact sequence in $\Ac^{\beta, \alpha}$
\begin{equation}\label{eq:AG53-37-1}
0 \to F \to E \to N \to 0
\end{equation}
for $\alpha \gg 0$.  By Lemma \ref{lem:BMT1Lem621}, it follows that \eqref{eq:AG53-37-1} is a short exact sequence in $\mathcal{C}^p$.  Then $Z'$-semistability of $E$ in $\mathcal{C}^p$ now gives $\phi_{Z'}(F) \preceq \phi_{Z'}(N)$.  Since $Z'=H^3Z_{\beta, m, s}$ from \eqref{eq:cccompare1}, it follows that $\lambda_{\beta, \alpha, s}(F) \leq \lambda_{\beta, \alpha_s}(N)$ for $\alpha \gg 0$, i.e.\ $E$ is vertical semistable.

For the converse, suppose $E \in \dbx$ is a vertical semistable object.  This means that $E \in \Ac^{\beta, \alpha}$ for $\alpha \gg 0$, and so $E \in \mathcal{C}^p$ by Lemma \ref{lem:BMT1Lem621}.  Now take any short exact sequence in $\mathcal{C}^p$
\[
0 \to F \overset{j}{\to} E \to N \to 0.
\]
By Lemma \ref{lem:BMT1Lem622}, the morphism $j$ is an injection in the heart $\Ac^{\beta, \alpha}$ for $\alpha \gg 0$.  The vertical semistability of $E$ gives $\lambda_{\beta, \alpha, s}(F) \leq \lambda_{\beta, \alpha, s}(N)$ for $\alpha \gg 0$.  By \eqref{eq:cccompare1}, this means that $\phi_{Z'}(F) \preceq \phi_{Z'}(N)$.  Thus $E$ is $\sigma'$-semistable.

It is easy to see that the above arguments can be modified to prove the corresponding statement for $\sigma'$-stability and vertical stability.
\end{proof}

Based on Theorem \ref{thm:AG53-39-1} and Remark \ref{rmk:sigmaprimePSaltform}(2), we now know that the class of polynomial stability conditions $\sigma_{\infty, B}$, in which $f, g \in \mathbb{R}[m]$ are parameters, includes both vertical stability and the large volume limit in \cite{BMT1} as examples.  We can therefore think of $\sigma_{\infty, B}$ as a generalization of the large volume limit, and that it includes vertical stability as a special case.


\subsection{PT-stability as a limit of large volume limit}

In this subsection, we show in that if we take the large volume limit $(Z_{\infty \omega, B}, \mathcal{A}^p)$ in Section \ref{sec:lambdastabnLVL} and let $B$  \textit{go to negative infinity} in a particular, linear manner, then we obtain a polynomial stability that is equivalent to PT-stability (Proposition \ref{prop:PTislimitstab}).  Specifically, we take $B=bm\omega$ where $m$ is the indeterminate in polynomial stability, $\omega$ is a fixed ample class, and $b$ is a constant in the interval $\left(-\tfrac{1}{\sqrt{3}}, 0\right)$.  This gives us a description of PT-stability different from that in \cite{BayerPBSC}.

This is also consistent with the picture in \cite[Theorem 4.7]{Toda1} where, for rank-one objects, the moduli space of stable objects with respect to the large volume limit (with the extra factor $\sqrt{\mathrm{td}_X}$ in the central charge) coincides with the moduli of PT stable objects.

We also show that if the constant $b$ lies in an interval other than  $\left(-\tfrac{1}{\sqrt{3}}, 0\right)$, then the central charge $Z_{\infty\omega, B}$ coincides with that of other standard polynomial stability conditions in \cite{BayerPBSC}, including DT stability, dual PT stability, and dual DT stability (Proposition \ref{prop:otherlimitsLVL}).

\begin{proposition}\label{prop:PTislimitstab}
Let $X$ be a smooth projective threefold and  $\omega$ an ample class on $X$.  For any  $b \in \left(-\tfrac{1}{\sqrt{3}},0\right)$, if we set $B=bm\omega$
in the polynomial central charge
\[
Z_{\infty\omega,B} = -\ch_3^B + m^2 \frac{\omega^2}{2}\ch_1^B + i\left(m\omega \ch_2^B - m^3\frac{\omega^3}{6}\ch_0^B\right),
\]
then  $(Z_{\infty \omega, B},\Ap)$ is a PT polynomial stability condition in the sense of Bayer \cite{BayerPBSC}.  Moreover, for any $a \in \mathbb{R}$ such that 
\[
-\tfrac{1}{2}(1-b^2)+ib,\,\, b+i \in \{re^{i\phi \pi} : r \in \mathbb{R}_{>0}, \phi \in (a,a+1]\},
\]
$(Z_{\infty \omega, B},\Ap)$ is a  polynomial stability condition with respect to the interval $(a,a+1]$.
\end{proposition}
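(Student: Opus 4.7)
The plan is to expand $Z_{\infty\omega,B}$ with $B=bm\omega$ substituted in, identify the resulting coefficients as a Bayer stability vector $(\rho_0,\rho_1,\rho_2,\rho_3)$, verify the PT phase ordering, and then check the two polynomial stability axioms for the heart $\Ap$.

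The first step is a direct expansion: using the standard formulas for $\ch^B_i$ and substituting $B=bm\omega$, collection by powers of $m$ should give
\[
Z_{\infty\omega, B}(E) = -\ch_3(E) + (b+i)\,m\omega\ch_2(E) - \tfrac{(b+i)^2}{2}\,m^2\omega^2\ch_1(E) + \tfrac{(b+i)^3}{6}\,m^3\omega^3\ch_0(E),
\]
so that $Z_{\infty\omega,B}$ is a polynomial central charge in Bayer's sense with stability vector $(\rho_0,\rho_1,\rho_2,\rho_3)=\bigl(-1,\, b+i,\, -\tfrac{(b+i)^2}{2},\, \tfrac{(b+i)^3}{6}\bigr)$. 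Setting $\theta=\arg(b+i)$, the hypothesis $b\in(-1/\sqrt 3, 0)$ translates to $\theta\in(\pi/2, 2\pi/3)$, and the four relevant phases become $\arg(\rho_0)=\pi$, $\arg(\rho_1)=\theta$, $\arg(-\rho_2)=2\theta$, and $\arg(-\rho_3)\equiv 3\theta-\pi\pmod{2\pi}$. The PT ordering $\arg(-\rho_2)>\arg(\rho_0)>\arg(-\rho_3)>\arg(\rho_1)$ then unpacks to $2\theta>\pi>3\theta-\pi>\theta$, each of which is equivalent to $\theta\in(\pi/2,2\pi/3)$, proving the first assertion.

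For the second assertion, I would fix $a$ as in the hypothesis and verify the two polynomial stability axioms. For the polynomial stability function, any $0\neq E\in\Ap$ has $\calh^{-1}(E)\in\coh^{\geq 2}(X)$ and $\calh^0(E)\in\coh^{\leq 1}(X)$; a case analysis by the smallest $i$ with $\ch_i(E)\neq 0$ will show that the leading coefficient of $Z_{\infty\omega,B}(E)(m)$ is a positive real multiple of one of $-\rho_3, -\rho_2, \rho_1, \rho_0$, so the limiting phase lies in $\{3\theta-\pi,\,2\theta,\,\theta,\,\pi\}\subset[\theta, 2\theta]$. The hypothesis on $a$ says precisely that both $\theta/\pi$ and $2\theta/\pi$ lie in $(a, a+1]$, hence so do all four limiting phases. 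For the Harder--Narasimhan property, I would argue as in the proof of Lemma~\ref{lem:AG53-43-1}: refine the torsion pair $(\Ac^p_1, \Ac^p_{1/2})$ by support dimension to produce finite-length subcategories with constant limiting phases, and apply \cite[Lemma~4.1.2]{BMT1} to each piece.

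The hard part will be a clean verification of the HN property. Unlike in Lemma~\ref{lem:AG53-43-1}, where all objects in each of $\Ac^p_1$ and $\Ac^p_{1/2}$ share a common phase limit, our phases within those pieces can differ (pure 2-dimensional sheaves shifted carry phase $2\theta$ while 0-dimensional sheaves carry phase $\pi$). Refining each of the two pieces further by support dimension should produce four finite-length subcategories (pure 2-dimensional shifted, 0-dimensional, torsion-free shifted, and pure 1-dimensional) with distinct constant limiting phases, and the total ordering $\theta<3\theta-\pi<\pi<2\theta$ will then supply the iterated torsion-pair decomposition needed to extract HN filtrations.
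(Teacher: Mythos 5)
Your verification of the PT configuration is correct, and it is actually cleaner than the paper's. The paper expands $Z_{\infty\omega,bm\omega}$, writes out the real and imaginary parts of each $\rho_i$ separately, and checks the four ordering conditions one by one via $2\times 2$ determinants. Your observation that $\rho_k$ is, up to the positive factor $1/k!$ and the sign $(-1)^{k+1}$, the power $(b+i)^k$ — so that the four relevant phases are $\theta$, $2\theta$, $\pi$ and $3\theta-\pi$ for $\theta=\arg(b+i)\in(\pi/2,2\pi/3)$ — collapses all of those checks into one line, and it also makes the second assertion about the interval $(a,a+1]$ immediate, since the extreme phases are $\arg(\rho_1)=\theta$ and $\arg(-\rho_2)=2\theta$ and the total spread $\theta$ is less than $\pi$. (Strictly, each of your three inequalities is implied by, rather than equivalent to, $\theta\in(\pi/2,2\pi/3)$; their conjunction is the equivalence. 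This is cosmetic.)

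The gap is in the HN step, and it is twofold. First, the step is unnecessary: once the $\rho_i$ are shown to satisfy the PT configuration, $(Z_{\infty\omega,B},\Ap)$ is literally one of Bayer's polynomial stability conditions, and \cite[Theorem 3.2.2]{BayerPBSC} (equivalently, Bayer's Section 6) already supplies the HN property on $\Ap$ for any such configuration — this citation is the entirety of the paper's treatment of HN here. Second, the direct argument you sketch does not work as stated. The four pieces you propose do not form an iterated torsion-pair decomposition of $\Ap$ in the order $2\theta>\pi>3\theta-\pi>\theta$: an object $E$ with $\calh^{-1}(E)$ torsion-free, $\calh^0(E)$ a nonzero $0$-dimensional sheaf, and $\Hom(\coh^{\leq 0}(X),E)=0$ (for instance $F^\vee[1]$ for a $\mu$-stable reflexive non-locally-free $F$, as in Example \ref{eg:reflx-sh-gives-ST}) admits only the canonical filtration with factors of limiting phases $3\theta-\pi$ and then $\pi$, i.e.\ in \emph{increasing} order; the reverse filtration would require a $0$-dimensional subobject, which is excluded. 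So such $E$ lies in no extension closure of your four pieces taken in decreasing phase order. Moreover, ``constant limiting phase'' on a piece is not by itself enough to invoke \cite[Lemma 4.1.2]{BMT1}: that lemma requires the HN property on each piece, and since the full polynomial phase functions within a piece are not constant, you would still need each piece to be of finite length — which is exactly what Lemma \ref{lem:AG53-43-1} cites \cite[Lemma 4.2.2]{BMT1} for, and which your refined pieces are not given. The clean fix is to delete the HN discussion and cite Bayer's theorem, exactly as the first part of your argument already entitles you to do.
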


\begin{proof}
For a real number $b$, setting  $B=bm\omega$  gives
\begin{align*}
  Z_{\infty \omega, B} = & m^3\omega^3 \left( \left(-\frac{1}{2}b + \frac{1}{6}b^3\right) + i\left( -\frac{1}{6}+\frac{1}{2}b^2\right)\right)\ch_0 \\ & + m^2\omega^2 \left(  \left(\frac{1}{2}-\frac{1}{2}b^2\right)+i\left(-b\right) \right)\ch_1 \\
  & + m\omega \left( b+i \right)\ch_2 + \left( -1\right)\ch_3.
\end{align*}
Let us write $\rho_i$ to denote the coefficient of the $m^i\omega^i\ch_{3-i}$ term above, i.e.\ 
\begin{align*}
 \rho_3&= \left(-\frac{1}{2}b + \frac{1}{6}b^3\right) + i\left( -\frac{1}{6}+\frac{1}{2}b^2\right) \\
 \rho_2 &= \left(\frac{1}{2}-\frac{1}{2}b^2\right)+i\left(-b\right)\\
 \rho_1 &= b+i \\
 \rho_0 &= -1.
\end{align*}
If we can ensure that the complex numbers $-\rho_2, \rho_0, -\rho_3, \rho_1$ all lie in some rotation of the upper-half complex plane and that
\begin{equation}\label{eq:AG52-19-1}
\phi (-\rho_2) > \phi (\rho_0) > \phi (-\rho_3) > \phi (\rho_1),
\end{equation}
then we can write
\[
Z_{\infty \omega, B}(E) = \sum_{i=0}^3 \int_X m^i \omega^i \rho_i \ch (E)
\]
where the $\rho_i \in \mathbb{C}$ would satisfy the configuration for a PT stability function, thus proving the claim.

Note that $\rho_0=-1$ lies on the negative real axis on the complex plane.  We have:
\begin{itemize}
\item $-\rho_2$ lies on the lower-half complex plane if and only if $b<0$.
\item $-\rho_3$ lies above the upper-half complex plane if and only if 
\[
  -\left(-\frac{1}{6}+\frac{1}{2}b^2\right) >0 \Leftrightarrow b \in \left(-\frac{1}{\sqrt{3}}, \frac{1}{\sqrt{3}}\right).
\]
\item $-\rho_3 \in \{\rho_1 re^{i\phi \pi} : r \in\mathbb{R}_{>0}, \phi \in (0,1)\}$ if and only if 
    \[
    \begin{vmatrix} b & 1 \\ \tfrac{1}{2}b-\tfrac{1}{6}b^3 & \tfrac{1}{6}-\tfrac{1}{2}b^2 \end{vmatrix} >0 \Leftrightarrow b^3+b<0 \Leftrightarrow b<0
    \]
\item $-\rho_2 \in \{\rho_1 re^{i\phi \pi} : r \in\mathbb{R}_{>0}, \phi \in (0,1)\}$ if and only if
    \[
      \begin{vmatrix} b & 1 \\ -\frac{1}{2}+\tfrac{1}{2}b^2 & b \end{vmatrix} > 0 \Leftrightarrow \tfrac{1}{2}(b^2+1)>0,
    \]
    which holds for all $b \in \mathbb{R}$.
\end{itemize}
Therefore, when $b \in  (-\tfrac{1}{\sqrt{3}}, 0)$, all the inequalities in \eqref{eq:AG52-19-1} hold, and all of $-\rho_2, \rho_0, -\rho_3, \rho_1$ lie in some rotation of the upper-half complex plane, making $Z_{\infty \omega, B}$ a PT polynomial stability function.  

In \cite[Section 6]{BayerPBSC}, Bayer already established that a PT stability function has the HN property on the heart $\Ap$.  The rest of the proposition then follows.
\end{proof}

Proposition \ref{prop:PTislimitstab} shows that if we deform the $B$-field in the polynomial central charge $Z_{\infty\omega, B}$ at a specific rate, we obtain the stability function of PT stability.  The next proposition shows that if we deform $B$ at different rates, we obtain the stability functions of other polynomial stability conditions.  See Section \ref{sec:polynomial} for the definitions of DT stability, PT stability, and the large volume limit, and  \cite[Section 3]{BayerPBSC} for the definitions of dual stability conditions.

\begin{proposition}\label{prop:otherlimitsLVL}
Let $X$ be a smooth projective threefold, and $\omega$ an ample class on $X$.  Suppose  $B=bm\omega$ for some constant $b \in \mathbb{R}$, and let  $\rho_i$ for $0\leq i \leq 3$ be as in the proof of Proposition \ref{prop:PTislimitstab}.  Then $Z_{\infty \omega, B}$, as a polynomial central charge in the indeterminate $m$, is of the following type  in the sense of Bayer up to a rotation of the complex plane:
\begin{itemize}
\item[(i)] $b \in \left(-\infty, -\tfrac{1}{\sqrt{3}}\right)$:  $Z_{\infty \omega, B}$ is a DT stability function.
\item[(ii)] $b \in \left( -\frac{1}{\sqrt{3}} , 0\right)$:  $Z_{\infty \omega, B}$ is a PT stability function.
\item[(iii)] $b=0$:  $Z_{\infty \omega, B}$ is the polynomial stability function at the large volume limit.
\item[(iv)] $b \in \left( 0, \frac{1}{\sqrt{3}}\right)$:  $Z_{\infty \omega, B}$ is the polynomial stability function for the dual stability to PT stability.
\item[(v)] $b \in \left( \frac{1}{\sqrt{3}}, \infty\right)$:  $Z_{\infty \omega, B}$ is the polynomial stability function for the dual stability to DT stability.
\end{itemize}
\end{proposition}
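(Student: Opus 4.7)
The plan is to determine, for each range of $b$ in cases (i)--(v), the cyclic arrangement of the vectors $\rho_0, \rho_1, -\rho_2, -\rho_3$ on the complex plane, and match it with the configuration defining the corresponding polynomial stability condition in \cite{BayerPBSC}. The explicit expressions
\[
\rho_0 = -1, \quad \rho_1 = b+i, \quad -\rho_2 = \tfrac{b^2-1}{2} + ib, \quad -\rho_3 = \tfrac{b(3-b^2)}{6} + i\tfrac{1-3b^2}{6}
\]
computed in the proof of Proposition \ref{prop:PTislimitstab} serve as the starting point.

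Case (iii) is immediate: at $b=0$, the central charge $Z_{\infty \omega, 0}$ is precisely the large-volume-limit central charge recorded in \cite[Section 6.1]{BMT1}. Case (ii) is Proposition \ref{prop:PTislimitstab} itself. For case (i), the same three determinantal checks used there give the DT ordering $\phi(-\rho_2) > \phi(-\rho_3) > \phi(\rho_0) > \phi(\rho_1)$: the positions of $\rho_0, \rho_1, -\rho_2$ are qualitatively unchanged when $b$ is pushed below $-1/\sqrt{3}$, while the sign change of $\mathrm{Im}(-\rho_3) = (1-3b^2)/6$ moves $-\rho_3$ from Q2 into Q3 or Q4, so that $\phi(-\rho_3) > \phi(\rho_0)$; a $2 \times 2$ determinant analogous to those in Proposition \ref{prop:PTislimitstab} shows that $-\rho_3$ nevertheless remains clockwise from $-\rho_2$, completing the DT configuration.

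Cases (iv), (v) I would handle via the symmetry $b \mapsto -b$: a direct computation gives
\[
\rho_0(-b) = \rho_0(b), \quad \rho_1(-b) = -\overline{\rho_1(b)}, \quad \rho_2(-b) = \overline{\rho_2(b)}, \quad \rho_3(-b) = -\overline{\rho_3(b)}.
\]
Up to a rotation of the complex plane, this transformation matches Bayer's construction of the dual polynomial stability condition obtained by precomposition with the derived dual functor on $\dbx$ (see \cite{BayerPBSC}): conjugation of the $\rho_i$ corresponds to the conjugate central charge, while the sign flips on the odd indices account for the Chern character transformation $\ch_j(E^\vee)=(-1)^j\ch_j(E)$. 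Consequently, the stability types at $b \in (0, \tfrac{1}{\sqrt{3}})$ and $b > \tfrac{1}{\sqrt{3}}$ are identified with the duals of the PT and DT stability types established in cases (ii) and (i), respectively.

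The main obstacle lies in making the identification in cases (iv) and (v) fully rigorous, since Bayer's dual construction packages the transformation of the central charge together with a compatible modification of the heart, and one has to ensure this matches the fixed heart $\Ap$ we use throughout. Should that identification prove delicate, the fallback plan is to perform direct phase and determinant computations on the sign-flipped tuple $(\rho_0, -\rho_1, -\rho_2, \rho_3)$ arising from the derived dual, and verify the required common half-plane and cyclic-ordering conditions by the same methods as in cases (i) and (ii).
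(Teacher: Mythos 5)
Your proposal is correct, and for cases (i)--(iii) it follows essentially the same route as the paper: case (iii) by inspection, case (ii) by citing Proposition \ref{prop:PTislimitstab}, and case (i) by tracking the sign change of $\mathfrak{Im}(-\rho_3)=\tfrac{1}{6}(1-3b^2)$ across $b=-\tfrac{1}{\sqrt{3}}$ together with the $b$-independent determinant inequalities $\left|\begin{smallmatrix}\rho_1\\-\rho_2\end{smallmatrix}\right|>0$ and $\left|\begin{smallmatrix}-\rho_3\\-\rho_2\end{smallmatrix}\right|>0$ --- exactly the checks the paper performs. Where you genuinely diverge is in cases (iv) and (v): the paper simply redoes the quadrant and determinant analysis for $b>0$ (e.g.\ computing $\left|\begin{smallmatrix}-\rho_3\\ \rho_1\end{smallmatrix}\right|=\tfrac{1}{3}b(b^2+1)$ and $\left|\begin{smallmatrix}\rho_3\\ \rho_0\end{smallmatrix}\right|=\tfrac{1}{6}(3b^2-1)$) and then matches the resulting ordering $\phi(\rho_0)>\phi(-\rho_2)>\phi(\rho_1)>\phi(-\rho_3)$ against the configuration for $\sigma_5$-stability in \cite[Section 2]{Lo4}, identified there as dual to PT stability up to a $\mathrm{GL}^+\!(2,\mathbb{R})$-action. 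Your symmetry $\rho_j(-b)=(-1)^j\overline{\rho_j(b)}$ is correct and gives $Z_{\infty\omega,-bm\omega}(E)=-\overline{Z_{\infty\omega,bm\omega}(E^\vee)}$, which is conceptually cleaner: it derives (iv) from (ii) and (v) from (i) in one stroke and makes the duality structurally transparent rather than an a posteriori pattern-match. The cost is the identification you flag yourself --- one must pin down Bayer's convention for the dual central charge (conjugation composed with the derived dual, up to rotation/shift) to conclude that the resulting configuration is literally the ``dual PT'' (respectively ``dual DT'') configuration; since the proposition only concerns the stability \emph{function} up to rotation, and not the heart, this is a matter of matching conventions rather than a mathematical obstruction, and your stated fallback of direct determinant computations on the sign-flipped tuple is precisely what the paper does. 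Either route closes the argument.
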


\begin{proof}
Recall that we defined
\begin{align}
 \rho_3&= \left(-\frac{1}{2}b + \frac{1}{6}b^3\right) + i\left( -\frac{1}{6}+\frac{1}{2}b^2\right) \notag\\
 \rho_2 &= \left(\frac{1}{2}-\frac{1}{2}b^2\right)+i\left(-b\right) \notag\\
 \rho_1 &= b+i \notag\\
 \rho_0 &= -1.  \label{eq:rhoidef}
\end{align}
so that we can write 
\[
Z_{\infty \omega, B}(E) = \sum_{i=0}^3 \int_X m^i \omega^i \rho_i \ch (E).
\]

(i) \textbf{The case $b \in \left(\infty, -\tfrac{1}{\sqrt{3}}\right)$}.  In this case, $\rho_1$ always lies in the 2nd quadrant of the complex plane.  Also, since 
\[
\begin{vmatrix}\rho_1 \\ -\rho_2 \end{vmatrix}=\begin{vmatrix} b & 1 \\ -\frac{1}{2}+\tfrac{1}{2}b^2 & b \end{vmatrix}  = \tfrac{1}{2}(b^2+1) >0 \text{ for all $b \in \mathbb{R}$},
\]
the angle from $\rho_1$ to $-\rho_2$, measured counterclockwise, lies in the range $(0,\pi)$.  Note that
\begin{align}
 \mathfrak{Re} (-\rho_3) &= \tfrac{1}{6}b(\sqrt{3}+b)(\sqrt{3}-b) \\
 \mathfrak{Im} (-\rho_3) &= \frac{1}{6}(1+\sqrt{3}b)(1-\sqrt{3}b) \label{eq:-rho3reim}
\end{align}
and so $\mathfrak{Im} (-\rho_3)<0$ for $b \in \left(\infty, -\tfrac{1}{\sqrt{3}}\right)$, i.e.\ $-\rho_3$ always lies in the lower-half complex plane in case (i).  Lastly, we have
\[
\begin{vmatrix} -\rho_3 \\ -\rho_2\end{vmatrix} = \begin{vmatrix} \tfrac{1}{2}b-\tfrac{1}{6}b^3 & \tfrac{1}{6}-\tfrac{1}{2}b^2 \\ 
-\frac{1}{2}+\tfrac{1}{2}b^2 & b \end{vmatrix}  = \tfrac{1}{12} ( b^4 + 2b^2 + 1) >0 \text{ for all $b \in \mathbb{R}$},
\]
which tells us the angle from $-\rho_3$ to $-\rho_2$, measured counter-clockwise, lies in the range $(0,\pi)$. Overall, we see that there is a half plane
\[
\{ re^{i\phi \pi} : r \in \mathbb{R}_{>0}, \phi \in (r,r+1]\}
\]
for some $r \in \mathbb{R}$ that contains $\rho_1, \rho_0, -\rho_3, -\rho_2$, and that in this half-plane, we have
\[
\phi (-\rho_2)>  \phi (-\rho_3) > \phi (\rho_0) > \phi (\rho_1),
\]
which is the configuration of $\rho_i$ for DT stability up to a rotation.

(ii) \textbf{The case $b \in \left( -\frac{1}{\sqrt{3}}, 0\right)$}. This was shown in Proposition \ref{prop:PTislimitstab}.

(iii) \textbf{The case $b=0$}. In this case, the vectors $\rho_0, -\rho_2$ lie on the negative real axis while $\rho_1, -\rho_3$ lie on the positive imaginary axis, which coincide with the configuration for $\rho_i$ for the large volume limit in  \cite[Section 6]{BMT1} or \cite{Toda1}.

(iv) \textbf{The case $b \in \left(0,\tfrac{1}{\sqrt{3}}\right)$}. In this case, 
\[
\mathfrak{Re} (-\rho_2) = \frac{1}{2}(b+1)(b-1)
\]
is negative while $\mathfrak{Im} (-\rho_2)=b$ is positive, and so $-\rho_2$ lies in the second quadrant in the complex plane.  On the other hand, $\rho_1$ lies in the first quadrant, and from our computations of $\mathfrak{Re} (-\rho_3), \mathfrak{Im} (-\rho_3)$ in Case (i), we know $-\rho_3$ lies in the first quadrant as well.  Since
\[
  \begin{vmatrix} -\rho_3 \\ \rho_1 \end{vmatrix} = \tfrac{1}{3}b(b^2+1)>0 \text{ for all $b \in \mathbb{R}$},
\]
the angle from $-\rho_3$ to $\rho_1$, measured counter-clockwise, lies in $(0,\pi)$.  Overall, all of $\rho_0, -\rho_2, \rho_1, -\rho_3$ lie on the upper-half complex plane in case (iv), and if we regard all of their phases as being in $(0, \pi)$ then we have
\[
  \phi (\rho_0) > \phi (-\rho_2) > \phi (\rho_1) > \phi (-\rho_3).
\]
This is the configuration of $\rho_i$ for \textit{$\sigma_5$-stability} in \cite[Section 2]{Lo4}, which is dual to PT stability up to a $\mathrm{GL}^+\!(2,\mathbb{R})$-action.

(v) \textbf{The case $b \in \left( \tfrac{1}{\sqrt{3}}, \infty\right)$}.  In this case, $-\rho_2$ and $\rho_1$ always lie on the upper-half plane.  Since $\begin{vmatrix} \rho_1 \\-\rho_2 \end{vmatrix}>0$ as in Case (i), however, we know that the angle from $\rho_1$ to $-\rho_2$ lies in $(0,\pi)$. 

On the other hand, from the computation in Case (iv), we know the angle from $\rho_1$ to $-\rho_3$ (measured counter-clockwise) lies in $(0,\pi)$; from the computation in Case (i), we know the angle from $-\rho_3$ to $-\rho_2$ (measured counter-clockwise) also lies in $(0,\pi)$.  Also, 
\[
\begin{vmatrix} \rho_3 \\ \rho_0 \end{vmatrix} = \begin{vmatrix} -\tfrac{1}{2}b + \tfrac{1}{6}b^3 & -\frac{1}{6} + \tfrac{1}{2}b^2 \\ -1 & 0 \end{vmatrix} = \tfrac{1}{6}(\sqrt{3}b+1)(\sqrt{3}b-1),
\]
which is positive in Case (v).

Putting all these together, we have that each of $\rho_i/\rho_{i+1}$ lies on the upper-half complex plane for $i=0, 1, 2$, while there is some half-plane
\[
\mathbb{H}_a = \{ re^{i\phi \pi} : r \in \mathbb{R}_{>0}, \phi \in (a,a+1]\} 
\]
such that $\rho_0, \rho_3$ lie in $\mathbb{H}_a$ while $\rho_1, \rho_2$ lie in $-\mathbb{H}_a$.  Overall, the $\rho_i$ satisfy the configuration for the polynomial stability that is dual to DT stability.
\end{proof}


\subsection{PT-stability and $\sigma_{\infty, B}$-stability}\label{sec:PTstabvsgenLVL}

In this subsection, we show that when the $B$-field is sufficiently negative, $\sigma_{\infty, B}$-stability coincides with PT-stability.  The idea is to prove an effective version of Proposition \ref{prop:PTislimitstab}, using the generalization of the large volume limit defined in Section \ref{sec:LVLgeneralization}.

As a result, when $3f_2>g_2$ in the definition of $\sigma_{\infty, B}$ (see Theorem \ref{thm:PTislimtstab-eff} below), we have 
\[
\text{PT- stability} \Leftrightarrow \text{$\sigma_{\infty, B}$-stability}.
\]
On the other hand, when $f=2(s+\tfrac{1}{6})m^2, g=3m^2$ in the definition of $\sigma_{\infty, B}$ (see Theorem \ref{thm:AG53-39-1}) where $s>0$ is a parameter that  appears in the definition of vertical stability, we have 
\[
\text{$\sigma_{\infty, B}$-stability for $B \ll 0$} \Leftrightarrow \text{vertical stability for $B\ll 0$}.
\]
Overall, whenever  $f=2(s+\tfrac{1}{6})m^2, g=3m^2$ and $s>\tfrac{1}{3}$, we have the equivalences
\[
\text{PT- stability}\,\, \Leftrightarrow \,\, \text{$\sigma_{\infty, B}$-stability for $B \ll 0$} \,\, \Leftrightarrow \,\, \text{vertical stability for $B\ll 0$}.
\]

We begin with a boundedness result that will be needed in the proof of the main theorem in this subsection.

\begin{lemma}\label{lem:AG53-47-1}
Let $X$ be a smooth projective threefold and $\omega$ an ample class on $X$. Fix a Chern character $v$ on $X$ with $v_0 \neq 0$.  Then there exist constants $b_{-1}, b_0 \in \mathbb{R}$ such that, for every $\sigma_{PT}$-semistable object $E$ in $\Ac^p$ with $\ch(E)=v$, we have 
\[
  \wh{\mu}_{\mathrm{max}}( \mathcal{H}^{-1}(E)^{\ast\ast}/\mathcal{H}^{-1}(E)) < b_{-1}
\]
and $\mathrm{length}(\mathcal{H}^0(E))<b_0$.
\end{lemma}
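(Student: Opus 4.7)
The plan is to reduce both bounds to properties of the cohomology sheaves of $E$ with respect to the standard $t$-structure, then invoke boundedness of $\mu$-semistable sheaves together with the upper bound $\varepsilon_X$ from \eqref{eq:epsilon(v)}. The main structural input is Proposition \ref{prop:tech-0}: for any PT-semistable $E \in \Ap$ of nonzero rank, $\calh^{-1}(E)$ is a 2-Gieseker semistable torsion-free sheaf and $\calh^0(E) \in \coh^{\leq 0}(X)$.

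I would first pin down the Chern characters of the cohomology sheaves. From the canonical short exact sequence
\[
0 \to \calh^{-1}(E)[1] \to E \to \calh^0(E) \to 0 \quad \text{in } \Ap,
\]
one reads $\ch(\calh^{-1}(E)) = \ch(\calh^0(E)) - v$. Since $\calh^0(E)$ is $0$-dimensional, $\ch_{\leq 2}(\calh^0(E))=0$, so $\ch_{\leq 2}(\calh^{-1}(E)) = -v_{\leq 2}$; in particular $v_0<0$ because $\ch_0(\calh^{-1}(E))>0$.

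For the first bound, as $E$ ranges over the PT-semistable objects of character $v$, the torsion-free sheaves $\calh^{-1}(E)$ form a subfamily of the $\mu$-semistable sheaves with fixed Chern classes $(-v_0,-v_1,-v_2)$, which is bounded by Grothendieck--Maruyama. Proposition \ref{prop:AG53-46-1} applied to this family then produces the required constant $b_{-1}$. For the second bound, since $\calh^0(E)$ is $0$-dimensional we have $\mathrm{length}(\calh^0(E))=\ch_3(\calh^0(E))$; rearranging the degree-$3$ component of $\ch(\calh^{-1}(E))=\ch(\calh^0(E))-v$ yields
\[
\mathrm{length}(\calh^0(E)) = v_3 + \ch_3(\calh^{-1}(E)).
\]
Because $\calh^{-1}(E)$ is 2-Gieseker semistable with $\ch_{\leq 2}=-v_{\leq 2}$, the boundedness result preceding \eqref{eq:epsilon(v)} gives $\ch_3(\calh^{-1}(E)) \leq \varepsilon_X(-v_{\leq 2})$, so $b_0:=v_3+\varepsilon_X(-v_{\leq 2})+1$ works.

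The main obstacle I foresee is verifying the hypotheses of \eqref{eq:epsilon(v)} (namely $-v_0>0$ and $\Delta(-v_{\leq 2})>0$) in all non-vacuous cases. The first is automatic from $v_0<0$; if $\Delta(v_{\leq 2})<0$ the Bogomolov inequality forces the set of PT-semistable objects with $\ch=v$ to be empty and the lemma is vacuous; the boundary case $\Delta(v_{\leq 2})=0$ can be patched by a direct argument, since $\ch_3$ of a $\mu$-semistable sheaf saturating Bogomolov is controlled by its lower Chern classes.
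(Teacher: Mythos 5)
There is one step that fails as written: the assertion that the family of $\mu$-semistable torsion-free sheaves with fixed $\ch_{\leq 2}=(-v_0,-v_1,-v_2)$ is bounded by Grothendieck--Maruyama. With only $\ch_{\leq 2}$ fixed this is false --- the paper itself remarks, right after the definition of $E(v)$, that $\ch_3$ of a 2-Gieseker semistable sheaf with fixed $\ch_{\leq 2}$ can be made arbitrarily negative (take kernels of surjections onto skyscrapers), so that family is unbounded. The boundedness theorem you need (\cite[Theorem 4.8]{Maru1}, as cited in the paper's proof) requires, in addition, that $\ch_3$ be bounded \emph{below}. The missing observation is exactly the one substantive point of the paper's argument: since $\calh^0(E)$ is $0$-dimensional, $\ch_3(\calh^0(E))=\mathrm{length}(\calh^0(E))\geq 0$, hence
$$\ch_3(\calh^{-1}(E)) \;=\; \mathrm{length}(\calh^0(E)) - v_3 \;\geq\; -v_3,$$
which you in fact already wrote down when deriving the length formula. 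Inserting this lower bound before invoking Maruyama repairs the argument, and then Proposition \ref{prop:AG53-46-1} gives $b_{-1}$ exactly as you say.

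Once boundedness is established this way, the second half of your proof becomes an unnecessary detour: a bounded family has only finitely many Hilbert polynomials, so $\ch_3(\calh^{-1}(E))$ is automatically bounded above, and $\mathrm{length}(\calh^0(E))=v_3+\ch_3(\calh^{-1}(E))$ is bounded without appealing to $\varepsilon_X$. This is how the paper concludes, and it sidesteps the issue you correctly flag at the end, namely that the proposition defining $\varepsilon_X$ is stated only for $\Delta>0$, leaving the case $\Delta(v_{\leq 2})=0$ to an ad hoc patch. I would drop the $\varepsilon_X$ route entirely and derive both bounds from the single boundedness statement.
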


\begin{proof}
For any $\sigma_{PT}$-semistable object $E \in \Ac^p$ of nonzero rank, we know $\mathcal{H}^{-1}(E)$ is $\mu_\omega$-semistable and $\mathcal{H}^0(E)$ is supported in dimension 0 by \cite[Lemma 3.3]{Lo1}.  Hence
\[
\ch_3 (\mathcal{H}^{-1}(E)) = -\ch_3(E) + \ch_3(\mathcal{H}^0(E)) \geq -\ch_3(E),
\]
i.e.\ $\ch_3(\mathcal{H}^{-1}(E))$ is bounded from below.  Thus the set
\[
\{ \mathcal{H}^{-1}(E)  : E \in \Ac^p \text{ is $\sigma_{PT}$-semistable}, \ch(E) = v \}
\]
is bounded by \cite[Theorem 4.8]{Maru1}.  The lemma then follows from Proposition \ref{prop:AG53-46-1}.
\end{proof}

Now we prove the equivalence between PT-stability and $\sigma_{\infty,B}$-stability for sufficiently negative $B$.

\begin{theorem}\label{thm:PTislimtstab-eff}
  Let $X$ be a smooth projective threefold,  $\omega$ an ample $\mathbb{Q}$-divisor  on $X$, and  $b \in  \left( -\tfrac{1}{\sqrt{3}},0\right)$ a constant.  Let $\sigma_{PT}$ denote the polynomial stability condition $(Z_{m\omega, bm\omega}, \Ap)$ in the indeterminate $m$ as in Proposition \ref{prop:PTislimitstab}.  For any $n \in \mathbb{R}_{>0}$, let $\sigma_{\infty,n}$ denote the polynomial stability condition $\sigma_{\infty, B}$ with $B=nb\omega$.  Suppose $3f_2-g_2>0$ in the definition of the central charge $Z$ of $\sigma_{\infty,n}$. Fix a Chern character $v$ on $X$.
  Then for every nonzero object $E \in \dbx$ with $\ch(E)=v$, the following are equivalent:
  \begin{itemize}
      \item[(i)] $E$ is $\sigma_{PT}$-semistable (respectively\ $\sigma_{PT}$-stable).
      \item[(ii)] There exists some $n_0>0$  depending only on $v$ such that, for all  $n>n_0$, $E$ is  $\sigma_{\infty,n}$-semistable (respectively\ $\sigma_{\infty,n}$-stable).
  \end{itemize}
\end{theorem}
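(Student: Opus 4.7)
The plan is to combine a boundedness argument with a direct comparison of the polynomial central charges $Z_{PT}$ and $Z_n$. Both $\sigma_{PT}$ and $\sigma_{\infty,n}$ have heart $\Ap$, so the equivalence of (semi)stability reduces to comparing phase orderings on $\Ap$-subobjects. The threshold $n_0$ will be uniform in $E$ because the Chern characters of $E$ and its potential (de)stabilizing subobjects range over a bounded set determined only by $v$.

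For (i) $\Rightarrow$ (ii), Proposition \ref{prop:tech-0} ensures that any $\sigma_{PT}$-semistable $E$ with $\ch(E)=v$ has $\calh^{-1}(E)$ a torsion-free 2-Gieseker semistable sheaf and $\calh^0(E)\in\coh^{\leq 0}(X)$. Combined with Lemma \ref{lem:AG53-47-1} and Maruyama's theorem, the family of such $E$ is bounded, so the Chern characters of all $\Ap$-subobjects $F\hookrightarrow E$ lie in a bounded set $\mathcal{W}(v)$. I would then write $Z_{PT}(w)(m)$ and $Z_n(w)(m)$ as explicit polynomials in $m$ for $w\in\mathcal{W}(v)$ and perform a leading-term comparison as $m\to\infty$. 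The hypothesis $3f_2>g_2$ plays the role analogous to the constraint $b\in(-1/\sqrt{3},0)$ in Proposition \ref{prop:PTislimitstab}, placing $\sigma_{\infty,n}$ in the ``PT regime'' as $n\to\infty$. The aim is to derive, from the PT-inequality $\phi_{Z_{PT}}(F)\preceq\phi_{Z_{PT}}(E)$, the inequality $\phi_{Z_n}(F)\preceq\phi_{Z_n}(E)$ uniformly for $w\in\mathcal{W}(v)$ and $n>n_0(v)$. The converse (ii) $\Rightarrow$ (i) is symmetric: a parallel analysis of $Z_n$-HN filtrations in $\Ap$ should show that $\sigma_{\infty,n}$-semistable objects with $\ch=v$ also have torsion-free $\mu$-semistable $\calh^{-1}$ and 0-dimensional $\calh^0$ for $n$ past a threshold, giving the analogous boundedness, and then the phase comparison runs in reverse: a PT-destabilization of $E$ would force a $\sigma_{\infty,n}$-destabilization for all sufficiently large $n$, contradicting (ii).

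The main obstacle is the effective phase comparison itself: verifying that the inequalities on $\phi_{Z_{PT}}$ and $\phi_{Z_n}$ match uniformly for all $w\in\mathcal{W}(v)$ and $n$ above a common threshold. This will require a case analysis---subobjects with $w_0\neq 0$ are controlled by the $m^3$ coefficients of the imaginary parts, where $3f_2>g_2$ ensures the $\ch_0$-term in $Z_n$ behaves compatibly with that of $Z_{PT}$; subobjects with $w_0=0$ (necessarily with $w_1\leq 0$ in $\Ap$) need finer comparisons at the $m^2$ and $m$ levels, exploiting that $\calh^0(E)\in\coh^{\leq 0}(X)$ and the strictness of PT-phase inequalities on the bounded lattice set $\mathcal{W}(v)$. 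A further subtlety is simultaneously establishing the boundedness of $\sigma_{\infty,n}$-semistable objects uniformly in large $n$, needed for the (ii) $\Rightarrow$ (i) direction: this should follow by the same Maruyama-style argument once one checks that $\sigma_{\infty,n}$-semistability forces $\mu$-semistability of $\calh^{-1}$ and $\calh^0\in\coh^{\leq 0}(X)$ in the relevant range of $n$.
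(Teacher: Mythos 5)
Your overall architecture (same heart $\Ap$, reduce to a phase comparison on subobjects, feed in boundedness, split by the dimension of the subobject) matches the paper's, but the load-bearing step is flawed. You assert that ``the Chern characters of all $\Ap$-subobjects $F\hookrightarrow E$ lie in a bounded set $\mathcal{W}(v)$'' and then propose a leading-term comparison uniform over $\mathcal{W}(v)$. That set is not bounded: for a subobject $M\subseteq E$ in $\Ap$ of nonzero rank, $\calh^{-1}(M)$ is an arbitrary subsheaf of the torsion-free sheaf $\calh^{-1}(E)$, and such subsheaves have unbounded $\ch_1,\ch_2,\ch_3$ (already $\mu_\omega$ is only bounded above, not below). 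Since polynomial stability conditions carry no support property, you cannot restrict to a bounded set of ``potentially destabilizing'' classes either, so the uniform-in-$n$ threshold cannot be extracted by compactness over $\mathcal{W}(v)$.

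The paper gets uniformity by a different mechanism, split into two regimes. For subobjects $M$ of nonzero rank, both $\sigma_{PT}$ and $\sigma_{\infty,n}$ compare phases lexicographically through the slopes $\mu_{\omega,B}$, $\omega\ch_2^B/\ch_0$, $\ch_3^B/\ch_0$; since the $B=nb\omega$-twist shifts each of these by a quantity depending only on the \emph{previous} slopes (e.g.\ $\tfrac{\omega\ch_2^B}{\ch_0}=\tfrac{\omega\ch_2}{\ch_0}-bn\mu_\omega+\tfrac12 b^2n^2\omega^2$), each twisted inequality follows from its untwisted counterpart once the earlier slopes agree --- no boundedness of $\ch(M)$ is needed, and the only $n$-threshold comes from forcing the sign of $\omega^2\ch_1^B(E)$. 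The only place boundedness enters is for subobjects $M$ supported in dimension $1$: there one shows $M$ injects (modulo a $0$-dimensional piece) into $\calh^{-1}(E)^{\ast\ast}/\calh^{-1}(E)$, and the required uniform input is exactly Lemma \ref{lem:AG53-47-1} --- a bound on $\wh{\mu}_{\mathrm{max}}$ of that quotient and on $\mathrm{length}(\calh^0(E))$ over all PT-semistable $E$ with $\ch(E)=v$ --- after which the hypothesis $3f_2-g_2>0$ makes the (suitably $\mathrm{GL}^+\!(2,\mathbb{R})$-normalized) phase of $E$ dominate for $n\gg0$. Incidentally, your worry about establishing boundedness of $\sigma_{\infty,n}$-semistable objects uniformly in $n$ for the direction (ii)$\Rightarrow$(i) is unnecessary: that direction is again a direct lexicographic slope comparison (Proposition \ref{prop:tech-0} is not invoked there), so no family-boundedness is required.
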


In the proof below, whenever we have a 2 by 2 matrix $A$ over $\mathbb{R}$ and a complex number $z=x+iy$ (where $x, y \in \mathbb{R}$), we will write $Az$ to denote the complex number whose real and imaginary parts form the column vector $A\left(\begin{smallmatrix} x \\ y \end{smallmatrix}\right)$.  (That is, we identify $\mathbb{C}$ with $\mathbb{R}^2$ via $z=x+iy \mapsto \left(\begin{smallmatrix} x \\ y \end{smallmatrix}\right)$.)

\begin{proof}
We will divide the proof into two parts, according to whether  $\ch_0$  is nonzero.  We will write $\phi_{PT}, \phi_{\infty,n}$ to denote the phase functions associated to $\sigma_{PT}, \sigma_{\infty,n}$, respectively.

\smallskip

\textbf{Part I}: $\ch_0(E)\neq 0$.

\textbf{(i) $\Rightarrow$ (ii):}  Suppose $E \in \Ac^p$ has Chern character $v$.  In order to prove that $E$ is $\sigma_{\infty,n}$-semistable, let us consider any $\Ac^p$-short exact sequence of the form
\begin{equation}\label{eq:AG52-26-1}
0 \to M \overset{k}{\to} E\to N \to 0
\end{equation}
where $M, N \neq 0$.  The $\sigma_{PT}$-semistability of $E$ implies that $M$ must be supported in dimension 1 or 3.  

\textbf{Case 1.} Suppose $\dim M=1$.  Then $M=\mathcal{H}^0(M) \in \coh^{\leq 1}(X)$.  Consider the canonical short exact sequence in $\Ac^p$
\begin{equation}\label{eq:AG52-26-2}
0 \to \mathcal{H}^{-1}(E)[1] \overset{i}{\to} E \overset{j}{\to} \mathcal{H}^0(E) \to 0.
\end{equation}

\textbf{Case 1(a).} Suppose $jk=0$.  Then $k$ factors through $i$, i.e.\ we have a commutative diagram 
\[
\xymatrix{
  \mathcal{H}^{-1}(E)[1] \ar[r]^(.6)i & E \\
  M \ar[u]^{\ol{k}} \ar[ur]_k
}
\]
for some $\ol{k}$.  Since $i, k$ are both injections in $\Ap$, it follows that $\ol{k}$ is also an injection in $\Ap$.  Applying $\Hom (M,-)$ to the $\Ap$-short exact sequence
\begin{equation}\label{eq:AG52-26-3}
0 \to \mathcal{H}^{-1}(E)^{\ast\ast}/\mathcal{H}^{-1}(E) \overset{\alpha}{\to} \mathcal{H}^{-1}(E)[1] \to \mathcal{H}^{-1}(E)^{\ast\ast}[1] \to 0
\end{equation}
and noting that $\Hom (M, \mathcal{H}^{-1}(E)^{\ast\ast}[1])=0$ by  \cite[Lemma 4.20]{CL}, we have the isomorphism
\[
\ol{\alpha}: \Hom (M, \mathcal{H}^{-1}(E)^{\ast\ast}/\mathcal{H}^{-1}(E)) \overset{\thicksim}{\to} \Hom (M, \mathcal{H}^{-1}(E)[1])
\]
induced by $\alpha$.  This means that $\ol{k}=\alpha{\wt{k}}$ for some $\wt{k} \in  \Hom (M, \mathcal{H}^{-1}(E)^{\ast\ast}/\mathcal{H}^{-1}(E))$.  Since $\ol{k}$ and $\alpha$ are both injections in $\Ap$, it follows that $\wt{k}$ is also an injection in $\Ap$;  since $\coh^{\leq 1}(X)$ is a Serre subcategory of $\Ap$, we can identify $M$ with a subsheaf of $\mathcal{H}^{-1}(E)^{\ast\ast}/\mathcal{H}^{-1}(E)$.  Since $M\neq 0$, we have $\mathcal{H}^{-1}(E)^{\ast\ast}/\mathcal{H}^{-1}(E)\neq 0$.  Note that  $\mathcal{H}^{-1}(E)^{\ast\ast}/\mathcal{H}^{-1}(E)$ is a pure 1-dimensional sheaf by Lemma \ref{lem:tech-2}, and so $M$ is also a pure 1-dimensional sheaf.

Now set 
\[
\mu_{32,B} = \frac{\ch_3^B}{\omega \ch_2^B}
\]
with $B=bn\omega$, which is a slope function with the HN property on $\coh^{\leq 1}(X)$.  
Let $G$ denote the left-most $\mu_{32,B}$-HN factor of $\mathcal{H}^{-1}(E)^{\ast\ast}/\mathcal{H}^{-1}(E)$.  Then we have $\mu_{32,B}(M) \leq \mu_{32,B}(G)$, which is equivalent to $\phi_{\infty, n}(M) \preceq \phi_{\infty,n}(G)$ since both $M, G$ are supported in dimension 1.  Therefore, to prove that $M$ does not destabilise $E$, it suffices to show $\phi_{\infty,n}(G) \preceq \phi_{\infty,n}(E)$.

Consider the truncation 
\[
  Z_{01} = f \tfrac{\omega^2}{2}\ch_1^B + i(-gm) \tfrac{\omega^3}{6}\ch_0^B
\]
of the central charge $Z$ that corresponds to the two highest-degree terms in $Z$.   For any object $A \in \Ap$ supported in dimension 3, let us define a polynomial  phase function $\phi_{01}(A)$ of $Z_{01}(A)$ via
\[
  Z_{01}(A)(m) \in \{ re^{i \pi \phi_{01} (A)(m)} : r \in \mathbb{R}_{>0}, \phi_{01} (A)(m) \in (\tfrac{1}{4}, \tfrac{5}{4} )\} \text{\qquad for $m \gg 0$}.
\]
We will now prove  $\phi_{\infty,n}(G) \prec \phi_{\infty,n}(E)$ by proving  $\phi_{\infty,n}(G) \prec \phi_{01}(E)$.

Observe that 
\begin{align*}
Z(G) &= -(\ch_3(G)-B\ch_2(G))+im\omega \ch_2(G) \\
&= -\ch_3(G)+nb\omega \ch_2(G) + im\omega \ch_2(G)
\end{align*}
while
\begin{align*}
  Z_{01}(E) &= f \tfrac{\omega^2}{2}(\ch_1(E)-B\ch_0(E)) + i(-gm) \tfrac{\omega^3}{6}\ch_0(E) \\
  &= m^2 \left( f_2\tfrac{\omega^2}{2}\ch_1(E) - f_2bn\tfrac{\omega^3}{2}\ch_0(E) + i(-g_2)m\tfrac{\omega^3}{6} \ch_0(E)\right).
\end{align*}
If we define the matrix
\[
 J = \begin{pmatrix} 1 & -nb/m \\ 0 & 1/m\end{pmatrix} = \begin{pmatrix} 1 & -nb \\ 0 & 1  \end{pmatrix}\begin{pmatrix}
    1 & 0 \\ 0 & 1/m\end{pmatrix}
\]
which lies in $\mathrm{GL}^+\!(2,\mathbb{R})$ for any $m \in \mathbb{R}_{>0}$, then 
\begin{align*}
  JZ(G) &= -\ch_3(G) + i\omega \ch_2(G) \\
  \tfrac{1}{m^2}JZ_{01}(E) &=  f_2\tfrac{\omega^2}{2}\ch_1(E) + n \tfrac{\omega^3}{6}b (-3f_2+g_2)\ch_0(E) + i(-g_2)\tfrac{\omega^3}{6}\ch_0(E).
\end{align*}
Our assumption $3f_2-g_2>0$ now implies that the term $  \tfrac{\omega^3}{6}b (-3f_2+g_2)\ch_0(E)$ is negative (note that $\ch_0(E)<0$).  Therefore, if we consider both $JZ(G)$ and $\tfrac{1}{m^2}JZ_{01}(E)$ as having phases in $(0,1)$, then there exists $n_1>0$ such that, for all $n>n_1$, the phase of $JZ(G)$ is strictly less than the phase of  $\tfrac{1}{m^2}JZ_{01}(E)$, in which case we have $\phi_{\infty,n}(G)\prec \phi_{01}(E)$.  Moreover, the choice of $n_1$ only depends on $\ch(E)$ by Lemma \ref{lem:AG53-47-1}, and we are done in this case.

\textbf{Case 1(b).} Suppose $jk \neq 0$.  Since $M$ is a subobject of $E$ in the abelian category $\Ac^p$, we can write $M$ as an extension in $\Ac^p$ 
\[
0\to M' \to M \to M'' \to 0
\]
 where $M'$ is a subobject of $\mathcal{H}^{-1}(E)[1]$ and $M''$ a subobject of $\mathcal{H}^0(E)$ in $\Ac^p$.  Since we are assuming $\dimension M = 1$, and $\coh^{\leq 1}(X)$ is a Serre subcategory of $\Ac^p$, both $M'$ and $M''$ must lie in $\coh^{\leq 1}(X)$.  Note that $M'$ is a subobject of $E$ in $\Ac^p$, and the vanishing $\Hom (\coh^{\leq 0}(X), E)=0$ from the $\sigma_{PT}$-semistability of $E$ implies that $M'$ is a pure 1-dimensional sheaf.

By comparing the phases of $JZ(M')$ and $JZ(G)$, we see that there exists an $n_0>0$ depending only on $\ch(E)$ such that, for all $n>n_0$, we have $\phi_{\infty,n}(M')\prec \phi_{\infty,n}(G)$.  Since $\omega$ is a $\mathbb{Q}$-divisor, this means that $JZ(M')=-\ch_3(M')+i\omega \ch_2(M')$ lies in a discrete lattice in $\mathbb{C}$ with phase bounded from above by the phase of $JZ(G)$, which is strictly less than $1$ because $\dimension G=1$.  Then, since $M''$ is a subsheaf of the 0-dimensional sheaf $\mathcal{H}^0(E)$, there exists some $\epsilon >0$, depending only on $\wh{\mu}(G)=\ch_3(G)/\omega \ch_2(G)$  and the length of $\mathcal{H}^0(E)$, and hence depending only on $\ch(E)$ overall by Lemma \ref{lem:AG53-47-1}, such that (here we write $\phi(z)$ to denote the phase of a complex number $z$):
\begin{itemize}
    \item the phase of $JZ(M) = JZ(M') + JZ(M'')$ is bounded from above by  $\phi (JZ(G))+\epsilon$; 
    \item $\phi (JZ(G))+\epsilon < 1$.
\end{itemize}
Since  $\phi (JZ(G))+\epsilon$ depends only on $\ch(E)$, the same argument as in the last part of Case 1(a) means that there exists $n_0>0$ depending only on $\ch(E)$ such that, whenever $n>n_0$, we have
\[
\phi (JZ(G))+\epsilon < \phi (\tfrac{1}{m^2}JZ_{01}(E)).
\]
As a result, for $n>n_0$, we have $\phi_{\infty,n}(M) \prec \phi_{\infty,n}(E)$, finishing  Case 1(b).

\textbf{Case 2.} Suppose $\dim M=3$.  If $\ch_0(\mathcal{H}^{-1}(M))=\ch_0(\mathcal{H}^{-1}(E))$, then $\ch_0(\mathcal{H}^{-1}(N))=0$ and it follows that $\mathrm{dim}\, N=2$ or $0$.  In either case, we have $\phi_{\infty,n}(E) \to \tfrac{1}{2}$ while $\phi_{\infty,n}(N) \to 1$ for any fixed $n>0$, meaning $E$ is not destabilised with respect to $\sigma_{\infty,n}$.  Also, if $\ch_0(\mathcal{H}^{-1}(M))< \ch_0(\mathcal{H}^{-1}(E))$ and $\mu_{\omega,B}(\mathcal{H}^{-1}(M)) < \mu_{\omega,B}(\mathcal{H}^{-1}(E))$, then we have $\phi_{\infty,n}(M) \prec \phi_{\infty,n}(E)$ as well.  Therefore, let us assume $0<\ch_0(\mathcal{H}^{-1}(M)) < \ch_0(\mathcal{H}^{-1}(E))$ and  $\mu_{\omega,B}(\mathcal{H}^{-1}(M)) = \mu_{\omega,B}(\mathcal{H}^{-1}(E))$ from now on.  With the $\sigma_{PT}$-semistability of $E$, this implies that  $\frac{\omega\ch_2(M)}{\ch_0(M)} \leq \frac{\omega\ch_2(E)}{\ch_0(E)}$.

Since
\[
\frac{\omega\ch_2^B}{\ch_0} = \frac{\omega\ch_2}{\ch_0} - bn\mu_\omega + \tfrac{1}{2}b^2n^2\omega^2,
\]
it follows that the inequality $\frac{\omega\ch_2^B(M)}{\ch_0(M)} \leq \frac{\omega\ch_2^B(E)}{\ch_0(E)}$ (which does not depend on $n$) holds for any $n>0$.  Note that  $\omega^2\ch_1^B(E)=\omega^2(\ch_1(E)-bn\omega\ch_0(E))$, and so there exists $n_0>0$ depending only on $\ch(E)$ (and $\omega$) such that, whenever $n>n_0$, we have $\omega^2\ch_1^B(E)<0$.  As a result, if $\frac{\omega\ch_2^B(M)}{\ch_0(M)} < \frac{\omega\ch_2^B(E)}{\ch_0(E)}$, then for any $n>n_0$ we have $\phi_{\infty,n} (M) \prec \phi_{\infty,n}(E)$.  So let us assume  $\frac{\omega\ch_2^B(M)}{\ch_0(M)} = \frac{\omega\ch_2^B(E)}{\ch_0(E)}$ from now on, which is equivalent to $\frac{\omega\ch_2(M)}{\ch_0(M)} = \frac{\omega\ch_2(E)}{\ch_0(E)}$.  

The $\sigma_{PT}$-semistability of $E$ now implies  $-\frac{\ch_3(M)}{\ch_0(M)} \leq -\frac{\ch_3(E)}{\ch_0(E)}$.  Since
\[
\frac{\ch_3^B}{\ch_0} = \frac{\ch_3}{\ch_0} - bn\frac{\omega \ch_2}{\ch_0} + \frac{1}{2}b^2n^2\mu_\omega - \frac{1}{6}b^3n^3\omega^3,
\]
the same argument as in the previous paragraph shows that $-\frac{\ch_3^B(M)}{\ch_0(M)} \leq -\frac{\ch_3^B(E)}{\ch_0(E)}$, and so $\phi_{\infty,n}(M) \preceq \phi_{\infty,n}(E)$ for any $n>n_0$.

\medskip

\textbf{(ii) $\Rightarrow$ (i):} Suppose statement (ii) holds.  Let us  take an $\Ap$-short exact sequence 
\[
0 \to M \to E \to N \to 0
\]
where $M, N \neq 0$.  
In trying to prove $E$ is $\sigma_{PT}$-semistable, it suffices to consider the cases where $\dim M=0, 2$ or $3$.  If $\dim M=0$ or $2$, then $\phi_{\infty, n}(M) \to 1$ while $\phi_{\infty, n}(E) \to \tfrac{1}{2}$, contradicting the $\sigma_{\infty,n}$-semistability of $E$ for $n \gg 0$.  So let us assume $\dim M=3$ from here on.  We now proceed in a manner similar to the proof for the other implication.

Since $E$ is $\sigma_{\infty,n}$-semistable for $n \gg 0$, it follows that $\mathcal{H}^{-1}(E)$ is a torsion-free  $\mu_\omega$-semistable sheaf.  If $\mu_\omega (\mathcal{H}^{-1}(M)) < \mu_\omega (\mathcal{H}^{-1}(E))$, we would have $\phi_{PT}(M) \prec \phi_{PT}(E)$.  Therefore, let us assume $\mu_\omega (\mathcal{H}^{-1}(M)) = \mu_\omega (\mathcal{H}^{-1}(E))$ from now on. As before, we have $\omega^2\ch_1^B(E)<0$ for $n \gg 0$; together with the $\sigma_{\infty,n}$-semistability of $E$, it follows that  $\frac{\ch_2^B(M)}{\ch_0(M)} \leq \frac{\ch_2^B(E)}{\ch_0(E)}$, which in turn implies $\frac{\ch_2(M)}{\ch_0(M)} \leq \frac{\ch_2(E)}{\ch_0(E)}$.

If we have the strict inequality $\frac{\ch_2(M)}{\ch_0(M)} < \frac{\ch_2(E)}{\ch_0(E)}$, it would follow that $\phi_{PT}(M) \prec \phi_{PT}(E)$, so let us assume $\frac{\ch_2(M)}{\ch_0(M)} = \frac{\ch_2(E)}{\ch_0(E)}$.  The $\sigma_{\infty,n}$-semistability of $E$ for $n \gg 0$ then implies $\frac{\ch_3^B(M)}{\ch_0(M)} \leq \frac{\ch_3^B(E)}{\ch_0(E)}$ for $n \gg 0$, which is equivalent to $\frac{\ch_3(M)}{\ch_0(M)} \leq \frac{\ch_3(E)}{\ch_0(E)}$.  We now have  $\phi_{PT}(M) \preceq \phi_{PT}(E)$, proving that $E$ is $\sigma_{PT}$-semistable.  This also completes the proof of Part I.

\bigskip

\textbf{Part II}: $\ch_0(E)=0$.  

If $\dim E\leq 1$, then every $\Ap$-short exact sequence of the form $0 \to M \to E \to N \to 0$ is also a short exact sequence in $\coh^{\leq 1}(X)$.  Also, for  objects in $\coh^{\leq 1}(X)$ we have $\frac{\ch_3^B}{\omega\ch_2^B} = \frac{\ch_3}{\omega\ch_2} - bn$.  It is then clear that $E$ is $\sigma_{PT}$-semistable if and only if it is $\sigma_{\infty,n}$-semistable for all $n>0$.  From now on, let us assume $\dim E=2$.

For any integers $3 \geq i > j \geq 1$, we will write
\[
\mu_{ij,\omega,B} = \frac{\omega^{3-i}\ch_i^B}{\omega^{3-j}\ch_j^B} \text{\quad and \quad} \mu_{ij,\omega} = \frac{\omega^{3-i}\ch_i}{\omega^{3-j}\ch_j}. 
\]
Consider any $\Ap$-short exact sequence $0\to M \to E \to N \to 0$ with $M, N \neq 0$. 

\textbf{(i) $\Rightarrow$ (ii):} Suppose $E$ is $\sigma_{PT}$-semistable. 

\textbf{Case 1.} Suppose $\dim M=2$.  Then we have $\mu_{21,\omega}(M) \leq \mu_{21,\omega}(E)$ from the $\sigma_{PT}$-semistability of $E$.  Since $\mu_{21,\omega,B}=\mu_{21,\omega}-bn$ for objects supported in dimension 2,   it follows that $\mu_{21,\omega,B}(M) \leq \mu_{21,\omega,B}(E)$; if strict inequality holds here, then we obtain $\phi_{\infty,n}(M) \prec \phi_{\infty,n}(E)$ for all $n>0$.  So let us assume $\mu_{21,\omega,B}(M) = \mu_{21,\omega,B}(E)$, or equivalently $\mu_{21,\omega}(M) = \mu_{21,\omega}(E)$.

Now the $\sigma_{PT}$-semistability of $E$ gives $\mu_{31,\omega}(M) \leq \mu_{31,\omega}(E)$.  Since 
\[
\mu_{31,\omega,B} = \mu_{31,\omega} - bn\mu_{21,\omega}+\frac{1}{2}b^2n^2
\]
for objects supported in dimension 2, we see that $\mu_{31,\omega,B}(M) \leq \mu_{31,\omega,B}(E)$.   On the other hand, observe that since $\dimension E=2$, we have
\[
\omega \ch_2^B (E) = \omega \ch_2(E) - nb\omega^2 \ch_1(E).
\]
Since $\omega^2\ch_1(E)<0$, it follows that there exists some $n_0>0$ depending only on $\ch(E)$ such that, for all $n>n_0$, we have $\omega\ch_2^B(E)<0$.  The inequalities $\mu_{31,\omega,B}(M) \leq \mu_{31,\omega,B}(E)$ and $\omega \ch_2^B(E)<0$ together now imply $\phi_{\infty,n}(M) \preceq \phi_{\infty,n}(E)$. 

\textbf{Case 2.} Suppose $\dim M = 1$.  In this case, we have $\phi_{\infty,n}(M)\to \tfrac{1}{2}$ while $\phi_{\infty,n}(E) \to 1$ for all $n>0$, and so $\phi_{\infty,n}(M) \prec \phi_{\infty,n}(E)$ for all $n>0$.

\textbf{Case 3.} Suppose $\dim M =0$.  In this case, $\phi_{\infty,n}(M)=1$ for all $n>0$.  Meanwhile, we have 
\[
\mathfrak{Im}\big(Z(E)\big) = m(  \omega \ch_2(E)- nb\omega^2 \ch_1(E)  ).
\]
Since $\omega^2 \ch_1(E)<0$, there exists $n_0>0$ depending only on $\ch (E)$ such that, for all $n>n_0$, we have $\mathfrak{Im}\big(Z(E)\big)<0$ for all $m>0$.  Then for all $n>n_0$ we have $\phi_{\infty,n}(M) \prec \phi_{\infty,n}(E)$.

\medskip

\textbf{(ii) $\Rightarrow$ (i):} Suppose there exists some $n_0>0$ such that $E$ is $\sigma_{\infty,n}$-semistable for all $n>n_0$.

When $\dim M=2$, we can show that $\phi_{PT}(M) \preceq \phi_{PT}(E)$ by considering $\mu_{21,\omega,B}, \mu_{21,\omega}$ (and taking $n \gg 0$ if necessary so that $\omega \ch_2^B(E)<0$) and then $\mu_{31,\omega,B}, \mu_{31,\omega}$ of $M$ and $E$ as in Case 1 above.

When $\dim M=1$ or $0$, we have $\phi_{PT}(M) \prec \phi_{PT}(E)$ from the definition of $\sigma_{PT}$-stability.

The arguments above can be modified easily to show the equivalence of \textit{$\sigma_{PT}$-stable} and \textit{$\sigma_{\infty.n}$-stable for $n>n_0$}.
\end{proof}

\subsection{Stable triples as PT stable objects}

Let $X$ be a smooth projective threefold.  Given a  triple $(B, P, \eta)$ where $B, P$ are coherent sheaves and $\eta \in \Ext^1(P,B[1])$, we write $A_\eta$ to denote the 2-term complex sitting at degrees $-1, 0$ corresponding to the extension class $\eta$.  In particular, when $B \in \coh^{\geq 2}(X)$ and $P \in \coh^{\leq 1}(X)$ we have $A_\eta \in \Ac^p$.  This is consistent with our notation in Definition \ref{defn:stable-triple}, where $B$ is specifically a 2-Gieseker semistable sheaf and $P$ a 0-dimensional sheaf.

We now prove that, under the coprime assumption on degree and rank, the following are equivalent:
\[
  A_\eta \text{ is a PT-stable object} \,\, \Leftrightarrow \,\, (B,P,\eta) \text{ is a stable triple}.
\]

\begin{proposition}\label{prop:PTequivSTnSST}
Let $X$ be a smooth projective threefold, and $\omega$ an ample class on $X$.  Fix $v_i \in A^i(X)$ for $i=0, 1$ such that $v_0, \omega^2 v_1$ are coprime.  
\begin{itemize}
\item[(a)] Suppose $E$ is a PT-stable object in $\Ac^p$ with $(\ch_0(E), \ch_1(E))=(v_0, v_1)$.  Then $E\cong A_\eta$ in $\dbx$ for some  stable triple $(B, P, \eta)$.
\item[(b)] Suppose $(B, P, \eta)$ is a  stable triple such that $(\ch_0(B), \ch_1(B))=(v_0, v_1)$.  Then $A_\eta$ is a PT-stable object in $\Ac^p$.
\end{itemize}
Statements (a) and (b) still hold if we replace each occurrence of stable triple with \textit{semistable triple}, \textit{$\mu_\omega$-stable triple}, or \textit{$\mu_\omega$-semistable triple}.
\end{proposition}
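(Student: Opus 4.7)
The proposition is, at its core, a dictionary between two characterisations of the same objects: PT-(semi)stability in the sense of Bayer, as described by Proposition \ref{prop:tech-0}, and the combinatorial data of a (semi)stable triple from Definition \ref{defn:stable-triple}. Under the coprime hypothesis on $v_0$ and $\omega^2 v_1$, all of the notions \emph{stable triple}, \emph{semistable triple}, \emph{$\mu_\omega$-stable triple}, and \emph{$\mu_\omega$-semistable triple} coincide by Remark \ref{rem:fourtripesequiv}, while \emph{PT-stable} and \emph{PT-semistable} coincide by the last clause of Proposition \ref{prop:tech-0}. Accordingly, it is enough to establish the two implications in the semistable case, and the remaining variants follow at once.

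\textbf{Part (a).} Given a PT-semistable object $E \in \Ap$ with $(\ch_0(E),\ch_1(E))=(v_0,v_1)$, set $B:=\calh^{-1}(E)$, $P:=\calh^0(E)$, and let $\eta \in \Hom_{\dbx}(P,B[2])$ be the connecting morphism in the canonical exact triangle
\[
 B[1] \lra E \lra P \stackrel{\eta}{\lra} B[2],
\]
so that $E\cong A_\eta$ in $\dbx$. Proposition \ref{prop:tech-0} provides that $B$ is a 2-Gieseker semistable torsion-free sheaf, $P\in\coh^{\leq 0}(X)$, and $\Hom_{\dbx}(\coh^{\leq 0}(X),E)=0$. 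Since $v_0$ and $\omega^2 v_1$ are coprime, every $\mu_\omega$-semistable sheaf of that Chern character is $\mu_\omega$-stable, so $B$ is in fact $\mu_\omega$-stable and hence 2-Gieseker stable. Lemma \ref{lem:tech-2} yields $\hd(B)\leq 1$, and Lemma \ref{lem:hd1equivconds} then gives that $B^{**}/B$ is pure 1-dimensional whenever non-zero. Finally, the equivalence of (iii-a) and (iii-d) in Lemma \ref{eq:STcond3equivs} turns the vanishing $\Hom_{\dbx}(\coh^{\leq 0}(X),A_\eta)=0$ into the complete non-triviality of $\eta$. Thus $(B,P,\eta)$ is a $\mu_\omega$-stable triple and $E\cong A_\eta$.

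\textbf{Part (b).} Conversely, suppose $(B,P,\eta)$ is a (semi)stable triple with $(\ch_0(B),\ch_1(B))=(v_0,v_1)$. By Remark \ref{rem:fourtripesequiv}, $B$ is $\mu_\omega$-stable, and in particular 2-Gieseker stable and torsion-free with $B^{**}/B$ pure 1-dimensional when non-zero, so $B\in\coh^{\geq 2}(X)$; together with $P\in\coh^{\leq 0}(X)\subset \coh^{\leq 1}(X)$, this places $A_\eta$ in $\Ap$, with $\calh^{-1}(A_\eta)=B$ and $\calh^0(A_\eta)=P$. Complete non-triviality of $\eta$ gives $\Hom_{\dbx}(\coh^{\leq 0}(X),A_\eta)=0$ via Lemma \ref{eq:STcond3equivs}. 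All three conditions (i), (ii), (iii) of Proposition \ref{prop:tech-0} are therefore met by $A_\eta$, and the coprime clause of that proposition upgrades this to PT-stability.

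\textbf{Main obstacle.} There is essentially no substantial obstacle: the argument is a careful bookkeeping exercise that threads Lemmas \ref{lem:tech-2}, \ref{lem:hd1equivconds}, and \ref{eq:STcond3equivs} between Proposition \ref{prop:tech-0} and Definition \ref{defn:stable-triple}. The only point requiring slight attention is checking that the coprime hypothesis functions symmetrically on both sides of the correspondence, which is exactly what Remark \ref{rem:fourtripesequiv} and the final equivalence in Proposition \ref{prop:tech-0} are designed to provide.
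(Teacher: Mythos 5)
Your proof is correct and follows essentially the same route as the paper's: both reduce to the $\mu_\omega$-stable case via Remark \ref{rem:fourtripesequiv} and then translate between Proposition \ref{prop:tech-0} and Definition \ref{defn:stable-triple} using Lemma \ref{lem:tech-2} (purity of $\calh^{-1}(E)^{**}/\calh^{-1}(E)$) and Lemma \ref{eq:STcond3equivs} (complete non-triviality versus the $\Hom(\coh^{\leq 0}(X),-)$ vanishing). Your write-up merely spells out the intermediate bookkeeping that the paper compresses into "it follows easily."
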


\begin{proof}
Given Remark \ref{rem:fourtripesequiv}, it suffices to prove the version of statements (a) and (b) for $\mu_\omega$-stable triples.

(a) Suppose $E$ is a PT-stable object as described.  Then $\mathcal{H}^{-1}(E)^{\ast\ast}/\mathcal{H}^{-1}(E)$ is pure 1-dimensional by Lemma \ref{lem:tech-2}. From the characterisation of PT-stable objects in Proposition \ref{prop:tech-0} and Lemma \ref{eq:STcond3equivs}, it follows easily that $E \cong A_\eta$ for the stable triple $(B,P,\eta)$ where $B=\mathcal{H}^{-1}(E),\ P=\mathcal{H}^0(E)$, and $\eta$ corresponds to the extension class from the canonical exact triangle
\[
\mathcal{H}^{-1}(E) \to E \to \mathcal{H}^0(E) \to \mathcal{H}^{-1}(E)[2].
\]

(b) Suppose $(B,P,\eta)$ is a stable triple as described. Then $B$ is $\mu_\omega$-stable, and $A_\eta$ is a PT-stable object by Lemma \ref{eq:STcond3equivs} and the characterisation of PT-stable objects in Proposition \ref{prop:tech-0}.
\end{proof}

We now  recover Lemma \ref{lem:rk1ST-PTpairsequiv}:

\begin{corollary}\label{cor:AG53-92-null-1}
Giving a rank-one stable triple with $\ch_1=0$  is equivalent to giving a PT stable pair in $\dbx$.
\end{corollary}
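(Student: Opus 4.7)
The plan is to obtain the corollary as a direct specialization of Proposition \ref{prop:PTequivSTnSST} combined with the description of rank-$(-1)$ PT-stable objects due to Bayer recalled in Section \ref{sec:polynomial}. Since the corollary recovers Lemma \ref{lem:rk1ST-PTpairsequiv}, the goal is to illustrate how that lemma now follows from the general coprime result rather than from a hands-on construction.

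First, I would invoke Proposition \ref{prop:PTequivSTnSST} with $(v_0,v_1)=(1,0)$; since $\gcd(1,\omega^2\cdot 0)=1$, the coprime hypothesis is automatic. Parts (a) and (b) of that proposition then give mutually inverse assignments
\[
(B,P,\eta) \longmapsto A_\eta \quad\text{and}\quad E \longmapsto \big(\mathcal{H}^{-1}(E),\mathcal{H}^0(E),\eta\big)
\]
between isomorphism classes of rank-one stable triples with $\ch_1=0$ and isomorphism classes of PT-stable objects $E\in\mathcal{A}^p$ with $\ch_0(E)=-1$ and $\ch_1(E)=0$. (This is exactly the bijection  $\mathfrak{A}\leftrightarrow\mathfrak{H}$ of Lemma \ref{lem:AG52-29-1} restricted to the stable locus.)

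Next, I would apply the characterization of rank-$(-1)$ PT-semistable objects of trivial determinant recalled in item (i) of Section \ref{sec:polynomial}, following \cite[Theorem 3.2.2, Proposition 6.1.1]{BayerPBSC}: such objects are precisely isomorphic in $\dbx$ to two-term complexes $[\calo_X\xrightarrow{s} F]$ (with $F$ placed in degree $0$) where $(F,s)$ is a PT stable pair in the sense of Definition \ref{def:PT}. Under this identification, the isomorphism class of the PT-stable object recovers the isomorphism class of $(F,s)$ in the sense of that definition, since $A_\eta=[\calo_X\to F]$ has $\mathcal{H}^{-1}(A_\eta)=\ker(s)=\cali_C=:B$ and $\mathcal{H}^0(A_\eta)=\coker(s)=:P$, matching the data of the triple built from $(F,s)$ in the proof of Lemma \ref{lem:rk1ST-PTpairsequiv}.

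Chaining these two bijections yields the equivalence in the corollary. The only (mild) point requiring care is to check that the condition \emph{trivial determinant} used in Bayer's characterization  corresponds, on the stable-triple side, to the condition $\ch_1(B)=0$; this amounts to the observation that for a rank-one torsion-free sheaf $B$ with $\ch_1(B)=0$, one has $B\simeq\cali_C\otimes L$ with $L\in\Pic^0(X)$, and the twist $L$ can be absorbed as noted in the proof of Lemma \ref{lem:rk1ST-PTpairsequiv}. No real obstacle is expected since all the hard work is already contained in Proposition \ref{prop:PTequivSTnSST} and in Bayer's theorem.
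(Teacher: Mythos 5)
Your proposal is correct and follows essentially the same route as the paper: invoke Proposition \ref{prop:PTequivSTnSST} (where the coprime hypothesis is automatic for rank one and $\ch_1=0$) to identify stable triples with rank-one PT-stable objects, then apply Bayer's \cite[Proposition 6.1.1]{BayerPBSC} to identify those with PT stable pairs. The extra remarks you add about absorbing the $\Pic^0(X)$ twist are a harmless elaboration already contained in the proof of Lemma \ref{lem:rk1ST-PTpairsequiv}.
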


\begin{proof}
From Proposition \ref{prop:PTequivSTnSST}, we know that giving a rank-one stable triple is equivalent to giving a rank-one PT stable object.  Since a rank-one PT stable object with $\ch_1=0$ is equivalent to a PT stable pair by \cite[Proposition 6.1.1]{BayerPBSC}, the claim follows.
\end{proof}

We  can also recover Example \ref{eg:reflx-sh-gives-ST}:

\begin{lemma}\label{lem:AG53-92-null-2}
Let $F$ be a $\mu$-stable reflexive sheaf.  Suppose $E := F^\vee [1]$, $B=\mathcal{H}^{-1}(E)$, $P = \mathcal{H}^0(E)$, and $\eta$ denotes the element in $\Ext^1 (P, B[1])$ corresponding to the canonical exact triangle
\[
B[1] \to E \to P,
\]
then $(B, P, \eta)$ is a stable triple.
\end{lemma}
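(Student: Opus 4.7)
The plan is to verify each of the three conditions in Definition~\ref{defn:stable-triple} for the triple $(B,P,\eta)$; since this lemma is essentially a restatement of Example~\ref{eg:reflx-sh-gives-ST}, the main work is assembling standard facts about reflexive sheaves on a smooth threefold together with the complete non-triviality input from \cite{JM19}.

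First I would identify the cohomology sheaves of $E=F^{\vee}[1]$ explicitly. Since $X$ is smooth of dimension $3$ and $F$ is reflexive, standard results (e.g.\ \cite[Proposition~1.1.10]{HL} type statements for reflexive sheaves) give $\inext^{i}(F,\ox)=0$ for $i\geq 2$ and $\codim\supp\bigl(\inext^{1}(F,\ox)\bigr)\geq 3$, so $\inext^{1}(F,\ox)$ is $0$-dimensional.  The spectral sequence for the derived dual then collapses to $\calh^{0}(F^{\vee})\cong F^{\ast}$ and $\calh^{1}(F^{\vee})\cong\inext^{1}(F,\ox)$, which after the shift gives $B=\calh^{-1}(E)\cong F^{\ast}$ and $P=\calh^{0}(E)\cong\inext^{1}(F,\ox)$.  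This verifies condition~(ii) of Definition~\ref{defn:stable-triple} immediately.

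Next I would verify (i).  Because $F$ is reflexive, so is $B=F^{\ast}$, which implies in particular $B^{\ast\ast}/B=0$ (so the purity requirement on $B^{\ast\ast}/B$ is vacuous).  The sheaf $B$ inherits $\mu$-stability from $F$: any destabilizing saturated subsheaf of $B=F^{\ast}$ would dualize to a destabilizing quotient of $F^{\ast\ast}=F$, contradicting the $\mu$-stability of $F$. By the chain of implications recalled in Section~\ref{sec:sheaves}, $\mu$-stability of $B$ implies $2$-Gieseker stability, so $B$ satisfies condition~(i).

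Finally, for (iii), the canonical exact triangle
\[
B[1]\lra E\lra P\xrightarrow{\eta}B[2]
\]
in $\dbx$ defines the extension class $\eta\in\Hom(P,B[2])$, and its complete non-triviality — i.e.\ the statement that no nonzero subsheaf $U\hookrightarrow P$ factors through the map $E\to P$ — is exactly \cite[Proposition~2.15]{JM19}, applied to the $\mu$-stable reflexive sheaf $F$.  (Equivalently, by Lemma~\ref{eq:STcond3equivs}, this is the vanishing $\Hom_{\dbx}(\coh^{\leq 0}(X),E)=0$, which follows from $F$ being reflexive and $\mu$-stable as in \cite[Lemma~3.3]{Lo17}.)  All three conditions being verified, $(B,P,\eta)$ is a stable triple.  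There is no serious obstacle here; the only non-routine step is the appeal to \cite[Proposition~2.15]{JM19} for condition (iii), and that proposition already hands us what we need.
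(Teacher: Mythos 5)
Your proof is correct, and its overall structure (identify $B=F^{*}$ and $P=\inext^{1}(F,\ox)$, check (i) and (ii) from reflexivity, invoke complete non-triviality) matches the paper's; the one genuine difference is how condition (iii) is discharged. You cite \cite[Proposition~2.15]{JM19}, which is exactly what the paper's Example~\ref{eg:reflx-sh-gives-ST} does, whereas the paper's proof of this lemma is self-contained: it obtains $\Hom(\coh^{\leq 0}(X),E)=0$ directly from the derived-dual adjunction $\Hom(U,F^{\vee}[1])\cong\Hom(F,U^{\vee}[1])$, which vanishes because $U^{\vee}[1]\in\coh(X)[-2]$ for $U$ zero-dimensional, and then feeds this into Lemma~\ref{lem:tech-2} and the last paragraph of the proof of Proposition~\ref{prop:PTequivSTnSST}(a). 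The internal route is preferable if one wants the paper to be self-contained (and it also shows the vanishing needs neither stability nor reflexivity of $F$), but your external citation is legitimate since the paper itself leans on it elsewhere. One small caution: your parenthetical claim that $\Hom_{\dbx}(\coh^{\leq 0}(X),E)=0$ ``follows from $F$ being reflexive and $\mu$-stable as in \cite[Lemma~3.3]{Lo17}'' is not quite right as stated. That lemma concerns a \emph{sheaf} $E\in\coh^{\geq 2}(X)$ and gives $\Hom(\coh^{\leq 0}(X),B[1])=0$ for $B$ of homological dimension $\leq 1$; applying $\Hom(U,-)$ to the triangle $B[1]\to E\to P$ shows this alone does not kill $\Hom(U,E)$, since the map to $\Hom(U,P)$ must also be controlled. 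The adjunction argument above is the clean way to get the vanishing. Since this is only an alternative remark and your main appeal is to \cite[Proposition~2.15]{JM19}, the proof stands.
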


\begin{proof}
Observe that $B=F^\ast$ is $\mu$-stable, hence 2-Gieseker stable. Also, taking derived dual gives
\[
\Hom (\coh^{\leq 0}(X), E) \cong \Hom (F, (\coh^{\leq 0}(X))^\vee [1])=0
\]
since $(\coh^{\leq 0}(X))^\vee [1] \in \coh (X)[-2]$.  Applying the proof of Lemma \ref{lem:tech-2} to $E$ then gives that  $B^{\ast\ast}/B$ is pure 1-dimensional.  Finally, the last paragraph in the  proof of Proposition \ref{prop:PTequivSTnSST}(a) shows that condition (iii-b) in Lemma \ref{eq:STcond3equivs} holds.  Since condition (iii-a) is equivalent to condition (iii-b) by Lemma \ref{eq:STcond3equivs}, it follows that $(B, P, \eta)$ is a stable triple in the sense of Definition \ref{defn:stable-triple}.
\end{proof}

In Proposition \ref{prop:PTequivSTnSST}, we saw that under the coprime assumption on rank and degree, PT-stable objects correspond to stable triples.  Even without the coprime assumption, we have the following comparisons.

\begin{proposition}\label{pro:AG53-92-1}
Suppose $s>\tfrac{1}{3}$ and $(B,P,\eta)$ is a stable triple.  Then there exists $\beta_0<0$ such that, for all $\overline{\beta} <\beta_0$, the object $A_\eta$ is vertical stable.
\end{proposition}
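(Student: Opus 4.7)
The strategy is to verify the hypotheses of Lemma \ref{lem:alt v3.1} for the object $A:=A_\eta=\mathrm{cone}(\eta)[-1]$. Since $A$ is a two-term complex with $\calh^{-1}(A)=B$ and $\calh^0(A)=P$, condition (1) of Theorem \ref{vert-3.1} holds immediately. By the definition of a stable triple, $B$ is $2$-Gieseker stable (supplying the hypothesis of Lemma \ref{lem:alt v3.1}) and $B^{\ast\ast}/B$ is pure $1$-dimensional whenever nonzero. Since $P$ is $0$-dimensional, every quotient sheaf of $P$ has $\ch_2=0$, so condition (2) is vacuous.

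A direct computation gives
\[
C_{\obeta,s}(A)=\frac{6s+1}{3}\mu(A)+\frac{2-6s}{3}\obeta,
\]
so the hypothesis $s>\tfrac{1}{3}$ yields $(2-6s)/3<0$ and hence $C_{\obeta,s}(A)\to+\infty$ as $\obeta\to-\infty$. For condition (4), the Harder--Narasimhan filtration of $B^{\ast\ast}/B$ with respect to the slope $\wh\mu$ provides an upper bound on $\ch_3(R)/\ch_2(R)$, uniform over all subsheaves $R\hookrightarrow B^{\ast\ast}/B$ and depending only on $B$. Combined with the divergence of $C_{\obeta,s}(A)$, condition (4) then holds once $\obeta$ is sufficiently negative.

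The main obstacle is condition (5). Suppose $U\in\coh(X)$ has dimension at most $1$ and $u:U\to P$ is a nonzero morphism lifting to a monomorphism $\tilde u:U\hookrightarrow A$ in $\cala^{\obeta,\alpha}$ for every $\alpha\gg 0$. The case $U\in\coh^{\leq 0}(X)$ is ruled out by condition (iii-d) of Lemma \ref{eq:STcond3equivs}, which forces $\Hom_{\dbx}(U,A_\eta)=0$; more generally, any $0$-dimensional torsion subsheaf of $U$ would map to zero in $A$, obstructing $\tilde u$ from being a monomorphism. Hence $U$ may be assumed pure $1$-dimensional with $\mathrm{Im}(u)$ a nonzero $0$-dimensional subsheaf of $P$. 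Setting $U':=\ker(U\onto\mathrm{Im}(u))$, which is pure $1$-dimensional with $\ch_i(U')=\ch_i(U)$ for $i\leq 2$, the composition $U'\hookrightarrow U\to A$ has zero image in $P$, so it factors through $B[1]$ and produces a class in $\Ext^1(U',B)$. The $2$-Gieseker stability of $B$ together with complete non-triviality of $\eta$ constrains the pure $1$-dimensional sheaves $U$ arising in this manner to a bounded family, furnishing a uniform upper bound on $\ch_3(U)/\ch_2(U)$ depending only on the triple; combined with $C_{\obeta,s}(A)\to+\infty$, this yields condition (5) for $\obeta$ sufficiently negative. Lemma \ref{lem:alt v3.1} then delivers the claim, with $\beta_0$ taken as the minimum of the thresholds produced in the verifications of (4), (5), and that supplied by the lemma itself.
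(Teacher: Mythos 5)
Your route is genuinely different from the paper's. The paper proves that $A_\eta$ is PT-stable as a polynomial-stability object in $\Ap$ (by a case analysis on the dimension of a destabilizing subobject $E_0$, using $\Hom(\coh^{\leq 0}(X),A_\eta)=0$ from Lemma \ref{eq:STcond3equivs} and the 2-Gieseker stability of $B$), and then invokes the equivalences of Theorem \ref{thm:AG53-39-1} and Theorem \ref{thm:PTislimtstab-eff} to transfer PT-stability to vertical stability for $\obeta\ll 0$. You instead verify the sufficient conditions of Lemma \ref{lem:alt v3.1} directly, which stays entirely within the language of Section \ref{sec:vertical} and avoids the polynomial-stability machinery; the price is that you must control conditions (4) and (5) by hand. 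Your treatment of (1), (2), (4), the 2-Gieseker hypothesis, and the divergence $C_{\obeta,s}(A)\to+\infty$ for $s>\tfrac13$ is correct, as is the reduction in (5) to $U$ pure 1-dimensional via Lemma \ref{eq:STcond3equivs}(iii-d).

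The one genuine gap is the boundedness claim at the heart of condition (5): you assert that ``the 2-Gieseker stability of $B$ together with complete non-triviality of $\eta$'' bounds the family of admissible $U$, but neither of these is the operative ingredient, and as written the uniform upper bound on $\ch_3(U)/\ch_2(U)$ is not established. The correct mechanism is the following. Since $U'=\ker(U\to \mathrm{Im}(u))$ maps to $B[1]=\ker(A_\eta\to P)$ and $U'\into U\into A_\eta$ is injective in the heart, $U'$ is a subobject of $B[1]$; by Lemma \ref{lem:tech-1} the canonical map induces an isomorphism $\Hom(U',B^{\ast\ast}/B)\simto\Hom(U',B[1])$, and injectivity transfers, so $U'$ is in fact a \emph{subsheaf} of the pure 1-dimensional sheaf $B^{\ast\ast}/B$ (in particular $U'=0$, hence $U=0$, if $B$ is reflexive). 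Therefore $0<\ch_2(U)=\ch_2(U')\le\ch_2(B^{\ast\ast}/B)$, $\ch_3(U')/\ch_2(U')\le\wh{\mu}_{\max}(B^{\ast\ast}/B)$, and $\ch_3(U)\le\ch_3(U')+\lt(P)$, which gives a bound on $\ch_3(U)/\ch_2(U)$ depending only on $B$ and $P$; combined with $C_{\obeta,s}(A)\to+\infty$ this closes condition (5). With that substitution your argument is complete (modulo the implicit assumption $B\ne 0$, which Lemma \ref{lem:alt v3.1} requires via $\ch_0(A)\ne 0$ and which the paper's proof also makes tacitly).
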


\begin{proof}
By Theorem \ref{thm:AG53-39-1} and Theorem \ref{thm:PTislimtstab-eff} (also see the discussion at the start of Section \ref{sec:PTstabvsgenLVL}), it suffices to show that $A_\eta$ is a PT-stable object.  Suppose $E_0$ is a nonzero proper subobject of $A_\eta$ in $\Ac^p$.  We want to show that $\phi (E_0) \prec \phi (A_\eta)$ where $\phi$ denotes the phase function with respect to PT-stability.

If $E_0$ is supported in dimension 1, then $\phi (E_0) \prec \phi (A_\eta)$ since $\phi (\rho_1) < \phi (-\rho_3)$ for PT-stability.  By Lemma \ref{eq:STcond3equivs}, we have $\Hom (\coh^{\leq 0}(X),A_\eta)=0$, and so $E_0$ cannot be 0-dimensional.  Since $\mathcal{H}^{-1}(E_0)$ is a subsheaf of $\mathcal{H}^{-1}(A_\eta)=B$, $\mathcal{H}^{-1}(E_0)$ (and hence $E_0$) cannot be 2-dimensional, either.   Therefore, it suffices to consider the case where $E_0$ is 3-dimensional, in which case $\mathcal{H}^{-1}(E_0)$ is a torsion-free sheaf.

Since $(B,P,\eta)$ is a stable triple, the sheaf $B$ is 2-Gieseker stable.  This implies $p_{3,1}(\mathcal{H}^{-1}(E_0)) \prec p_{3,1}(\mathcal{H}^{-1}(A_\eta))$ (where $p_{3,1}$ denotes the reduced Hilbert polynomial with constant term omitted), which in turn implies $\phi (E_0) \prec \phi (A_\eta)$ unless $\mathcal{H}^{-1}(E_0) = \mathcal{H}^{-1}(A_\eta)$.

Suppose $\mathcal{H}^{-1}(E_0)=\mathcal{H}^{-1}(A_\eta)$.  Then $\mathcal{H}^{-1}(A_\eta/E_0)$ is a subsheaf of a sheaf in $\coh^{\leq 1}(X)$ and so must be 0.  This forces $A_\eta/E_0$ to lie in $\coh^{\leq 1}(X)$.  On the other hand, $A_\eta/E_0$ is a quotient sheaf of $\mathcal{H}^0(A_\eta)=P$ which is 0-dimensional, and so $A_\eta/E_0$ itself is a 0-dimensional sheaf.  This gives $\phi (E_0) \prec \phi (A_\eta) \prec \phi (A_\eta/E_0)$ and we are done.
\end{proof}

\begin{proposition}\label{pro:AG53-92-2}
Suppose $s>\tfrac{1}{3}$, and and $\ch$ is a Chern character such that $\ch_0 \neq 0$.  Then there exists $\beta_0<0$ such that, for every vertical semistable object $A$ at $\beta$ for some $\beta < \beta_0$ and $\ch(A)=\ch$, we have $A\cong A_\eta$ for some semistable triple $(B,P,\eta)$.
\end{proposition}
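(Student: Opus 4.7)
The plan is to reduce the statement to the corresponding characterisation of PT-semistable objects by chaining the equivalences established earlier in the section, namely
\[
\text{vertical semistable at } \beta \,\, \Leftrightarrow \,\, \sigma_{\infty, B}\text{-semistable with } B = \beta H \,\, \Leftrightarrow \,\, \text{PT-semistable \quad (for } \beta \ll 0\text{)},
\]
and then to extract the desired triple from the cohomology sheaves of a PT-semistable object.

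First, by Theorem \ref{thm:AG53-39-1} applied with $\omega = H$, $B = \beta H$ and the choices $f = 2(s+\tfrac{1}{6})m^2$, $g = 3m^2$, vertical (semi)stability at $\beta$ coincides with $\sigma_{\infty, B}$-(semi)stability. Observe that $3f_2 - g_2 = 6s - 2 > 0$ precisely because $s > \tfrac{1}{3}$, which is the hypothesis required in Theorem \ref{thm:PTislimtstab-eff}. Fixing some constant $b \in (-\tfrac{1}{\sqrt{3}}, 0)$ and applying that theorem to the Chern character $\ch$, one obtains a constant $n_0 > 0$ depending only on $\ch$ such that, for every $n > n_0$, $\sigma_{\infty, nb\omega}$-semistability coincides with PT-semistability on objects of class $\ch$. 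Setting $\beta_0 := b n_0 < 0$, any $\beta < \beta_0$ yields $n := \beta/b > n_0$ with $B = \beta H = n b \omega$, so every vertical semistable $A$ with $\ch(A) = \ch$ at such $\beta$ is PT-semistable.

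Next, since $\ch_0(A) \neq 0$, Proposition \ref{prop:tech-0} applies and provides that $B := \calh^{-1}(A)$ is a 2-Gieseker semistable torsion-free sheaf, $P := \calh^{0}(A)$ is 0-dimensional, and $\Hom_{\dbx}(\coh^{\leq 0}(X), A) = 0$. Combining this last vanishing with Lemma \ref{lem:tech-2} and Lemma \ref{lem:hd1equivconds} shows that $B^{\ast\ast}/B$ is pure 1-dimensional whenever non-zero, verifying conditions (i) and (ii) of Definition \ref{defn:stable-triple}.

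Finally, the canonical exact triangle $B[1] \to A \to P \xrightarrow{\eta} B[2]$ in $\dbx$ produces $\eta \in \Hom_{\dbx}(P, B[2])$ with $A_\eta \cong A$. The vanishing $\Hom_{\dbx}(\coh^{\leq 0}(X), A) = 0$ is exactly condition (iii-d) of Lemma \ref{eq:STcond3equivs}, which is equivalent to the complete non-triviality condition (iii-a); thus $(B, P, \eta)$ is a semistable triple. No step is a real obstacle---the substantive work has already been performed in the two equivalence theorems and in the structural lemmas on PT-semistable objects---the only mild point requiring attention is ensuring that the threshold $n_0$ supplied by Theorem \ref{thm:PTislimtstab-eff} depends solely on $\ch$, so that a single $\beta_0$ suffices for all $A$ of class $\ch$.
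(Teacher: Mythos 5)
Your proposal is correct and follows essentially the same route as the paper's own proof: reduce vertical semistability to PT-semistability via Theorem \ref{thm:AG53-39-1} and Theorem \ref{thm:PTislimtstab-eff}, then read off the semistable triple from Proposition \ref{prop:tech-0}, Lemma \ref{lem:tech-2}, and the equivalence (iii-d) $\Leftrightarrow$ (iii-a) of Lemma \ref{eq:STcond3equivs}. Your added bookkeeping (checking $3f_2-g_2=6s-2>0$ and making the threshold $\beta_0 = bn_0$ explicit) is a correct elaboration of details the paper leaves implicit.
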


\begin{proof}
As in the proof of Proposition \ref{pro:AG53-92-1}, by  Theorem \ref{thm:AG53-39-1} and Theorem \ref{thm:PTislimtstab-eff}, we know there exists $\beta_0<0$ such that, for every vertical semistable object $A$ at $\beta$ where $\beta< \beta_0$ and $\ch(A)=\ch$, the object $A$ is PT-semistable.  Then by Proposition \ref{prop:tech-0}, the sheaf $\mathcal{H}^{-1}(A)$ is torsion-free and semistable in $\coh_{3,1}(X)$, while $\mathcal{H}^0(A) \in\coh^{\leq 0}(X)$ and $\Hom (\coh^{\leq 0}(X),A)=0$.  Also, $\mathcal{H}^{-1}(A)^{\ast\ast}/\mathcal{H}^{-1}(A)$ is pure 1-dimensional by Lemma \ref{lem:tech-2}.  Now by Lemma \ref{eq:STcond3equivs}, the triple $(\mathcal{H}^{-1}(A),\mathcal{H}^0(A),\eta)$, where $\eta$ corresponds to the extension from the canonical exact triangle $\mathcal{H}^{-1}(A) \to A \to \mathcal{H}^0(A) \to \mathcal{H}^{-1}(A)[1]$, is a semistable triple.
\end{proof}


\section{Stability along the $\Theta$ curve} \label{sec:stab-theta}

Fixing a class $v\in K_{\rm num}(X)$, consider the following curve in the $(\beta,\alpha)$-plane
$$ \Theta_v := \{ (\beta,\alpha) ~|~ \nu_{\beta,\alpha}(v)=0 \} ;$$
this is a hyperbola with foci along the $\beta$-axis and symmetric around the vertical line $\beta=\mu(v)$.

Let $(\alpha_v,\beta_v)$ be the coordinates of the point where the largest $\nu$-wall for the class $v$ to the left of vertical line $\beta=\mu(v)$ intersects $\Theta$.

\begin{proposition}\label{prop:moduli in theta}
Take $(\obeta,\oalpha)\in\Theta_v$ with $\obeta<\beta_v$; assume that $v_0>0$. Then every $\lambda_{\obeta,\oalpha,s}$-semistable object $A$ with $\ch(A)=v$ is S-equivalent to an object of the form $B[1]\oplus P$, where $B$ is a 2-Gieseker semistable sheaf and $P$ is a 0-dimensional sheaf. In addition, an object $A$ with $\ch(A)=v$ is $\lambda_{\obeta,\oalpha,s}$-stable if and only if $A=B[1]$ for some 2-Gieseker stable sheaf $B$ with $\hd(B)=1$.
\end{proposition}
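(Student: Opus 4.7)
The key observation is that $(\obeta,\oalpha)\in\Theta_v$ forces $\mathrm{Im}\bigl(Z_{\obeta,\oalpha,s}(A)\bigr)=0$ for every $A\in\mathcal{A}^{\obeta,\oalpha}$ with $\ch(A)=v$, so $Z_{\obeta,\oalpha,s}(A)$ lies on the non-positive real axis and $A$ has Bridgeland phase exactly $1$. The plan is to exploit this: a $\lambda_{\obeta,\oalpha,s}$-semistable object of phase $1$ admits a Jordan--Hölder filtration in $\mathcal{A}^{\obeta,\oalpha}$ whose factors are $\lambda$-stable of the same phase, so the S-equivalence class of $A$ is determined by the direct sum of these JH factors and the proposition reduces to classifying the simple phase-$1$ objects in the heart.

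To identify these I would analyse the canonical short exact sequence
\[
0 \to \calh^{-1}_{\cohb}(A)[1] \to A \to \calh^{0}_{\cohb}(A) \to 0
\]
in $\mathcal{A}^{\obeta,\oalpha}$. Since $\mathrm{Im}(Z_{\obeta,\oalpha,s})$ is non-negative and additive on the heart, both terms must have $\mathrm{Im}(Z)=0$. An object of $\mathcal{T}_{\obeta,\oalpha}$ with $\mathrm{Im}(Z)=0$ has $\ch_1^{\obeta}=0$ and is supported in dimension $0$, so $\calh^0_{\cohb}(A)$ is a $0$-dimensional sheaf. On the other hand, $M:=\calh^{-1}_{\cohb}(A)\in\mathcal{F}_{\obeta,\oalpha}$ is tilt-semistable with $\nu_{\obeta,\oalpha}(M)=0$; since $\obeta<\beta_v$ places $(\obeta,\oalpha)$ to the left of the largest tilt wall for $\ch(M)$, one applies \cite[Theorem 5.2]{JM19} to identify $M$ with a $2$-Gieseker semistable torsion-free sheaf.

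Consequently, the simple phase-$1$ objects of $\mathcal{A}^{\obeta,\oalpha}$ split into two types: skyscraper sheaves $k(x)$ (which are clearly $\lambda$-stable) and shifts $B[1]$ of a sheaf $B\in\mathcal{F}_{\obeta,\oalpha}$ with $\nu_{\obeta,\oalpha}(B)=0$. Such a $B[1]$ is $\lambda$-stable iff it has no proper same-phase subobject, and this is ensured by two requirements: (i) if $B$ had a $2$-Gieseker destabilizing subsheaf $C\into B$ with $\nu_{\obeta,\oalpha}(C)=0$, then $C[1]\into B[1]$ would be a proper subobject of the same phase, so $B$ must be $2$-Gieseker stable; (ii) any nonzero morphism $k(x)\to B[1]$ in $\mathcal{A}^{\obeta,\oalpha}$ corresponds to a nonzero class in $\Ext^1(k(x),B)$ and exhibits $k(x)$ as a phase-$1$ subobject, and the vanishing of all such $\Ext^1$'s is equivalent to $\hd(B)\le 1$ by Lemma~\ref{lem:hd1equivconds}. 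Collecting the JH factors of $A$ then yields the decomposition $A\sim\bigl(\bigoplus_j k(x_j)\bigr)\oplus\bigl(\bigoplus_i B_i[1]\bigr)=P\oplus B[1]$ with $P$ a $0$-dimensional sheaf and $B=\bigoplus_i B_i$ a $2$-Gieseker semistable sheaf.

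The stability statement follows immediately: $A$ is $\lambda_{\obeta,\oalpha,s}$-stable precisely when its JH filtration has a single factor, and since $v_0\neq 0$ rules out the factor being a skyscraper, $A=B[1]$ for some $2$-Gieseker stable $B$ with $\hd(B)\le 1$; the converse is exactly (i) and (ii). The principal technical difficulty is Step~2: one must handle $\cohb$-objects that a priori need not be sheaves and carefully track the Chern character of $M$ to confirm that the hypothesis $\obeta<\beta_v$ really places us past the last tilt wall for $\ch(M)$, legitimising the identification of $M$ with a $2$-Gieseker semistable sheaf.
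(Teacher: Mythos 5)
Your proposal is correct in substance and follows essentially the same route as the paper: both arguments rest on the canonical short exact sequence $0\to\calh^{-1}_{\cohb}(A)[1]\to A\to\calh^{0}_{\cohb}(A)\to0$ in the heart, the identification of $\calh^{-1}_{\cohb}(A)$ with a tilt-semistable object of slope zero and hence (past the last tilt wall, via \cite{JM19}) with a $2$-Gieseker semistable sheaf, the observation that both pieces have $\lambda_{\obeta,\oalpha,s}=+\infty$ so that this sequence already determines the S-equivalence class, and the same two obstructions to stability ($2$-Gieseker destabilizers and $0$-dimensional subobjects, the latter governed by $\hd(B)\le 1$ through Lemma~\ref{lem:hd1equivconds}). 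The only genuine difference is one of packaging: you descend all the way to Jordan--H\"older factors and classify the simple phase-$1$ objects, whereas the paper stops at the two-step decomposition, which makes the S-equivalence statement immediate and spares you the (minor, but real) verification that the direct sum $\bigoplus_i B_i$ of the positive-rank factors is again $2$-Gieseker semistable.

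The one step you must tighten is the sufficiency direction of the stability criterion. You assert that excluding subobjects of types (i) and (ii) ``ensures'' that $B[1]$ is $\lambda_{\obeta,\oalpha,s}$-stable, but an arbitrary same-phase subobject $F\into B[1]$ in $\cala^{\obeta,\oalpha}$ is not a priori of either form: it could have $\calh^{-1}_{\obeta}(F)$ a proper subsheaf of $B$ \emph{and} $\calh^{0}_{\obeta}(F)$ a nonzero $0$-dimensional sheaf. The paper closes this by taking $\coh^{\obeta}(X)$-cohomology of an arbitrary destabilizing sequence and using the $\nu_{\obeta,\oalpha}$-stability of $B$ to force $\calh^{-1}_{\obeta}(F)$ to be either $0$ or all of $B$, so that $F$ is either $0$-dimensional or equal to $B[1]$. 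Within your framework the repair is to replace $F$ by a stable phase-$1$ subobject (the first Jordan--H\"older factor of its maximal destabilizing subobject) and then invoke the necessity half of your classification, which you did establish independently in Step~2; but as written the word ``ensures'' conflates necessity with sufficiency, and the reduction from a general $F$ to the two listed types has to be carried out explicitly. This is a fixable omission rather than a wrong approach.
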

\begin{proof}
Given $(\obeta,\oalpha)\in\Theta_v$, Proposition 3.2 in \cite{JM19} says that an object $A\in D(X)$ is $\lambda_{\obeta,\oalpha,s}$-semistable if and only if $B:=\calh_\beta^{-1}(A)$ is $\nu_{\obeta,\oalpha}$-semistable and $P:=\calh_\beta^{0}(A)$ is a 0-dimensional sheaf. When, in addition, $\obeta<\beta_v$ (or, equivalently, $\oalpha>\alpha_v$), \cite[Proposition 5.2 (i)]{JM19} implies that $B$ is a 2-Gieseker semistable sheaf. It follows that $A\in D(X)$ is $\lambda_{\obeta,\oalpha,s}$-semistable if and only if it fits into a triangle of the form $B[1]\to A \to P$; since both $B[1]$ and $P$ belong to $\cala^{\obeta,\oalpha}$ (because $B\in\calf_{\obeta,\oalpha}$ and $P\in\calt_{\obeta,\oalpha}$), then we get an exact sequence $0\to B[1] \to A\to P\to 0$ in $\cala^{\obeta,\oalpha}$. But $\lambda_{\obeta,\oalpha,s}(B[1])=\lambda_{\obeta,\oalpha,s}(P)=+\infty$, so $A$ is S-equivalent to $[B[1]\oplus P]$, as desired.

Let $B$ is a 2-Gieseker stable sheaf; according to \cite[Proposition 5.2 (i)]{JM19}, $B$ belongs to $\coh^\obeta(X)$ and is $\nu_{\obeta,\oalpha}$-stable with $\nu_{\obeta,\oalpha}(B)=0$. Let $F\into B[1]\onto G$ be a destabilizing sequence in $\cala^{\obeta,\oalpha}$, so that $\lambda_{\obeta,\oalpha,s}(F)=+\infty$; taking cohomology with respect to $\coh^\obeta(X)$, we obtain the exact sequence in $\coh^\obeta(X)$
$$ 0 \to \calh^{-1}_\obeta(F) \to B \to \calh^{-1}_\obeta(G) \to \calh^{0}_\obeta(F) \to 0 . $$
Let $K$ be the cokernel of the first morphism in the sequence; since $K$ is a quotient of $B$, we have that $\nu_{\obeta,\oalpha}(K)>0$; by contrast, $\nu_{\obeta,\oalpha}(K)\le 0$ because $K$ is a sub-object of $\calh^{-1}_\obeta(G)\in\calf_{\obeta,\oalpha}$. We have therefore two possibilities:
\begin{itemize}
\item either $K=0$, in which case $\calh^{-1}_\obeta(F) \simeq B$, and $\calh^{-1}_\obeta(G) \simeq \calh^{0}_\obeta(F)$, which implies that both of the latter objects are trivial; it follows that $F=B[1]$.
\item or $K=B$ and $\calh^{-1}_\obeta(F)=0$ so $F=\calh^{0}_\obeta(F)$; but $\nu_{\obeta,\oalpha}(F)=0$ (since $\lambda_{\obeta,\oalpha,s}(F)=+\infty$), which leads to a contradiction because $\calh^{0}_\obeta(F)\in\calt_{\obeta,\oalpha}$.
\end{itemize}
We conclude that $F=B[1]$, thus $B[1]$ is $\labs$-stable.

Conversely, let $B$ strictly 2-Gieseker semistable sheaf and assume that $\ch(B[1])=v$. Let
$$ 0\rightarrow C\rightarrow B\rightarrow D\rightarrow 0 $$
be a destabilizing sequence in $\coh(X)$, so that $C$ is 2-Gieseker stable and $D$ is 2-Gieseker semistable. Then as a polynomial on $\beta$
$$
\frac{\ch_2^{\beta}(C)}{\ch_0(C)}=\frac{\ch_2^{\beta}(B)}{\ch_0(B)}.
$$
Thus, having $\lambda_{\beta,\alpha,s}(B[1])=+\infty$ implies that
$$
\alpha^2=2\frac{\ch_2^{\beta}(B)}{\ch_0(B)}=2\frac{\ch_2^{\beta}(C)}{\ch_0(C)},
$$
and so $\lambda_{\beta,\alpha,s}(C[1])=+\infty$. This says that the sequence
$$
0\rightarrow C[1]\rightarrow B[1]\rightarrow D[1]\rightarrow 0
$$
is exact in $\mathcal{A}^{\obeta,\oalpha}$. Therefore, $B[1]$ is also strictly $\lambda_{\obeta,\oalpha,s}$-semistable.

Finally, if $B^{**}/B$ does not have pure dimension 1, let $Z_B$ be its maximal 0-dimensional subsheaf (compare with the notation in display \eqref{e' sqc}); then
$$ 0 \longrightarrow Z_B \longrightarrow B[1] \longrightarrow B'[1] \longrightarrow 0 $$
is a short exact sequence in $\mathcal{A}^{\obeta,\oalpha}$, so $B[1]$ is not $\lambda_{\obeta,\oalpha,s}$-stable.
\end{proof}

Next, we show that stable triples with Chern character $v$ with $v_0>0$ are Bridgeland stable objects right above the curve $\Theta_v$.

\begin{proposition}\label{prop:st-epsilon}
Fix $s>1/3$. Let $(B,P,\eta)$ be a stable triple with numerical Chern character $v$, and let $(\beta_v,\alpha_v)\in\Theta_v$ be the intersection point of the last tilt wall for $v$. Then $A_\eta$ is $\lambda_{(\beta,\alpha)}$-stable in a chamber immediately above the portion of $\Theta_v$ above $(\beta_v,\alpha_v)$.
\end{proposition}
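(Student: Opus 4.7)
The plan is to directly rule out destabilizing subobjects of $A_\eta$ in a chamber just above the specified portion of $\Theta_v$, combining the structural description of $A_\eta$ from the stable triple with Proposition \ref{prop:moduli in theta} applied on $\Theta_v$.

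I would first check that $A_\eta\in\mathcal{A}^{\beta,\alpha}$ in an open region above the portion of $\Theta_v$ with $\beta<\beta_v$. Since $P$ is $0$-dimensional, $\ch_{\le 2}(B)=\ch_{\le 2}(v)$; as $(\beta_v,\alpha_v)$ sits on the largest tilt wall for $v$, the portion of $\Theta_v$ with $\beta<\beta_v$ lies outside that wall, and by \cite[Proposition 5.2]{JM19} $B$ is tilt-stable there with $\nu_{\beta,\alpha}(B)\le 0$, so $B[1]\in\mathcal{A}^{\beta,\alpha}$ and hence $A_\eta\in\mathcal{A}^{\beta,\alpha}$. Moreover, $\hd(B)\le 1$ by Lemma \ref{lem:hd1equivconds} (because $(B,P,\eta)$ is a stable triple), so Proposition \ref{prop:moduli in theta} gives that $B[1]$ is $\lambda$-stable on this portion of $\Theta_v$. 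On $\Theta_v$ itself, $\mathfrak{Im}(Z_{\beta,\alpha,s}(A_\eta))=0$ makes $\lambda(A_\eta)=+\infty$, so $A_\eta$ is automatically $\lambda$-semistable there.

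Suppose now $F\hookrightarrow A_\eta$ in $\mathcal{A}^{\beta,\alpha}$ destabilizes $A_\eta$ just above $\Theta_v$. Since $\lambda_{\beta,\alpha,s}(A_\eta)\to+\infty$ as $(\beta,\alpha)\to(\obeta,\oalpha)\in\Theta_v$, the destabilization inequality $\lambda(F)\ge \lambda(A_\eta)$ forces $\mathfrak{Im}(Z_{\obeta,\oalpha,s}(F))=0$; using local finiteness of walls in $\mathrm{Stab}(X)$ together with continuity of the central charge, $F$ can be identified with a subobject of $A_\eta$ in $\mathcal{A}^{\obeta,\oalpha}$ with $\lambda=+\infty$. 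The exact sequence $0\to B[1]\to A_\eta\to P\to 0$ in $\mathcal{A}^{\obeta,\oalpha}$ together with $\lambda$-stability of $B[1]$ and additivity of $\mathfrak{Im}(Z)$ forces $F\cap B[1]\in\{0,B[1]\}$; the case $F\cap B[1]=0$ would make $F$ a $0$-dimensional subobject of $A_\eta$ (using Lemma \ref{lem:CohzeroSerresubs}), ruled out by the complete non-triviality of $\eta$ via Lemma \ref{eq:STcond3equivs}. Hence $F$ sits in an extension $0\to B[1]\to F\to U\to 0$ for some $0$-dimensional subsheaf $U\subseteq P$.

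Since $U$ is $0$-dimensional, $\ch_{\le 2}(F)=\ch_{\le 2}(A_\eta)$, and a short computation yields
\[
\lambda_{\beta,\alpha,s}(F)-\lambda_{\beta,\alpha,s}(A_\eta) = \frac{\ch_3(U)-\ch_3(P)}{\ch_2^\beta(A_\eta)-\tfrac{\alpha^2}{2}\ch_0(A_\eta)} \le 0,
\]
with strict inequality whenever $U\subsetneq P$: the denominator is positive above $\Theta_v$ since $\ch_0(A_\eta)=-v_0<0$, and the numerator equals $\mathrm{length}(U)-\mathrm{length}(P)$; the equality $U=P$ forces $F=A_\eta$. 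This contradicts the destabilization assumption, so $A_\eta$ is $\lambda$-stable in a chamber immediately above $\Theta_v$. The main obstacle will be making rigorous the identification of a hypothetical $F$ just above $\Theta_v$ with a subobject of $A_\eta$ on $\Theta_v$; this relies on combining continuity of the central charge with local finiteness of actual walls for $A_\eta$ in $\mathrm{Stab}(X)$.
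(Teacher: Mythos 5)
Your argument is sound where it goes, and its endgame (splitting $F$ along $0\to B[1]\to A_\eta\to P\to 0$, killing the case $K=0$ by complete non-triviality, and the $\ch_3$ comparison) matches the paper's Case 1. But there is a genuine gap exactly where you flag it: the claim that a destabilizer $F\into A_\eta$ in $\cala^{\obeta,\alpha'}$ for $\alpha'$ slightly above $\oalpha$ ``can be identified with a subobject of $A_\eta$ in $\cala^{\obeta,\oalpha}$ with $\lambda=+\infty$.'' This can fail: $F$ need not remain in the heart as $\alpha\downarrow\oalpha$, because $\rho_{\obeta,\alpha}(F)=\ch_2^{\obeta}(F)-\tfrac{\alpha^2}{2}\ch_0(F)$ may vanish at some $\alpha_t\in(\oalpha,\alpha')$, after which $F\notin\cala^{\obeta,\alpha}$. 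Local finiteness of walls for the \emph{fixed} class $v$ does not control this, since whether $F$ stays in $\cala^{\obeta,\alpha}$ is governed by the class of $F$, not of $v$; and continuity of the central charge only tells you $\mathfrak{Im}\,Z(F)\to\mathfrak{Im}\,Z(F)$, not that the monomorphism survives in the limiting heart. So your proof as written only covers one of the two cases.

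The missing case is handled in the paper as follows, and you need some version of it. If $\rho_{\obeta,\alpha_t}(F)=0$ for some $\alpha_t\in(\oalpha,\alpha')$, then by \cite[Proposition 3.2]{JM19} the sheaf $F_0:=\calh^{0}_{\obeta}(F)$ is $0$-dimensional, so $\rho_{\obeta,\alpha_t}(F_1)=0$ for $F_1:=\calh^{-1}_{\obeta}(F)$, i.e.\ $\nu_{\obeta,\alpha_t}(F_1)=0$. Since $\obeta<\beta_v$ lies beyond the last tilt wall, $B$ is $\nu_{\obeta,\alpha}$-stable with $\nu_{\obeta,\alpha_t}(B)<0$ at $\alpha_t>\oalpha$, and $F_1$ injects into $B$ in $\coh^{\obeta}(X)$; this contradicts tilt stability of $B$ unless $F_1=0$, in which case $F=F_0$ is a $0$-dimensional sheaf and is excluded by the complete non-triviality of $\eta$ (Lemma \ref{eq:STcond3equivs}). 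With this case added, your argument closes; without it, the proof is incomplete.
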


\begin{proof}
Fix $\obeta<\beta_v$ and pick $\alpha^*$ such that there are no walls for Chern character $v$ crossing the interval $(\obeta,\oalpha)$ and $(\obeta,\alpha^*)$, where $(\obeta,\oalpha)\in\Theta_v$. For a contradiction, assume that $A_\eta$ is not $\lambda_{\obeta,\alpha,s}$-stable for any $\alpha\in(\bar\alpha,\alpha^*)$. 

Let $F\into A_\eta$ be a destabilizing sub-object in $\cala^{\obeta,\alpha'}$ for some $\alpha'\in(\bar\alpha,\alpha^*)$, so that $\lambda_{\obeta,\alpha',s}(F)\ge\lambda_{\obeta,\alpha',s}(A_\eta)$. Consider the composition $\varphi:F\into A_\eta \onto P$ and let $K:=\ker(\varphi)$ and $I:=\im(\varphi)$ as objects in $\cala^{\obeta,\alpha'}$. Set $F_i:=\calh_{\obeta}^{-i}(F)$ and $K_i:=\calh^{-i}_{\obeta}(K)$.

Since $I$ is a sub-object of $P$, and $\coh^{\leq 0}(X)$ is a Serre subcategory of $\cala^{\obeta,\alpha'}$, we conclude that $I$ is also a 0-dimensional sheaf. If $K=0$ we obtain a contradiction with the complete non-triviality of $\eta$. Therefore, we can assume that $K\ne0$. Moreover, $\ch_{\le2}(K)=\ch_{\le2}(F)$ and, since $\calh_{\obeta}^{-1}(I)=0$, we get that $F_1\simeq K_1\into B$ as objects in $\coh^{\obeta}(X)$.

Now, there are two possibilities to be considered. First, assume that $F\in\cala^{\obeta,\alpha}$ for every $\alpha\in(\oalpha,\alpha']$; since there are no walls in this interval, $\lambda_{\obeta,\alpha,s}(F)\ge\lambda_{\obeta,\alpha,s}(A_\eta)$ for every $\alpha\in(\oalpha,\alpha']$. But $\lambda_{\obeta,\alpha,s}(A_\eta)\to\infty$ as $\alpha$ approaches $\oalpha$, thus $\lambda_{\obeta,\alpha,s}(F)\to\infty$ as $\alpha\to\oalpha$ as well. It follows that $\rho_{\obeta,\oalpha}(F)=0$; by \cite[Proposition 3.2]{JM19}, we get that $F_0$ is a 0-dimensional sheaf and $F_1$ is a $\nu_{\obeta,\oalpha}$-semistable object with $\rho_{\obeta,\oalpha}(F_1)=0$. We then have that 
$$ \nu_{\obeta,\oalpha}(F_1) = \nu_{\obeta,\oalpha}(B) = 0; $$
but $B$ is $\nu_{\obeta,\oalpha}$-stable, so $F_1=B$ and we obtain $\ch_{\leq 2}(F)=-\ch_{\leq 2}(F_1)=\ch_{\leq 2}(A_{\eta})$. Moreover, since $K$ is a subobject of $B[1]$ in $\mathcal{A}^{\obeta,\alpha'}$ then if $Q:=B[1]/K$ denotes the corresponding quotient, we get the long exact sequence of cohomologies in $\coh^{\obeta}(X)$
$$
0\rightarrow K_1\rightarrow B\rightarrow \mathcal{H}_{\obeta}^{-1}(Q)\rightarrow K_0\rightarrow 0,
$$
which implies that $K_0\cong \mathcal{H}_{\obeta}^{-1}(Q)$ since the first map is an isomorphism. However, this is impossible unless $K_0=0$, since $K_0\in \mathcal{T}_{\obeta,\alpha'}$ while $\mathcal{H}_{\obeta}^{-1}(Q)\in \mathcal{F}_{\obeta,\alpha'}$. Therefore,
$$ \lambda_{\obeta,\alpha',s}(F) - \lambda_{\obeta,\alpha',s}(A_\eta) = \dfrac{\ch_3(I)-\ch_3(P)}{\rho_{\obeta,\alpha'}(A_\eta)^2} \le 0 $$
contradicting the hypothesis that $F$ destabilizes $A_\eta$ at $\cala^{\obeta,\alpha'}$.

Otherwise, there is $\alpha_t\in(\oalpha,\alpha')$ such that $\rho_{\obeta,\alpha_t}(F)=0$ so $F\notin\cala^{\obeta,\alpha}$ when $\alpha\in(\oalpha,\alpha_t)$. According to \cite[Proposition 3.2]{JM19}, $F_0$ is a 0-dimensional sheaf, thus 
$$ \rho_{\obeta,\alpha_t}(F) = -\rho_{\obeta,\alpha_t}(F_1) + \rho_{\obeta,\alpha_t}(F_0) = 0 ~~ \Longrightarrow ~~ \rho_{\obeta,\alpha_t}(F_1) = 0. $$
Since $\nu_{\obeta,\alpha_t}(B)<0$, it follows that $\nu_{\obeta,\alpha_t}(F_1)>\nu_{\obeta,\alpha_t}(B)$, contradicting the fact that $B$ is $\nu_{\obeta,\alpha}$-stable when $\alpha>\oalpha$ unless $F_1=0$; but this would imply that $F=F_0$ is a 0-dimensional sheaf, again contradicting the complete non-triviality of $\eta$.
\end{proof}


\section{DT/PT wall-crossing} \label{sec:honest-wall}

Let $\mathcal{P}_{\beta,\alpha,s}$ denote the slicing of the stability condition $\sigma_{\beta,\alpha,s}$, so that the double tilted category $\mathcal{A}_{\beta,\alpha}=\mathcal{P}_{\beta,\alpha,s}(0,1]$. For this section, we will fix $s>1/3$ and consider instead the $(\beta,\alpha)$-half plane of stability conditions $\tau_{\beta,\alpha,s}=(\mathcal{B}_{\beta,\alpha},\widehat{Z}_{\beta,\alpha,s})$, where
$$
\mathcal{B}_{\beta,\alpha}:=\mathcal{P}_{\beta,\alpha,s}\left(\frac{1}{2},\frac{3}{2}\right],\ \ \ \widehat{Z}_{\beta,\alpha,s}=\ch_2^{\beta}-\frac{\alpha^2}{2}\ch_0+i\left(\ch_3^{\beta}-\left(s+\frac{1}{6}\right)\alpha^2\ch_1^{\beta}\right),
$$
i.e., $\tau_{\beta,\alpha,s}$ is obtained from $\sigma_{\beta,\alpha,s}$ by the $\mathbb{C}^*$-action. The advantage of considering these stability conditions is that, for a fixed Chern character $v$ with $v_0,v_1$ relatively prime, the curve $\Theta^-(v)$ becomes an honest wall that we can cross. Furthermore, we have 
$$
\mathcal{M}_{\tau_{\beta,\alpha,s}}(v)=\mathcal{M}_{\sigma_{\beta,\alpha,s}}(v),\  \text{for}\  (\beta,\alpha)\ \text{above}\ \Theta^-(v)\  \text{and}\  \beta\ll 0.
$$
Likewise, the shift functor induces an isomorphism
$$
\mathcal{M}_{\tau_{\beta,\alpha,s}}(v)\cong \mathcal{M}_{\sigma_{\beta,\alpha,s}}(-v),\  \text{for}\  (\beta,\alpha)\ \text{below}\ \Theta^-(v),\  \text{above} \ \Gamma_{s}^{-}(v)\  \text{and}\  \beta\ll 0.
$$
We will use this isomorphism to prove that for $(\beta,\alpha)$ below and near $\Theta^-(v)$, the moduli space $\mathcal{M}_{\sigma_{\beta,\alpha,s}}(-v)$ is isomorphic to the Gieseker moduli space $\mathcal{G}(-v)$. 

Let $\widehat{\lambda}_{\beta,\alpha,s}$ be the Bridgeland slope associated to the stability condition $\tau_{\beta,\alpha,s}$. Thus,
$\widehat{\lambda}_{\beta_0,\alpha_0,s}(E)=0$ for each object $E$ with $\ch(E)=v$ and every $(\beta_0,\alpha_0)\in \Theta(v)$. We have shown in Proposition \ref{prop:st-epsilon} that above and near the wall $\Theta(v)$ for $\beta\ll 0$, each stable triple is $\lambda_{\beta,\alpha,s}$-stable and so $\widehat{\lambda}_{\beta,\alpha,s}$-stable; such triple fits into a short exact sequence
$$
0\rightarrow E'[1]\rightarrow A'\rightarrow Z\rightarrow 0,
$$
where $E'$ is a 2-Gieseker stable sheaf of homological dimension 1 and $Z$ is a 0-dimensional sheaf. Since $Z$ is 0-dimensional then $E'[1]$ is a destabilizing object for $A'$ making the curve $\Theta(v)$ an honest wall. Crossing this wall, i.e., near and below $\Theta(v)$, extensions of the form
$$
0\rightarrow Z\rightarrow A\rightarrow E'[1]\rightarrow 0
$$
are $\widehat{\lambda}_{\beta,\alpha,s}$-stable. Taking the long exact sequence of cohomology sheaves we obtain that $A$ is a 2-term complex and, moreover, that $\mathcal{H}^0(A)$ is 0-dimensional (since it is a quotient of $Z$). However, since the exact sequence 
$$
0\rightarrow\mathcal{H}^{-1}(A)[1]\rightarrow A\rightarrow \mathcal{H}^0(A)\rightarrow 0
$$ 
can not destabilize $A$ and $\widetilde{\lambda}_{\beta,\alpha,s}(A)>0=\widehat{\lambda}_{\beta,\alpha,s}(\mathcal{H}^0(A))$, then $\mathcal{H}^0(A)=0$. Therefore, $A=E[1]$ for some 2-Gieseker semistable sheaf $E$ that is, in fact, Gieseker semistable. Indeed, suppose that $F\hookrightarrow E$ is a subsheaf with $\delta_{10}(F,E)=\delta_{20}(F,E)=0$ and $\delta_{30}(F,E)>0$, then setting
$$
U=\frac{\ch_2^{\beta}(F)}{\ch_0(F)}-\frac{\alpha^2}{2}=\frac{\ch_2^{\beta}(E)}{\ch_0(E)}-\frac{\alpha^2}{2},
$$
we have that $U>0$ for $\beta\ll 0$. Moreover,
$$
\frac{1}{\widehat{\lambda}_{\beta,\alpha,s}(F)}-\frac{1}{\widehat{\lambda}_{\beta,\alpha,s}(E)}=-\frac{1}{U}\delta_{30}(F,E)<0,
$$
contradicting the stability of $E$. Thus, together with the results of Proposition \ref{prop:moduli in theta} and Proposition \ref{prop:st-epsilon} we have proved the following:
\begin{theorem} \label{thm:wall-xing}
    Let $v$ be a Chern character vector with $v_0>0$ and such that $v_0$ and $v_1$ are relatively prime. Fix $s>1/3$ and let $(\beta_v,\alpha_v)\in\Theta_v$ be the last tilt wall for $v$. For $(\obeta,\oalpha)\in\Theta_v$ with $\obeta<\beta_v$ the Bridgeland wall-crossing, via restriction to the locus of stable triples, produces a diagram
    $$
    \begin{diagram}
     \node{\mathcal{G}(-v)}\arrow{se,b}{\gamma}\node{}\node{\calt(v)}\arrow{sw,b}{\tau}\\
     \node{}\node{\mathcal{M}_{\obeta,\oalpha,s}(v)}\node{}
    \end{diagram}
    $$
with morphisms given by
$$ \gamma(E) = \big[E'[1]\oplus Z_E\big]  ~~{\rm and}~~ \tau(A) = \big[\calh^{-1}(A)[1]\oplus \calh^{0}(A)\big]. $$
\end{theorem}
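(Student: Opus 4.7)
The plan is to establish both maps $\gamma$ and $\tau$ by combining three ingredients already developed earlier in the paper: (i) Proposition \ref{prop:moduli in theta} characterizing the $\lambda_{\obeta,\oalpha,s}$-semistable objects on $\Theta_v$ as S-equivalence classes of the form $[B[1]\oplus P]$ with $B$ a 2-Gieseker semistable sheaf and $P$ a 0-dimensional sheaf; (ii) Proposition \ref{prop:st-epsilon} showing that each stable triple produces a Bridgeland stable object in the chamber immediately above $\Theta_v$; and (iii) a parallel analysis immediately below $\Theta_v$ using the rotated stability condition $\tau_{\beta,\alpha,s}$ (obtained from $\sigma_{\beta,\alpha,s}$ via the $\mathbb{C}^\ast$-action) to identify stable objects as shifts of Gieseker semistable sheaves.

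First, I would verify that $\gamma$ is well-defined. Given $E\in\mathcal{G}(-v)$, form $E'$ via the canonical sequence $0\to E\to E'\to Z_E\to 0$ from \eqref{e' sqc}; since $E$ is Gieseker semistable, so is $E'$, hence $E'$ is 2-Gieseker semistable with $(E')^{\ast\ast}/E'$ pure 1-dimensional by construction. A direct Chern character computation yields $\ch(E'[1]\oplus Z_E)=-\ch(E')+\ch(Z_E)=-\ch(E)=v$. By Proposition \ref{prop:moduli in theta} the class $[E'[1]\oplus Z_E]$ represents a point of $\mathcal{M}_{\obeta,\oalpha,s}(v)$. For $\tau$, given $A\in\calt(v)$, Proposition \ref{prop:tech-0} provides $\calh^{-1}(A)$ 2-Gieseker semistable, $\calh^{0}(A)\in\coh^{\leq 0}(X)$, and $\Hom(\coh^{\leq 0}(X),A)=0$; these are exactly the conditions required by Proposition \ref{prop:moduli in theta} for $[\calh^{-1}(A)[1]\oplus\calh^{0}(A)]$ to belong to $\mathcal{M}_{\obeta,\oalpha,s}(v)$.

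The main content is the verification that $\Theta_v$ is an \emph{honest} wall, i.e.\ that the stable objects actually change across it. This is where the coprime hypothesis on $(v_0,v_1)$ and the passage to $\tau_{\beta,\alpha,s}$ are essential: after the rotation, $\widehat{\lambda}_{\obeta,\oalpha,s}(\cdot)=0$ on the wall, so a destabilizing sequence on one side becomes the relation flipping to the other. Concretely, immediately above $\Theta_v$, extensions $0\to E'[1]\to A_\eta\to Z\to 0$ arising from stable triples are Bridgeland stable (Proposition \ref{prop:st-epsilon}), and under the coprime assumption these coincide with PT-stable objects (Proposition \ref{prop:PTequivSTnSST}); immediately below $\Theta_v$, the opposite extensions $0\to Z\to A\to E'[1]\to 0$ become the stable objects. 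A cohomological argument then forces $\calh^{0}(A)=0$, so $A=E[1]$ with $E$ a 2-Gieseker semistable sheaf.

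The main obstacle — and the subtlest step — will be upgrading this 2-Gieseker semistability to honest Gieseker semistability of $E$, so that $A=E[1]$ truly represents a point of $\mathcal{G}(-v)$. The key computation is that a would-be destabilizer $F\into E$ with $\delta_{10}(F,E)=\delta_{20}(F,E)=0$ and $\delta_{30}(F,E)>0$ would satisfy
\[
\frac{1}{\widehat{\lambda}_{\beta,\alpha,s}(F)}-\frac{1}{\widehat{\lambda}_{\beta,\alpha,s}(E)}=-\frac{1}{U}\delta_{30}(F,E)<0
\]
for $U=\ch_2^{\beta}(E)/\ch_0(E)-\alpha^2/2>0$ (valid for $\beta\ll 0$), contradicting the Bridgeland stability of $A=E[1]$. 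Once this is in place, together with the characterization of semistables on $\Theta_v$ from Proposition \ref{prop:moduli in theta}, the diagram assembles and the stated formulas for $\gamma$ and $\tau$ follow.
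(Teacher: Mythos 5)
Your proposal is correct and follows essentially the same route as the paper: the rotation to $\tau_{\beta,\alpha,s}$ so that $\Theta_v$ becomes a crossable wall, Propositions \ref{prop:moduli in theta} and \ref{prop:st-epsilon} for the two sides, the cohomological argument killing $\calh^0(A)$ below the wall, and the same $U$--$\delta_{30}$ computation upgrading 2-Gieseker to Gieseker semistability. The only slight imprecision is the phrase ``since $E$ is Gieseker semistable, so is $E'$'' --- what actually holds (and is all you need) is that $E'$ is 2-Gieseker semistable, as you correctly conclude in the next clause.
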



\subsection{First example: a collapsing wall}

Let $X$ be a Fano threefold with Picard rank 1, and set $P:=r\cdot P_{\ox}-n$ where $P_{\ox}$ is the Hilbert polynomial of $X$, and $n\ge r\ge1$ are positive integers. The Gieseker moduli space $\calg(P)$ is described in \cite{GJ}: every $E\in\calm_X(P)$ satisfies an exact sequence of the form
\begin{equation} \label{eq:q-trivial}
0 \lra E \lra \ox^{\oplus r} \lra Q \lra 0,
\end{equation}
where $Q$ is 0-dimensional sheaf of length $n$. Notice that $E^{**}\simeq \ox^{\oplus r}$ and every sheaf in $\calg_X(P)$ has homological dimension larger than 1. Moreover $\ch(E)=v_{r,n}:=(r\cdot\deg(X),0,0,-n)$.

Let us now consider the PT moduli space $\calt(-v_{r,n})$. Take $A\in\calp(-v_{r,n})$ and set $B:=\calh^{-1}(A)$ and $T:=\calh^{0}(A)$, so that
$\ch_{\le2}(B)=(r,0,0)$ and $\ch_3(B)=n+\ch_3(T)\ge0$. It follows from \cite[Lemma 2.5]{GJ} that $B$ also satisfies an exact sequence like the one in display \eqref{eq:q-trivial}, so in fact $\ch_3(B)<0$, providing a contradiction. 

In addition, once can similarly deduce that if $A\in\mathcal{M}_{\obeta,\oalpha}(-v_{r,n})$, then $A=[\ox^{\oplus r}[1]\oplus Q]$, so in fact $\mathcal{M}_{\obeta,\oalpha}(-v_{r,n})$ coincides with ${\rm Sym}^n(Q)$, the $n^{\rm th}$-symmetric product of $X$.

Therefore, we conclude that $\calt(-v_{r,n})$ is empty, and the Brigeland wall described in Theorem \ref{thm:wall-xing} is a collapsing wall.


\subsection{Second example: a fake wall}

Let $X$ be a Fano threefold of Picard rank 1 for which Bridgeland stability conditions exist. Fix $w=(w_0,w_1,w_2)\in\Z\times\Z\times\frac{1}{2}\Z$, and let $\tilde{w}=(w_0,w_1,w_2,\varepsilon_X(w))$, where $\varepsilon_X(w)$ is the constant defined in display \eqref{eq:epsilon(v)}.

Let us consider the PT moduli space $\calp(-\tilde{w})$. Take $A\in\calp(-\tilde{w})$ and set $B:=\calh^{-1}(A)$ and $T:=\calh^{0}(A)$, so that
$\ch_{\le2}(B)=w$ and 
$$ \ch_3(B)=\varepsilon_X(w)+\ch_3(T)\ge\varepsilon_X(w). $$
Since $B$ is a 2-Gieseker semistable sheaf, we get, by definition, that $\ch_3(B)\le\varepsilon_X(w)$. It follows that $\ch_3(B)=\varepsilon_X(w)$ and $\ch_3(T)=0$, so in fact $A=B[1]$ and so the object is stable on both sides of the wall.

Therefore, $\calg(\tilde{w})=\calt(\tilde{w})$ meaning that the wall described in Theorem \ref{thm:wall-xing} is not an actual wall, since it does not affect the moduli spaces.


\subsection{Third example: an honest wall}

Set $v=(2,-1,-1/2,-1/6)$, and take $A\in\calp(-v)$, setting $B:=\calh^{-1}(A)$ and $T:=\calh^{0}(A)$, as usual. We get that $\ch_{\le2}(B)=(2,-1,-1/2)$ and
$$ \ch_3(T)=\ch_3(B)+\frac{1}{6}. $$

Since $\gcd(\ch_0(B),\ch_1(B))=1$, we get that $B$ is a $\mu$-stable sheaf. Since $\ch_{\le2}(B)=(2,-1,-1/2)$, then forcibly $\ch_3(B)\le 5/6$, see \cite{S21}. If $B$ is torsion-free but not reflexive, have the exact sequence
$$ 0 \to B \to B^{**} \to Q_B \to 0 $$
with $B^{**}$ being a $\mu$-stable rank 2 reflexive sheaf with $\ch_1(B)=-1$. Note that $\ch_2(Q_B)=\ch_2(B^{**})+1/2\ge0$; on the other hand, Bogomolov inequality yields $\ch_2(B^{**})\le1/4$. Given that $\ch_2(B^{**})\in\frac{1}{2}\Z$ (because $\ch_1(B^{**})=-1$) and $\ch_2(Q_B)\in\Z$, the only possibility is to have $\ch_2(B^{**})=-1/2$, so $\dim Q_B=0$. However, by definition, $Q_B$ has pure dimension 1, so it follows that $Q_B=0$ and $B$ is a reflexive sheaf. 

From \cite[Lemma 9.4]{H-reflexive}, every $B$ is a $\mu$-stable rank 2 reflexive sheaf with $\ch_{\le2}(B)=(2,-1,-1/2)$ has $\ch_3(B)=5/6$, and it is given by an exact sequence of the form
$$ 0 \longrightarrow \op3(-2) \longrightarrow \op3(-1)^{\oplus 3} \longrightarrow B \longrightarrow 0, $$
so that $\sing(B)$ consists of a single point, say $p\in\p3$, which is given by the intersection of the three linear forms in the morphism $\op3(-2)\to\op3(-1)^{\oplus 3}$. 

It then follows that the sheaf $\ch_3(T)=1$, while Lemma \ref{lem:sup-sing} implies that $T\simeq\calo_p$. Since $A$ is not a shifted sheaf, we can conclude that $\Theta_v$ is an actual wall.

Let us further understand the maps provided by Theorem \ref{thm:wall-xing}.

The Gieseker moduli space $\calg(v)$ consists of sheaves $E$ fitting into the following short exact sequence
$$ 0 \lra E \lra F \lra \calo_q \lra 0 $$
where $F$ is a $\mu$-stable reflexive sheaf with $\ch(F)=(2,-1,-1/2,5/6)$ and $q\in\p3$ is an arbitrary point. Clearly, $F=E'=E^{**}$, $Z_E=\calo_q$ and $\hd(E')=1$; as explained above, the sheaf $F$ has only one singular point, which we denote by $f$.

It follows that every sheaf $E$ such that $\supp(Z_E)\ne\sing(F)$ (that is $q\ne f$) gets killed by the wall at $\Theta_v$.


\section{Finiteness of Bridgeland walls}\label{sec:y}

In this section, we show that there are only finitely many actual $\lambda$-walls for $\alpha>\alpha_0$, where $\alpha_0$ is any positive real number. This will imply that there is some $\beta_0$ such that no walls intersect the line $\beta=\beta_1$ for any $\beta_1<\beta_0$. We start by showing that a particular type of pseudo $\lambda$-wall cannot exist.
Throughout this section We fix a Chern character $v$ with $\ch_0(v)>0$ and $\Delta(v)\geq0$.
The key to this section is Theorem \cite[Lemma 4.17]{JM19} which tells us that any actual $\lambda$-wall is a piecewise path of pseudo-walls.
\begin{lemma}\label{l:nowall}
Suppose $\beta_0$ is less than the smallest $\beta$ coordinate of the intersection of the largest actual tilt wall associated with $v$ with the $\beta$-axis. Then if a pseudo $\lambda$-wall $\Upsilon_{u,v}$ passes above $\Theta^-_v$ has a portion of actual $\lambda$-wall including a point with coordinate $\beta_0$, then it cannot intersect $\Theta^-_v$ at a point with $\beta<\beta_0$.    
\end{lemma}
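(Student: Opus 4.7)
We argue by contradiction: suppose $\Upsilon_{u,v}$ is a pseudo $\lambda$-wall above $\Theta^-_v$, with an actual $\lambda$-wall portion containing a point $(\beta_0,\alpha^\ast)$, that also meets $\Theta^-_v$ at a point $(\beta_1,\alpha_1)$ with $\beta_1<\beta_0$.

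First, we pin down the destabilizer at $(\beta_0,\alpha^\ast)$. Because $\beta_0$ lies to the left of every actual tilt wall for $v$, any $\nu_{\beta_0,\alpha}$-semistable object of class $v$ with $\alpha$ large must come from a 2-Gieseker semistable sheaf of Chern character $\pm v$, by Theorem \ref{thm:asymp-h+} and the boundedness of actual tilt walls for $v$. Hence the object $V\in\mathcal{A}^{\beta_0,\alpha^\ast}$ that is $\lambda$-destabilized at $(\beta_0,\alpha^\ast)$ along $\Upsilon_{u,v}$ is, up to shift, a fixed Gieseker semistable sheaf, and the destabilizing sub-object $U$ of Chern character $u$ fits into a short exact sequence $0\to U\to V\to W\to 0$ in the heart.

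Next, we apply Proposition \ref{prop:moduli in theta} at $(\beta_1,\alpha_1)\in\Theta^-_v$. Because $\beta_1<\beta_0\le\beta_v$, every $\lambda_{\beta_1,\alpha_1,s}$-semistable object of class $v$ is S-equivalent to $B[1]\oplus P$ with $B$ a 2-Gieseker semistable sheaf and $P$ a $0$-dimensional sheaf. Since $\operatorname{Im}Z_{\beta_1,\alpha_1,s}(v)=0$ on $\Theta^-_v$, the identity $\lambda_{\beta_1,\alpha_1,s}(u)=\lambda_{\beta_1,\alpha_1,s}(v)$ along $\Upsilon_{u,v}$ forces $\operatorname{Im}Z_{\beta_1,\alpha_1,s}(u)=0$ as well; combined with the requirement that $u$ be the class of a sub-object of an S-equivalence class $B[1]\oplus P$, this restricts $u$ to a very short list, essentially $u=\ch(B[1])$, $u=(0,0,0,n)$ with $n>0$, or sums of such contributions.

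The contradiction then comes from a direct numerical check: for every admissible $u$ on the above list, the pseudo-wall $\Upsilon_{u,v}=\{(\beta,\alpha):\lambda_{\beta,\alpha,s}(u)=\lambda_{\beta,\alpha,s}(v)\}$ either coincides set-theoretically with $\Theta_v$ (as happens when $u$ and $v$ share $\ch_0,\ch_1,\ch_2$ and differ only in $\ch_3$) or lies on or below $\Theta^-_v$, each case contradicting the standing assumption that $\Upsilon_{u,v}$ passes above $\Theta^-_v$ and carries an actual destabilization at $(\beta_0,\alpha^\ast)$. The main obstacle I anticipate is ensuring that the destabilizing sub-object $U$ at $(\beta_0,\alpha^\ast)$ can be propagated coherently along $\Upsilon_{u,v}$ down to $(\beta_1,\alpha_1)$; this will rely on the piecewise description of actual walls in \cite[Lemma 4.17]{JM19} and on the fact that for $\beta\in[\beta_1,\beta_0]$ no tilt wall for $v$ intervenes, so that the sub-object relation is preserved in the heart along $\Upsilon_{u,v}$ between the two points.
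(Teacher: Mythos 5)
There is a genuine gap, and it sits exactly where you flagged your ``main obstacle.'' Your step 2 applies Proposition \ref{prop:moduli in theta} at the intersection point $(\beta_1,\alpha_1)\in\Theta^-_v$ to constrain the destabilizing class $u$ to a short list. But the hypothesis only says that the \emph{pseudo} (numerical) wall $\Upsilon_{u,v}$ passes through $(\beta_1,\alpha_1)$; the wall is only known to be \emph{actual} near $\beta_0$. A numerical wall can pass through a region in which no destabilization occurs, so at $(\beta_1,\alpha_1)$ you are not entitled to realize $u$ as the class of a subobject of anything $\lambda_{\beta_1,\alpha_1,s}$-semistable, and Proposition \ref{prop:moduli in theta} gives you no handle on $u$. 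The only information the intersection supplies is the numerical condition $\mathrm{Im}\,Z_{\beta_1,\alpha_1,s}(u)=0$, a single linear condition on $u$ which is far from forcing $u_{\le 2}$ proportional to $v_{\le 2}$; consequently the ``very short list'' does not exist and the ``direct numerical check'' of step 3 has no finite list to check. The fix you propose --- propagating the subobject along $\Upsilon_{u,v}$ using the piecewise structure of actual walls --- cannot close this, because the whole point is that the subobject relation may terminate before the numerical wall reaches $\Theta^-_v$.

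For contrast, the paper's proof never leaves the vertical line $\beta=\beta_0$. From the destabilizing sequence $0\to F\to E[1]\to G[1]\to 0$ in $\cala^{\beta_0,\alpha_0}$ one takes cohomology in $\calb^{\beta_0}$ and extracts a quotient $E\onto C$ with $C\into G$. The assumed intersection of $\Upsilon_{u,v}$ with $\Theta^-_v$ at some $\beta<\beta_0$ forces a \emph{pseudo} tilt wall for $v$ to cross $\beta=\beta_0$ at some height $\alpha_1>\alpha_0$, where $\nu_{\beta_0,\alpha_1}(E)=\nu_{\beta_0,\alpha_1}(G)$; combining this with the tilt-stability of $E$ (which holds for all $\alpha>0$ since $\beta_0$ lies left of all actual tilt walls) shows that $C$ generates an \emph{actual} tilt wall for $G$ crossing the segment $\alpha_0<\alpha<\alpha_1$ on $\beta=\beta_0$, and following that tilt wall to $\Theta^-_v$ produces an actual tilt wall for $E$ to the left of $\beta_0$ --- contradicting the choice of $\beta_0$. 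If you want to salvage your strategy, you would need to replace step 2 by an argument of this local, tilt-theoretic kind rather than by a classification of classes at the (non-actual) intersection point.
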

\begin{proof}
Suppose otherwise. Let $(\beta_0,\alpha_0)\in\Upsilon$. Then consider the destabilising sequence in $\cala^{\beta_0,\alpha_0}$ associated to a destabilized object $E[1]$, where $E\in \calb^{\beta_0}$, which must be tilt     stable for all $\alpha>0$, has Chern character $v$:
\[
0\to F\to E[1]\to G[1]\to 0.
\]
Taking cohomology in $\calb^{\beta_0}$ we have the long exact sequence
\[0\to F_1\to E\to G\to F_0\to 0,\]
and split it into two short exact sequences via $E\onto C\hookrightarrow G$ in $\calb^{\beta_0}$. 
 
By hypothesis, the pseudo $\lambda$-wall intersects $\Theta^-_v$ to the left of $\beta_0$ and so there is a pseudo tilt wall for $v$, lying above $(\beta_0,\alpha_0)$. Let the $\alpha$-coordinate where $\beta=\beta_0$ intersects this pseudo tilt wall be $\alpha_1>\alpha_0$. Since $E$ is tilt stable, we have 
\[\nu_{\beta_0,\alpha}(F_1)<\nu_{\beta_0,\alpha}(E)<\nu_{\beta_0,\alpha}(C)\]
for all $\alpha>0$. But $\nu_{\beta_0,\alpha_1}(E)=\nu_{\beta_0,\alpha_1}(G)$ and so $\nu_{\beta_0,\alpha_1}(C)>\nu_{\beta_0,\alpha_1}(G)>\nu_{\beta_0,\alpha_1}(F_0)$. 
It follows that $C$ provides an actual tilt wall for $G$ which intersects the line segment $\alpha_0<\alpha<\alpha_1$, $\beta=\beta_0$. Then this tilt wall remains actual for its whole length and, in particular, when it intersects $\Theta^-_v$. At this point, it must intersect an actual tilt wall for $E$ which is a contradiction.
\end{proof}
We also need a further lemma. This relies on the observation that if a wall has a discontinuity so that there are two or more separate destabilisers at a point of the plane then the two numerical walls must intersect so that they extend as non-actual walls on the destabilizing side of the wall.
\begin{lemma}\label{l:maxmin}
For $s=1/3$. An actual $\lambda$-wall $W_{u,v}$ corresponding to an asymptotically stable object in $R^-$ or $R^+$ has maxima precisely on $\Gamma_{v,1/3}$ and in $R^0$ has minima precisely on tilt walls.
\end{lemma}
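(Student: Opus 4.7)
The plan is to view the $\lambda$-wall $W_{u,v}$ for $s=1/3$ as the zero locus of the single polynomial obtained by cross-multiplying $\lambda_{\beta,\alpha,1/3}(u) = \lambda_{\beta,\alpha,1/3}(v)$. Writing $\delta^\beta_{ij} := \delta^\beta_{ij}(u,v)$ and using that $s=\tfrac{1}{3}$ makes the coefficient $s+\tfrac{1}{6}=\tfrac{1}{2}$, a direct expansion gives
\[
F(\beta,\alpha) \;=\; \delta_{32}^\beta \;-\; \tfrac{\alpha^2}{2}\bigl(\delta_{30}^\beta + \delta_{12}^\beta\bigr) \;+\; \tfrac{\alpha^4}{4}\,\delta_{10},
\]
with $\delta_{10}$ $\beta$-independent because $\ch_0$ is. The identity $\partial_\beta \ch_k^\beta = -\ch_{k-1}^\beta$ produces a crucial cancellation $\partial_\beta \delta_{30}^\beta + \partial_\beta \delta_{12}^\beta = -\delta_{20}^\beta + \delta_{20}^\beta = 0$, whence
\[
\partial_\beta F \;=\; -\delta_{31}^\beta.
\]

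At an interior horizontal tangent of $W_{u,v}$, the implicit function theorem requires $\partial_\beta F = 0$, so $\delta_{31}^\beta(u,v) = 0$. Setting $k := \ch_3^\beta(v)/\ch_1^\beta(v)$, this forces $\ch_3^\beta(u) = k\,\ch_1^\beta(u)$, and substituting into the wall equation $\mathrm{Re}(Z_u)\mathrm{Im}(Z_v) = \mathrm{Re}(Z_v)\mathrm{Im}(Z_u)$ factors it as
\[
\bigl(k - \tfrac{\alpha^2}{2}\bigr)\bigl(\ch_1^\beta(u)\,\mathrm{Im}(Z_v) - \ch_1^\beta(v)\,\mathrm{Im}(Z_u)\bigr) \;=\; 0.
\]
The first factor $k = \alpha^2/2$ is exactly the defining equation of $\Gamma_{v,1/3}$, while the second unwinds to $\delta_{12}^\beta = \tfrac{\alpha^2}{2}\delta_{10}$, the pseudo tilt-wall equation $\nu_{\beta,\alpha}(u) = \nu_{\beta,\alpha}(v)$. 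Thus every interior horizontal tangent of $W_{u,v}$ lies either on $\Gamma_{v,1/3}$ or on the tilt wall for $(u,v)$; the special value $s=1/3$ is indispensable here, since any other value destroys the factorisation.

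It remains to match the two alternatives with maxima versus minima in the regions $R^\pm$ and $R^0$. Computing $\partial_\beta^2 F = \delta_{30}^\beta - \delta_{12}^\beta$ at a horizontal tangent, implicit differentiation gives $\alpha''(\beta) = -\partial_\beta^2 F / \partial_\alpha F$, so that max/min is controlled by the signs of these two quantities. In $R^-$ or $R^+$, asymptotic $\lambda$-stability (Theorem \ref{thm:asymp-h+}) places the wall above the relevant branch of $\Theta_v$; any horizontal tangent occurring on an \emph{actual} tilt-wall above $\Theta_v^-$ is forbidden by Lemma \ref{l:nowall}, so the extremum must fall on $\Gamma_{v,1/3}$, and the sign calculation then shows it is a maximum. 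In $R^0$, the curve $\Gamma_{v,1/3}$ does not enter the region, so the extremum must lie on the tilt-wall factor, and the opposite concavity identifies it as a minimum. The main obstacle is this final sign-tracking: one must carefully use Theorem \ref{vert-3.2}(3) (which bounds $D_{\bar\beta,s}$ for vertical semistable objects) together with the configuration of pseudo-walls established in \cite{JM19} to exclude the non-occurring alternative in each region and to ensure $\partial_\alpha F \neq 0$ at the extremum so the implicit function theorem applies.
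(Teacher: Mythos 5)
Your computation for a single smooth numerical wall is correct and worth noting: the cancellation $\partial_\beta(\delta_{30}^\beta+\delta_{12}^\beta)=0$, the identity $\partial_\beta F=-\delta_{31}^\beta$, and the resulting factorisation of the wall equation at a horizontal tangent into the $\Gamma_{v,1/3}$ factor and the tilt-wall factor are all sound, and they essentially re-derive the statement for \emph{numerical} walls that the paper simply cites as \cite[Prop.~6.10]{JM19}. That part of your argument is a legitimate, more explicit substitute for the citation.

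However, there is a genuine gap: the lemma concerns \emph{actual} $\lambda$-walls, and by \cite[Lemma 4.17]{JM19} (invoked at the start of Section \ref{sec:y}) an actual $\lambda$-wall is only a \emph{piecewise} path of numerical pseudo-walls --- each segment is an arc of a different cubic or quartic $F_u=0$ for a different destabiliser $u$, glued at points where two numerical walls cross. A piecewise curve can attain a local maximum or minimum at such a corner with no horizontal tangent on either smooth piece, so the implicit-function-theorem argument you run on a single $F(\beta,\alpha)$ says nothing about these points. This is precisely the content the paper's proof supplies: it argues that at a singularity of the actual wall the two numerical walls must continue away from the stable region, so the vertical direction of the actual wall cannot reverse at a corner; this uses the direction in which destabilisation propagates (left-to-right or top-down in $R^-$, vertically upwards from $\Theta_v^-$ in $R^0$ via Proposition \ref{prop:st-epsilon}), not any calculus on a fixed $F$. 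Without an argument of this kind your proof establishes the conclusion only for the smooth interior points of each arc. A secondary weakness is that the final step --- deciding maximum versus minimum in each region and ruling out the unwanted factor --- is left as a plan (``the sign calculation then shows\dots'', ``one must carefully use\dots'') rather than carried out; since this is where the regions $R^\pm$ versus $R^0$ actually enter, it cannot be omitted.
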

\begin{proof}
 By \cite[Prop~6.10]{JM19} this holds for numerical walls. The result follows from the following observation. At a singularity of $\Upsilon_{u,v}$, the actual portion of the two intersecting numerical walls must have the continuation of the numerical walls away from the region where the object is stable. This means that the vertical direction of the actual wall cannot change at a discontinuity when two numerical walls cross. The reason for this is because, in $R^-$, destabilization occurs from left to right along horizontal lines or top-down vertically from $\Theta^-_v$ (and mirror symmetrically in $R^+$), while in $R^{0}$ destabilisation occurs vertically upwards from $\Theta^-_{v}$ by Proposition \ref{prop:st-epsilon}. This, in turn, follows from the asymptotics in $R^-$ and the characterisation of stable triples in Section \ref{sec:triples}.
\end{proof}
The upshot of this lemma is that the piecewise walls destabilising asymptotically stable objects roughly follow the same geometry as numerical walls.
\begin{theorem}\label{t:finite}
For any $s\geq1/3$, $\alpha_0>0$ and any Chern character $v$ with $\Delta(v)\geq0$ and $v_0>0$, there are only finitely many actual $\lambda$-walls passing through the upper $(\beta,\alpha)$ plane for which $\alpha>\alpha_0$ which destabilize Geiseker stable objects, PT stable objects or their duals.
\end{theorem}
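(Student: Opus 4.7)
The strategy is to confine the actual $\lambda$-walls to a bounded region of the upper half-plane and then use the support property to bound the Chern characters of the possible destabilizers, from which finiteness follows by a standard lattice argument as in \cite{JM19}. I will organize this in three steps.

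First, I would confine the walls horizontally. For Gieseker (semi)stable sheaves $E$ with $\ch(E)=v$, Theorem \ref{thm:asymp-h+}(1) gives an open half-plane $\{\beta<\beta_1\}$ (for $\beta_1\ll 0$) in which $E[1]$ is $\labs$-stable for all $\alpha\gg 0$; by Lemma \ref{l:nowall}, applied to $\beta_0<\beta_1$ chosen below all tilt walls for $v$, any pseudo $\lambda$-wall meeting $\{\beta=\beta_0\}$ at an actual segment cannot cross $\Theta^-_v$ further to the left. Thus actual $\lambda$-walls destabilizing Gieseker stable objects are confined to $\{\beta\geq\beta_0\}$. By the symmetry $E\leftrightarrow E^\vee$ (which corresponds to $\beta\mapsto -\beta$ and sends the $R^-$ region to $R^+$), the same argument bounds walls for the duals on the right side. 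For PT-stable objects, Theorem \ref{thm:PTislimtstab-eff} together with Proposition \ref{pro:AG53-92-2} identifies these with vertical (semi)stable objects for $\beta$ sufficiently negative, and Proposition \ref{prop:st-epsilon} confines their walls to a neighborhood of $\Theta_v$; so the same left-right bounds apply.

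Second, I would confine the walls vertically. By Lemma \ref{l:maxmin}, any maximum of an actual $\lambda$-wall in $R^-\cup R^+$ lies on $\Gamma_{v,s}$, and any minimum in $R^0$ lies on a tilt wall. The curve $\Gamma_{v,s}$ is a cubic in $\beta$ with bounded $\alpha$-coordinates over the strip $\beta\in[\beta_0,-\beta_0]$, and by the classical boundedness of tilt walls (\cite{S}, already invoked in Section~\ref{sec:Bsc}) the tilt walls sit inside a disc of bounded radius around $(\mu(v),0)$. Combining these with the horizontal confinement from the first step shows that all actual $\lambda$-walls intersecting $\{\alpha>\alpha_0\}$ are contained in a compact rectangle $K=[\beta_0,-\beta_0]\times[\alpha_0,\alpha_{\max}]$.

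Third, I would bound the Chern characters of destabilizers. For every actual $\lambda$-wall segment crossing $K$, a destabilizing subobject $F\hookrightarrow A$ satisfies a tilt inequality $\nu_{\beta,\alpha}(F)=\nu_{\beta,\alpha}(A)$ for some $(\beta,\alpha)\in K$, together with $\labs(F)=\labs(A)$; the support property gives the Bogomolov-type bound $Q_{\beta,\alpha,K}(F)\geq 0$ and the usual filtration argument forces $0<\ch_0^\beta(F)<\ch_0^\beta(A)$ and bounds $\omega^2\ch_1^\beta(F)$ from above. Since $(\beta,\alpha)$ stays in a compact set, the triples $(\ch_0(F),\ch_1(F),\ch_2(F))$ are constrained to a finite set in the integral lattice $\mathbb{Z}\times\mathbb{Z}\times\tfrac12\mathbb{Z}$. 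The remaining coordinate $\ch_3(F)$ is then pinned down, up to finitely many values, by the equation $\labs(F)=\labs(A)$ (a linear constraint in $\ch_3(F)$ for fixed $(\beta,\alpha,\ch_{\le 2}(F))$), together with the fact that each numerical $\lambda$-wall is determined by $\ch(F)$ and $\ch(A)$. Since each actual wall is a piecewise union of arcs of such numerical walls by \cite[Lemma~4.17]{JM19}, and each numerical wall gives rise to only finitely many actual segments within $K$, finiteness follows.

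The main obstacle I expect is step three: carefully controlling the piecewise decomposition of the actual walls via \cite[Lemma~4.17]{JM19} and ensuring the counting is uniform across the three cases (Gieseker, PT, and their duals), which requires verifying that the bound $\alpha_{\max}$ can be chosen independently of the Chern character of the destabilizer. This should follow from the explicit description of $\Gamma_{v,s}$ and the tilt-wall radii depending only on $v$, but writing out the uniform estimate cleanly is the delicate part.
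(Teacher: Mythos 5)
Your Step 2 contains the essential gap: the claim that all actual $\lambda$-walls meeting $\{\alpha>\alpha_0\}$ are contained in a compact rectangle is not justified and is, in general, false. Lemma \ref{l:maxmin} controls \emph{maxima} only for walls in $R^-$ and $R^+$ (where they must lie on $\Gamma_{v,s}$); for walls in the region $R^0$ above $\Theta^-_v$ it only locates \emph{minima} (on tilt walls), so it does nothing to prevent a wall in $R^0$ from escaping vertically to $\alpha=+\infty$ between the two branches of $\Theta_v$. The paper's proof explicitly allows such unbounded walls (its case (3)) and counts them separately by intersecting them with a compact horizontal strip $\alpha_1\leq\alpha\leq\alpha_1+1$ between the branches, after first showing via Lemma \ref{l:maxmin} that no piecewise wall can exit vertically at both ends. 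Your proposal never confronts this case, and your Step 3 lattice/support-property count — which is the standard local-finiteness argument and only works once every wall is known to pass through a \emph{fixed} compact set — cannot repair it on its own.

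A second, related omission is the paper's case (4): a priori an actual $\lambda$-wall could stay above $\Theta_v$ and be asymptotic to both branches without ever crossing $\Theta_v$ or $\Gamma_{v,s}$. Ruling this out is the most delicate part of the paper's argument: it uses Lemma \ref{l:nowall} together with an analysis of infinitely many ``looping'' numerical walls crossed by a fixed vertical line, concluded by the observation that the destabilized object has only finitely many Harder--Narasimhan factors in $\cohab$, so only finitely many numerical walls can meet that line. Your Step 1 invokes Lemma \ref{l:nowall} to ``confine walls horizontally,'' but that lemma is a statement about a single pseudo-wall lying \emph{above} $\Theta^-_v$ with an actual portion at a specific $\beta_0$; it does not yield a uniform left bound $\beta\geq\beta_0$ for all walls, and in any case horizontal confinement alone does not address either the vertical escape in $R^0$ or the doubly-asymptotic configuration. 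Your Step 3 is fine in spirit (it is what underlies ``finitely many walls through a compact region'' in the paper as well), but the theorem's real content is the global geometric analysis showing where every wall must go, and that is what is missing.
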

\begin{proof}
It suffices by \cite[Theorem 6.12]{JM19} to prove the Theorem in the case $s=1/3$ as no new walls are created as $s$ increases.

Consider first actual walls which intersect $\Theta^-_v$. But by \cite[Theorem 4.21]{JM19}, an actual $\lambda$-wall only crosses $\Theta_v$ at an actual tilt wall. There are only finitely many tilt walls above $\alpha_0$ and in a compact region around such an intersection point there can only be finitely many $\lambda$-walls. So there are only finitely many actual $\lambda$-walls in all which cross $\Theta^-_v$ above $\alpha_0$. 

Now consider the set of $\lambda$-walls which stay inside $R^-_v$, and which do not intersect an actual tilt wall. It suffices to consider those which destabilize asymptotically stable objects. From Lemma \ref{l:maxmin}, the proof of \cite[Lemma 7.16]{JM19} shows that any such wall must intersect $\Gamma^-_{v,s}$ and by \cite[Theorem 7.8]{JM19} it can only intersect once as an actual wall. Hence all walls are trapped in the region between $\Gamma^-_{v,s}$ and $\Theta^-_v$. If these two curves do not intersect, then the walls must all pass through a compact region, and hence there are finitely many. 

Now consider actual $\lambda$-walls above $\Theta^-_v$. This time there are several cases to consider. An actual $\lambda$-wall might exit the region $R^0$ in a number of ways:
\begin{enumerate}
\item Through $\Theta_v$
\item Through $\alpha=0$ between the branches of $\Theta_v$
\item Vertically (as an unbounded wall)
\item Asymptotically to the left and right above $\Theta_v$.
\end{enumerate}
In cases (1) or (2) there can only be finitely many. In the first case, since such a wall would intersect an actual tilt wall and, as before, there can only be finitely many intersecting each of finitely many tilt walls. In the second case, the curves must pass through a compact region between the branches of $\Theta_v$ and so again there can only be finitely many. We claim that situation (4) cannot happen. Since the asymptotically stable objects are also the objects that are stable near $\Theta^-_v$, it suffices to consider walls that destabilize these objects. Such an actual wall must be piecewise and consist of infinitely many numerical walls. If one of these numerical walls has a maximum to the left of where it is actual, then we are in the situation of Lemma \ref{l:nowall} above since any such wall must, at some
point, extend beyond the $\beta_0$ of the lemma. So we can assume all numerical walls making up the actual wall must double back above the actual portion. The proof of Lemma \ref{l:maxmin} implies that the loops to the left cross inside the loops to the right. By looking at a vertical line as in the proof of Lemma \ref{l:nowall}, we see that there must also be an actual portion of the same numerical wall above as the loop doubles back. If there are infinitely many such loops then this vertical line would cut such numerical walls infinitely many times. But the objects stay in $\cohab$ since otherwise there would be a $\Theta$-wall for one of them and we could argue as in the proof of Lemma \ref{l:nowall} that there would need to be an actual tilt wall. But in $\cohab$ there are only finitely many Harder--Narasimhan factors of $E$ and so there can only be finitely many numerical walls along the vertical line. But Lemma \ref{l:maxmin} shows that the lower portion of such a wall cannot have a minimum and so exits through $\alpha=\alpha_0$.  

So we have shown that there are only finitely many bounded $\lambda$-walls above $\alpha_0$. Note that Lemma \ref{l:maxmin} shows that there can be no piecewise unbounded walls where both ends exit vertically. Hence all such walls exit through $\alpha=\alpha_0$. If we pick an $\alpha_1$ so that all bounded $\lambda$-walls are contained in $\alpha<\alpha_1$ then there are only finitely many unbounded $\lambda$-walls in the region $\alpha_1\leq\alpha\leq\alpha_1+1$ between the branches of $\Theta_v$.  This deals with case (3).
\end{proof}

Combining Theorem \ref{t:finite} with Theorem \ref{thm:AG53-39-1} and Theorem \ref{thm:PTislimtstab-eff}, and noting that the condition $s>1/3$ is equivalent to $3f_2-g_2>0$ in Theorem \ref{thm:PTislimtstab-eff}, we immediately deduce the following statement.
\begin{theorem}\label{t:PTisBridgeland}
Let $X$ be a smooth projective threefold of Picard rank $1$ for which $\stab(X)$ is non-empty. Then $E\in D^b(X)$ with $\ch_0(E)>0$ is PT-(semi)stable if and only if $E$ is Bridgeland (semi)stable in the chamber immediately above $\Theta^-_{\ch(E)}$ and the last actual tilt wall.
\end{theorem}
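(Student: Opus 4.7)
The strategy is to chain together the three deep results already established in the paper, namely Theorem \ref{thm:AG53-39-1}, Theorem \ref{thm:PTislimtstab-eff}, and Theorem \ref{t:finite}, so as to realize PT-(semi)stability as the asymptotic form of Bridgeland (semi)stability for $\beta \ll 0$, and then use finiteness of walls to transport this equivalence into the whole chamber in question.

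First I would check the numerical compatibility of the two generalized large volume limits. With the choice $f = 2(s+\tfrac{1}{6})m^2$ and $g = 3m^2$ specified in Theorem \ref{thm:AG53-39-1}, we have $f_2 = 2s+\tfrac{1}{3}$, $g_2 = 3$, so the hypothesis $s > \tfrac{1}{3}$ is equivalent to $3f_2 - g_2 = 6s-2 > 0$, which is precisely the inequality required for Theorem \ref{thm:PTislimtstab-eff} to apply. Picking any constant $b \in \bigl(-\tfrac{1}{\sqrt{3}}, 0\bigr)$ and setting $B = nb\omega$ with $\omega = H$, Theorem \ref{thm:PTislimtstab-eff} gives some $n_0 > 0$ (depending only on $\ch(E)$) such that PT-(semi)stability of $E$ is equivalent to $\sigma_{\infty, nb\omega}$-(semi)stability for every $n > n_0$. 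Theorem \ref{thm:AG53-39-1} then identifies the latter with vertical (semi)stability at $\overline{\beta} = nb$, which by definition means Bridgeland $\lambda_{\overline{\beta},\alpha,s}$-(semi)stability for all $\alpha \gg 0$.

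The next step is to observe that this asymptotic Bridgeland stability at fixed $\overline{\beta} \ll 0$ is detected anywhere within the chamber immediately above $\Theta^-_{\ch(E)}$ and the last actual tilt wall. This is where Theorem \ref{t:finite} plays its role: there are only finitely many actual $\lambda$-walls in the upper $(\beta,\alpha)$-plane destabilizing PT-(semi)stable objects. I would argue that for $\overline{\beta}$ chosen sufficiently negative (more negative than the $\beta$-coordinate of every wall meeting the region above $\Theta^-_{\ch(E)}$ and the last tilt wall), the vertical ray $\{\overline{\beta}\} \times (\alpha_0, \infty)$ lies eventually in this chamber. Since the chamber is open, connected, and contains no walls for $v = \ch(E)$ by definition, Bridgeland (semi)stability is constant on it; hence being (semi)stable at any point of the chamber coincides with being (semi)stable at such $(\overline{\beta}, \alpha)$ with $\alpha \gg 0$. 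Concatenating the equivalences yields the theorem.

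The main subtlety will be the last geometric claim, namely that the chamber \emph{immediately above} $\Theta^-_{\ch(E)}$ and the last tilt wall indeed extends to the asymptotic region $\overline{\beta} \to -\infty$, $\alpha \to \infty$. This requires knowing that no actual $\lambda$-wall separates the asymptotic region from the portion of that chamber sitting just above $\Theta^-_{\ch(E)}$, which is exactly the content of Theorem \ref{t:finite} combined with the wall geometry controlled by Lemmas \ref{l:nowall} and \ref{l:maxmin}: once finitely many walls are accounted for, the region above the envelope of $\Theta^-_{\ch(E)}$ and the last tilt wall is a single chamber. The stable versus semistable distinction is handled in parallel since each invoked theorem is proved in both versions.
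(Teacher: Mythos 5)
Your proposal is correct and follows essentially the same route as the paper, which deduces the theorem by combining Theorem \ref{thm:AG53-39-1}, Theorem \ref{thm:PTislimtstab-eff}, and Theorem \ref{t:finite}, together with the observation that $s>1/3$ is equivalent to $3f_2-g_2>0$. Your additional elaboration of the geometric step — that finiteness of walls lets the asymptotic equivalence propagate into the chamber just above $\Theta^-_{\ch(E)}$ and the last actual tilt wall — is exactly what the paper leaves implicit in its ``we immediately deduce.''
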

In other words, positive rank PT-stable objects are Bridgeland stable in some region of $\stab(X)$, which is shown in Figure \ref{fig:intro}. In particular, if we let $\rho_v(\beta,\alpha)=0$ denote the equation defining $\Theta^-_v$ in which the coefficient of $\alpha^2$ is positive. Then there is a $\beta_0$ such that the curvilinear unbounded triangular region
\[\{(\beta,\alpha)\mid \rho_v(\beta,\alpha)>0,\ \beta<\beta_0\}\]
is contained in the PT chamber.

\printbibliography
\end{document}